\newcolumntype{H}{>{\setbox0=\hbox\bgroup}c<{\egroup}@{}}
\numberwithin{equation}{section}
\newtheorem{teo}{Theorem}[section]
\newtheorem*{teo*}{Main Theorem}
\newtheorem{pro}[teo]{Proposition}
\newtheorem{lem}[teo]{Lemma}
\newtheorem{cor}[teo]{Corollary}
\theoremstyle{definition}
\newtheorem{dfn}[teo]{Definition}
\newtheorem{ex}[teo]{Example}
\theoremstyle{remark}
\newtheorem{rem}[teo]{Remark}
\newcommand{\calA}{\mathcal{A}}
\newcommand{\calD}{\mathcal{D}}
\newcommand{\calE}{\mathcal{E}}
\newcommand{\calF}{\mathcal{F}}
\newcommand{\calG}{\mathcal{G}}
\newcommand{\calL}{\mathcal{L}}
\newcommand{\calM}{\mathcal{M}}
\newcommand{\calN}{\mathcal{N}}
\newcommand{\calO}{\mathcal{O}}
\newcommand{\SL}{\operatorname{SL}}
\newcommand{\GL}{\operatorname{GL}}
\newcommand{\Aut}{\operatorname{Aut}}
\renewcommand{\Im}{\operatorname{Im}}
\newcommand{\NS}{\operatorname{NS}}
\newcommand{\MW}{\operatorname{MW}}
\newcommand{\MWt}{\operatorname{MW}_{\tors}}
\renewcommand{\O}{\operatorname{O}}
\newcommand{\SO}{\operatorname{SO}}
\newcommand{\tO}{\operatorname{\widetilde{O}}}
\newcommand{\bO}{\operatorname{\overline{O}}}
\newcommand{\rank}{\operatorname{rank}}
\newcommand{\sign}{\operatorname{sign}}
\newcommand{\coker}{\operatorname{coker}}
\newcommand{\ZZ}{\mathbb{Z}}
\newcommand{\QQ}{\mathbb{Q}}
\newcommand{\RR}{\mathbb{R}}
\newcommand{\CC}{\mathbb{C}}
\newcommand{\PP}{\mathbb{P}}
\newcommand{\calFF}{\calM}
\newcommand{\slz}{\SL(2,\ZZ)}
\newcommand{\pslz}{\PSL(2,\ZZ)}
\newcommand{\HH}{\mathbb{H}}
\newcommand{\PSL}{\operatorname{PSL}}
\newcommand{\id}{\operatorname{id}}
\newcommand{\jGamma}{j_{\barGamma}}
\newcommand{\barGamma}{\bar\Gamma}
\newcommand{\tr}{\operatorname{tr}}
\newcommand{\tors}{\operatorname{tors}}
\let\@wraptoccontribs\wraptoccontribs
\date{}
\begin{document}

\title[Elliptic $K3$ surfaces]{Moduli of elliptic $K3$ surfaces: monodromy and Shimada root lattice strata }

\author[K. Hulek]{Klaus Hulek}
\address{Institut f\"ur Algebraische Geometrie, Leibniz Universit\"at Hannover,  30060 Hannover, Germany}
\email{hulek@math.uni-hannover.de}

\author[M: L\"onne]{Michael L\"onne}
\address{Mathematisches Institut der Universit\"at Bayreuth, Universit\"atsstr.\ 30, 95447 Bayreuth, Germany}
\email{michael.loenne@uni-bayreuth.de}

\contrib[with an appendix by]{Markus Kirschmer}
\address{Universit\"at Paderborn, Fakult\"at EIM, Institut f\"ur Mathematik, Warburger Str. 100, 33098 Paderborn,Germany}
\email{markus.kirschmer@math.upb.de}

\begin{abstract}
In this paper we investigate two stratifications of the moduli space of elliptically fibred $K3$ surfaces. The first comes from Shimada's classification of connected components of 
the moduli of elliptically fibred $K3$ surfaces and is closely related to the root lattices of the fibration.
The second is the monodromy stratification defined 
by Bogomolov, Petrov and Tschinkel. 
The main result of the paper is a classification of all positive-dimensional ambi-typical strata, that is, strata which are  both Shimada root strata and monodromy strata.
 We also discuss the relationship with moduli spaces of lattice-polarised $K3$ surfaces. The appendix by M. Kirschmer contains
computational results about the $1$-dimensional ambi-typical strata.  
\end{abstract}

\date{}
\maketitle


\section{Introduction}
\label{sec:intro}

Elliptically fibred $K3$ surfaces have been studied over a long period  from many different angles. We refer the reader in particular to the book \cite{ScSh} where $K3$ surfaces are especially treated in Sections 11 and 12. Due to 
the work of Miranda \cite{Mi90} it is well known that the moduli space $\calF$ of elliptically fibred $K3$ surfaces with a section, also known as Jacobian fibrations, can be described as a GIT quotient of an open subset $V$ in the weighted projective space $\PP_{8,12}(9,13)$ by the 
group $\SL(2,\CC)$. 
Alternatively, the moduli space $\calF$  can also be constructed as the moduli space of lattice polarised $K3$ surfaces, more precisely $U$-quasi-polarised $K3$ surfaces (where $U$ is the hyperbolic plane spanned by the classes of the section and a general fibre).

The moduli space $\calF$ itself is a rational variety by a result of Lejarraga \cite{Le}. 
Geometric properties of elliptically fibred $K3$ surfaces provide this space with many interesting geometric features.     

The starting point of our work are two stratifications of the moduli space $\calF$. The first of these was introduced by Bogomolov, Petrov and Tschinkel in \cite{BPT}. The strata of this decomposition are defined by the property that they are maximal locally closed irreducible subvarieties with constant monodromy group $\Gamma$ where $\Gamma$ is a fixed subgroup of  $\SL(2,\ZZ)$ modulo conjugation. Bogomolov, Petrov and Tschinkel prove the remarkable fact that all of these {\em monodromy strata} are themselves rational varieties. 

The other stratification is due to 
the work of Shimada \cite{shimada},  \cite{shimada2}, \cite{shimada3} in which he classifies all connected components of the moduli of elliptic $K3$ surfaces with fixed combinatorial type. This means the following: Given an elliptically 
fibred $K3$ sufrace $f: S \to \PP^1$ the components of the singular fibres not meeting the $0$-section define a root lattice $R$ which is the direct sum of some $ADE$-lattices. This need not be be saturated in the N\'eron-Severi group 
$\NS(S)$. Its saturation $L$ corresponds, by lattice theory, to an isotropic subgroup $G$ of the discriminant group of $R$. Shimada's work provides a complete classification of all connected families of elliptically fibred $K3$ surfaces with 
given $(R,G)$. This leads to a total of 3932 families and in this way one obtains a second stratification of the moduli space $\calF$. We refer to the
strata in this stratification as {\em Shimada root strata} or, shorter, simply as {\em Shimada strata}. A priori the stratifications given by monodromy strata and Shimada strata are not related and none is a refinement of the other. However, both stratifications are refined by a third stratification, namely the one given by 
a fixed configuration of the singular fibres. We call these the {\em configuration strata}. Their properties were investigated by Kloosterman in \cite{Kl}.

The starting point of our work is the observation that there are some strata which appear in both the monodromy stratification and  Shimada's stratification. We call these {\em ambi-typical } strata and the main purpose of this
paper is to understand these special strata. Our main result  (Theorem \ref{teo:mainresult}) is 
\begin{teo*}
There are exactly 50 positive-dimensional  ambi-typcical strata. These are listed in Table \ref{table:list}.
\end{teo*}
We then prove in Theorem \ref{teo:liststrata2} that the ambi-typical Shimada strata are, with the exception of two strata, already completely determined by the root lattice $R$ itself.  
In Theorem \ref{teo:liststrata} and Table \ref{tab:jmaps} we further characterise the ambi-typical strata in terms of local monondromy around the singular fibres and the branching behaviour of the $j$-invariant.

Clearly, there is a connection with moduli spaces of lattice polarised $K3$ surfaces. Indeed, every ambi-typical stratum gives rise to a priori several moduli spaces of lattice-polarised $K3$ surfaces.
It turns out that the total number of possible components of moduli spaces associated to a given ambi-typical stratum is either 1,2, or 4, as follows from  Corollary \ref{cor:countingcomponets} and Proposition \ref{prop:embeddings}.  
For each such component $\calN$ associated to an ambi-typical stratum $\calM$ 
there is a natural finite dominant map $\calN \to \calM$ and we compute its degree. 
This is the content of Proposition \ref{pro:constructionmap}, Theorem \ref{pro:descriptionmap} and Corollary \ref{cor:countingdegree}.     

We will now discuss the contents of the paper in some more detail: In Section \ref{sec:ellfibered}  we recall basic facts about elliptically fibred $K3$ surfaces and Miranda's construction of the moduli space $\calF$. 
The monodromy and Shimada stratifications are introduced in Section \ref{sec:stratifications} where we also recall the principal results of Shimada's theory.  In Section \ref{sec:mainresult} we formulate the main results of our paper.

Sections \ref{sec:config} to \ref{sec:high} are dedicated to the proof of the Main Theorem. We start in Section \ref{sec:config} by recalling the configuration strata studied by Klosterman \cite{Kl}. It is sufficient to 
look for ambi-typical configuration strata and this will lead us to the severe restriction that the generic element of an ambi-typical stratum cannot have singular fibres of type $II, III, IV, II^*$ or $III^*$ (Proposition \ref{badfibres}).
In Section \ref{sec:jfactor} we recall a natural factorisation of the $j$-invariant $j(\calE)= \jGamma \circ j_{\calE}$ via the modular curve $X(\barGamma)$ defined by the modular monodromy group $\barGamma \subset \PSL(2,\ZZ)$. 
The restrictions on fibre configurations
then translates into branching properties of $\jGamma$ and $j_\calE$ over the points $0,1,\infty$. In Section \ref{sec:Euler} we use 
an Euler number consideration to find  a list of possible candidates for the modular monodromy group  $\barGamma$. In Proposition \ref{pro:indexmodular} we prove that the index of the modular group 
$\barGamma$ in $\PSL(2,\ZZ)$ is bounded by 18. For the rest of the proof we shall distinguish between low index $\leq 6$ and high index $> 6$. The next step in the proof is that we provide Weierstra{\ss} data 
for all possible ambi-typical  strata with modular monodromy group $\barGamma$ of low index in Proposition \ref{weier}. We also obtain some results about the Mordell-Weil groups and the monodromy of the 6 families 
listed in this proposition. In Sections \ref{sec:low} and \ref{sec:high} we finally complete the proof of the classification in the low and high index case respectively.

Section \ref{sec:mirrorpol} and \ref{sec:modulimaps} are devoted to relating the ambi-typical strata which we have found to moduli spaces of lattice polarised $K3$ surfaces. 

The appendix  by M. Kirschmer contains explicit calculations concerning the $1$-dimensional ambi-typical strata, in particular the genus of the moduli spaces of lattice polarised 
$K3$ surfaces covering these ambi-typical strata and the degree of the covering map.  We find it remarkable that although the ambi-typical $1$-dimensional strata all have genus $0$, in accordance with the rationality result of
Bogomolov, Petrov and Tschinkel, the genus of the moduli space of lattice-polarised $K3$ surfaces can be as high as $13$.

\subsection*{Acknowledgements}
We thank Ichiro Shimada for extensive discussions and for sharing his insight and calculations with us.
The first author thanks Simon Brandhorst for discussions on lattice genera and for supplying the proof of Proposition \ref{pro:independenceindex}. He also thanks Eduard Looijenga for an exchange of e-mails. 
We also would like to thank the referee for the careful reading of the text and many helpful remarks.

The first author is further grateful to DFG for partial support under grant Hu 337/7-1. The second author acknowledges the support of the ERC 2013 Advanced Research Grant 340258-TADMICAMT.  


\section{Elliptically fibred $K3$ surfaces}\label{sec:ellfibered}

A $K3$ surface $S$ is {\em elliptically fibred} if there exists a surjective morphism $f: S \to \PP^1$ whose generic geometric fibre is a genus $1$ curve. We say that 
$f: S \to \PP^1$ is a {\em Jacobian fibration} if it also has a section $s$. We denote the {\em N\'eron-Severi group} by $\NS(S)$ and its rank, also called the {\em Picard rank} by $\rho(S)$.
A general fibre of $f$ and the $0$-section $s$ define a hyperbolic sublattice $U\subset \NS(S)$. Conversely, the existence of a hyperbolic plane $U$ in $\NS(S)$ guarantees the existence of a Jacobian 
fibration on $S$. But it should be noted that the class of the fibre and the section need not be contained in $U$. This is only the case if $U$ contains a nef, or equivalently base point free, isotropic class and an irreducible $-2$ curve.  

We will typically denote $K3$ surfaces with a Jacobian fibration by $f: \calE \to \PP^1$ or simply by $\calE$, where we think of $\calE$ as an elliptic curve over the function field $\CC(\PP^1)\cong\CC(t)$. The sections of $\calE$ form a 
finitely generated abelian group, the {\em Mordell-Weil group} of $\calE$ which we denote by $\MW(\calE)$.

The components of all singular fibres of $\calE$ which do not meet the section generate a sublattlice $R(\calE)$ of $\NS(\calE)$ which is a root lattice, more precisely an orthogonal sum of  lattices of $ ADE $ type.  
The  {\em trivial part} of the  N\'eron-Severi group $\NS(\calE)$ is 
$$
\NS_{\tr}(\calE) = U + R(\calE)
$$
where $U$ is the hyperbolic plane defined by the Jacobian fibration. 
To simplify notation we write here and in the sequel simply $+$ for a direct orthogonal sum.
We will denote the rank of the trivial part by $\rho_{\tr}(\calE)$. In general $R(\calE)$ is not saturated in $\NS(\calE)$ and we will denote its saturation by $L(\calE)$. It is well known, see \cite[Corollary 6.20]{ScSh}, that
$$
L(\calE)/R(\calE) \cong \MWt(\calE).
$$ 
We recall that a Jacobian fibration $\calE$ is called {\em extremal } if $\rho_{\tr}(\calE)=20$.
Such surfaces correspond to isolated points in $\calF$ and will not be considered here,
see however \cite{bemo} for the classification of those with a semi-stable fibration.

Jacobian fibrations can be classified via their Weierstra{\ss} models and
this is the approach taken by Miranda.
For the basic theory of  Weierstra{\ss} equations we refer the reader to Miranda's paper \cite{Mir}.
Any Jacobian fibration $f: \calE \to \PP^1$, where $\calE$ is a $K3$ surface, is birational to a minimal Weierstra{\ss} model 
\begin{equation}\label{equ:weierstass} 
y^2z=4x^3 - g_2xz^2 - g_3z^3 \, {\mbox { where}} \,  g_2\in H^0(\PP^1, \calO_{\PP^1}(8)), g_3\in H^0(\PP^1, \calO_{\PP^1}(12))
\end{equation}
and where the following (open) conditions hold:
\begin{itemize}
\item[(1)] $\Delta=g_2^3 - 27 g_3^2 \not\equiv 0$.
\item[(2)] For every point $q\in \PP^1$ the inequality $\min \{3\nu_q(g_2), 2\nu_q(g_3) \} < 12$ holds, where $\nu_q(g)$ is the vanishing order of a polynomial $g$ at $q$. 
\end{itemize}
The latter condition ensures that the  Weierstra{\ss} equation is {\em minimal} in the sense that we cannot write 
$g_2=h^4\overline{g}_2$ and  $g_3=h^6{\overline{g}}_3$ with  
${\overline{g}}_2 \in  H^0(\PP^1, \calO_{\PP^1}(4))$ and  ${\overline{g}}_3 \in H^0(\PP^1, \calO_{\PP^1}(6))$.

 Conversely,  an equation given as above defines a surface $\calE'$  in the projective  bundle $\PP(\calO_{\PP^1}(4) \oplus   \calO_{\PP^1}(6) \oplus \calO_{\PP^1})$ with at most 
rational double points. 
A minimal resolution $\calE$ of $\calE'$ is then a $K3$ surface and the projection onto the base of the projective bundle gives a Jacobian fibration $f: \calE \to \PP^1$.    
Weierstra{\ss} equations with coefficients $g_2,g_3$ and $g'_2,g'_3$ respectively, define isomorphic Jacobian fibrations if and only if there exists a coordinate transformation 
$\GL(2,\CC)$, which maps the pair $(g_2,g_3)$ to $(g'_2,g'_3)$. This allows to describe the moduli of Jacobian fibrations in terms of a GIT quotient. 

In order to do this we consider the weighted projective space $\PP_{8,12}(9,13)$ associated to $H^0(\PP^1, \calO_{\PP^1}(8)) \oplus H^0(\PP^1, \calO_{\PP^1}(12))$. The open
conditions described above define an open subset $V \subset \PP_{8,12}(9,13)$ and the group $\SL(2,\CC)$ acts on $\PP_{8,12}(9,13)$ as well as on the open subset $V$.
By \cite[Proposition 5.1]{Mir} all points of $V$ are stable with respect to the action of $\SL(2,\CC)$. Hence we can form the quotient 
\begin{equation*}
\calF := V/\!/\SL(2,\CC)
\end{equation*}
which is a quasi-projective variety. Its geometric points correspond bijectively to isomorphism classes of Jacobian fibrations $f: \calE \to \PP^1$ where $\calE$ is a $K3$ surface.

In Section \ref{sec:mirrorpol} we will discuss the theory of moduli spaces of lattice (quasi-)polarised $K3$ surfaces. Taking the hyperbolic plane $U$ (which one should think of as being spanned by the class of the section
and of a general  fibre) one obtains an alternative construction of the moduli space $\calF$ as a quotient of a homogeneous domain of type IV by an arithmetic group. As we shall see, both points of view are relevant for our purposes. 
An interesting result of Odaka and Oshima \cite[Theorem 7.9]{OO} even says that the GIT compactification and the Baily-Borel compactification of this space coincide, i.e. $\calF^{\operatorname{GIT}} \cong \calF^{\operatorname{BB}}$.


\section{Monodromy and Shimada strata}\label{sec:stratifications}

In this section we will introduce the main protagonists of this paper, namely {\em monodromy strata} and {\em Shimada strata.}

\subsection*{Monodromy strata}
\label{sec:BPT}
Monodromy strata for Jacobian fibrations were first introduced by Bogomolov, Petrov and Tschinkel in \cite{BPT}. Here we recall some 
 of their results, restricting ourselves to the case in hand, namely Jacobian fibrations of $K3$ surfaces. 

The first step is to consider the open subset $V'\subset V$ corresponding to 
non-isotrivial fibrations.
It is naturally obtained by replacing  condition (1) above by the slightly stronger condition
\begin{itemize}
\item[(1')] $\Delta=g_2^3 - 27 g_3^2$ and $g_2^3$ are not proportional 
(which also excludes the case $g_2 \equiv 0$ and the case $\Delta\equiv 0$).
\end{itemize}
Since the $j$-invariant is the quotient of these two expressions, this translates
directly into several equivalent,
more geometric, characterisations of points $v\in V'$:
\begin{enumerate}
\item
The $j$-invariant of the Jacobian elliptic fibration $\calE_v$ is non-constant.
\item
The Jacobian elliptic fibration $\calE_v$ is not isotrivial.
\item
The number of singular fibres of multiplicative 
type $I_\nu$ or additive type 
$I_\nu^*$, $\nu>0$, is positive.
\end{enumerate}
Clearly $V'$ is invariant (as  a set) under the action of $\SL(2,\CC)$ and we denote the quotient by $\calF' \subset \calF$.  This is the open subset parameterising all non-isotrivial Jacobian 
fibrations of $K3$ surfaces. 

To decompose $V'$ (and thus $\calF'$) in a geometrically meaningful way, \cite{BPT} exploit the 
\emph{monodromy group} of elliptic fibrations: 
the complement $\calE'$ of the union of singular fibres of a Jacobian fibration $\calE$ is topologically equivalent
to a torus bundle over the base punctured at the critical values of the fibration.
The image of the associated monodromy representation in the automorphisms
of the first homology of a fibre is orientation preserving. Upon the choice of 
a basis, it becomes the \emph{monodromy group} $\Gamma(\calE)$,
a subgroup of $\SL(2,\ZZ)$, well-defined up to conjugacy, see \cite[VI,3]{Mi89}.
Note that according to this definition we will consider monodromy
groups as subgroups of $\SL(2,\ZZ)$, and keep in mind that it is the
conjugacy class which is the invariant. We will use $\sim$ to denote conjugate subgroups.
The group $\Gamma(\calE)$ determines a group $\barGamma(\calE) \subset \PSL(2,\ZZ)$ which we will call the {\em modular monodromy group}.

The initial observation in \cite{BPT} is the following semi-continuity property
with respect to the Zariski topology on the quasi-projective variety $\calF'$:
the monodromy group can only change if the configuration of singular fibres 
changes, i.e.\ if some fibres split or come together. The latter case only occurs
on closed subsets of the base of a family and the monodromy group will be
a subgroup after the process. Therefore, the property that the monodromy group is a subgroup
of a fixed subgroup of $\slz$ (up to conjugacy) defines a closed subset.

This can be rephrased as follows. We consider the set $P$ of all conjugacy classes of subgroups of $\slz$ together with the partial ordering induced by inclusion.
On this we define the {\em Alexandroff topology }  for which a set $U$ is open if and only if $p\in U, p\leq q$ implies $q\in U$. The semicontinuity property observed by 
Bogomolov, Petrov and Tschinkel then says that the map  $V' \to P$ is continuous with respect to the Alexandroff topology on $P$ and the Zariski topology on $V'$. 
It implies that the fibres $V'_{\Gamma}$ of this map are locally closed for a fixed (conjugacy class of a) subgroup  $\Gamma \subset \slz$.  We can decompose each 
such set into its irreducible components $V'_{\Gamma}= \cup_i V'_{\Gamma,i}$.  Bogomolov, Petrov and Tschinkel showed that the number of possible monodromy groups is finite and in this way obtained the 
\begin{lem}[cf.\ \cite{BPT}, Lemma 3.1]
The variety $V'$ is a finite 
union of locally closed irreducible subvarieties $V'_{\Gamma, i}$,
each preserved under the action of $\SL(2,\CC)$ such that for every $v\in V'_{\Gamma, i}$
one has $\Gamma( \calE_v) \sim \Gamma.$
\end{lem}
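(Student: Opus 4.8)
The plan is to establish the statement as a direct consequence of the semicontinuity property of the monodromy map $V' \to P$ combined with general facts about the Zariski topology of quasi-projective varieties. First I would make precise the continuity claim already sketched in the excerpt: the map sending $v \in V'$ to the conjugacy class $[\Gamma(\calE_v)] \in P$ is continuous for the Zariski topology on $V'$ and the Alexandroff topology on $P$. Concretely, this means that for a fixed conjugacy class $[\Gamma]$, the set $\{v : [\Gamma(\calE_v)] \geq [\Gamma]\}$ (i.e.\ the monodromy is conjugate to a subgroup containing a conjugate of $\Gamma$) is closed; equivalently, the ``exactly $[\Gamma]$'' fibre $V'_{\Gamma}$ is the difference of a closed set and finitely many strictly larger closed sets, hence locally closed. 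The geometric input behind this is the observation of \cite{BPT} recalled above: the monodromy group is locally constant on the stratum where the configuration of singular fibres does not degenerate, and can only shrink (pass to a subgroup up to conjugacy) as fibres collide, a phenomenon confined to a proper closed subset.

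Next I would address finiteness. The monodromy group $\Gamma(\calE_v)$ is generated by the local monodromies around the critical values, each of which is conjugate to a matrix $\begin{pmatrix} 1 & n \\ 0 & 1\end{pmatrix}$ (up to orientation) determined by the Kodaira type of the fibre, and the total number of singular fibres — counted with the constraint $e(\calE) = 24$ on the Euler number of a $K3$ surface — is bounded. Hence there are only finitely many possibilities for the tuple of local monodromy data, and a fortiori only finitely many conjugacy classes of subgroups of $\slz$ that arise as $\Gamma(\calE_v)$ for $v \in V'$. This is exactly \cite[Lemma 3.1]{BPT} in the $K3$ case; I would cite it rather than reprove the bound. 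Call this finite set of conjugacy classes $\Sigma \subset P$.

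With these two ingredients the decomposition is immediate: $V' = \bigsqcup_{[\Gamma] \in \Sigma} V'_{\Gamma}$ is a finite partition into locally closed subsets by the continuity/finiteness just discussed, and each $V'_{\Gamma}$ decomposes further into finitely many irreducible components $V'_{\Gamma} = \bigcup_i V'_{\Gamma,i}$ since a locally closed subvariety of a Noetherian scheme has finitely many irreducible components, each of which is again locally closed. For each such component and each $v \in V'_{\Gamma,i}$ one has by construction $\Gamma(\calE_v) \sim \Gamma$. Finally, $\SL(2,\CC)$-invariance: the coordinate change action of $\SL(2,\CC)$ on $V'$ sends a Weierstra{\ss} fibration to an isomorphic one, so it does not change the isomorphism class of $\calE_v$ and in particular preserves the conjugacy class of the monodromy group; thus each $V'_{\Gamma}$ is $\SL(2,\CC)$-invariant, and since $\SL(2,\CC)$ is connected it permutes the irreducible components of $V'_{\Gamma}$ by a continuous action of a connected group, hence fixes each component setwise. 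This gives the $\SL(2,\CC)$-invariance of every $V'_{\Gamma,i}$ and completes the argument.

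The main obstacle, and the only point requiring genuine care, is the semicontinuity statement itself — i.e.\ justifying rigorously that ``monodromy can only shrink under specialization'' and that this shrinking happens on a closed set. One has to argue that for a family of Weierstra{\ss} fibrations over a base $B$, the locus in $B$ where the discriminant acquires extra or merged zeros is closed (clear, as it is cut out by vanishing of resultants/discriminants of the defining polynomials in the parameters), and that over the open complement the monodromy representation is locally constant (a covering/local triviality argument for the family of punctured torus bundles), while over the closed locus the group injects, after suitable identification, into the generic one. Since all of this is already carried out in \cite{BPT}, in practice I would cite their Lemma 3.1 for the hard content and present the above as the routine packaging of it in the $K3$ case.
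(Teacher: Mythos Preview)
Your proposal is correct and follows the same approach as the paper, which simply packages the semicontinuity and finiteness results of \cite{BPT} via the Alexandroff--topology formulation and then decomposes the resulting locally closed fibres into irreducible components; the paper gives no independent proof beyond this sketch, so your expansion (including the explicit $\SL(2,\CC)$--invariance argument via connectedness) is entirely in line with it.

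One slip to fix: you have the direction of the closed sets reversed. Since the monodromy \emph{shrinks} under specialization (as you correctly say later), the closed sets are $\{v : [\Gamma(\calE_v)] \leq [\Gamma]\}$, i.e.\ ``monodromy is contained in $\Gamma$ up to conjugacy'', not $\geq$. This is exactly what the paper states: ``the property that the monodromy group is a subgroup of a fixed subgroup of $\slz$ \dots\ defines a closed subset.'' Your subsequent expression of $V'_\Gamma$ as a closed set minus finitely many smaller closed ones then goes through with the correct inequality, using the finiteness of the set $\Sigma$ of occurring monodromy classes. Also, your informal justification of finiteness (``finitely many tuples of local monodromy data, a fortiori finitely many subgroups'') is not quite a proof as stated, since the same tuple of local conjugacy classes can a priori generate many different subgroups; but since you explicitly defer to \cite[Lemma~3.1]{BPT} for this step, that is harmless.
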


We denote $\calF'_{\Gamma, i}=V'_{\Gamma, i}/\!\!/ \SL(2,\CC)$ and in this way one obtains a finite decomposition 
\begin{equation} \label{equ:monodromystart}
\calF'= \cup_i \calF'_{ \Gamma, i}
\end{equation}
into irreducible locally closed subvarieties. 

\begin{dfn} We refer to (\ref{equ:monodromystart}) as the {\em monodromy stratification } of the moduli space of $K3$ surfaces with a non-isotrivial Jacobian fibration, and call the subvarieties 
 $\calF'_{ \Gamma, i}$ the  {\em monodromy strata } of $\calF'$.
 \end{dfn}
 
While the objective of \cite{BPT} is the rationality of the monodromy strata $\calF'_{ \Gamma, i}$
in the moduli space, for our arguments it is important to note the following
maximality condition
\begin{equation}
\label{closure}
v\in \overline{V'_{ \Gamma, i}}\,\setminus V'_{ \Gamma, i}
\quad \implies \quad
\Gamma(\calE_v)\not\sim\Gamma.
\end{equation}

\subsection*{Shimada strata}\label{subsec:root}
We have already seen the root lattice $R(\calE)$ associated to a Jacobian fibration $f: \calE \to \PP^1$ of a $K3$ surface. In general $R(\calE)$ is not saturated in $\NS(\calE)$ and we denoted its saturation by
$L(\calE)$. The overlattice $L(\calE)$ of $R(\calE)$ defines, and can be reconstructed, from the finite group  $G(\calE)=L(\calE)/R(\calE) \subset D(R(\calE))$. Here we use standard notation and standard facts form lattice theory: if $L$ 
is an even  
lattice we denote by  $L^{\vee}$ the {\em dual lattice } which can be defined as $L^{\vee}=\{x \in L_{\QQ} \mid (x,y) \in \ZZ {\mbox{ for all }} y \in L\}$.  The {\em discriminant } of $L$ is the quotient 
$D(L) = L^{\vee}/L$. This is a finite group and it is equipped with a quadratic form with values in $\QQ/ 2\ZZ$. The importance of the discriminant group in lattice theory was fully developed in Nikulin's 
seminal  paper \cite{Nik}. Overlattices of $L$ correspond to isotropic subgroups $G \subset D(L)$, see \cite[Proposition 1.4.1]{Nik}.

In his paper \cite{shimada} Shimada investigated the question which pairs $(R,G)$, where $R$ is an $ADE$ root lattice and $G \subset D(R)$ is a finite (isotropic) subgroup of the discriminant group, 
can occur as $(R(\calE), G(\calE))$ and 
how many irreducible families these Jacobian fibrations form. At this point 
it is important to note that in general a pair $(R,G)$ does not necessarily determine a unique family. 
There are several reasons for this. Firstly, the abstract group $G$ does not necessarily define a unique overlattice $L$ of $R$, more precisely one must choose a specific isotropic subgroup $G$ in $D(R)$ and it can happen 
that $G$ can be embedded in several ways as an isotropic subgroup in $D(R)$. Secondly, given $L$ one must also specify a primitive embedding of $M=U \oplus L$ into the $K3$ lattice
\begin{equation*}\label{equ:K3lattice}     
L_{K3}= 3U + 2E_8
\end{equation*}
where $E_8$ is the unique even unimodular negative definite lattice of rank $8$ and the  lattice $M$ may have several such embeddings (modulo the orthogonal group $\O(L_{K3})$). Given such an embedding one can consider 
the moduli space of {\em lattice  polarised} $K3$ surfaces with lattice polarisation $M$.   
Note however, that a lattice polarization contains possibly more information than the pair $(R,G)$ as different lattice polarizations can give rise to the same elliptic $K3$ surface with a given configuration of singular fibres.
We shall discuss this relationship  in detail in Sections \ref{sec:mirrorpol} and \ref{sec:modulimaps}.
The Shimada strata correspond to finite quotients of (open subsets) of moduli spaces of lattice polarised $K3$ surfaces. 
We also note that the moduli spaces can have $1$ or $2$ 
components and this, thirdly, can also increase the number of strata. If there are $2$ such components then they are complex conjugate to each other, 
i.e.\ surfaces of one component are complex conjugate to those of the other component, cf.\ \cite[p.206]{FM}.

Shimada's classification \cite{shimada2} gives the following result.

\begin{teo}[Shimada] The following holds:
\begin{itemize}
\item[{\rm{(1)}}] There are $3278$ different root lattices which occur as $R(\calE)$ for a Jacobian fibration $\calE$ of a $K3$ surface. Of these $2953$ belong to non-extremal fibrations.
\item[{\rm{(2)}}]  This decomposes the set of all Jacobian fibrations of $K3$ surfaces into $3932$ connected families, of which $3469$ belong to  non-extremal fibrations.
\end{itemize}
\end{teo}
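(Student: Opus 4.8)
The statement is a classification, and the plan is to convert it, via the global Torelli theorem and surjectivity of the period map for $K3$ surfaces, into a finite lattice-theoretic enumeration and then to carry that enumeration out (by hand or with computer assistance).

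First I would set up the dictionary. By the Shioda--Tate formula the sublattice $U\subset\NS(\calE)$ spanned by the $0$-section and a general fibre is a unimodular, hence direct, summand, so $\NS(\calE)=U\oplus W$ with $W=U^{\perp}$ negative definite of rank $\rho(\calE)-2$. A $(-2)$-class in $W$ is, up to sign, effective and orthogonal to the fibre class, hence supported on fibres; therefore the root sublattice of $W$ is exactly the lattice $R(\calE)$ generated by fibre components, and $L(\calE)$ is its saturation with $L(\calE)/R(\calE)\cong\MWt(\calE)$. For the \emph{generic} member of a family in which $R(\calE)$ and $G(\calE)$ are held fixed there are no further sections, so $W=L$ and $\NS=U\oplus L$. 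By surjectivity of the period map and Torelli, the moduli space of such elliptic $K3$ surfaces is then an open subset of the moduli of $M$-quasi-polarised $K3$ surfaces for $M=U\oplus L$; it is non-empty, of dimension $18-\rank R$, precisely when $M$ admits a primitive embedding into $L_{K3}=3U+2E_8$.

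This reduces the theorem to the following bookkeeping, with three sources of multiplicity exactly as described before the statement. Enumerate the negative-definite $ADE$ root lattices $R$ with $\rank R\le 18$ (orthogonal sums of $A_n$, $D_m$, $E_k$ of total rank $\le 18$ --- finite in number). For each $R$ compute $D(R)$ with its $\QQ/2\ZZ$-valued form and list its isotropic subgroups $G$; each such $G$, and each inequivalent way of realising the abstract group $G$ as an isotropic subgroup, gives an overlattice $L=L_{R,G}\subset R^{\vee}$. Keep only those $(R,G)$ for which (a) the root sublattice of $L$ is \emph{exactly} $R$ --- the genericity condition, which for instance rejects the diagonal glue $4A_1\subset D_4$ --- and (b) $M=U\oplus L$ embeds primitively into $L_{K3}$. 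Condition (b) I would test with Nikulin's criteria: since $L_{K3}$ is unimodular and splits off a $U$, one has $U\oplus L\hookrightarrow L_{K3}$ primitively iff $L\hookrightarrow 2U+2E_8$ primitively, and whenever $\rank(2U+2E_8)-\rank L\ge \ell(D(L))+2$, with $\ell$ the minimal number of generators, this holds and the embedding is unique up to $\O(L_{K3})$. Finally, for each surviving $(R,G)$ count the $\O(L_{K3})$-orbits of primitive embeddings $M\hookrightarrow L_{K3}$ and multiply by $1$ or $2$ according to whether the orthogonal complement $T=M^{\perp}$ does or does not admit an orientation-reversing isometry (the latter giving a pair of complex-conjugate components); summing these multiplicities over all $(R,G)$ gives the number of connected families.

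The hard part is the boundary of Nikulin's criterion: the crude inequality in (b) fails when $\rank R$ is close to $18$, most severely in the extremal case $\rank R=18$ (equivalently $\rho_{\tr}=20$), where $T=M^{\perp}$ is a positive-definite even lattice of rank $2$ whose discriminant form is the negative of that of $L$. There one must decide case by case whether such a $T$ exists at all --- a question about which discriminant forms are represented by binary quadratic forms, governed by genus theory and Nikulin's existence theorem for lattices of prescribed signature and discriminant form --- how many classes its genus contains, and whether a class carries an orientation-reversing automorphism; the same analysis, milder, is needed for $\rank R=16,17$ where $T$ has rank $4$ or $3$. Carrying this out for every candidate, discarding the ones that fail (a) or (b) and tallying the rest, yields the $3278$ root lattices and $3932$ families; isolating $\rank R=18$ splits these as $2953+325$ and $3469+463$. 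As consistency checks one verifies that the extremal totals agree with the independent classification of extremal elliptic $K3$ surfaces, and that the groups $G$ that occur match the known classification of Mordell--Weil torsion.
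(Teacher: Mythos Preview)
The paper does not give its own proof of this theorem: it is stated as Shimada's classification result and attributed to \cite{shimada}, \cite{shimada2}. So there is no ``paper's proof'' to compare against; the theorem functions here as a cited input.

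That said, your outline is essentially the strategy Shimada himself uses. The reduction via Torelli and surjectivity of the period map to a lattice-theoretic enumeration of pairs $(R,G)$ with the constraints (a) that $L_{R,G}$ has root sublattice exactly $R$ and (b) that $U\oplus L$ embeds primitively in $L_{K3}$, followed by counting $\O(L_{K3})$-orbits of such embeddings and the number of connected components of the associated period domains modulo $\tO(T)$, is precisely the machinery of \cite{shimada2}. Your identification of the three sources of multiplicity matches the paper's own summary preceding the statement, and your remark that the delicate cases are those of high rank (where Nikulin's crude bound fails and one must invoke genus theory and the Miranda--Morrison theory for the orthogonal complement $T$) is correct. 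One small imprecision: the criterion you quote for existence and uniqueness of the primitive embedding should be stated with care --- Nikulin's Theorem~1.14.4 gives existence and uniqueness when the rank of $T$ exceeds $\ell(D(L))$ by at least $2$ in the indefinite case, but the exact hypotheses (on the $2$-adic part in particular) matter for the borderline ranks, and Shimada's actual implementation handles these via explicit computation rather than a single clean inequality. The enumeration itself is a substantial computer calculation; your proposal correctly identifies it as such.
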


Shimada's result defines a stratification of the moduli space $\calF'$ into maximal (with respect to inclusion) locally closed irreducible subvariety $\calFF'_{R,G,j}$ and this defines a finite 
decomposition
\begin{equation}\label{equ:rootstrat}
\calF' = \cup_j \calFF'_{R,G,j}.
\end{equation}

\begin{dfn} We call the decomposition (\ref{equ:rootstrat}) the {\em Shimada stratification} of $\calF'$ and the sub\-varieties 
$\calFF'_{R,G,j}$ the  {\em Shimada strata} of $\calF'$.
\end{dfn}
For future use we also note the following remark which is simply obtained by counting the conditions imposed by the rank of the N\'eron-Severi group.
\begin{rem}
The dimension of a component  $\calFF'_{R,G,j}$ is given by
\begin{equation}\label{rem:dimcomp}
\dim \calFF'_{R,G,j} = 18 - \rank(R).
\end{equation}
\end{rem}
We notice that this only depends on $R$ and not on $G$ nor the specific component we are considering. 
Note that all components of   $\calFF'_{R}$ have the same dimension, which we will also refer to as the dimension of $\calFF'_{R}$.  
  
We note that the decomposition in Shimada strata (\ref{equ:rootstrat}) is a topological poset stratification in the
sense of \cite{YaYo}.

\section{The main classification result}\label{sec:mainresult}
The primary goal of this paper is to compare the two stratifications of the moduli space $\calF'$ of non-isotrivial Jacobian fibrations on $K3$ surfaces which we have defined above, namely the monodromy stratification 
and the Shimada stratification. More precisely, we want to determine all positive dimensional monodromy and Shimada strata whose generic points coincide. In other words, we want to determine all pairs $\calF'_{\tilde{\Gamma,i}}$ 
and $\calFF'_{R,G,j}$ such there intersection is non-empty and open (and hence dense) in both  $\calF'_{{\Gamma,i}}$
and $\calFF'_{R,G,j}$. This leads us to the definition 
\begin{dfn}
A positive dimensional irreducible closed subset $\calA \subset \calF'$ is called an {\em ambi-typical stratum } if there is a monodromy stratum   $\calF'_{\Gamma,i}$ and a Shimada stratum $\calFF'_{R,G,j}$ such that
\begin{equation}
\calA=\overline{\calF'_{\Gamma,i}} = \overline{\calFF'_{R,G,j}}. 
\end{equation}
(The monodromy stratum and the Shimada stratum are then uniquely defined.)
\end{dfn}

Our main result is
\begin{teo}\label{teo:mainresult}
There are $50$ ambi-typical
strata in $\calF'$. 
\end{teo}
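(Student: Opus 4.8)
The plan is to reduce the classification to a finite search by combining three filters: the configuration-stratum restriction, the factorisation of the $j$-map through a modular curve, and an Euler-number count. First I would pass from Shimada/monodromy strata to \emph{configuration strata}, which refine both (Section \ref{sec:config}): an ambi-typical stratum must be a union of configuration strata on which the monodromy group is constant, so it suffices to enumerate configuration-theoretic candidates. The key structural input is Proposition \ref{badfibres}: the generic member of an ambi-typical stratum has no fibres of type $II, III, IV, II^*, III^*$. The reason is that such additive fibres with ``extra'' local monodromy force the Shimada stratum and the monodromy stratum to have different generic loci — degenerating the $j$-invariant behaviour changes one but not the other — so these fibres are excluded, leaving only $I_\nu$ and $I_\nu^*$ fibres (plus possibly $I_0^*$) to worry about.

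Next I would exploit the factorisation $j(\calE) = \jGamma \circ j_\calE$ through the modular curve $X(\barGamma)$ attached to the modular monodromy group $\barGamma \subset \PSL(2,\ZZ)$ (Section \ref{sec:jfactor}). The local monodromies around singular fibres, now constrained to be of type $I_\nu$ or $I_\nu^*$, translate into precise ramification data for $\jGamma$ and $j_\calE$ over the three special points $0,1,\infty$ of the $j$-line. The Euler-characteristic / Riemann--Hurwitz bookkeeping for the composed cover $\PP^1 \to X(\barGamma) \to \PP^1_j$, together with the constraint that the total second Chern class of the $K3$ surface equals $24$ and that the fibration is non-isotrivial, yields a finite list of admissible $\barGamma$. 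Concretely, I expect this to culminate in the bound $[\PSL(2,\ZZ):\barGamma] \le 18$ (Proposition \ref{pro:indexmodular}), cutting the problem down to finitely many congruence and non-congruence subgroups of small index, which can be enumerated (e.g.\ against the known tables of low-index subgroups of $\PSL(2,\ZZ)$).

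Then the classification splits into two regimes according to the index: low index ($\le 6$) and high index ($>6$). In the low-index case I would write down explicit Weierstra{\ss} data $(g_2,g_3)$ realising each candidate, as in Proposition \ref{weier}, check that the resulting fibration is indeed a $K3$ (minimality, $\Delta \not\equiv 0$, correct degrees), compute its singular-fibre configuration and monodromy group, and verify that the Shimada stratum $\calFF'_{R,G,j}$ and the monodromy stratum $\calF'_{\Gamma,i}$ it determines have coinciding generic points — this uses the maximality property \eqref{closure} and the dimension formula \eqref{rem:dimcomp}, $\dim \calFF'_{R,G,j} = 18 - \rank R$, to rule out accidental containments. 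In the high-index case a more uniform argument is needed: the larger $\barGamma$ is, the more constrained the cover $X(\barGamma)\to\PP^1_j$ and the fibre configuration become, so one shows directly that only a short supplementary list survives, again realising each by explicit equations or excluding it by a ramification/Euler count. Assembling the two lists and discarding the finitely many $0$-dimensional coincidences gives exactly $50$ positive-dimensional ambi-typical strata, which one records in Table \ref{table:list}.

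\textbf{Main obstacle.} I expect the genuinely hard part to be the high-index range $6 < [\PSL(2,\ZZ):\barGamma] \le 18$: there the number of subgroups (many of them non-congruence) is large, explicit Weierstra{\ss} models are harder to produce by hand, and one must rule out subtle situations where a monodromy stratum and a Shimada stratum agree numerically (same configuration, same $R$, same dimension) yet differ as loci in $\calF'$ because of how the saturation $G$ or the lattice embedding varies. Controlling this requires a careful combined analysis of the possible $G \subset D(R)$ against the monodromy data, and this is where the bulk of the case-by-case work — and the appeal to Shimada's tables — will be concentrated.
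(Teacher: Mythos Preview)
Your proposal is essentially correct and follows the paper's strategy closely: configuration strata as common refinement, Proposition~\ref{badfibres} to constrain fibre types, the $j$-factorisation through $X(\barGamma)$, the Euler-number bound $[\PSL(2,\ZZ):\barGamma]\le 18$, and the low/high index split with explicit Weierstra{\ss} families for low index.

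Two points where your sketch diverges from what actually happens. First, Proposition~\ref{badfibres} does \emph{not} exclude $IV^*$ fibres unconditionally: they are only forbidden when $-\id\in\Gamma(\calE)$. The Euler formula~\eqref{Eulerformel} therefore reads $24=\deg\jGamma\cdot\deg j_\calE+6\,\#I^*+8\,\#IV^*$, and the $IV^*$ term drives a good part of the case analysis in Tables~\ref{table:columnslow}--\ref{table:columnshigh} and Lemma~\ref{rami}; ultimately none of the $50$ strata carry a $IV^*$ fibre, but ruling them out requires keeping them in the bookkeeping. Second, for high index the paper does \emph{not} produce explicit Weierstra{\ss} data. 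Instead it shows, for each index $9,12,18$, that $\barGamma$ determines the stratum essentially uniquely (Propositions~\ref{index9root}--\ref{index18root}), and then enumerates the admissible $\barGamma$ combinatorially: via Beauville's classification of semi-stable rational elliptic fibrations for index~$12$, and via explicit monodromy triples $(\mu_0,\mu_1,\mu_\infty)$ in $S_9$ or $S_{18}$ (Remark~\ref{coversubgroup}) cross-checked against Shimada's tables for indices~$9$ and~$18$. Your instinct that Shimada's tables and the $G\subset D(R)$ subtleties are where the delicate work lies is right --- the $2A_3+8A_1$ case (Proposition~\ref{Gamma2'}, Remark~\ref{rem:explanationShimadastrata}) and the doubled lattice in lines~38/39 are exactly where this bites.
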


As we have discussed, a Shimada stratum determines a pair $(R,G)$, but not necessarily the other way round. There are, however, cases where the root lattice defines a unique Shimada stratum. Indeed, this is the case for most of the 
ambi-typical strata. 
The data $(R,G)$ for the ambi-typical strata which are referred to in the following theorem are listed in Table \ref{table:list}.

\begin{teo}\label{teo:liststrata2}
In all but $3$ cases the root lattice $R$ of an ambi-typical stratum $\calA$ determines a unique Shimada stratum. The exceptions are:
\begin{itemize}
\item[{\rm{(1)}}]  The root lattice $D_4 + 2A_6 + A_1$ in 38/39 determines two Shimada strata, these are conjugate complex to each other. 
\item[{\rm{(2)}}]  The root lattice  $2A_3 + 8A_1$ in 6 determines two non-conjugate Shimada strata. Only one of these occurs as an ambi-typical stratum.\footnote{We will say more on this in Remark \ref{rem:explanationShimadastrata}.}
\end{itemize} 
\end{teo}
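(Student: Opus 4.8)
\textbf{Proof proposal for Theorem \ref{teo:liststrata2}.}

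The plan is to leverage the classification of the 50 ambi-typical strata from Theorem \ref{teo:mainresult} (equivalently, the list in Table \ref{table:list}) and to argue essentially by inspection combined with a uniqueness criterion for the pair $(R,G)$. First I would recall that a Shimada stratum is labelled by a triple $(R,G,j)$, where the extra data beyond $R$ consists of: (a) a choice of isotropic subgroup $G \subset D(R)$ up to the action of $\O(D(R))$; (b) a choice of primitive embedding of $M = U + L$ into $L_{K3}$ modulo $\O(L_{K3})$, where $L$ is the overlattice of $R$ determined by $G$; and (c) a possible splitting into two complex-conjugate components. So to prove that $R$ alone determines a unique Shimada stratum for a given ambi-typical $\calA$, one must check that for that particular $R$ there is a unique choice at each of (a), (b), (c). The strategy is: for each of the 50 root lattices $R$ occurring in Table \ref{table:list}, determine the $\O(D(R))$-orbits of isotropic subgroups of $D(R)$, then — using Nikulin's theory of primitive embeddings of even lattices into $L_{K3}$ — check uniqueness of the embedding of $M$ into $L_{K3}$, and finally check whether the resulting moduli space of lattice-polarised $K3$ surfaces is irreducible or has two complex-conjugate components.

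For step (a) I would use that the discriminant groups involved are small and that $D(R) = \bigoplus D(R_i)$ with $D(A_n) \cong \ZZ/(n+1)$, $D(D_4) \cong (\ZZ/2)^2$, $D(D_n) \cong \ZZ/4$ for $n$ odd, etc., together with the quadratic form; the isotropic subgroups and their $\O(D(R))$-orbits are then a finite check. Since the ambi-typical strata are highly constrained — by Proposition \ref{badfibres} the generic fibre has only fibres of type $I_\nu$ or $I_\nu^*$, which pins down the summands of $R$ — one finds that in almost all of the 50 cases the only isotropic subgroup is trivial ($G = 0$), or $G$ is forced up to $\O(D(R))$-symmetry. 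The cases with a nontrivial isotropic subgroup, hence the cases requiring care, are exactly those whose root lattice contains enough $A_1$'s and $A_3$'s to build an isotropic element of order $2$ — and among the 50 these are the ones flagged, namely $2A_3 + 8A_1$ (entry 6) and the configurations related to $D_4 + 2A_6 + A_1$ (entries 38/39). For step (b), Nikulin's criterion (uniqueness of the primitive embedding of $M$ into $L_{K3}$ holds when, roughly, the orthogonal complement's genus contains a single class, which in the relevant corank range is automatic by \cite[Theorem 1.14.4]{Nik}); I would verify this applies for all 50 root lattices. For step (c), one checks that the moduli space of $M$-polarised $K3$ surfaces is connected except precisely when $\O(L_{K3})$ does not act transitively on the relevant period data — and this is what produces the two complex-conjugate components in the $D_4 + 2A_6 + A_1$ case (item (1)), exactly as in the phenomenon described via \cite[p.206]{FM}.

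Concretely, I would organise the proof as: (i) state the reduction — "$R$ determines a unique Shimada stratum iff $G$ is unique up to $\O(D(R))$, the embedding $M \hookrightarrow L_{K3}$ is unique, and the moduli space is irreducible"; (ii) dispose of the generic case by noting that for 47 of the 50 root lattices $D(R)$ has no nonzero isotropic subgroup other than (at most) one forced orbit, and the embedding uniqueness and irreducibility follow from Nikulin's theorems; (iii) treat entry 6 ($2A_3 + 8A_1$) by hand, exhibiting the two $\O(D(R))$-orbits of isotropic subgroups $G$ of the appropriate order and identifying which one carries the monodromy stratum structure (deferring the finer point to Remark \ref{rem:explanationShimadastrata}); and (iv) treat entries 38/39 ($D_4 + 2A_6 + A_1$) by showing $G$ is unique but the $M$-polarised moduli space breaks into two complex-conjugate pieces. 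The main obstacle I anticipate is step (iv) together with the borderline embedding-uniqueness checks: verifying that $M \hookrightarrow L_{K3}$ is unique up to $\O(L_{K3})$ for the larger root lattices (where the orthogonal complement has small rank and one is near the boundary of Nikulin's sufficient conditions) and correctly diagnosing when the moduli space is reducible may require the explicit genus-symbol computations — precisely the kind of lattice-theoretic bookkeeping that the appendix and Proposition \ref{pro:independenceindex} are designed to support — rather than a one-line appeal to a general theorem.
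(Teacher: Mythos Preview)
Your proposal has a genuine gap in the reduction step (i). The claimed equivalence ``$R$ determines a unique Shimada stratum iff $G$ is unique up to $\O(D(R))$, the embedding $M\hookrightarrow L_{K3}$ is unique, and the moduli space is irreducible'' is false: a Shimada stratum is a finite \emph{quotient} of (an open part of) a moduli space of lattice-polarised $K3$ surfaces, not a component thereof, and the symmetries $S_R$ of the Dynkin diagram can identify distinct embeddings or components on the Shimada side. Concretely, Proposition~\ref{prop:embeddings} shows that entries 17, 21, 37, 48, 49 have two inequivalent primitive embeddings of $M$ into $L_{K3}$, and Example~\ref{ex:components} exhibits further entries with $|\calM_T|^c=2$; yet none of these is an exception in Theorem~\ref{teo:liststrata2}, precisely because of the mechanism in Remark~\ref{rem:groupscomponents} and Proposition~\ref{prop:trans}. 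Your criterion would flag all of them incorrectly. There is also a factual slip: most of the 50 entries carry nontrivial $G$ (see the MW column of Table~\ref{table:list}), so it is not true that ``in almost all of the 50 cases the only isotropic subgroup is trivial''; and $D_4+2A_6+A_1$ has \emph{trivial} $G$ and neither many $A_1$'s nor any $A_3$'s --- its two strata come from complex conjugation, as you yourself say later in (iv), contradicting your earlier grouping of 38/39 under step (a).

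The paper's argument is much lighter and does not redo the lattice theory. The classification in Sections~\ref{sec:low}--\ref{sec:high} produces, as a by-product, the pair $(R,G)$ for each of the 50 strata (Table~\ref{table:list}); one then simply cross-references Shimada's published tables of $(R,G)$ with multiple components \cite{shimada2,shimada3}. The relevant checks are carried out inside Propositions~\ref{index9groups}, \ref{index12groups}, \ref{index18groups} and in Remark~\ref{rem:explanationShimadastrata}, and the three root lattices from Table~\ref{table:list} that meet Shimada's multiple-component list are exactly $D_4+2A_6+A_1$, $2A_3+8A_1$, and $4A_3+2A_1$ --- the last only with a different $G$ than ours, hence not an exception (Remark~\ref{rem:groupscomponents}). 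The proof of Theorem~\ref{teo:liststrata2} is declared complete at the end of Section~\ref{sec:high}.
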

 
By definition a monodromy stratum determines a monodromy group $\Gamma$, but the converse will in general not be true. In fact, as we will see in the rest of the paper, monodromy strata are determined 
by some fairly involved data. This includes the  modular monodromy group $\barGamma \subset \PSL(2,\ZZ)$, but also information about the $j$-invariant $j(\calE): \PP^1 \to \PP^1$. We shall see in 
Section \ref{sec:jfactor} that this can be factored in a unique way as $j(\calE)= \jGamma \circ j_{\calE}$ where $\jGamma$ is a Belyi map.
The additional data which determine a monodromy stratum are given by 
a space of rational maps $j_\calE:\PP^1\to\PP^1$ and a rational
family of divisors on $\PP^1$ corresponding to the $*$-fibres. However, in the case of ambi-typical strata these data are fully determined by much more accessible ones:

\begin{teo}\label{teo:liststrata}
The $50$ ambi-typical monodromy strata
determine the list of invariants given in Table \ref{tab:jmaps}, consisting of
\begin{enumerate}
\item
the local monodromies of $\jGamma$ at $0,1,\infty$,
\item
the branching of $j_\calE$ for a generic element $\calE$ at the non-critical pre-images of $0,1$ 
and at poles of $\jGamma$,
\item
the number of $*$ fibres.
\end{enumerate}
Conversely, the data are pairwise distinct and each determines a unique
monodromy stratum with the following exception: 
the strata 38/39 correspond to two distinct maps
$\jGamma$ having the same combinatorial type but with monodromy
factorizations in distinct conjugation classes.
\end{teo}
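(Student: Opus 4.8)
The plan is to leverage the heavy structural results already assembled in Sections~\ref{sec:config}--\ref{sec:high} and simply repackage the classification data into the invariants $(1)$--$(3)$. Recall that by the Main Theorem (Theorem~\ref{teo:mainresult}) there are exactly $50$ positive-dimensional ambi-typical strata, and each such stratum $\calA$ is simultaneously the closure of a monodromy stratum $\calF'_{\Gamma,i}$ and of a Shimada stratum $\calFF'_{R,G,j}$. The first step is to recall from Section~\ref{sec:jfactor} that for the generic elliptic $K3$ surface $\calE$ in such a stratum the $j$-invariant factors uniquely as $j(\calE)=\jGamma\circ j_\calE$ through the modular curve $X(\barGamma)$; the Belyi map $\jGamma:X(\barGamma)\to\PP^1$ is determined by its combinatorial type, i.e.\ by the three partitions recording the ramification (equivalently the local monodromies) over $0,1,\infty$. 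So the first column of data in Table~\ref{tab:jmaps} is read off from the list of modular monodromy groups $\barGamma$ produced by the Euler-number analysis (Section~\ref{sec:Euler}, Proposition~\ref{pro:indexmodular}) and completed case-by-case in Sections~\ref{sec:low} and~\ref{sec:high}.

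Next I would extract the invariant $(2)$: by Proposition~\ref{badfibres} a generic member of an ambi-typical stratum has no fibres of type $II,III,IV,II^*,III^*$, so the only local configurations that can occur over $0,1,\infty$ are the ``standard'' ones, and the ramification profile of $j_\calE$ over the non-critical preimages of $0$ and $1$ and over the poles of $\jGamma$ is exactly what controls which root lattice $R$ appears. Here I would invoke the Weierstra{\ss} data assembled in Proposition~\ref{weier} (low index) together with the analogous data in Section~\ref{sec:high} (high index), from which the branching behaviour of $j_\calE$ for each of the $50$ strata is explicit; this gives the second column of Table~\ref{tab:jmaps}. The invariant $(3)$, the number of $*$-fibres, is likewise part of the Weierstra{\ss} data --- it is the degree of the divisor class of ``$*$-type'' points, which is read directly off the minimal Weierstra{\ss} model --- and filling in this last column completes the construction of the table. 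So far this is bookkeeping: the content has already been established, and the statement's forward direction amounts to asserting that Table~\ref{tab:jmaps} has been correctly compiled.

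The converse direction --- that the listed data are pairwise distinct and each determines a unique monodromy stratum, with the sole exception of $38/39$ --- is where the real argument sits. First I would argue injectivity: given the triple of invariants, the combinatorial type of $\jGamma$ pins down $\jGamma$ up to finitely many choices (its conjugacy class of monodromy), and then the branching data of $j_\calE$ together with the $*$-divisor degree determine the stratum of rational maps $j_\calE$ and the rational family of $*$-divisors, which by the discussion preceding Theorem~\ref{teo:liststrata} is precisely the data that determines a monodromy stratum. Running through Table~\ref{tab:jmaps} one then checks that no two rows coincide. The exceptional pair $38/39$ is handled separately: here the \emph{combinatorial} type of $\jGamma$ is the same for both strata, but the monodromy factorization --- the actual tuple of permutations realizing that ramification profile --- lies in two distinct conjugacy classes in the relevant symmetric group, and these correspond to the two complex-conjugate Shimada strata attached to the root lattice $D_4+2A_6+A_1$ identified in Theorem~\ref{teo:liststrata2}(1). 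So one must verify, by an explicit computation with dessins d'enfants (or equivalently with the monodromy permutations), that exactly two such classes exist and that they are genuinely not conjugate.

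The main obstacle, I expect, is precisely this last verification: distinguishing monodromy strata whose Belyi maps share a combinatorial type but differ at the level of the braid/monodromy data. For all the other $49$ strata one can hope that the combinatorial type of $\jGamma$ plus the $j_\calE$-branching already separates them, so the argument reduces to a finite check against the table; but for $38/39$ one genuinely needs to compute in the Hurwitz space and show there are exactly two components with the prescribed ramification, matching the two complex-conjugate Shimada strata --- this is the step that cannot be reduced to reading off already-tabulated lattice data and requires a direct combinatorial or computer-assisted argument (of the kind recorded in the appendix).
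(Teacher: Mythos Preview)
Your proposal is correct and follows essentially the same route as the paper: the theorem is indeed a repackaging of the classification carried out in Sections~\ref{sec:config}--\ref{sec:high}, with the forward direction being bookkeeping and the converse reducing to a table inspection plus the explicit $38/39$ verification. One small correction: the non-conjugacy of the two monodromy factorizations for $38/39$ is not handled in the appendix but rather by an elementary hand computation at the end of the proof of Proposition~\ref{index18groups}, where the two permutation triples (lines $15$ and $16$ of Table~\ref{tab:jmonodromy}) are shown to be non-conjugate by tracking where a putative conjugating permutation would have to send certain elements and deriving a contradiction.
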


The proof of these theorems is quite involved and takes up most of the remaining paper. Here we shall give a rough outline of our strategy:
\begin{itemize}
\item
In Section \ref{sec:config}
we study  configurations of singular fibres where we use the work of Kloosterman
\cite[sec.4]{Kl}.
This leads to a semi-continuous invariant which induces a stratification
which refines both the stratifications by monodromy and by
root lattice. Hence it suffices to look for ambi-typical fibre configuration strata. 
We find that to be ambi-typical poses severe restrictions
on the possible singular fibres.
\item
In Section \ref{sec:jfactor} we recall the factorisation $j(\calE)= \jGamma \circ j_{\calE}$ of the $j$-invariant.
The restrictions on fibre configurations
then translates into branching properties of $\jGamma$ and $j_\calE$
over the points $0,1,\infty$.
\item
In Section \ref{sec:Euler} we use 
an Euler number restriction to find 
a list of candidates for the modular monodromy group  $\barGamma$
and give an upper bound for the dimension of the corresponding  strata.
\item
In Section \ref{sec:Weier}
we parameterise closed subsets of $V'$ which give Weierstra{\ss}
data for all possible ambi-typical  strata with
modular monodromy group $\barGamma$ of low index, i.e.\
at most $6$.
\item
In Section \ref{sec:low}
we determine the ambi-typical strata among these
families of Weierstra{\ss} data and their invariants.
\item
In Section \ref{sec:high}
we address the classification of ambi-typical strata
with modular monodromy group $\barGamma$ of high index, i.e.\ at least $7$.
In the first step we determine the possible corresponding root
lattices and the topological types of the $j_\calE$ factor of the
$j$-function. 
In the second step we determine the topology of the maps $\jGamma$
and the corresponding monodromy groups.
\end{itemize}
This program will finally allow us to complete
the proofs of our theorems.

\section{Singular fibre configurations and generic invariants}
\label{sec:config}

In this section we consider the configuration of singular fibres of
elliptic surfaces. The possible singular fibre types have been classified by
Kodaira and  are listed in Table \ref{table:kodaira}, see \cite[Table V.6]{BHPV}
with some of their invariants.
The relation between singular fibre types and Weierstra{\ss} data is given in Table \ref{table:tate}.
This contains all information necessary to apply the \emph{Tate algorithm} over the complex
numbers, i.e.\ to determine the fibre types from the vanishing orders of $\nu_2=\nu(g_2), \nu_3=\nu(g_3)$ 
and $\nu_{\Delta}=\nu(\Delta)$ of $g_2,g_3$ and $\Delta$ respectively.

\begin{table}[h]
\centering
\begin{tabular}{|c|c|c|c|c|}
 fibre type & 
$ADE$-type  & 
Euler number  & local monodromy & local $j$-expansion
\\ \hline
 & & & &\\[-4mm] \hline
 & & & &\\[-4mm]
 $I_0$ & -- &
$0$ & $(\begin{smallmatrix} 1 & 0 \\ 0 & 1 \end{smallmatrix})$ 
& 
\begin{tabular}{c} $j=s^{3k}$, $j=1+s^{2k}$\\ or $j\neq0,1$ \end{tabular}
\\[-4.2mm] &&&&
\\ \hline  & & & &\\[-4mm]
 $I_1$ & -- &
$1$ & $(\begin{smallmatrix} 1 & 1 \\ 0 & 1 \end{smallmatrix})$ & pole of order $1$
\\[-4.2mm] &&&&
\\ \hline & & & &\\[-4mm]
 $I_b\:\: (b\geq 2)$ &
$A_{b-1}$ & $b$ 
& $(\begin{smallmatrix} 1 & b \\ 0 & 1 \end{smallmatrix})$ & pole of order $b$
\\[-4.2mm] &&&&
\\ \hline & & & &\\[-4mm]
 $I^*_0$ &
$D_{4}$ & $6$ 
& $(\begin{smallmatrix} -1 & 0 \\ 0 & -1 \end{smallmatrix})$ & same as in first case
\\[-4.2mm] &&&&
\\ \hline & & & &\\[-4mm]
 $I^*_b \:\:  (b\geq 1)$ &
$D_{4+b}$ & $6+b$ 
& $(\begin{smallmatrix} -1 & -b \\ 0 & -1 \end{smallmatrix})$ & pole of order $b$
\\[-4.2mm] &&&&
\\ \hline & & & &\\[-4mm]
 $II$ & -- &  $2$ 
& $(\begin{smallmatrix} 1 & 1 \\ -1 & 0 \end{smallmatrix})$ & $j = s^{3k+1}$
\\[-4.2mm] &&&&
\\ \hline & & & &\\[-4mm]
 $III$ & $A\sb 1$ & $3$ 
& $(\begin{smallmatrix} 0 & 1 \\ -1 & 0 \end{smallmatrix})$ & $j = 1+s^{2k+1}$
\\[-4.2mm] &&&&
\\ \hline & & & &\\[-4mm]
 $IV$ & $A_2$ & $4$ 
& $(\begin{smallmatrix} 0 & 1 \\ -1 & -1 \end{smallmatrix})$ & $j = s^{3k+2}$
\\[-4.2mm] &&&&
\\ \hline & & & &\\[-4mm]
 $IV^*$ &
$E\sb 6$ & $8$ 
& $(\begin{smallmatrix} -1 & -1 \\ 1 & 0 \end{smallmatrix})$ & $j = s^{3k+1}$
\\[-4.2mm] &&&&
\\ \hline & & & &\\[-4mm]
 $III^*$ &
$E\sb 7$ & $9$ 
& $(\begin{smallmatrix} 0 & -1 \\ 1 & 0 \end{smallmatrix})$ & $j = 1+s^{2k+1}$
\\[-4.2mm] &&&&
\\ \hline & & & &\\[-4mm]
 $II^*$ &
$E\sb 8$ & $10$ 
& $(\begin{smallmatrix} 0 & -1 \\ 1 & 1 \end{smallmatrix})$ & $j = s^{3k+2}$
 \\[-4.2mm] &&&&
\\ \hline 
\end{tabular}
\caption{Fibre types of elliptic surfaces}
\label{table:kodaira}
\end{table}

\begin{table}[b]
\[
\begin{array}{c|c@{\hspace*{2mm}}c@{\hspace*{1mm}}c|c|c@{\hspace*{2mm}}c@{\hspace*{1mm}}c|c|c|c|c|c|c|c}
\text{type} & &I_0& & I_k , k>0 & & I_0^* & & I_k^*, k>0
& II & III & IV & IV^* & III^* & II^* \\
\hline
j & 0 & 1 & gen. & \,\infty\, & 0 & 1 & gen. & \infty & 0 & 1 & 0 & 0 & 1 & 0 \\
\nu_2 & >\!0 & 0 & 0 & 0 & >\!2 & 2 & 2 & 2 & >\!0 & 1 & >\!1 & >\!2 & 3 & >\!3 \\
\nu_3 & 0 & >\!0 & 0 & 0 & 3 & >\!3 & 3 & 3 & 1 & >\!1 & 2 & 4 & >\!4 & 5 \\
\nu_{\Delta} & 0 & 0 & 0 & k & 6 & 6 & 6 & k\!+\!6 & 2 & 3 & 4 & 8 & 9 & 10 \\[2mm]
\end{array}
\]
\caption{Tate algorithm (cf.\ \cite[Table IV.3.1]{Mi89})}
\label{table:tate}
\end{table}

In the ensuing discussion we will use notation as introduced in Kloosterman's paper  \cite{Kl}.
For a Jacobian fibration $f: S \to \PP^1$ let $C(f)$ denote the configuration of its singular fibres.
Since we are restricting ourselves to non-isotrivial fibrations, we can assume that  
$C(f)$ contains at least one fibre of type $I_\nu$ or $I_\nu^*$ with $\nu>0$. 
For an abstract configuration $C$ of singular fibres we define 
\begin{equation}\label{equ:L(C)}
L(C):= \{[f:  S \rightarrow \PP^1] \in \calF' \mid C(f)=C\}.
\end{equation}

\begin{rem}
We recall from \cite[Lemma 4.6]{Kl} that $L(C) \subset \calF'$ is constructible. 
Fibre configurations are partially ordered by degeneration of several singular fibres into fewer
more complicated ones. Degeneration occurs on closed subsets, thus semi-continuity holds
and $\calF'$ is topologically stratified by the components of the sets $L(C)$.
\end{rem}

\begin{dfn}
We call $L(C)$ a {\em configuration locus } and the components of the sets $L(C)$ the {\em configuration strata}.   
\end{dfn}

As we will need the dimension of the components of $L(C)$ we recall
\begin{lem}\cite[Lemma 4.6, Assumption 4.3]{Kl}
\label{dimLoc} 
Assume that $C$ is a configuration of singular fibres arising from a non-isotrivial Jacobian fibration.  
Then all components of $L(C)$ have the dimension
\[ 
\dim L(C)= \# \{\mbox{singular fibres}\} + 
\# \{ \mbox{fibres of type } II^*, III^*, IV^*, I_\nu^*, \nu\geq 0\} - 6. 
\]
\end{lem}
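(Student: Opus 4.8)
The plan is to reduce the statement to a parameter count for Weierstra{\ss} data, following the philosophy of Miranda's construction. Recall that a Jacobian fibration is given (up to $\GL(2,\CC)$) by a pair $(g_2,g_3) \in H^0(\PP^1,\calO(8)) \oplus H^0(\PP^1,\calO(12))$ subject to the minimality and non-isotriviality conditions. The space of such pairs has dimension $9 + 13 = 22$, and after quotienting by the $4$-dimensional group $\GL(2,\CC)$ (acting on the base $\PP^1$ together with the scaling $(g_2,g_3) \mapsto (\lambda^4 g_2, \lambda^6 g_3)$) one recovers $\dim \calF' = 18$. So I would phrase Lemma \ref{dimLoc} as: imposing the fibre configuration $C$ cuts out a locally closed subset of the $22$-dimensional space of Weierstra{\ss} pairs whose components all have codimension equal to the number of ``conditions'' forced by $C$, and this codimension equals $22 - (\dim L(C) + 4)$, i.e. one must show the conditions number exactly $28 - \#\{\text{sing.\ fibres}\} - \#\{\text{fibres of type } II^*,III^*,IV^*,I_\nu^*\}$.

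The key step is a local computation at each critical value. For a point $q \in \PP^1$ the Tate table (Table \ref{table:tate}) records the minimal vanishing orders $(\nu_2(q),\nu_3(q),\nu_\Delta(q))$ forcing a given Kodaira fibre at $q$. Prescribing such vanishing at $q$ imposes a number of linear conditions on $(g_2,g_3)$ equal to $\nu_2(q) + \nu_3(q)$ \emph{minus} the dimension contributed back by the free choice of the location of $q$ itself: each singular fibre may sit at a freely varying point of $\PP^1$, which restores one parameter. Going through Table \ref{table:kodaira}/\ref{table:tate} type by type, one checks that the net codimension contributed by a fibre of type $T$ equals: $0$ for $I_1$ (a simple zero of $\Delta$, imposed automatically once $\deg\Delta = 24$ and the other zeros are fixed — it costs nothing beyond its location, which is free); for $I_k$ with $k \geq 1$ it contributes $k - 1$ (as $\nu_\Delta = k$ but one unit is the location and the remaining data are the leading coefficients); for the additive non-$*$ types $II, III, IV, IV^*, III^*, II^*$ one reads off $\nu_2 + \nu_3$ and subtracts $1$ for the location; and for the $*$-types $I_k^*$ one gets $\nu_2 + \nu_3 = 5 + \min(\ldots)$ adjustments. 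Summing these local contributions over all singular fibres, using $\sum_q \nu_\Delta(q) = 24$ (equivalently the Euler number sum in Table \ref{table:kodaira} equals $24$), and subtracting the remaining global gauge freedom, the total codimension collapses to the claimed combinatorial expression, and hence $\dim L(C) = 22 - \text{codim} - 4 = \#\{\text{sing.\ fibres}\} + \#\{*\text{-type or }IV^*,III^*,II^*\text{ fibres}\} - 6$. (The ``$-6$'' absorbs the $4$ of $\GL(2,\CC)$ together with the $+2$ coming from $24 - 22$.)

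The main obstacle is the bookkeeping showing that the prescribed vanishing conditions are actually \emph{independent} on each component, so that the codimension is exactly — not merely at most — the sum of local counts, and that no component has smaller codimension because of ``accidental'' extra conditions (e.g. the interaction between $g_2^3$ and $g_3^2$ in $\Delta = g_2^3 - 27 g_3^2$, which can make a prescribed $\nu_\Delta$ larger than $\min(3\nu_2, 2\nu_3)$ and thereby alter the count for $I_k$ versus additive fibres). This is where Kloosterman's analysis in \cite[sec.\ 4]{Kl} does the real work: one stratifies by the ``extra'' vanishing of $\Delta$ beyond what $g_2,g_3$ force, shows each such stratum is irreducible of the expected dimension by an explicit parametrisation (monic normal forms for $g_2, g_3$ with the zeros of $\Delta$ as coordinates), and checks that passing to a deeper degeneration strictly drops the dimension. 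I would therefore cite Kloosterman for the irreducibility-and-dimension input and present the above count as the identification of his formula with the one stated here, verifying it type-by-type against Tables \ref{table:kodaira} and \ref{table:tate} and against the identity $\sum \text{(Euler numbers)} = 24$.
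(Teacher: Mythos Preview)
The paper gives no proof of this lemma at all: it is simply quoted from Kloosterman \cite[Lemma~4.6, As.~4.3]{Kl}. So there is nothing to compare against except Kloosterman's own argument, and your sketch is in fact a faithful outline of that argument --- parameter counting on Weierstra{\ss} data, with the hard step (independence of the imposed vanishing conditions, equivalently irreducibility of each stratum of prescribed vanishing) deferred to the source.

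One remark that would sharpen your write-up: the local bookkeeping becomes transparent once you notice that for every Kodaira type $T$ the net codimension contribution $c(T)$ (minimal vanishing conditions on $(g_2,g_3)$, plus any extra conditions on $\Delta$ beyond those forced, minus one for the free location) satisfies
\[
c(T) \;=\; \begin{cases} e(T)-1 & T \in \{I_k,\,II,\,III,\,IV\},\\[1mm] e(T)-2 & T \in \{I_k^*,\,IV^*,\,III^*,\,II^*\}, \end{cases}
\]
where $e(T)$ is the Euler number from Table~\ref{table:kodaira}. (Check this against Table~\ref{table:tate}: e.g.\ for $I_k^*$ one has $\nu_2+\nu_3=5$ linear conditions on $(g_2,g_3)$, which already force $\nu_\Delta\geq 6$, and then $k$ further conditions on $\Delta$ to reach $\nu_\Delta=k+6$, giving $5+k-1=e(T)-2$.) Summing and using $\sum e(T_i)=24$ immediately gives total codimension $24 - s - s^*$ in the $22$-dimensional space of pairs, hence $\dim L(C)= 22 - (24-s-s^*) - 4 = s+s^*-6$ after dividing by $\GL(2,\CC)$. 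Your third paragraph gestures at this but never states the uniform formula $c(T)=e(T)-1$ or $e(T)-2$; writing it this way replaces the somewhat opaque case analysis and makes the appearance of the Euler-number constraint $\sum e=24$ (and hence of the ``$-6$'') entirely mechanical.

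Your identification of the genuine obstacle --- that the $I_k$ and $I_k^*$ conditions are on $\Delta$ rather than directly on $(g_2,g_3)$, so independence is not automatic --- is correct and is exactly where Kloosterman's explicit parametrisation earns its keep.
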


By construction, each irreducible component of $L(C)$ corresponds to Jacobian elliptic fibrations
with topologically equivalent complements of singular fibres. In particular
all elements in an irreducible component of $L(C)$ have the same monodromy group
and each irreducible component $\calF'_{\Gamma,i}$ is a finite union of irreducible components
of certain $L(C)$. The only one of these, which is open, corresponds thus to
the configuration of a generic member of the monodromy stratum.
We call this configuration $C(\calF'_{\Gamma, i})$ and
all components of $L(C(\calF'_{\Gamma, i}))$ have the same dimension 
as $\calF'_{\Gamma,i}$. 
\medskip

By definition all Jacobian fibrations $\calE \in L(C)$ have the same root configuration and hence the same rank 
$\rho_{\tr}(\calE)$ of the trivial lattice.  

From this one easily obtains 
\begin{lem}\cite[Prop. 4.7]{Kl}
\label{dimlem} 
Let $C$ be a configuration of singular fibres, containing at least one $I_\nu$ or $I_\nu^*$-fibre
($\nu>0$) with $L(C) \neq \emptyset$ and let $\calE$ be any element in $L(C)$. 
Then
\[ 
\dim L(C)= 20-\rho_{\tr}(\calE)-\# \{\mbox{fibres of type } II, III \mbox{ or } IV \}.
\]
\end{lem}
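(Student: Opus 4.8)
The plan is to derive Lemma \ref{dimlem} from Lemma \ref{dimLoc} by a purely bookkeeping comparison of the two quantities, using the Euler number constraint for $K3$ surfaces together with the dictionary in Table \ref{table:kodaira}. Concretely, recall that for an elliptic $K3$ surface the Euler numbers of the singular fibres sum to $e(S)=24$; so if I write $n_{\mathrm{sing}}$ for the total number of singular fibres and group the remaining fibres by type, I get one linear relation among the counts of fibres of each Kodaira type and their multiplicities $\nu$. The strategy is to combine this with the formula of Lemma \ref{dimLoc}, namely $\dim L(C)= n_{\mathrm{sing}} + n_* - 6$ where $n_*$ is the number of fibres of type $II^*,III^*,IV^*,I_\nu^*$, and to re-express $n_{\mathrm{sing}}+n_*$ in terms of $\rho_{\tr}$ and the count of $II,III,IV$ fibres.

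First I would recall that $\rho_{\tr}(\calE)=\rank U + \rank R(\calE)=2+\rank R(\calE)$, and that $\rank R(\calE)$ is the sum of the ranks of the $ADE$ summands contributed by the singular fibres; reading off Table \ref{table:kodaira}, a fibre of type $I_b$ contributes $b-1$ (and $I_1,I_0$ contribute $0$), a fibre $I_b^*$ contributes $4+b$ (so $I_0^*$ contributes $4$), while $II,III,IV,IV^*,III^*,II^*$ contribute $0,1,2,6,7,8$ respectively. Next I would write the Euler number identity $24=\sum e(F)$ and subtract a suitable multiple of it from (or add it to) the rank identity; the key observation is that for every fibre type $T$ one has $e(T)-\rank(T) = 1$ except for the three types $II,III,IV$, where $e(T)-\rank(T)=2,2,2$, and except for $I_0,I_0^*$... let me be careful: $e(I_b)=b$, $\rank=b-1$, difference $1$; $e(I_b^*)=6+b$, $\rank=4+b$, difference $2$ — so actually the $*$-fibres are the exceptions in the other direction, which is exactly why $n_*$ appears. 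Precisely, $e(F)-\rank(F)$ equals $1$ for $I_\nu$ ($\nu\geq 1$), $2$ for $I_\nu^*$ ($\nu\geq 0$) and also $2$ for $II,III,IV$, while $I_0$ contributes $0$ to both. Summing over all singular fibres: $24 - \rank R(\calE) = \sum_F (e(F)-\rank F) = n_{\mathrm{sing}} + n_* + \#\{II,III,IV\}$, since each ordinary $I_\nu$ contributes $1$, each $*$-fibre contributes $2 = 1+1$ (the extra $1$'s total $n_*$), and each $II,III,IV$ contributes $2$.

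Therefore $n_{\mathrm{sing}}+n_* = 24 - \rank R(\calE) - \#\{II,III,IV\} = 24 - (\rho_{\tr}(\calE)-2) - \#\{II,III,IV\} = 26 - \rho_{\tr}(\calE) - \#\{II,III,IV\}$. Substituting into Lemma \ref{dimLoc} gives $\dim L(C) = n_{\mathrm{sing}}+n_* - 6 = 20 - \rho_{\tr}(\calE) - \#\{II,III,IV\}$, which is exactly the claimed formula. I would then add a one-line remark that, by the hypothesis that $C$ contains at least one $I_\nu$ or $I_\nu^*$ fibre with $\nu>0$ and that all such fibrations are non-isotrivial, the hypotheses of Lemma \ref{dimLoc} are met so the cited dimension formula applies, and that $L(C)\neq\emptyset$ guarantees the count is about an actual family; the equality of $\rho_{\tr}$ across $L(C)$ was already noted before the statement. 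The only mild subtlety, and the step I would double-check most carefully, is the exact value of $e(F)-\rank(F)$ for each of the eleven Kodaira types in Table \ref{table:kodaira} — in particular that $I_0$ and $I_0^*$ behave as $0$ and $2$ and that $II,III,IV$ all give $2$ — since the whole proof is just this accounting identity; there is no geometric obstacle beyond correctly transcribing the table.
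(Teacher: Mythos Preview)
Your proof is correct and follows essentially the same approach as the paper's. Both arguments combine Lemma \ref{dimLoc} with the Euler number constraint $\sum_F e(F)=24$ and the fibrewise identity $e(F)-\rank(F)=1$ for multiplicative fibres and $=2$ for additive fibres; the paper packages this last fact as $\rho_{\tr}(\calE)=2+\sum_{\text{mult}}(e(F)-1)+\sum_{\text{add}}(e(F)-2)=26-\#\{\text{mult}\}-2\#\{\text{add}\}$, while you verify the same identity type-by-type from Table \ref{table:kodaira} and then regroup via $\#\{\text{add}\}=n_*+\#\{II,III,IV\}$, but the content is identical.
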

\begin{proof}
We use the above lemma and apply the fact that the rank of the trivial lattice $\rho_{\tr}(\calE)$ is given by
\[
\begin{aligned}
\rho_{\tr}(\calE) \quad = \quad
&2+\sum_{\mbox{$F$ multiplicative}} (e(F)-1)+ \sum_{\mbox{$F$ additive}} (e(F)-2)
\\
= \quad
&26 - \# \{\mbox{multiplicative fibres}\} - 2\# \{\mbox{additive fibres}\}.
\end{aligned}
\]
Here $e(F)$ denotes the Euler number of a fibre $F$.
\end{proof}

Any component of a stratum $L(C)$ defines an embedding  $U + R \hookrightarrow L_{K3}$ (up to isometries in $\O(L_{K3})$) and thus an embedding into a root stratum. 
Hence the stratification given by the configuration strata refines both the monodromy stratification and the root lattice stratification. In particular, every Shimada stratum $\calFF'_{R,G,j}$ 
and every monodromy stratum
$\calF'_{\Gamma,i}$ contains a unique configuration stratum which is open and dense in this stratum.

This also allows us to talk about the properties of a {\em generic } 
element of a Shimada stratum or a monodromy stratum. In particular, the following data, in addition to the configuration $C(\calE)$,  are invariant for all 
members $\calE$ of a configuration stratum: 
the monodromy group $\Gamma(\calE)$ (as we have already pointed out), the root lattice $R(\calE)$, the trivial lattice $\NS_{\tr}(\calE)$, the saturation $L(\calE)$ of $R(\calE)$ in $\NS(\calE)$   
and the torsion of the Mordell-Weil group $\MWt(\calE) \cong L(\calE)/R(\calE)$.

We are now ready to start the classification of ambi-typical strata. For this we first collect necessary conditions which a generic element of such a stratum must fulfill. Indeed, the first restrictions on the possible configurations
can be derived easily.  

\begin{pro}\label{prop:restricitionfibres}
\label{fibres1}
Suppose $\Gamma$ is a proper subgroup of $\slz$ of finite index and $\calE \in \calF'_{\Gamma,i}$ a generic element of a monodromy stratum with monodromy group $\Gamma$. 
Then the following properties are equivalent:
\begin{enumerate}
\item
The Jacobian fibration $\calE$ has no fibres of type $II, III \mbox{ or } IV$.
\item
The dimensions of the Shimada stratum $\calFF'_{R(\calE),G(\calE),j}$ 
which contains $\calE$ and of $\calF'_{\Gamma,i}$ coincide.
\end{enumerate}
In particular, the generic element of an ambi-typical stratum does not have any fibres of type  $II,III$ or $IV$.
\end{pro}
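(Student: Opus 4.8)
The plan is to reduce the equivalence to a one-line comparison of two dimension formulas, both already at our disposal. First I would fix a generic element $\calE$ of the monodromy stratum $\calF'_{\Gamma,i}$ and let $C=C(\calF'_{\Gamma,i})$ be its configuration of singular fibres. Since $\calE\in\calF'$ is non-isotrivial, $C$ contains at least one fibre of type $I_\nu$ or $I_\nu^*$ with $\nu>0$, so Lemma~\ref{dimlem} applies to $C$ and $\calE$. Because the configuration stratification refines the monodromy stratification, $\calF'_{\Gamma,i}$ contains an open dense configuration stratum (a component of $L(C)$) of the same dimension, so $\dim\calF'_{\Gamma,i}=\dim L(C)$; combining this with Lemma~\ref{dimlem} and with $\NS_{\tr}(\calE)=U+R(\calE)$, hence $\rho_{\tr}(\calE)=2+\rank R(\calE)$, yields
\[
\dim\calF'_{\Gamma,i} \;=\; 18-\rank R(\calE)-\#\{\mbox{fibres of type } II, III \mbox{ or } IV\}.
\]

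On the Shimada side I would just quote the dimension formula $(\ref{rem:dimcomp})$, which gives $\dim\calFF'_{R(\calE),G(\calE),j}=18-\rank R(\calE)$ for the unique Shimada stratum containing $\calE$; here I use that the configuration stratum through $\calE$ lies in a single Shimada stratum and that $R(\calE)$ is constant along it, so this dimension depends only on data that are constant near $\calE$. Subtracting the two identities gives
\[
\dim\calFF'_{R(\calE),G(\calE),j}-\dim\calF'_{\Gamma,i} \;=\; \#\{\mbox{fibres of type } II, III \mbox{ or } IV\}\;\ge\;0,
\]
the inequality being consistent with the inclusion $\calF'_{\Gamma,i}\subseteq\calFF'_{R(\calE),G(\calE),j}$ coming from the refinement of stratifications. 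This difference vanishes exactly when $\calE$ has no fibre of type $II$, $III$ or $IV$, which is the asserted equivalence of (1) and (2).

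For the final assertion, let $\calA$ be an ambi-typical stratum, so $\calA=\overline{\calF'_{\Gamma,i}}=\overline{\calFF'_{R,G,j}}$ for a uniquely determined monodromy stratum and Shimada stratum. A generic element $\calE$ of $\calA$ lies in $\calF'_{\Gamma,i}$ and in $\calFF'_{R,G,j}$, so $R(\calE)=R$ and $\dim\calFF'_{R(\calE),G(\calE),j}=\dim\calA=\dim\calF'_{\Gamma,i}$; thus $\calE$ satisfies condition (2) and, by the equivalence, has no fibre of type $II$, $III$ or $IV$. (The dimension count above never uses that $\Gamma$ is proper, so the same argument also covers the degenerate possibility $\Gamma=\SL(2,\ZZ)$, where the generic configuration is $24\,I_1$ in any case.) I do not expect a genuine obstacle here: the only care needed is the bookkeeping of the three nested stratifications — that a generic point of a monodromy stratum lies in the open dense configuration stratum, that this configuration stratum lies in a unique Shimada stratum, and that the Shimada-stratum dimension depends only on $R$ — all of which are recorded in Section~\ref{sec:config} and in $(\ref{rem:dimcomp})$, after which the proof is essentially the two displayed identities above.
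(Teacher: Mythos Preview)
Your proof is correct and follows exactly the same approach as the paper: you compare the configuration-stratum dimension formula from Lemma~\ref{dimlem} with the Shimada-stratum dimension formula~\eqref{rem:dimcomp}, and the difference is precisely the count of $II$, $III$, $IV$ fibres. The only cosmetic difference is that the paper writes the dimensions as $20-\rho_{\tr}(\calE)$ rather than $18-\rank R(\calE)$, which amounts to the same thing via $\rho_{\tr}(\calE)=2+\rank R(\calE)$.
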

\begin{proof}
Since $\calE$ is a generic element of $\calF'_{\Gamma,i}$ it lies in a unique open and dense configuration stratum, namely a component of $L(C(\calE))$. Hence 
\[
\dim \calF'_{\Gamma,i}= \dim L(C(\calE)) =20-\rho_{\tr}(\calE)-\# \{\mbox{fibres of type } II, III \mbox{ or } IV \}
\]
where the last equality follows from Lemma \ref{dimlem}.
Since the Shimada stratum has dimension equal to $20-\rho_{\tr}(\calE)$ the claim follows immediately.
\end{proof}

\begin{rem}
The above lemma shows: if  the generic element of a monodromy stratum $\calF'_{\Gamma,i}$ has fibres of type $II,III$ or $IV$, then this monodromy stratum is contained in a 
Shimada stratum  of strictly bigger dimension.
\end{rem}

We can find further severe restrictions on ambi-typical strata by studying how the monodromy behaves under certain fibre degenerations.

\begin{pro}
\label{badfibres}
Assume that $\calE$ is a generic element of an ambi-typical stratum. Then 
$\calE$ has no singular fibres of type $II, III, IV, II^*$ or $III^*$.
If $-\id \in \Gamma(\calE)$ then $\calE$ also has no singular fibres of type $I_{>0}^*, IV^*$.
\end{pro}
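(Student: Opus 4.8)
The plan is to exploit the maximality condition \eqref{closure} for monodromy strata together with Proposition~\ref{prop:restricitionfibres}. The key idea is that each of the fibre types $II, III, IV, II^*, III^*$ (and, when $-\id\in\Gamma(\calE)$, also $I_{>0}^*$ and $IV^*$) can be ``split'' under a generic deformation into a configuration of simpler fibres that retains exactly the same monodromy group. If such a splitting exists, then the fibre configuration $C(\calE)$ of a generic element $\calE$ of the putative ambi-typical stratum is not the generic configuration of its monodromy stratum $\calF'_{\Gamma,i}$ --- there is a nearby configuration stratum of strictly larger dimension with the same monodromy --- and hence $\calE$ cannot be generic in $\calF'_{\Gamma,i}$, contradicting that the stratum is ambi-typical. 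For the types $II, III, IV$ this is already covered by Proposition~\ref{prop:restricitionfibres}, so the new content is the types $II^*, III^*$ (unconditionally) and $IV^*$, $I_{>0}^*$ (under $-\id\in\Gamma$).

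First I would treat $II^*$ and $III^*$. Using the local monodromy matrices from Table~\ref{table:kodaira}, I would write $II^*$ (monodromy $\bigl(\begin{smallmatrix}0&-1\\1&1\end{smallmatrix}\bigr)$, Euler number $10$, $ADE$-type $E_8$) as a degeneration of the configuration $\{II,\ I_{>0}^*\}$, since $\bigl(\begin{smallmatrix}1&1\\-1&0\end{smallmatrix}\bigr)\cdot(-\id)$ has the right conjugacy class and the Euler numbers $2+8=10$ match; similarly $III^*$ (monodromy $\bigl(\begin{smallmatrix}0&-1\\1&0\end{smallmatrix}\bigr)$, Euler number $9$, type $E_7$) splits as $\{III,\ I_{>0}^*\}$ with Euler numbers $3+6=9$. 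Crucially, the monodromy group generated by the new local monodromies together with those of the remaining (unchanged) fibres is the \emph{same} group $\Gamma$: the product of the two new local monodromies around a small loop encircling both new critical values equals the old local monodromy, so the global monodromy group does not grow, and by the usual openness of the locus with a given monodromy it does not shrink generically either. Then Lemma~\ref{dimlem} (or Lemma~\ref{dimLoc}) shows the split configuration has strictly larger dimension: replacing one additive fibre of type $II^*$ (resp.\ $III^*$) by $II$ plus $I_{>0}^*$ changes $\rho_{\tr}$ and the count of $II,III,IV$-fibres in a way that, by Lemma~\ref{dimlem}, strictly increases $\dim L(C)$ --- indeed we trade one additive fibre for two, lowering $\rho_{\tr}$ by $2$, while adding at most one fibre of type $II$ or $III$. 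Hence $\calE$ was not generic in $\calF'_{\Gamma,i}$: contradiction.

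For the case $-\id\in\Gamma(\calE)$ and fibres of type $IV^*$ or $I_{>0}^*$, I would run the same argument with the splittings $IV^*\rightsquigarrow\{IV,\ I_0^*\}$ (Euler $8=4+4$, with $\bigl(\begin{smallmatrix}-1&-1\\1&0\end{smallmatrix}\bigr)=\bigl(\begin{smallmatrix}0&1\\-1&-1\end{smallmatrix}\bigr)\cdot(-\id)$) and $I_b^*\rightsquigarrow\{I_0^*,\ I_b\}$ or $\{I_{b-1}^*,\ I_1\}$. The hypothesis $-\id\in\Gamma$ is exactly what guarantees that the extra $I_0^*$ factor (whose local monodromy is $-\id$) does not enlarge the monodromy group, so $\Gamma$ is preserved; without it the split configuration could have strictly smaller monodromy and the argument fails, which is why the statement is conditional. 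Again Lemma~\ref{dimlem} gives a strict dimension increase (for $IV^*\rightsquigarrow\{IV,I_0^*\}$ one trades one additive fibre for two and adds one $IV$-fibre, so $\rho_{\tr}$ drops by $2$ and the $II,III,IV$-count rises by $1$, for a net gain of $1$ in $\dim L(C)$; for $I_b^*$ the count of bad fibres is unchanged and $\rho_{\tr}$ drops, so the gain is even larger), contradicting genericity of $\calE$ in its monodromy stratum.

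The main obstacle I anticipate is making the claim ``the split configuration has the same monodromy group'' fully rigorous: one must verify that the deformation realizing the splitting stays inside $V'$, that the new local monodromies are in the stated conjugacy classes \emph{compatibly glued} (i.e.\ as an actual factorization of the product-of-all-local-monodromies relation around $\PP^1$, not just individually up to conjugacy), and that the resulting stratum is genuinely nonempty and of the dimension predicted by Lemma~\ref{dimlem}. The existence of such deformations is essentially a statement about Weierstra{\ss} data --- locally one perturbs $(g_2,g_3)$ to move a single zero of high order into several zeros of lower order realizing the desired Tate invariants --- and I would phrase it using the local normal forms in Table~\ref{table:tate}; the global monodromy bookkeeping is then a standard Hurwitz-type argument with the relation $\prod_i M_i=\id$ in $\SL(2,\ZZ)$.
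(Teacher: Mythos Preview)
Your overall strategy---show that the putative generic $\calE$ lies in a strictly larger locus with the same monodromy group, contradicting maximality---is exactly the right idea, and you correctly flag the crux in your final paragraph. But the argument as written has a genuine gap precisely there, and the paper fills it by a different (and simpler) construction.

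The product argument does not establish that ``the global monodromy group does not grow''. When a single fibre with local monodromy $M$ splits into two fibres with local monodromies $M_1, M_2$ satisfying $M_1 M_2 = M$, the new monodromy group $\Gamma_t$ is generated by $M_1, M_2$ and the remaining local monodromies; the old group $\Gamma_0$ is generated by $M$ and the same remaining ones. Since $M = M_1 M_2 \in \Gamma_t$ one gets $\Gamma_0 \subseteq \Gamma_t$---but that is just semicontinuity and says nothing about the reverse inclusion. There is no reason for $M_1, M_2$ individually to lie in $\Gamma_0$, so $\Gamma_t$ can be strictly larger. (Several of the concrete splittings are also garbled: for $III^*$ you need $I_0^*$, not $I_{>0}^*$, to get Euler number $3+6=9$; and $\bigl(\begin{smallmatrix}1&1\\-1&0\end{smallmatrix}\bigr)\cdot(-\id)$ is the $IV^*$ monodromy, not the $II^*$ one.)

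The paper sidesteps this difficulty by \emph{not changing the $j$-invariant at all}. Given a $*$-fibre at $x=0$, factor $g_2 = \check g_2\, x^2$, $g_3 = \check g_3\, x^3$ and consider the one-parameter family $g_2^{(t)} = \check g_2 (x-t)^2$, $g_3^{(t)} = \check g_3 (x-t)^3$. Every member has the \emph{same} $j$-invariant as $\calE$, so the modular monodromy $\barGamma(\calE_t) \subset \pslz$ is literally constant. The full monodromy $\Gamma(\calE_t)$ can then differ from $\Gamma(\calE)$ only by whether $-\id$ is present; if $-\id \in \Gamma(\calE)$, then $\Gamma(\calE)$ is already the full preimage of $\barGamma(\calE)$ and semicontinuity forces $\Gamma(\calE_t) = \Gamma(\calE)$ for all $t$. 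For $II^*$ and $III^*$ one has $-\id \in \Gamma(\calE)$ automatically (a power of the local monodromy is $-\id$); for $IV^*$ and $I_{>0}^*$ it is the hypothesis. At $t \neq 0$ the $*$-fibre is replaced by an $I_0^*$ at $x=t$ together with the un-starred fibre (the one whose local monodromy agrees with the original up to sign) at $x=0$, so $\rho_{\tr}$ strictly drops and $\calE$ was not generic in its Shimada stratum. No Hurwitz-type gluing argument is needed.
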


\proof
We have already seen in Proposition \ref{fibres1} that fibres of type $II,III$ or $IV$ cannot exist.

Our strategy is the following:  suppose that $\calE$ has a singular fibre of type $II^*, III^*, I_{>0}^*$ or $IV^*$, where in the latter two cases we assume that $-\id \in \Gamma(\calE)$. We will then
construct a family containing $\calE$ where the monodromy remains constant, but  where the rank of the trivial N\'eron-Severi group drops. This contradicts the assumption that $\calE$ is a generic element of 
a Shimada stratum. 

To do this pick a Weierstra{\ss} datum $g_2,g_3$ for $\calE$. By inspection of the Tate Table \ref{table:tate},  the presence
of a *-fibre implies that $g_2,g_3$ have a common zero at this fibre. More precisely, we
can factor $g_2=\check g_2 x^2, g_3= \check g_3 x^3$ where $x$ is a linear form vanishing at this fibre. We then consider the family of Weierstra{\ss} data 
$\check g_2 (x-t)^2, \check g_3 (x-t)^3$ where $t$ varies. It has the same 
$j$-invariant as $\calE$, hence the projective monodromy group $\barGamma(\calE_t)$ is constant,
see \cite[p.211]{BHPV}.
If in addition $-\id\in \Gamma(\calE)$, then also the monodromy group $\Gamma(\calE_t)$ is constant,
since it can only get smaller on a closed subset. By Table  \ref{table:kodaira}, for fibres of type $II^*, III^*$ a power
of the local monodromy is $-\id$, so this assumption is fulfilled. For fibres of type $I_{>0}^*, IV^*$ this is part of our assumptions.
Hence the monodromy group remains constant if we vary $t$, however for 
$t\neq0$ the fibre of $*$-type is replaced by
an $I_0^*$ fibre and the fibre without $*$ that has the same local monodromy up to $-\id$, 
see Table  \ref{table:kodaira}.
But then, again by Table  \ref{table:kodaira}, this implies that the  rank of the trivial N\'eron-Severi group $\NS_{\tr}(\calE_t)$ group drops as $t\neq 0$, giving the  desired contradiction.
\qed

\begin{rem}
A typical example for the situation discussed is when a type $II^*$-fibre splits into a type $IV$- and an $I_0^*$-fibre. Here the rank of the 
trivial N\'eron-Severi drops by $2$. 
This gives examples where a Shimada stratum is contained as a proper subset in a bigger 
monodromy stratum.  
\end{rem}

\section{Factorisations of the $j$-invariant}
\label{sec:jfactor}
\label{jcomposition}

An essential tool for our classification result is the $j$-function associated to a Jacobian fibration $f \colon \calE \to \PP^1$. As usual we denote the upper half plane by $\HH_1$ and set
$$
\overline{\HH}_1= \HH_1 \cup \QQ \cup \{\infty\}.
$$
The quotient 
$$
X(1) := \overline{\HH}_1 / \pslz  \cong \PP^1
$$
is the modular curve of level $1$, namely the compactification of the $j$-line. 
If the Jacobian fibration $f: \calE \to \PP^1$ has the monodromy group $\Gamma \subset \slz$ then we denote its image in $\pslz$ by $\barGamma$. This defines the modular curve 
$$
X({\barGamma}) := \overline{\HH}_1 / \barGamma 
$$
which in our case is again isomorphic to $\PP^1$. The $j$-function 
$$
j(\calE) \colon \PP^1 \to X(1) \cong \PP^1 
$$
has a unique  factorisation  
\begin{equation}\label{equ:jfactor}
j(\calE)= \jGamma \circ j_{\calE} 
\end{equation}
up to deck transformations of $\jGamma$,  
where 
$$
\jGamma: X(\barGamma)\cong \PP^1 \to X(1) \cong \PP^1
$$
is the natural quotient map. 
We refer the reader also to \cite[p. 1107]{BPT}.
Another characterization of $\barGamma$, using the factorisation of $j(\calE)$, is provided by 

\begin{lem}
\label{lem:BT}
The group $\barGamma$ has the following minimality property:
\begin{eqnarray}
\label{BT2new}
&& \text{if $j(\calE)$ factors through $j_{\barGamma'}$ then } 
\barGamma\subset\barGamma' \text{ up to conjugation}.
\hspace*{17mm}
\end{eqnarray}
\end{lem}

\begin{proof}
Given a  holomorphic surjective map $j:\PP^1\to X(1)$ there exists by \cite[Lemma 2.3]{BT}  
an, up to conjugation unique, subgroup
 $\barGamma_{\!j}\subseteq\pslz$ of finite index such that
\begin{eqnarray}
\label{BT1}
&& j \text{ factors through } j_{\barGamma_{\!j}},
\\
\label{BT2}
&& \text{if $j$ factors through $j_{\barGamma'}$ then } 
\barGamma_{\!j}\subseteq\barGamma' \text{ up to conjugation}.
\hspace*{17mm}
\end{eqnarray}
We apply this result to $j(\calE)$ and thus have to show that $\barGamma_{\!j(\calE)}=\barGamma$.
The inclusion $\barGamma_{\!j(\calE)}\subseteq\barGamma$ (up to conjugacy) follows from \eqref{BT2}.

For the other direction we use the discussion  in \cite[p.211]{BHPV}.
Every $j$-map gives rise to a representation class $r = r(j)$ with values
in $\pslz$ and, in particular, $r(j_{\barGamma'})$ has image $\barGamma'$.
By Kodaira, $r(j(\calE))$ is the composition of the monodromy homomorphism with
$\slz\to\pslz$ and hence has image $\barGamma$.
By property \eqref{BT1} the map $j(\calE)$ factors through $j_{\barGamma(j_\calE)}$ and hence
$\Im r(j(\calE)) \subseteq \Im r(j_{\barGamma(j_\calE)})$, which implies that 
$\barGamma \subseteq \barGamma(j_\calE)$ (up to conjugation).
\end{proof}

\begin{rem}\label{rem:equivalentdatacovering}
\label{coversubgroup}
In the sequel it will be very helpful to investigate subgroups of $\pslz$ by means
of the following three sets, where $S_d$ denotes the symmetric group of $d$ elements:
\begin{enumerate}
\item
subgroups $\barGamma$ in $\pslz$ of index $d$ up to conjugacy in $\pslz$,\\[-2mm]
\item
(connected) branched covers $j:C\to X(1)$ of degree $d$ up to equivalence of covers,
branched only over $0,1, \infty$ with multiplicities $1,3$
over $0$ and $1,2$ over $1$,\\[-2mm]
\item
homomorphisms $\mu:\pi_1(\CC\setminus\{0,1\}) \to S_d$
up to conjugacy in $S_d$, such that simple loops around $0$, resp.\ $1$
map to elements of order $1$ or $3$, resp.\ $1$ or $2$,
and the image acts transitively.
\end{enumerate}
Since
$\pi_1(\CC\setminus\{0,1\})$ is freely generated by simple loops around $0$ and $1$, while
$\pslz$ as the free product $\ZZ/2\ast\ZZ/3$ is generated by
$(\begin{smallmatrix} 0 & 1 \\ -1 & 0 \end{smallmatrix})$ and $(\begin{smallmatrix} 1 & 1 \\ -1 & 0 \end{smallmatrix})$,
these sets are in bijective correspondence with each other via the following maps:
\begin{description}
\item[(1)$\to$(2)]
Given $\barGamma$, associate the branched cover 
$\jGamma:X(\barGamma)\to X(1)$.
\item[(1)$\to$(3)]
Given $\barGamma$, associate left multiplication 
$\pslz \to S(\pslz/\barGamma)\cong S_d$ on cosets
and compose with $\pi_1(\CC\setminus\{0,1\}) \to \pslz$ to get $\mu$.
\item[(2)$\to$(3)]
Given $j:C\to \PP^1$, restrict to $\CC\setminus\{0,1\}$, which is a topological
cover of degree $d$ and associate the representation
$\mu:\pi_1(\CC\setminus\{0,1\}) \to S_d$ by permutations of a fibre.
\item[(3)$\to$(1)]
Given $\mu$, note that by our assumptions this factors through a homomorphism $\bar\mu:\pslz\to S_d$ and
associate the stabiliser subgroup $\barGamma\subset\pslz$ of $1$.
\item[(3)$\to$(2)]
Given $\mu$, associate the connected topological cover over $\CC\setminus\{0,1\}$
of degree $d$ that extends to a branched cover $j:C\to \PP^1$ by Riemann existence.
\end{description}
\end{rem}

Using the remark we obtain a characterisation of the factorisation \eqref{equ:jfactor} from Lemma \ref{lem:BT}:

\begin{lem}
\label{unique}
Given the holomorphic map $j(\calE):\PP^1\to X(1)$ a factorisation 
$j_2\circ j_1$ is equivalent to $\jGamma\circ j_\calE$ if and only if
\begin{eqnarray}
\label{belyj1}
&j_2& \text{
is branched only over $0,1, \infty$ with multiplicities $1,3$
over $0$}
\\ && \text{
and $1,2$ over $1$,}
\notag\\ 
\label{belyj2}
&j_1& \text{
has no proper left factor $j'$, such that $j_2\circ j'$ has the property above.}
\hspace*{7mm}
\label{maximality}
\end{eqnarray}
\end{lem}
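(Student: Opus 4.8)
The plan is to derive Lemma~\ref{unique} from the lemma immediately above together with the dictionary of Remark~\ref{rem:equivalentdatacovering}, the one substantial input being a degree count. Recall from Remark~\ref{rem:equivalentdatacovering} that a non-constant holomorphic map $j_2\colon C\to X(1)$ satisfies \eqref{belyj1} if and only if it is equivalent to $j_{\barGamma''}$ for some finite-index subgroup $\barGamma''\subset\pslz$, in which case $\deg j_2=[\pslz:\barGamma'']$; in particular $\jGamma$ itself satisfies \eqref{belyj1}. Consequently, if $j(\calE)=a\circ b$ with $a$ satisfying \eqref{belyj1}, then $a$ is equivalent to some $j_{\barGamma''}$ and $j(\calE)$ factors through $j_{\barGamma''}$, so \eqref{BT2} gives $\barGamma\subseteq\barGamma''$ (after conjugation) and hence $\deg a\le[\pslz:\barGamma]=\deg\jGamma$. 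Thus $\jGamma$ is, up to equivalence, the maximal-degree left factor of $j(\calE)$ having property \eqref{belyj1}.

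\emph{Necessity.} Suppose $j_2\circ j_1$ is equivalent to $\jGamma\circ j_\calE$, say $j_2=\jGamma\circ\psi$ and $\psi\circ j_1=j_\calE$ for an isomorphism $\psi$. Since $\psi$ is unramified, $j_2$ has the same branch locus and ramification profiles over $0,1,\infty$ as $\jGamma$, so \eqref{belyj1} holds. For \eqref{belyj2}, assume $j_1=j'\circ j_1'$ with $\deg j'\ge 2$ and $j_2\circ j'$ satisfying \eqref{belyj1}. Then $j_\calE=(\psi\circ j')\circ j_1'$ and $j(\calE)=\bigl(\jGamma\circ(\psi\circ j')\bigr)\circ j_1'$ exhibits $\jGamma\circ(\psi\circ j')=j_2\circ j'$ as a left factor of $j(\calE)$ with property \eqref{belyj1} of degree $\deg\jGamma\cdot\deg j'>\deg\jGamma$, contradicting the maximality just established. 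Hence \eqref{belyj2} holds.

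\emph{Sufficiency.} Assume \eqref{belyj1} and \eqref{belyj2} for $j(\calE)=j_2\circ j_1$. By \eqref{belyj1}, $j_2$ is equivalent to $j_{\barGamma'}$ for some finite-index $\barGamma'\supseteq\barGamma$; we must show $\barGamma'=\barGamma$ and that the factorisation is equivalent to $\jGamma\circ j_\calE$. Consider the morphism $(j_1,j_\calE)\colon\PP^1\to C\times_{X(1)}X(\barGamma)$, which is well defined since $j_2\circ j_1=\jGamma\circ j_\calE=j(\calE)$; its image lies in a single irreducible component, through whose normalisation $W$ it factors as $\iota\colon\PP^1\to W$ followed by the two projections $\pi\colon W\to C$ and $\pi'\colon W\to X(\barGamma)$. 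As $\pi\circ\iota=j_1$ is non-constant, so is $\iota$; since a non-constant morphism of smooth projective curves is surjective, $W\cong\PP^1$. By definition of the fibre product $j_2\circ\pi=\jGamma\circ\pi'$ as maps $W\to X(1)$, and a local computation at the fibre product shows this common map is again of Belyi type: it is branched only over $0,1,\infty$, and at a point over $0$ (resp.\ $1$) its ramification index is the least common multiple of the corresponding ramification indices of $j_2$ and of $\jGamma$, hence lies in $\{1,3\}$ (resp.\ $\{1,2\}$) because both $j_2$ and $\jGamma$ satisfy \eqref{belyj1}. Now $\pi$ is a left factor of $j_1$ with $j_2\circ\pi$ satisfying \eqref{belyj1}, so \eqref{belyj2} forces $\deg\pi=1$, i.e.\ $\pi$ is an isomorphism. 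Then $\deg j_2=\deg(j_2\circ\pi)=\deg\jGamma\cdot\deg\pi'\ge\deg\jGamma$, while $\deg j_2=[\pslz:\barGamma']\le[\pslz:\barGamma]=\deg\jGamma$ from $\barGamma\subseteq\barGamma'$; hence $\deg\pi'=1$ and $\barGamma'=\barGamma$. Finally $\theta:=\pi'\circ\pi^{-1}\colon C\to X(\barGamma)$ is an isomorphism with $\jGamma\circ\theta=(j_2\circ\pi)\circ\pi^{-1}=j_2$ and $\theta\circ j_1=\pi'\circ\pi^{-1}\circ\pi\circ\iota=\pi'\circ\iota=j_\calE$, so $j_2\circ j_1$ is equivalent to $\jGamma\circ j_\calE$, as desired.

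The key point — and the one place a genuine calculation enters — is the sufficiency step: condition \eqref{belyj2} is an a priori internal maximality of $j_2$ inside the chosen factorisation, and it must be upgraded to the global maximality of $\jGamma$ among all Belyi-type left factors of $j(\calE)$. The fibre-product curve $W$ is exactly what bridges these: it turns any competing factorisation into a left factor $\pi$ of the given $j_1$ whose absorption into $j_2$ stays of Belyi type, the latter reducing to the stability of the sets $\{1,3\}$ and $\{1,2\}$ under least common multiples. The remaining ingredients are bookkeeping of equivalences of factorisations and the degree inequality powered by \eqref{BT2}.
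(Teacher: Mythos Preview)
Your proof is correct. The paper does not actually prove this lemma: it merely states that the lemma is ``an equivalent statement'' to the preceding result of \cite{BT} obtainable via the dictionary of Remark~\ref{rem:equivalentdatacovering}, and leaves the verification to the reader. Your argument supplies precisely those details, and is compatible with what the paper intends. The necessity direction is the expected bookkeeping; the substantive content is your sufficiency step, where the fibre-product construction converts the internal maximality hypothesis~\eqref{belyj2} into the global maximality of $\jGamma$ among Belyi-type left factors. The local ramification computation (that the normalised fibre product of two covers with ramification profiles in $\{1,3\}$ over $0$ and $\{1,2\}$ over $1$ again has this property, because $\mathrm{lcm}$ preserves these sets) is the one genuine check, and you have it right.
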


In the topological analysis of more arbitrary (connected) branched covers of $\PP^1$ we first fix a base point not in the branch locus.
We then use the monodromy homomorphism taking homotopy
classes of closed paths inside the complement of the branch locus around this base point to the permutation group
of the fibre over the base point. 
This defines a local monodromy which associates to a branch point
the conjugacy class of the monodromy of the positively oriented boundary of a sufficiently small disc centred at the
branch point. 
While the monodromy depends on the choice of a fibre, the conjugacy class does not, and it is thus an
invariant, which we may as well give by the cycle type of the permutation or the corresponding partition of the fibre cardinality. 
The sizes of the parts of that partition are exactly the multiplicities of the pre-images of the branch point.
\medskip

Accordingly,
we can associate to $\jGamma$ the three conjugacy classes $C_0,C_1,C_\infty$
corresponding to the branch points $0,1,\infty$.
In practice we will give the three conjugacy classes as a $3$-tuple of partitions
of $\deg \jGamma$.
The degree of $\jGamma$ and the numbers $e_3$ and $e_2$ of fixed points of 
elements in $C_0$ and $C_1$ respectively,
are related by the following congruences
\begin{equation}
\label{jg_cong}
\deg \jGamma \equiv_2 e_2, \qquad
\deg \jGamma \equiv_3 e_3.
\end{equation}
This follows since the difference of the two numbers in question is the sum of cycle lengths of transpositions and $3$-cycles respectively.
Note that for the subgroup $\barGamma$ of $\pslz$ corresponding to $\jGamma$,
the numbers 
$e_2,e_3$ count the non-ramified pre-image of $1$ and $0$ under $\jGamma$, which
we call \emph{2-torsion} points and \emph{3-torsion} points respectively.
\medskip

For the next lemma, we only require a small part of the branching datum
of $j_\calE$, namely the partitions corresponding to the local monodromy of  $j_\calE$  around
the 2-torsion and 3-torsion points.
In particular, if $f^{-1}(p), p\in \PP^1$ is a fibre of type $IV^*$, then the map $j_\calE$ is locally unbranched and
maps the point $p$  to a 3-torsion point.

\begin{lem}
\label{rami}
The following properties hold for the invariants associated to
the factors $\jGamma$ and  $j_\calE$ of a generic element $\calE$ in an ambi-typical monodromy
stratum:
\begin{enumerate}
\item
partitions associated to $j_\calE$ at 2-torsion points have only even size parts,
\item
the parts of the partitions associated to $j_\calE$ at 3-torsion points all have size divisible by $3$ except for a
total of $\# IV^*$ parts of size equal to $1\!\!\pmod 3$,
\item
$e_2<2$,
\item
$e_3<2$, if $\# IV^*=0$,
\item
$e_3<3$, if $\# IV^*\leq2$, 
\item
$\deg j_\calE\equiv_31$ and $\# IV^*=2$, if $e_3=2$ and $\# IV^*<3$.
\end{enumerate}
\end{lem}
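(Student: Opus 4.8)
The plan is to prove (1)--(2) by a purely local analysis at the torsion points, and to deduce (3)--(6) from the maximality of the Belyi factor $\jGamma$ among the factorisations of $j(\calE)$ (Lemma \ref{unique} and Remark \ref{rem:equivalentdatacovering}), supplemented by an elementary congruence count.

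\emph{Statements (1) and (2).} By definition $\jGamma$ is a local isomorphism at a $2$-torsion, resp.\ $3$-torsion, point $p$ of $j_\calE$; hence if $q$ is a preimage of $p$ under $j_\calE$ with ramification index $m$, then $j(\calE)-1$, resp.\ $j(\calE)$, vanishes to order exactly $m$ at $q$. Reading the local $j$-expansion column of Table \ref{table:kodaira}: an odd $m$ at a $2$-torsion preimage forces fibre type $III$ or $III^*$, both excluded by Proposition \ref{badfibres}, so $m$ is even, which is (1); an $m\equiv2\pmod3$ at a $3$-torsion preimage forces type $IV$ or $II^*$ (excluded), and $m\equiv1\pmod3$ forces type $II$ or $IV^*$, of which $II$ is excluded, so the fibre is $IV^*$. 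Conversely a $IV^*$ fibre has $j=0$ vanishing to an order $\equiv1\pmod3$, so it can only lie over an \emph{unramified} preimage of $0$, i.e.\ a $3$-torsion point; summing the partitions over all $3$-torsion points then accounts for exactly $\#IV^*$ parts $\equiv1\pmod3$, which is (2).

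\emph{Statements (3) and (4).} Suppose $e_2\ge2$ and pick two $2$-torsion points $p_1,p_2$. In a coordinate on $X(\barGamma)\cong\PP^1$ with $p_1\mapsto0$, $p_2\mapsto\infty$, statement (1) says the rational function $j_\calE$ has all zeros and all poles of even order, hence $j_\calE=\rho\circ\psi$ where $\rho\colon\PP^1\to X(\barGamma)$ is the degree-$2$ map totally ramified over $p_1,p_2$. One checks that $\jGamma\circ\rho$ is again a connected cover branched only over $0,1,\infty$ with admissible multiplicities (since $\rho$ ramifies only over $p_1,p_2$, which lie over $1$, the multiplicities over $0$ are unchanged and those over $1$ all become $2$), hence of the form $j_{\barGamma''}$ in the sense of Remark \ref{rem:equivalentdatacovering}; but $j(\calE)=(\jGamma\circ\rho)\circ\psi$ factors through it and $\deg(\jGamma\circ\rho)=2\deg\jGamma$, contradicting the maximality of $\jGamma$. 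Replacing the square map by the cube map gives (4): if $\#IV^*=0$ then by (2) every partition of $j_\calE$ over a $3$-torsion point has all parts divisible by $3$, so two such points yield $j_\calE=\rho\circ\psi$ with $\rho$ the degree-$3$ map totally ramified over them and $\jGamma\circ\rho$ a Belyi factor of degree $3\deg\jGamma$, again contradicting maximality; hence $e_3\le1$.

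\emph{Statements (5) and (6).} For a $3$-torsion point $p$ write $n_p$ for the number of $IV^*$ fibres over $p$; by (2) the partition of $j_\calE$ over $p$ has $n_p$ parts $\equiv1\pmod3$ and the rest $\equiv0\pmod3$, and these parts sum to $\deg j_\calE$, so $\deg j_\calE\equiv n_p\pmod3$ for every $3$-torsion point $p$. Thus all the $n_p$ are congruent mod $3$, while $\sum_p n_p=\#IV^*$. If $e_3\ge3$ then either all $n_p=0$ (excluded by (4) once $e_3\ge2$) or every $n_p\ge1$, so $\#IV^*\ge e_3\ge3$; this proves (5). If $e_3=2$ and $\#IV^*\le2$, the two values $n_{p_1},n_{p_2}$ are congruent mod $3$ with sum $\le2$ and, by (4), sum $\ne0$; the options $\{0,1\}$ and $\{0,2\}$ are incongruent mod $3$, so $n_{p_1}=n_{p_2}=1$, giving $\#IV^*=2$ and $\deg j_\calE\equiv1\pmod3$, which is (6). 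The step needing the most care is the verification that $\jGamma\circ\rho$ is a legitimate Belyi cover of the type classified in Remark \ref{rem:equivalentdatacovering} --- the exact multiplicity profiles over $0,1,\infty$ and connectedness --- so that maximality applies; the other delicate point is the bookkeeping in (2) guaranteeing that \emph{every} $IV^*$ fibre sits over a $3$-torsion point. Everything else reduces to Proposition \ref{badfibres} and the local data of Table \ref{table:kodaira}, whose relevant expansions are worth double-checking at the outset.
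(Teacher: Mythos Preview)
Your proof is correct. For (1), (2), (5), (6) it follows the paper's route; the difference lies in (3) and (4). The paper invokes a separate Proposition~\ref{jfactor}, proved afterwards via monodromy-permutation combinatorics (together with the auxiliary Lemma~\ref{block}), to the effect that a cover $\PP^1\to\PP^1$ whose ramification partitions at two branch points have all parts divisible by $k$ factors through the cyclic degree-$k$ cover branched at those two points. You give this factorisation directly: placing the two torsion points at $0$ and $\infty$, the divisibility of all parts means $\operatorname{div}(j_\calE)$ is $k$ times a degree-zero divisor on $\PP^1$, hence principal, so $j_\calE$ is a constant times a $k$-th power. This is more elementary and --- since Proposition~\ref{jfactor} is not used elsewhere --- renders that proposition and its supporting lemma unnecessary for the paper's purposes.

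Two small points of precision. In (3), the parenthetical ``those over $1$ all become $2$'' is only literally correct when $e_2=2$; for $e_2>2$ the remaining $2$-torsion points contribute multiplicity-$1$ preimages under $\jGamma\circ\rho$. This is harmless: what you actually need is that all multiplicities over $1$ remain in $\{1,2\}$, which they do. In (5), the dichotomy ``either all $n_p=0$ or every $n_p\ge1$'' fails when $\deg j_\calE\equiv0\pmod3$ (one could have $n_{p_1}=n_{p_2}=0$ and $n_{p_3}=3$), but the conclusion survives since in that residue class any nonzero $n_p$ is already $\ge3$; the clean formulation is the case split on $\deg j_\calE\bmod3$, which is also how the paper phrases its proof of (5).
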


\begin{proof}[Proof of 1)]
Let $s$ be a local coordinate at a point corresponding to an odd part. Then the $j$-invariant
takes value $1$ and has odd multiplicity at $s=0$. Hence the local expansion is $j=1 + s^{2k+1}$.
But then the corresponding fibre of $\calE$ would be, according to Table \ref{table:kodaira}, of type $III$, or $III^*$.
This we have already excluded in Proposition \ref{badfibres}.

\emph{of 2)}
If $s$ is a local coordinate at a point corresponding to a part of size $\ell$,
then the $j$-invariant has value $0$ and multiplicity equal to $\ell$.
From the local expansion $j=s^\ell$ we can determine the corresponding fibre type again
from Table \ref{table:kodaira}.
It is $IV,II^*$ in case $\ell\equiv_32$, but these are excluded by Proposition \ref{badfibres}.
It is $II$ or $IV^*$ in case $\ell\equiv_31$, but the former is excluded again by the same reason and hence the
number of $IV^*$ fibres is equal to the number of parts of size $\ell\equiv_31$.

\emph{of 3)}
Suppose $e_2\geq2$, then there are at least two partitions with only even size parts.
By Proposition \ref{jfactor} below, $j_\calE$ then has a proper factor $j'$ which violates  condition (\ref{belyj2}) on the
factorisation of the $j$-invariant.

\emph{of 4)}
Suppose $e_3\geq2$ and $\# IV^*=0$ Then there are at least two partitions with 
parts of size divisible by $3$.
Again by Proposition \ref{jfactor} below, $j_\calE$ then has a proper factor $j'$ violating  condition (\ref{belyj2}).

\emph{of 5)}
The claim follows since the number of parts of size $\ell\equiv_31$ is bounded below by $e_3$ if
$\deg j_\calE$ is not divisible by $3$, otherwise the number of parts of size $\ell\equiv_31$ is at least
$3$ or \emph{4)} applies.

\emph{of 6)}
By \emph{4)} it is not possible to have $\# IV^*=0$.
In case $\deg j_\calE \equiv_30$ or $2$, the number of parts of size $\ell\equiv_31$ is 
therefore at least $3$, respectively \ $4$.
So $\deg j_\calE \equiv_3 1$ and the number of parts of size $\ell\equiv_31$ is $2$ and hence $\# IV^*=2$.
\end{proof}
 
In order to prove the factorisation result used above  we now study branched coverings of the Riemann sphere more systematically from a topological point of view.
Consider a finite branched covering  $\PP^1 \to \PP^1$ of degree $d$ and let $r$ 
denote the number of branch points. 
The fundamental group of the complement with respect to a base point is generated
by elements associated to a \emph{geometric} sequence of 
$r$ simple loops around these points,
i.e.\ chosen carefully such that they meet only at the base point
and their product loop is homotopic to the boundary of a disc
containing all base points.
Of course the product loop represents the trivial element and is known
to suffice as the only relation.

These elements act by permutations on the set $I=\{1,\dots,d\}$ in bijection to the elements of the fibre over the base point, giving rise to
the monodromy elements $\sigma_1,\dots,\sigma_r\in S(I)$ where $\sigma_i$ is given by monodromy along the simple loop around the $i$-th branch point.
Of course the choices of paths form an orbit under the action of the Hurwitz braid group
\cite{Hur}. The induced \emph{Hurwitz action} on $r$-tuples of monodromy elements is generated by 
transformations on adjacent pairs
\[
(\sigma_1,\dots,\sigma_i,\sigma_{i+1},\dots,\sigma_r) \mapsto 
(\sigma_1,\dots,\sigma_{i+1}, \sigma_{i+1}^{-1}\sigma_i{\sigma_{i+1}},\dots,\sigma_r).
\]

\begin{lem}\label{lem:factortwo}
\label{block}
Suppose that the covering $g: \PP^1 \to \PP^1$ 
has degree $kh$ with $k>1$,
and $I$ has a partition into parts $I_1,\dots,I_k$ each of cardinality $h$, such that
\begin{enumerate}
\item
all $\sigma_i, i>2$ preserve all parts and
\item
$\sigma_1,\sigma_2$ permute the parts. 
\end{enumerate}
Then
\begin{enumerate}
\item
the covering map $g$ is the composition of two factors $g=g_2\circ g_1$ where
\item
the second factor $g_2$ is a cyclic branched cover of degree $k$ branched at the two
branch points corresponding to $\sigma_1,\sigma_2$.
\end{enumerate}
Note that the claim and conclusion are insensitive to Hurwitz transformations, except that 
the indices may be relabelled.
\end{lem}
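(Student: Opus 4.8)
The plan is to use the standard dictionary — the Riemann existence theorem together with the Galois correspondence for covering spaces — between connected branched covers of $\PP^1$ with branch locus inside a fixed finite set $B$, transitive permutation representations of $\pi_1(\PP^1\setminus B)$, and conjugacy classes of finite-index subgroups; the combinatorial input is that a partition of the fibre which is permuted by the monodromy is precisely a quotient transitive $G$-set. So first I would fix the branch locus $B=\{b_1,\dots,b_r\}$ of $g$, with $\sigma_i$ the local monodromy at $b_i$, put $G=\pi_1(\PP^1\setminus B,*)$, and regard $g$ as corresponding to the transitive $G$-set $I$ via $\mu\colon G\to S(I)$. By hypotheses (1) and (2) each generator $\sigma_i$ maps the partition $\{I_1,\dots,I_k\}$ to itself, so the whole image $\mu(G)$ preserves this block system; hence $\mu$ descends to $\bar\mu\colon G\to S(\{I_1,\dots,I_k\})\cong S_k$, whose image is transitive because $\mu(G)$ is transitive on $I$. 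This $\bar\mu$ defines a connected degree-$k$ branched cover $g_2\colon C\to\PP^1$ with branch locus inside $B$. The surjection $I\twoheadrightarrow\{I_1,\dots,I_k\}$ is $G$-equivariant, so over $\PP^1\setminus B$ it is a morphism of covers; extending by Riemann existence gives a morphism $g_1\colon\PP^1\to C$ of branched covers over $\PP^1$ with $g=g_2\circ g_1$. This yields part (1) with second factor of degree $k$.

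Next I would pin down the branching of $g_2$. Its local monodromy at $b_i$ is $\bar\sigma_i\in S_k$, the induced permutation of the $k$ blocks. By hypothesis (1), for $i>2$ the permutation $\sigma_i$ fixes each $I_j$ setwise, so $\bar\sigma_i=\id$ and $g_2$ is unramified over $b_i$. Hence $g_2$ is branched at most over the two points $b_1,b_2$ corresponding to $\sigma_1,\sigma_2$. Therefore the monodromy group of $g_2$ is a quotient of $\pi_1(\PP^1\setminus\{b_1,b_2\})\cong\ZZ$, so it is cyclic, generated by $\bar\sigma_1$ (and the relation $\bar\sigma_1\bar\sigma_2\cdots\bar\sigma_r=\id$ together with $\bar\sigma_i=\id$ for $i>2$ forces $\bar\sigma_2=\bar\sigma_1^{-1}$). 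Transitivity of $\bar\mu(G)$ on the $k$-element set of blocks then forces $\bar\sigma_1$ to be a $k$-cycle; since $k>1$ this is nontrivial, so both $b_1$ and $b_2$ are genuine branch points. A connected cover of $\PP^1$ whose monodromy is cyclic and generated by a $k$-cycle is, up to equivalence, the standard cyclic cover $z\mapsto z^k$, totally ramified over $b_1$ and $b_2$; this gives part (2).

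The step I expect to require the most care is the passage from the purely combinatorial factorization $I\twoheadrightarrow\{I_1,\dots,I_k\}$ of $G$-sets to an honest factorization $g=g_2\circ g_1$ of branched covers of the \emph{closed} surface $\PP^1$. Over $\PP^1\setminus B$ this is formal from the Galois correspondence for covering spaces; one then fills in the punctures by Riemann existence, identifying $C$ and the source of $g$ with the normalizations of $\PP^1$ in the respective function fields, so that the inclusion $\CC(C)\subset\CC(\text{source of }g)$ produces a rational map which, being a map from a smooth projective curve, is automatically the required morphism $g_1$, with $g=g_2\circ g_1$ holding on a dense open set and hence everywhere. Once this functoriality of Riemann existence is in hand, identifying the branch locus of $g_2$ and recognizing a cover branched over two points as cyclic is routine.
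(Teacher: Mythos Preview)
Your proof is correct and follows essentially the same route as the paper's. The paper phrases the intermediate cover via the setwise stabiliser of one block $I_1$ in the fundamental group, whereas you phrase it via the quotient $G$-set $\{I_1,\dots,I_k\}$; these are equivalent by orbit--stabiliser, and both conclude that $g_2$ is unramified outside $b_1,b_2$ (hence cyclic with transitive monodromy on $k$ points, so a $k$-cycle) from the hypothesis that $\sigma_i$, $i>2$, fixes each block. Your write-up is simply more explicit about the Riemann existence extension step and the reason the monodromy of $g_2$ is cyclic.
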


\begin{proof}
Let us consider the topological cover over the complement of the branch points and
an element in $I_1$. Its
stabiliser in the fundamental group determines the covering.
The setwise stabiliser of $I_1$ 
determines an intermediate cover $g_2$
which is of degree $k>1$ over the base.
By assumption $\sigma_i, i>2$ stabilises $I_1$, so $g_2$ is cyclically branched of
degree $k$ over the two points corresponding to $\sigma_1,\sigma_2$.
\end{proof}

We now prove the factorisation of $j_\calE$ into two factors, which we used in the 
proof of Lemma \ref{rami} in
a more abstract setting.

\begin{pro}\label{pro:factortwo}
\label{jfactor}
Suppose $P_1,\dots,P_r$ are the partitions associated to the branch points of a branched covering 
$g: \PP^1\to \PP^1$ of degree $hk$ with $k>1$.
If all parts of $P_1$ and $P_2$ have length divisible by $k$ then 
\begin{enumerate}
\item
the covering map $g$ is the composition of two factors $g=g_2\circ g_1$ and
\item
the second factor $g_2$ is a cyclic branched cover of degree $k$ branched at the two
branch points corresponding to $P_1,P_2$.
\end{enumerate}
\end{pro}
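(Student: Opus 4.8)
The plan is to deduce Proposition \ref{jfactor} from Lemma \ref{block} by arranging the covering data so that the block hypothesis of the lemma is satisfied after a suitable Hurwitz move. First I would fix a base point and monodromy elements $\sigma_1,\dots,\sigma_r \in S(I)$, $|I| = hk$, for the branched covering $g$, with $\sigma_i$ the local monodromy around the branch point carrying partition $P_i$. The hypothesis says every cycle of $\sigma_1$ and of $\sigma_2$ has length a multiple of $k$; I must produce a partition of $I$ into $k$ parts of size $h$ preserved by all $\sigma_i$ with $i > 2$ and permuted cyclically by $\sigma_1$ and $\sigma_2$. The key structural input is that the image of the monodromy is transitive on $I$ (the cover is connected).

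The main step is a normal-form argument. Since $\sigma_1$ has all cycle lengths divisible by $k$, one can reorder $I$ (conjugating all $\sigma_i$ simultaneously, which changes nothing up to equivalence of covers) so that $\sigma_1 = c^k$ for a single $h$-cycle... more precisely, writing $\sigma_1$ as a product of cycles of lengths $k m_1, k m_2, \dots$, group the $\ell$-th entries of these cycles, for $\ell$ running over residues mod $k$, into $k$ blocks $I_1,\dots,I_k$ of size $h$; then $\sigma_1$ maps $I_j$ to $I_{j+1 \bmod k}$. The delicate point is the \emph{simultaneous} constraint on $\sigma_2$: I would use the defining relation $\sigma_1\sigma_2\cdots\sigma_r = 1$ together with the fact that the product of the permutations $\sigma_3,\dots,\sigma_r$ is $\sigma_2^{-1}\sigma_1^{-1}$, and then invoke the Hurwitz action to collapse $\sigma_3,\dots,\sigma_r$ (which collectively generate a transitive group on each... ) — actually cleaner: pass to the quotient cover. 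Consider the index-$k$ subgroup structure directly: look at the orbit partition of $I$ under the subgroup $H = \langle \sigma_3,\dots,\sigma_r\rangle$. One shows each $H$-orbit has size divisible by... no; instead, argue that $H$ preserves a block system with $k$ blocks because the quotient permutation action on blocks must be generated by the images of $\sigma_1,\sigma_2$ alone (images of $\sigma_i$, $i>2$, being trivial on blocks), and these two images multiply to the identity in the cyclic quotient $\ZZ/k$, forcing them to be mutually inverse $k$-cycles on the block set. Concretely, I would: (i) let $N \trianglelefteq \pi_1(\PP^1 \setminus \{r \text{ points}\})$ be the kernel of the monodromy, and $N' \supset N$ the preimage of $\langle \sigma_3,\dots,\sigma_r, \sigma_1^k\rangle$; (ii) check $[\pi_1 : N'] = k$ using that $\sigma_1$ has order divisible by $k$ modulo this subgroup and $\sigma_2$ likewise (here the cycle-length-divisible-by-$k$ hypothesis on $P_2$ enters); (iii) the cover $g_2$ associated to $N'$ has degree $k$, is unramified away from the two points of $\sigma_1,\sigma_2$, and on $\PP^1$ this forces $g_2$ to be the cyclic cover $z \mapsto z^k$ branched at those two points (a degree-$k$ connected cover of $\PP^1$ branched over exactly two points is the cyclic one, by Riemann–Hurwitz: $2 = 2k - d_0 - d_\infty$ with $1 \le d_0, d_\infty \le k$ forces $d_0 = d_\infty = 1$, i.e. total ramification at both). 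Then $g_1 := g$ viewed over $g_2$ furnishes the factorisation.

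The hard part will be the simultaneous block-system construction — i.e. verifying that the $k$ blocks cut out by $\sigma_1$ are genuinely preserved by $\sigma_3,\dots,\sigma_r$ and permuted by $\sigma_2$, not merely by $\sigma_1$. The cleanest route, which I would pursue, is to \emph{define} $g_2$ group-theoretically via the intermediate subgroup as in Lemma \ref{block}'s proof and read off the block system from it, rather than building the partition by hand; the cycle-divisibility hypotheses on $P_1$ and $P_2$ are exactly what guarantees the index is $k$ (not a proper divisor), and transitivity of the whole monodromy gives connectedness of $g_1$. The final sentence of the argument is then just the two-branch-point Riemann–Hurwitz computation above, which pins down $g_2$ as cyclic of degree $k$, completing the proof via Lemma \ref{block}.
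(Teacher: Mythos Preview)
Your proposal has a genuine gap at the crucial step. The intermediate subgroup $N'$ you define --- the preimage in $\pi_1$ of $\langle \sigma_3,\dots,\sigma_r,\sigma_1^k\rangle \subset G$ --- is the wrong object: a factorisation $g=g_2\circ g_1$ corresponds to a subgroup of $\pi_1$ lying between the \emph{point stabiliser} $H_1=\mathrm{Stab}(p)$ (of index $hk$) and $\pi_1$, not between the kernel $N$ of the monodromy and $\pi_1$. Your $N'$ contains $N$ but there is no reason for it to contain $H_1$; nor is $\langle \sigma_3,\dots,\sigma_r,\sigma_1^k\rangle$ normal in $G$ in general, so the index computation in~(ii) does not make sense as written. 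If one repairs this by taking instead $K=\ker(\pi_1\to\ZZ/k)$ with $\gamma_1\mapsto1$, $\gamma_2\mapsto-1$, $\gamma_i\mapsto0$ for $i>2$, then $[\pi_1:K]=k$ holds automatically --- no hypothesis on $P_1,P_2$ is needed for \emph{that} --- and the cover corresponding to $K$ is indeed the cyclic degree-$k$ cover branched at the two points. But now the entire content of the proposition is the containment $H_1\subset K$, which you never address. That containment is \emph{equivalent} to the existence of the block system required in Lemma~\ref{block} (both amount to a $\pi_1$-equivariant map $I\to\ZZ/k$), so invoking the lemma at the end is circular, and your sentence ``the cycle-divisibility hypotheses are exactly what guarantees the index is $k$'' misplaces where the hypothesis actually enters.

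The paper's proof confronts this head-on by a combinatorial construction: it reduces via Hurwitz moves to a ``generic'' case where $\sigma_1,\sigma_2$ each consist of $h$ cycles of length $k$ and the remaining $\sigma_i$ are transpositions; uses transitivity to assemble a single $hk$-cycle $\rho$ whose $k$-th power cuts out the candidate blocks; and then checks, via an elementary observation on how a transposition splits or merges the cycles of a permutation, that each transposition must preserve those blocks --- otherwise some cycle of $\sigma_1$ or $\sigma_2$ would acquire length not divisible by $k$. There \emph{is} a genuinely different short route you could have taken instead: pull back the cyclic cover $g_2$ along $g$ and observe that the resulting $\ZZ/k$-cover of the source $\PP^1$ is unramified (over a point where $g$ has local degree $m$ with $k\mid m$, the normalised fibre product $w^m=z^k$ splits into $k$ unramified sheets $z=\zeta w^{m/k}$), hence trivial since $\PP^1$ is simply connected, hence admits a section giving $g_1$. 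But that argument is not what your proposal contains.
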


\begin{proof}
We choose a base point and a geometric sequence of $r$ simple loops around the $r$ points making up the branch locus. 
Let $\sigma_1,\dots,\sigma_r$ be the permutations associated to  these paths. 
We recall that the conjugacy class of $\sigma_i$ is determined by $P_i$ and does not depend on the chosen paths. The product of the $\sigma_i$ is the identity.
It suffices to show that there is a decomposition of the fibre $I$ over the base point into $k$ subsets of cardinality $h$, the \emph{blocks}, such that the hypothesis of Lemma \ref{block} is met.

We will rely on the following elementary observation.
If $\sigma$ is a permutation in $S_n$ and $\tau$ the transposition of two elements $a,b$, then
\begin{description}
\item[either (1)]
$a,b$ belong to the orbit of the same cycle $c$ of $\sigma$ of length $\ell$, and
\begin{enumerate}
\item[(a)]
there exists a minimal $\ell_1>0$ such that $\sigma^{\ell_1}(a)=b$,
\item[(b)]
$\sigma$ and $\sigma\tau$ have all cycles of $\sigma$ except $c$ in common,
\item[(c)]
the orbit of $c$ is the union of orbits of two cycles of $\sigma\tau$ which are of lengths 
$\ell_1$ and $\ell_2=\ell-\ell_1$.
\end{enumerate}
\item[or (2)]
$a,b$ belong to orbits of distinct cycles $c_1,c_2$ of $\sigma$ of lengths $\ell_1$, $\ell_2$, and
\begin{enumerate}
\item[(a)]
$\sigma$ and $\sigma\tau$ have all cycles of $\sigma$ except $c_1,c_2$ in common,
\item[(b)]
the union of the orbits  of $c_1,c_2$ is the orbit of 
one cycle of $\sigma\tau$ which has length
$\ell=\ell_1+\ell_2$.
\end{enumerate}
\end{description}

Without loss of generality we may assume $\sigma_1,\sigma_2$ to have $h$ cycles of length
$k$ each, and all other $\sigma_i,i>2$ to be transpositions. This case we call the {\em generic case}. 

In fact, in any other case there are fewer monodromy elements. We can factor each of the first two elements
into a permutation with cycles of length $k$ only and a minimal number of transpositions.
Any other element can be factored into a minimal number of transpositions.
Then the sequence of factors -- after a suitable reordering using Hurwitz transformations -- 
is just a sequence of permutations as in the generic case.
Thus is suffices to prove the claim in the generic case, since the monodromy elements meet the
hypotheses of Lemma \ref{block}, if the factors do.

Recall that the cycles of $\sigma_i$ correspond bijectively to the points in the $i$-th fibre. The Riemann Hurwitz
formula, which relates the Euler number of the domain to the Euler number of the target, the degree and the
fibre defects, reads
\[
2 = 2 hk - 2(hk-h) - (r-2) \quad \implies \quad r = 2h.
\]
Let us write our monodromy elements $\sigma_1,\sigma_2,\tau_3,\dots,\tau_{2h}$ where the $\tau_i$ are
transpositions.

Next we exploit transitivity of the group generated, which needs only the elements $\sigma_2,\tau_i,i>2$,
since the product equals identity.
The element $\sigma_2$ has $h$ orbits, so it generates a transitive group only in case $h=1$.
For $h>1$ there must be a sequence $\tau_{i_1},\dots, \tau_{i_{h-1}}$ with $i_1<\dots<i_{h-1}$ such that
$\rho:=\sigma_2 \tau_{i_1}\dots\tau_{i_{h-1}}$ is an $hk$-cycle.
Using Hurwitz transformations we may assume without loss of generality that $i_1=3,\dots, i_{h-1}=h+1$.

The element $\rho^k$ has order $h$, hence $I$ decomposes into $k$ orbits of length
$h$.
It remains to show, that this is the decomposition into blocks we need for Lemma \ref{block}.

Assume to the contrary that $\tau_i$ for some $2<i\leq h+1$ does not preserve the blocks.
Using Hurwitz transformations we may write
\begin{equation}
\label{permproduct}
\sigma_2=\rho \tau_{h+1} \dots \tau_3 = \rho \tau_i \tau'_h \dots \tau'_3.
\end{equation}
The difference in the number of cycles for $\sigma_2$ and $\rho$ is $h-1$, so in each of the $h-1$
compositions with a transposition on the right we are in case $(1)$ where the number of cycles goes up.

Then $\tau_i$ transposes two elements of the cycle of $\rho$. By observation $(1.a)$ the permutation
$\rho\tau_i$ has a cycle of length $\ell_1$ which $k$ does not divide, since $\tau_i$ is assumed \emph{not}
to preserve the blocks. We remain in the case $(1)$ for all the following $\tau'$, so also $\sigma_2$ has a
cycle of length not divisible by $k$ contrary to the hypothesis of the proposition.

Since blocks are permuted by the element $\rho$ and preserved by the $\tau_i$, $2<i\leq h+1$
the element $\sigma_2$ permutes the blocks. 

Repeating the discussion for the remaining elements, we assume to the contrary that some $\tau_i$, with $i>h+1$ does not preserve
the blocks. We may then write
\begin{equation}
\label{permproduct2}
\sigma_1^{-1} = \sigma_2 \tau_3,\cdots\tau_{2h}
= \rho \tau_{h+2}\dots\tau_{2h} = \rho \tau_i \tau'_{h+3}\dots \tau'_{2h}. 
\end{equation}
The argument applied above to \eqref{permproduct} can now be applied here
to conclude, that $\sigma_1^{-1}$ and thus $\sigma_1$ has a
cycle of length not divisible by $k$ contrary to the hypothesis of the proposition.
We conclude, too, that $\sigma_1$ permutes the blocks.

Therefore we are in a position to apply Lemma  \ref{lem:factortwo} and this concludes the proof.
\end{proof}

\section{Restrictions on possible modular monodromy groups}
\label{sec:Euler}

The results collected so far are sufficient to derive a first characterization of the modular monodromy groups $\barGamma$
which can occur for the elliptic surfaces $\calE$ we consider.

Since the singular fibres of a generic element are all of the form $I_k$, $I_k^*$ and $IV^*$ we can, using Table \ref{table:kodaira},  
write the Euler number formula in the following form:
\begin{equation}
\label{Eulerformel}
24  \quad = \quad \sum_{I_k,I^*_k}  k + \sum_{I^*_k} 6 + 8 \# IV^*= \quad
\deg \jGamma \cdot \deg j_\calE + 6 \# I^* + 8 \# IV^*.
\end{equation}

We will consider all numerically possible combinations of the four integers
$\deg j_\calE>0$, $\# I^*\geq 0$, $\# IV^*\geq0$ and $\deg \jGamma>0$
and we discard the trivial case $\deg \jGamma=1$ corresponding to
$\barGamma=\pslz$ and hence to $\Gamma=\slz$.
We set up the corresponding table of combinations in two parts, namely {\em low } ($\le6$) and {\em high }
($>6$) index $[\pslz:\barGamma]$,
and we add two rows giving $e_2$ and $e_3$.
Since $\# IV^*\leq2$ in every column, they are determined by
\begin{enumerate}
\item
$e_2<2$ and $e_2 \equiv_2 \deg \jGamma$, according to \eqref{jg_cong} and Lemma \ref{rami}.(3).
\item
$e_3<3$ and $e_3 \equiv_3 \deg \jGamma$, according to \eqref{jg_cong} and Lemma \ref{rami}.(5).
\end{enumerate}
In a last row we mark columns, which we \emph{discard} from further consideration
according to one of the following arguments:
\begin{enumerate}
\item[(3)]
$e_3=2$ implies $\#IV^*=2$, according to Lemma \ref{rami}.(6).
\item[(4)]
If $\deg j_\calE=1$ then the $j$-invariant of $\calE$ is rigid. Thus $\calE$ is rigid,
except when there are $I_0^*$ fibres, which is obviously excluded  for $\#I^*=0$. We recall here that we are only concerned with positive dimensional strata. 
But this is also excluded for $\#IV^*>0$ since the presence of an $I_0^*$-fibre implies that $-\id$ is in the monodromy
group, which in turn forbids the existence of a $IV^*$ fibre in $\calE$ by Proposition \ref{badfibres}.
\end{enumerate}

\begin{table}[h]
\[
\begin{array}{c|c|c|c|c|c|c|c|c|c|c|c|c|c|c|c|c|c|c|c|c|c|c|c|c|c|c|c|c|c|c|c|c|c|c|c}
\deg \jGamma & 2 & 2 & 2 & 2 & 2 & 2 & 2 & 2 & 2 & 3 & 3 & 3 & 3 & 4 & 4 & 4 & 4 & 4 & 5 & 6 & 6 & 6 & 6
\\ \deg j_\calE &1 & 2 & 3 & 4 & 5 & 6 & 8 & 9 & 12 & 2 & 4 & 6 & 8 & 1 & 2 & 3 & 4 & 6 & 2 & 1 & 2 & 3 & 4 
\\ \# I^* & 1 & 2 & 3 & 0 & 1 & 2 & 0 & 1 & 0 & 3 & 2 & 1 & 0 & 2 & 0 & 2 & 0 & 0 & 1 & 3 & 2 & 1 & 0 
\\ \# IV^* & 2 & 1 & 0 & 2 & 1 & 0 & 1 & 0 & 0 & 0 & 0 & 0 & 0 & 1 & 2 & 0 & 1 & 0 & 1 & 0 & 0 & 0 & 0
\\\hline e_2 & 0 & 0 & 0 & 0 & 0 & 0 & 0 & 0 & 0 & 1 & 1 & 1 & 1 & 0 & 0 & 0 & 0 & 0 & 2 & 0 & 0 & 0 & 0 
\\ e_3 & 2 & 2 & 2 & 2 & 2 & 2 & 2 & 2 & 2 & 0 & 0 & 0 & 0 & 1 & 1 & 1 & 1 & 1 & 2 & 0 & 0 & 0 & 0 
\\\hline \emph{discard} & {\color{red}4} & {\color{red}3} &  {\color{red}3} && {\color{red}3}&  {\color{red}3} & {\color{red}3}& 
{\color{red}3} & {\color{red}3} & &&&&{\color{red}4} & &&&& {\color{red}3}& &&&
\end{array}
\]
\caption{Combinations of numerical invariants (low index)}
\label{table:columnslow}
\end{table}

\begin{table}[h]
\[
\begin{array}{c|c|c|c|c|c|c|c|c|c|c|c|c|c|c|c|c|c|c|c|c|c|c|c|c|c|c|c|c|c|c|c|c}
\deg \jGamma & 8 & 8 & 8 & 9 & 10 & 12 & 12 & 16 & 18 & 24 
\\ \deg j_\calE & 1 & 2 & 3 & 2 & 1 & 1 & 2 & 1 & 1 & 1  
\\ \# I^* & 0 & 0 & 0 & 1 & 1 & 2 & 0 & 0 & 1 & 0  
\\ \# IV^* & 2 & 1 & 0 & 0 & 1 & 0 & 0 & 1 & 0 & 0 
\\\hline e_2 & 0 & 0 & 0 & 1 & 0 & 0 & 0 & 0 & 0 & 0  
\\ e_3 & 2 & 2 & 2 & 0 & 1 & 0 & 0 & 1 & 0 & 0  
\\\hline \emph{discard} & {\color{red}4}& {\color{red}3} & {\color{red}3} && 
{\color{red}4} & && {\color{red}4} && {\color{red}4} 
\end{array}
\]
\caption{Combinations of numerical invariants (high index)}
\label{table:columnshigh}
\end{table}

Note that $e_2=e_3=0$ is equivalent to $\barGamma$ being torsion free, 
and that $\deg \jGamma$ is equal to the index of $\barGamma$ in $\pslz$.
Furthermore, subgroups of $\pslz$ of index at most $6$ are congruence
subgroups \cite[Thm.5]{wohlfahrt}.
Accordingly, our groups of low index occur in \cite[Table 2]{CP} 
with genus
$0$, the genus of the domain of $\jGamma$, and with $(I=\deg \jGamma,e_2,e_3)$ 
as in one of the columns in our Table \ref{table:columnslow}.
We find the entries $2A^0, 2B^0, 3B^0, 2C^0$ and $4B^0$.
By 
\cite[Table 4]{CP} they usually go by the standard names
$\barGamma(1)^2, \barGamma_1(2), \barGamma_1(3), \barGamma(2)$
and $\barGamma_1(4)$ -- which we recall after the proposition --
and we get:

\begin{pro}\label{pro:indexmodular}
\label{modular_monodromy}
Let $\barGamma$ be a subgroup of $\pslz$ which appears in one of the columns of Table \ref{table:columnslow} or Table \ref{table:columnshigh} which have not been discarded. 
Then it belongs to one of the following, mutually exclusive cases:
\begin{enumerate}
\item
$\barGamma$ has index $18$ in $\pslz$ and is torsion free,
\item
$\barGamma$ has index $12$ in $\pslz$ and is torsion free,
\item
$\barGamma$ has index $9$ in $\pslz$ with $e_2=1,e_3=0$,
\item
$\barGamma$ is either $\barGamma(2)$ or $\barGamma_1(4)$, has index $6$ in $\pslz$ and is torsion free,
\item
$\barGamma_1(3)$ which has index $4$ with $e_2=0,e_3=1$,
\item
$\barGamma_1(2)$  which has index $3$ with $e_2=1,e_3=0$,
\item
$\barGamma(1)^2$ which has index $2$ with $e_2=0,e_3=2$.
\end{enumerate} 
\end{pro}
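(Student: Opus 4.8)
The plan is to show that the numerical data appearing in the non-discarded columns of Tables~\ref{table:columnslow} and \ref{table:columnshigh}, together with Wohlfahrt's theorem and the classification of low-index subgroups, force $\barGamma$ into precisely the seven cases listed. First I would treat the high-index table. The surviving columns there all have $\deg j_\calE=1$ or $\deg j_\calE=2$; I discard those with $\deg j_\calE=1$ and $\#I^*=0$ using argument~(4) (rigidity, already in the setup), which kills $\deg\jGamma=8,10,16,18,24$ in that row. What remains in high index is $\deg\jGamma\in\{9,12\}$ with $\deg j_\calE=2$. For $\deg\jGamma=12$, $\deg j_\calE=2$, the table gives $\#I^*=0,\#IV^*=0$, hence $e_2=e_3=0$ (using $e_2\equiv_2 0$, $e_3\equiv_3 0$ and the bounds $e_2<2$, $e_3<2$ from Lemma~\ref{rami}(3),(4), noting $\#IV^*=0$): this is case~(2), torsion free of index $12$. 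For $\deg\jGamma=9$, $\deg j_\calE=2$, one has $e_2=1$ (since $9\equiv_2 1$) and $e_3=0$ (since $9\equiv_3 0$): this is case~(3), index $9$ with $e_2=1,e_3=0$. So the high table contributes exactly cases (1)--(3), where case~(1), index $18$, comes from the one high-index column $\deg\jGamma=18$, $\deg j_\calE=1$, $\#I^*=1$ — wait, that column carries discard mark~(4); I should instead locate index $18$ among the columns that survive, so the bookkeeping step is to check carefully which $\deg\jGamma$ value genuinely survives and yields index $18$, correcting the column-by-column reading of Table~\ref{table:columnshigh} as needed.

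Next I would treat the low-index table, $\deg\jGamma\le 6$. The discard marks eliminate every column with $\deg\jGamma=2$ except $\deg\jGamma=2,\deg j_\calE=4,\#I^*=0,\#IV^*=2$ (the one unmarked $2$-column), every column with $\deg\jGamma=4$ except two, and all columns with $\deg\jGamma=6$ survive, as does $\deg\jGamma=3$ and the $\deg\jGamma=5$ column is discarded by mark~(3). For each surviving column I read off $(\deg\jGamma,e_2,e_3)$ from the $e_2,e_3$ rows. By Wohlfahrt's theorem \cite[Thm.~5]{wohlfahrt}, a subgroup of $\pslz$ of index $\le 6$ is a congruence subgroup, hence appears in the Cummins--Pauli list \cite[Table~2]{CP} with genus $0$ (the genus of $X(\barGamma)$, which is $\PP^1$ here). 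Matching the triple $(I=\deg\jGamma,e_2,e_3)$ against that list pins down the group: index $2$ with $e_3=2$ gives $2A^0=\barGamma^2$ (case~7); index $3$ with $e_2=1$ gives $2B^0=\barGamma_1(2)$ (case~6); index $4$ with $e_3=1$ gives $3B^0=\barGamma_1(3)$ (case~5); index $6$ torsion free gives $2C^0=\barGamma(2)$ or $4B^0=\barGamma_1(4)$ (case~4). The remaining surviving low-index columns with $\deg\jGamma=5,6$ either carry a discard mark or have $\deg\jGamma>6$ after re-examination, so they do not produce new low-index groups — the step here is to confirm no surviving column of index $\le 6$ yields a triple outside $\{2A^0,2B^0,3B^0,2C^0,4B^0\}$.

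Finally I would assemble the two analyses and verify mutual exclusivity: cases~(1),(2),(3) have index $18,12,9$ respectively and so are disjoint from each other and from the index-$\le 6$ cases; within index $6$ the two named groups $\barGamma(2),\barGamma_1(4)$ are listed together in case~(4) precisely because they share the data $(6,e_2{=}0,e_3{=}0)$; cases~(5),(6),(7) have distinct indices $4,3,2$. The indices $12$ and $9$ may exceed the ``$\le 6$'' congruence-subgroup regime, so for those I cannot invoke \cite{CP}, which is exactly why cases (1)--(3) are phrased only in terms of index and torsion data rather than a named group; this is acceptable since that is all the later sections will use.

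I expect the main obstacle to be purely organizational rather than deep: the argument is a finite case check driven by Tables~\ref{table:columnslow} and \ref{table:columnshigh}, but one must reconcile the column indices as printed (which appear to have more columns than surviving cases, and where the index-$18$ entry's discard mark needs reinterpreting) with the seven clean outcomes, and one must correctly invoke Wohlfahrt plus Cummins--Pauli only in the index-$\le 6$ range while handling indices $9,12,18$ by the weaker torsion-and-index bookkeeping coming from \eqref{jg_cong} and Lemma~\ref{rami}. The genuinely substantive inputs — the Euler formula \eqref{Eulerformel}, Lemma~\ref{rami}, Proposition~\ref{badfibres}, and the congruences \eqref{jg_cong} — are all already available, so no new mathematics is required; the risk is merely a miscount or a misattributed discard reason.
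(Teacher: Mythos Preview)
Your approach is the same as the paper's: read off $(\deg\jGamma,e_2,e_3)$ from the surviving columns, observe that $e_2=e_3=0$ is equivalent to $\barGamma$ being torsion free, and for index $\le 6$ invoke Wohlfahrt plus the Cummins--Pauli tables to name the congruence subgroup. The only problem is the table-reading, and it is exactly the one you flag. The column $\deg\jGamma=18$, $\deg j_\calE=1$ has $\#I^*=1$ and $\#IV^*=0$, so neither clause of discard argument~(4) applies; it \emph{survives} and is precisely the source of case~(1). Likewise $\deg\jGamma=12$, $\deg j_\calE=1$ survives (it has $\#I^*=2$), so case~(2) is fed by two columns, not one. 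Your low-index miscount (four surviving columns with $\deg\jGamma=4$, not two) is harmless: all surviving columns with a fixed $\deg\jGamma\le6$ share the same $(e_2,e_3)$, hence determine the same $\barGamma$. Once the high-index columns are read correctly there is nothing left to ``reinterpret''; the proof is then a straight transcription.
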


Here we recall the standard notation for certain congruence subgroups of $\slz$ which are of importance for us.
The {\em principal congruence subgroup of level $n$} is denoted by $\Gamma(n)$ and defined as 
\[
\Gamma(n)= \left\{
\begin{pmatrix} a & b \\ c & d \end{pmatrix} \Big| \quad 
a\equiv d \equiv 1, b \equiv c\equiv 0 \mod n \right\}.
\]
Its geometric relevance is the fact that the modular curve $X^0(n)=\HH_1/\Gamma(n)$ parameterises elliptic curves with a level $n$ structure, i.e. a symplectic basis with respect to the Weil form, of the group $E[n]$ of $n$-torsion points. 
Next we recall the group
\[
\Gamma_1(n)= \left\{
\begin{pmatrix} a & b \\ c & d \end{pmatrix} \Big| \quad 
a\equiv d \equiv 1, c \equiv 0 \mod n \right\}
\]
whose meaning is that the modular curve $X_1^0(n)=\HH_1/\Gamma_1(n)$ parameterises elliptic curves with a fixed point of order $n$.
We will also use the group 
\[
\Gamma_0(n)= \left\{
\begin{pmatrix} a & b \\ c & d \end{pmatrix} \Big| \quad 
c \equiv 0 \mod n \right\}.
\]
Its significance is that the modular curve $X_0^0(n)=\HH_1/\Gamma_0(n)$ is the moduli space of elliptic curves with a distinguished subgroup $\ZZ/n\ZZ \subset E[n]$ of $n$-torsion points.
By $\barGamma(n)$, $\barGamma_1(n)$ and $\barGamma_0(n)$  we denote the images of these groups in $\pslz$.
Finally we recall that $\barGamma(1)^2$ is the subgroup of $\pslz$ which is generated by all squares. 

\begin{rem}
\label{Gamma0}
We note here that $\barGamma_0(n)=\barGamma_1(n)$ for $n\leq 4$. This will be relevant for Proposition \ref{Gamma04}.
\end{rem}

The data for the groups $\barGamma$ in Proposition \ref{modular_monodromy}
can be further exploited to obtain upper bounds for the dimension of
any corresponding monodromy stratum. 
For this purpose we make the following
\begin{dfn}
Let $\Gamma$ be a subgroup of finite index of $\slz$. We define the {\em maximal dimension of a monodromy stratum associated to $\Gamma$} by:
$$
m(\Gamma)= \begin{cases}
\operatorname{max } \{\dim \calF_{\Gamma,i} \mid \calF_{\Gamma,i} \mbox{ is a monodromy stratum associated to } \Gamma \}  
\\ -\infty \mbox{ if there exists no monodromy stratum with monodromy group } \Gamma.  
\end{cases}
$$
\end{dfn}

Together with the explicit bounds in the upcoming Lemma \ref{maxdim}
this will be used later in the following way: 
\begin{align}
&\text{The closure of a Shimada stratum of dimension $d$ is ambi-typical}
\notag\\[-3mm]
&\label{dim2} \\[-3mm]
&\text{if $d\geq m(\Gamma)$ for its generic monodromy $\Gamma$.} \notag
\end{align}

\begin{lem}
\label{maxdim}
Let $\barGamma$ be a subgroup of $\pslz$ which appears in one of the columns of Table \ref{table:columnslow} or Table \ref{table:columnshigh} which have not been discarded and let $\Gamma$ be any lift of $\barGamma$ 
in $\slz$. The invariants are listed in Table \ref{table:maxdim} .

\begin{table}[htbp]\label{tab:invariants1} 
\[
\begin{array}{c|ccccccc}
\deg \jGamma & 2  & 3 & 4 & 9 & 6 & 12 & 18 \\
e_2 & 0 & 1 & 0 & 1 & 0 & 0 & 0 \\
e_3 & 2 & 0 & 1 & 0 & 0 & 0 & 0 \\
 \# \operatorname{poles } \operatorname{ of } \jGamma & 1 & 2 & 2 & 3 & 3 & 4 & 5 \\
m(\Gamma) & \leq 6 & \leq 10 & \leq 6 & \leq 2 & \leq 6 & \leq 2 & \leq 1 \\
\end{array}
\]
\caption{Maximal dimension of $\Gamma$-strata}
\label{table:maxdim}
\end{table}
\end{lem}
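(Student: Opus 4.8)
The plan is to prove the bound $m(\Gamma)\le N$ for each group $\barGamma$ in the list of Proposition \ref{modular_monodromy} by exhibiting, for a generic element $\calE$ of any monodromy stratum with that monodromy, enough independent linear conditions on the N\'eron-Severi lattice — equivalently, by bounding $\deg j_\calE$ and then applying Lemma \ref{dimlem}. The starting point is the Euler number identity \eqref{Eulerformel}, together with the fact (Proposition \ref{badfibres}) that the only singular fibres of a generic ambi-typical $\calE$ are of type $I_k$, $I_k^*$ and $IV^*$, and with the observation that $\deg\jGamma$ equals the index $[\pslz:\barGamma]$ and is therefore fixed in each case. Since $\dim\calFF'_{R,G,j}=18-\rank R$ and a generic monodromy stratum with the given $\barGamma$ sits inside such a Shimada stratum with at least as large a dimension, it suffices to bound $18-\rank R=20-\rho_{\tr}(\calE)$ from above, i.e.\ to bound the number of singular fibres and their contributions.

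The key steps, in order: \emph{(i)} From \eqref{Eulerformel}, $\deg\jGamma\cdot\deg j_\calE = 24-6\#I^*-8\#IV^*$, so once $\#I^*,\#IV^*$ are among the finitely many values allowed by the non-discarded columns of Tables \ref{table:columnslow}/\ref{table:columnshigh}, $\deg j_\calE$ is determined. \emph{(ii)} Count singular fibres. The fibres of $\calE$ lying over $\infty$ (the $I_k$ and $I_k^*$ fibres) are exactly the preimages under $j(\calE)=\jGamma\circ j_\calE$ of $\infty\in X(1)$; the number of poles of $\jGamma$ is fixed (the fourth row of Table \ref{table:maxdim}, computed by Riemann--Hurwitz from $\deg\jGamma,e_2,e_3$), and over each pole of $\jGamma$ the map $j_\calE$ has at most $\deg j_\calE$ preimages. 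Hence $\#\{I_k\text{ and }I_k^*\text{ fibres}\}\le (\#\text{poles of }\jGamma)\cdot\deg j_\calE$. Adding the $\#IV^*$ fibres over the $3$-torsion points gives a bound on the total number of singular fibres $N_{\mathrm{sing}}$. \emph{(iii)} Feed this into Lemma \ref{dimlem}: since no $II,III,IV$ fibres occur, $\dim L(C)=20-\rho_{\tr}(\calE)$, and by the Euler computation in the proof of Lemma \ref{dimlem}, $\rho_{\tr}(\calE)=26-\#\{\text{multiplicative fibres}\}-2\#\{\text{additive fibres}\}\ge 26-N_{\mathrm{sing}}-\#\{\text{additive fibres}\}$. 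Since the additive fibres are precisely the $I_{>0}^*$ (i.e.\ all $\#I^*$ of them, as $I_0^*$ is excluded by Proposition \ref{badfibres} when $-\id\in\Gamma$, and otherwise contributes its $6$ to the Euler sum) together with the $\#IV^*$ fibres, and both $\#I^*,\#IV^*$ are fixed, one gets $\dim L(C)\le N_{\mathrm{sing}}+\#\text{additive}-6$, which equals the Kloosterman formula of Lemma \ref{dimLoc}; plugging in the case-by-case values yields the numbers $6,10,6,2,6,2,1$. \emph{(iv)} For the borderline groups one has to be a little sharper: e.g.\ for $\deg\jGamma=3$ ($\barGamma_1(2)$) one must check that the maximum $10$ is actually attained/bounded by analysing how many of the $\deg j_\calE$ preimages over each of the $2$ poles can be simple ($I_1$) versus forced to collide, and similarly for $\deg\jGamma=4,6$; here one uses that $j_\calE$ is itself constrained to have no proper left factor (Lemma \ref{unique}, condition \eqref{belyj2}) and the parity/divisibility constraints of Lemma \ref{rami} at the $2$- and $3$-torsion points, which cap $e_2,e_3$ and hence cap the ramification $j_\calE$ can have away from $\infty$, thereby forcing more simple poles and a larger fibre count — but never exceeding the tabulated bound once the fixed quantities are inserted.

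The main obstacle I expect is step \emph{(iv)}: the crude count in \emph{(ii)}--\emph{(iii)} gives the right answer only after one verifies that the configuration achieving the maximal fibre number is numerically consistent with a genuine Weierstrass datum — i.e.\ that the partition of $\deg j_\calE$ over each pole of $\jGamma$, the partitions at the $e_2$ $2$-torsion and $e_3$ $3$-torsion points (constrained to even parts, resp.\ parts $\equiv 0\pmod 3$ except for $\#IV^*$ exceptional ones of size $\equiv 1$), and the trivial partition over a generic point, can all coexist subject to Riemann--Hurwitz for $j_\calE\colon\PP^1\to X(1)$. This is a finite but delicate bookkeeping for the handful of columns surviving Tables \ref{table:columnslow}/\ref{table:columnshigh}, and it is where the two low-index exceptional groups $\barGamma(2),\barGamma_1(4)$ (both index $6$, torsion free, $3$ poles) must be handled together and shown to give the same bound $m(\Gamma)\le 6$. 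Everything else is routine substitution into Lemmas \ref{dimLoc} and \ref{dimlem}.
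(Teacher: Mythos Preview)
Your plan has the right skeleton (Riemann--Hurwitz for the pole count of $\jGamma$, the bound $s_\infty\le(\#\text{poles of }\jGamma)\cdot\deg j_\calE$, and Lemma~\ref{dimLoc}), and these are exactly the ingredients the paper uses. But there is a genuine gap at the very first step.

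You invoke Proposition~\ref{badfibres} and the Euler \emph{equality}~\eqref{Eulerformel} to conclude that the only singular fibres are $I_k$, $I_k^*$, $IV^*$ and that $(\deg j_\calE,\#I^*,\#IV^*)$ lies in one of the surviving columns of Tables~\ref{table:columnslow}/\ref{table:columnshigh}. Both of those facts hold \emph{only for generic members of ambi-typical strata}. The quantity $m(\Gamma)$, however, is by definition the maximum dimension over \emph{all} monodromy strata with monodromy $\Gamma$, and a general such stratum may very well have $II$, $III$, $IV$, $II^*$ or $III^*$ fibres in its generic configuration (whenever $\barGamma$ has torsion, i.e.\ for indices $2,3,4,9$). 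For those configurations \eqref{Eulerformel} fails and the column data are irrelevant, so your argument simply does not cover them. Since Lemma~\ref{maxdim} is precisely what is later used (via \eqref{dim2}) to \emph{certify} ambi-typicality, assuming ambi-typicality here would be circular.

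The paper avoids this by replacing your Euler equality with an \emph{inequality} valid for every configuration: from Table~\ref{table:kodaira} each singular fibre with $j=0$ has Euler number $\ge 2$, each with $j=1$ has Euler number $\ge 3$, and each $*$-fibre has Euler number $\ge 6$, whence
\[
24 \;\ge\; \deg\jGamma\cdot\deg j_\calE \;+\; 2s_0 + 3s_1 + 6s^{*}.
\]
Combined with $6+\dim\le s_\infty+s_0+s_1+2s^*$ (Lemma~\ref{dimLoc}, noting that any singular fibre with $j\notin\{0,1,\infty\}$ is an $I_0^*$), with $s_\infty\le(\#\text{poles})\cdot\deg j_\calE$, and with $s_0=0$ (resp.\ $s_1=0$) whenever $e_3=0$ (resp.\ $e_2=0$), a one-line substitution gives each tabulated bound. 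No case analysis over the columns is needed.

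Your step~(iv) is then a red herring: you are proving an \emph{upper bound} on $m(\Gamma)$, so there is nothing to ``attain'' and no need to check that any particular configuration is realised by a Weierstra{\ss} datum or satisfies Riemann--Hurwitz for $j_\calE$. Once the inequality above is in place, the bookkeeping you anticipate disappears entirely.
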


\begin{proof}
The first row in the table simply lists the possible degrees for $\jGamma$ which occur in Table \ref{table:columnslow} or Table \ref{table:columnshigh}. The second and third row are also copied from these 
tables. The number of poles of $\jGamma$ can be computed via Riemann-Hurwitz and equals $2+ \deg \jGamma/6 -2 e_3/3 -e_2/2$. 
Thus it remains to prove the upper bound for the dimension of the monodromy strata. 

The dimension of a stratum is equal to the open dense irreducible subset of elliptic surfaces
sharing the generic configuration of singular fibres. In Lemma \ref{dimLoc} the dimension
augmented by $6$ is given as the sum of cardinality of singular fibres and the cardinality of $*$-fibres.
Thus
\begin{equation}\label{equ:dimbound}
6 + \dim \quad \leq \quad s_\infty + s_0 + s_1 +  s^* +  s^*
\end{equation}
where $s_\infty$, $s_0,s_1$ are the number of singular fibres with $j=\infty, 0,1$  resp.  and 
$s^*$ is the number of $*$-fibres 
which is also an upper bound for the number of singular fibres with $j\not\in\{0,1,\infty\}$.

On the other hand the Euler sum gives  the bound
\begin{equation}\label{equ:ebound}
24  \geq \deg j_\calE\deg \jGamma + 4s_0 + 3 s_1 +6 s^*
\end{equation}
as follows immediately from Table \ref{table:kodaira}. 

Indeed, there are further restrictions on $s_\infty,s_0,s_1$ due to the map $\jGamma$:
\begin{enumerate}
\item
$s_\infty$ is the number of poles of the $j$-invariant and thus bounded above by $ \deg j_\calE$
times the number of poles of $\jGamma$. 
\item
$s_0$ is the number of points with local $j$-expansion $s^k$, $k$ not a multiple of $3$,
so $s_0=0$ if $e_3=0$.
\item
$s_1$ is the number of points with local $j$-expansion $1+s^k$, $k$ odd, so $s_1=0$ if $e_2=0$.
\end{enumerate}

We shall now give the proof exemplary for the case of index $9$. The other cases can be argued similarly. From (\ref{equ:ebound}) and using $s_0 \geq 0$ we find
\[
24  \geq  \deg \jGamma \deg j_\calE + 3 s_1 + 6 s^*. 
\]
Since $\jGamma$ has degree $9$ and $3$ poles this implies
\[
24 \geq   9 \frac{s_\infty}3 + 3 s_1 + 6 s^*  
\] 
or equivalently 
\[
8 \geq   s_{\infty} +  s_1 + 2 s^*.  
\]  
Since $e_3=0$ in this case, and hence $s_0=0$, it then follows immediately from (\ref{equ:dimbound}) that $m(\Gamma) \leq 2$.
\end{proof}

In the case of {\em low index} we have ample information on the two factors in the factorisation $j(\calE)= \jGamma \circ j_{\calE} $. Firstly, since we know the groups $\barGamma$ explicitly
we can in fact also write down the maps $\jGamma$ explicitly, as listed in Table \ref{table:j-functions}. 

\begin{table}[htb]
\[
\begin{array}{rclrl}
\barGamma(2) &:& \jGamma=\frac{(z^2+3)^3}{z^2(z^2-9)^2} &=&1 +  \frac{27 (z^2 -1)^2}{z^2(z^2-9)^2}\quad  \\
\barGamma_1(4) &:& \jGamma=\frac{4(z^2-4z+1)^3}{27z(z-4)} &=&1 +  \frac{(z-2)^2 (2z^2-8z-1)^2}{27z(z-4)} \\
\barGamma_1(3) &:& \jGamma=\frac{z(z+8)^3}{64(z-1)^3} &=&1 +  \frac{(z^2 - 20z-8)^2}{64(z-1)^3} \\
\barGamma_1(2) &:& \jGamma=\frac{(z+3)^3}{27(z-1)^2} &=&1 +  \frac{z(z - 9)^2}{27(z-1)^2} \\
\barGamma(1)^2 &:& \jGamma=\frac{4z}{(z+1)^2} &=&1 -  \frac{(z-1)^2}{(z+1)^2} 
\end{array}
\]
\caption{$j$-functions}\vspace*{-.8cm}
\label{table:j-functions}
\end{table}
There are various different explicit formulae in the literature, e.g.\ \cite{FK,MS}
since coordinates on the domain can be chosen arbitrarily.
Indeed, to check our formulae it suffices to check their degrees and their branching 
over $0,1$ and $\infty$ which are determined by the multiplicity sequences
of the two numerators and the denominator respectively.

With our choice of $z$-coordinate we note that  
\begin{eqnarray}
\quad&\text{\, row }1:& \text{
$z=0, \pm 3$ are the poles of $\jGamma$, of pole order $2$ each
}\label{row1}
\\ &\text{\, row }2:& \text{
$z=0,4,\infty$ are the poles of $\jGamma$, of pole order $1,1,4$ respectively
}\label{row2}
\\ &\text{\, row }3:& \text{
$e_2=0$ and
$z=0$ is the only $3$-torsion point 
(non-critical point of value $0$)
}\label{row3}
\\ &\text{\, row }4:& \text{
$e_3=0$ and
$z=0$ is the only $2$-torsion point 
(non-critical point of value $1$)
}\label{row4}
\\ &\text{\, row }5:& \text{
$e_2=0$ and
$z=0,\infty$ are the $3$-torsion points mapping to $0$.
}\label{row5}
\end{eqnarray}

Secondly, information from Table  \ref{table:columnslow} also allows us to obtain information about the map $j_{\calE}$. In particular, we can list the possible degrees of this map as well as the branching behaviour 
over \emph{special} points, namely the $3$-torsion and $2$-torsion points, see Lemma \ref{rami}.
Using the notation of \cite[Section 1]{BPT} we will denote these by $A$ and $B$ respectively.
So we can derive the number of pre-images of special points and their multiplicities.
They are included into  Table \ref{table:jE-functions} as the tuple of multiplicities with index the point they
map to. The last column of this table gives the most general polynomial expression for $j_\calE$
fitting the branching data. Here $\alpha_i, \beta_i, \gamma_i, \delta_i$ denote coprime
homogeneous bivariate polynomials of degree $i$.

\begin{table}[htb]
\label{table:jE-functions}
\[
\begin{array}{rcccc}
\barGamma & \deg j_\calE & \text{special point(s)} & \text{branch data} & j_\calE \\[1mm]
\barGamma(2),\barGamma_1(4) & 4 & - & - & \alpha_4:\beta_4  \\
& 3 & - & - & \alpha_3:\beta_3  \\
& 2 & - & - & \alpha_2:\beta_2  \\
& 1 & - & - & \alpha_1:\beta_1  \\
\barGamma_1(3) & 6 & A=0 & (3,3)_A & \alpha_2^3:\beta_6 \\
 & 4 & A=0 & (3,1)_A & \alpha_1^3\gamma_1:\beta_4\quad \\
 & 3 & A=0 & (3)_A & \alpha_1^3:\beta_3 \\
 & 2 & A=0 & (1,1)_A & \gamma_2:\beta_2 \\
\barGamma_1(2) & 8 & B=0 & (2,2,2,2)_B & \alpha_4^2:\beta_8 \\
& 6 & B=0 & (2,2,2)_B & \alpha_3^2:\beta_6 \\
& 4 & B=0 & (2,2)_B & \alpha_2^2:\beta_4 \\
& 2 & B=0 & (2)_B & \alpha_1^2:\beta_2 \\
\barGamma(1)^2 & 4 & A_1=0, A_2=\infty & (3,1)_{A_1}, (3,1)_{A_2} & \alpha_1^3\gamma_1:\beta_1^3\delta_1\\[3mm]
\end{array}
\]
\caption{$j_\calE$-functions for low index}
\end{table}

\section{Families of Weierstra{\ss} data for low index}
\label{sec:Weier}

In this section we will analyse the Weierstra{\ss} data of the Jacobian fibrations with modular monodromy  of low index. 
We will also discuss the Mordell-Weil group in these cases.

We start with the following table of Weierstra{\ss} data, whose relevance is that this includes all  the Weierstra{\ss} data needed to describe the families from 
Section \ref{sec:Euler} of low monodromy index.

Note that in this table we still assume the polynomials to have the degree given by their
index, but \emph{no} assumption of coprimality is imposed.

\begin{table}[htbp]
\[
\begin{array}{l|c|l|l|l}
\# 
& \barGamma & g_2 & g_3 & \Delta=g_2^3-27g_3^2\\[1mm]
\hline
i) \parbox[b][5mm][b]{0mm}{} & \barGamma(2) 
& \alpha_4^2+3\beta_4^2 & \beta_4(\alpha_4^2-\beta_4^2)
& \alpha_4^2(\alpha_4^2-9\beta_4^2)^2
\\
ii) \parbox[b][5mm][b]{0mm}{} & \barGamma_1(4) 
& 12(\alpha_4^2-4\alpha_4\beta_4+\beta_4^2) 
& 4(\alpha_4-2\beta_4)(2\alpha_4^2-8\alpha_4\beta_4-\beta_4^2)
& 2^43^6\alpha_4(\alpha_4-4\beta_4)\beta_4^4
\\
iii) \parbox[b][5mm][b]{0mm}{} & \barGamma_1(3) 
& 3\alpha_2(\alpha_2^3+8\beta_6) & \alpha_2^6-20\alpha_2^3\beta_6 - 8\beta_6^2
&  2^63^3(\alpha_2^3-\beta_6)^3\beta_6 
\\
iv) \parbox[b][5mm][b]{0mm}{} & \barGamma_1(3) 
& 3\alpha_1\gamma_2^2(\alpha_1^3 + 8\beta_3)  
& \gamma_2^3(\alpha_1^6-20\alpha_1^3\beta_3 - 8\beta_3^2)
&  2^63^3(\alpha_1^3-\beta_3)^3\beta_3\gamma_2^6
\\
v) \parbox[b][5mm][b]{0mm}{} & \barGamma_1(2) 
& 3\alpha_4^2+ 9 \beta_8 & \alpha_4(\alpha_4^2-9\beta_8)
& 3^6(\alpha_4^2-\beta_8)^2\beta_8
\\ 
vi) \parbox[b][5mm][b]{0mm}{} & \barGamma(1)^2 
& -12\alpha_1\beta_1\gamma_1^3\delta_1^3 & 
4\gamma_1^4\delta_1^4(\alpha_1^3\gamma_1-\beta_1^3\delta_1)
&  -2^43^3(\alpha_1^3\gamma_1+\beta_1^3\delta_1)^2
\gamma_1^8\delta_1^8 
\end{array}
\]
\caption{Weierstra{\ss} families}
\label{table:weierstrassfamilies}
\end{table}

\begin{pro}
\label{weier}
Every elliptic surface with data as in Table \ref{table:jE-functions} has 
Weierstra{\ss} datum in one of the families in Table \ref{table:weierstrassfamilies}.
The correspondence between the monodromy group and the Weierstra{\ss} data is given by:
\[
\begin{array}{rclrclrcl}
i) & : & \barGamma(2) &
iii) & : & \barGamma_1(3), \deg j_\calE\equiv_2 0 &
v) & : & \barGamma_1(2) 
\\[2mm]
ii) & : & \barGamma_1(4) \qquad&
iv) & : & \barGamma_1(3), \deg j_\calE = 3 \qquad&
vi) & : & \barGamma(1)^2. 
\end{array}
\]
\end{pro}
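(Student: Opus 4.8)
The plan is to reconstruct each Weierstra{\ss} family directly from the branching data in Table~\ref{table:jE-functions} together with the explicit formulae for $\jGamma$ in Table~\ref{table:j-functions}. The key point is the factorisation $j(\calE)=\jGamma\circ j_\calE$ established in Section~\ref{sec:jfactor}: once the $z$-coordinate on $X(\barGamma)\cong\PP^1$ is fixed as in Table~\ref{table:j-functions}, the map $j_\calE$ is a rational function $p:q$ of the stated degree and with the prescribed multiplicities over the special points, so we may substitute $z\mapsto p/q$ (equivalently, homogenise) into the formula for $\jGamma$ and read off $g_2,g_3$ up to the usual $\GL(2,\CC)$-scaling $(g_2,g_3)\mapsto(\lambda^4 g_2,\lambda^6 g_3)$. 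The strategy is therefore: for each of the five groups, write $j(\calE)$ as a ratio of a degree-$8$ and a degree-$12$ form on $\PP^1$ via this substitution, match numerator and denominator against $g_2^3$ and $\Delta=g_2^3-27g_3^2$, and extract $g_2,g_3$; then check minimality (condition (2) after \eqref{equ:weierstass}) holds generically, and verify the claimed $\Delta$.

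Concretely, first I would handle $\barGamma(2)$: by \eqref{row1} the substitution is $z\mapsto \alpha_4/\beta_4$, giving $j=\dfrac{(\alpha_4^2+3\beta_4^2)^3}{\alpha_4^2(\alpha_4^2-9\beta_4^2)^2}$, whence $g_2=\alpha_4^2+3\beta_4^2$ (degree $8$, as required), $\Delta=\alpha_4^2(\alpha_4^2-9\beta_4^2)^2$, and $g_3$ is recovered from $27g_3^2=g_2^3-\Delta$, which factors as $27\beta_4^2(\alpha_4^2-\beta_4^2)^2$ by the identity in row~1 of Table~\ref{table:j-functions}; this yields entry $i)$. The same mechanism, using rows $2$--$5$ and the corresponding rows of Table~\ref{table:jE-functions}, produces $ii)$ ($z\mapsto\alpha_4/\beta_4$ with the pole orders $1,1,4$ forcing the $\beta_4^4$ in $\Delta$), $iii)$ and $iv)$ (the two sub-cases $\deg j_\calE$ even versus $\deg j_\calE=3$ of $\barGamma_1(3)$, where the $3$-torsion point $z=0$ must have even, resp.\ triple, ramification, forcing the shape $\alpha_2^3{+}8\beta_6$ resp.\ $\alpha_1^3{+}8\beta_3$ with the extra factor $\gamma_2^2$), $v)$ ($z\mapsto\alpha_4/\beta_8$ with the $2$-torsion condition giving the square $\alpha_4^2$), and $vi)$ (the $\barGamma^2$ case, where the two torsion points at $z=0,\infty$ contribute $\alpha_1^3\gamma_1$ and $\beta_1^3\delta_1$ and the shared factors produce the $\gamma_1^8\delta_1^8$). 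In each case the fact that no coprimality is imposed in Table~\ref{table:weierstrassfamilies} is what allows these formulae to cover all members of the family, including degenerate ones; the correspondence table in the statement is then just the bookkeeping of which row of Table~\ref{table:jE-functions} lands in which row of Table~\ref{table:weierstrassfamilies}.

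I expect the main obstacle to be two-fold. First, one must verify that \emph{every} elliptic surface with the Table~\ref{table:jE-functions} data really has a Weierstra{\ss} datum of the stated form, not merely that such data give surfaces in the right families; this requires knowing that the factorisation $j(\calE)=\jGamma\circ j_\calE$ together with the normalisation of the $z$-coordinate determines $g_2,g_3$ up to the $\GL(2,\CC)$-scaling and up to the deck transformations of $\jGamma$ (which permute the rows but not the family), and that the minimality condition is automatic for generic parameters. Second, the algebra of matching $g_2^3-\Delta=27g_3^2$ and checking that the right-hand side is a perfect square of a form of the correct degree relies on the precise polynomial identities recorded in the second column of Table~\ref{table:j-functions} (e.g.\ $(z^2+3)^3-z^2(z^2-9)^2=27z^2(z^2-1)^2$ and its analogues); verifying these identities and the resulting degree bookkeeping is routine but is where an error would most easily creep in, so I would carry it out row by row and cross-check against the $\Delta$-column of Table~\ref{table:weierstrassfamilies}.
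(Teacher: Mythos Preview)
Your overall strategy---substitute $j_\calE$ into the explicit formula for $\jGamma$, then read off $g_2,g_3,\Delta$ by matching numerators and denominators---is exactly the route the paper takes. However, there is a genuine gap in your argument at the point you yourself flag as the first obstacle.

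You claim that the factorisation $j(\calE)=\jGamma\circ j_\calE$ determines $g_2,g_3$ ``up to the $\GL(2,\CC)$-scaling and up to the deck transformations of $\jGamma$''. This is false: the functional invariant $j(\calE)$ only determines the Weierstra{\ss} datum up to a \emph{twist} $(g_2,g_3)\mapsto(h^4g_2,h^6g_3)$ for an arbitrary form $h$ on the base, not just a constant. Such a twist does not change $j$ but does change the surface (it introduces or removes $*$-fibres at the zeros of $h$). So your construction produces \emph{some} elliptic surface with the right $j$-invariant, but not a priori the surface $\calE$ you started with. The paper closes this gap by invoking the Tate algorithm (Table~\ref{table:tate}): after writing down the candidate $g_2,g_3$, one checks that the resulting singular fibre configuration coincides with the one prescribed by Table~\ref{table:columnslow}, so that both the functional and the homological invariant agree, and only then concludes that the candidate surface is isomorphic to $\calE$.

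This issue is not merely cosmetic in case $iv)$. When $\barGamma=\barGamma_1(3)$ and $\deg j_\calE=3$, Table~\ref{table:columnslow} forces the surface to carry two $I_0^*$-fibres, whose positions are invisible in $j(\calE)$ since $j$ is finite there. The extra factor $\gamma_2$ in family $iv)$ is not, as you suggest, forced by the ramification at the $3$-torsion point; rather, $\gamma_2$ must be \emph{chosen} to vanish precisely at the two $I_0^*$-fibres of $\calE$. Without this step your candidate $(g_2,g_3)$ would have the wrong homological invariant. The paper treats this case explicitly and checks via Tate that the zeros of $\gamma_2$ give $I_0^*$-fibres and nothing worse. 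Your treatment of the other $\barGamma_1(3)$ degrees is also slightly off: the even-degree cases $6,4,2$ are all absorbed into family $iii)$ by allowing the polynomials $\alpha_2,\beta_6$ to share factors (this is why Table~\ref{table:weierstrassfamilies} drops the coprimality hypothesis), not by a separate ramification constraint.
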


\begin{proof}
We have to show that for each row of Table \ref{table:jE-functions}
any elliptic surface $\calE$ with the data provided by the row can be 
obtained by some suitable choice of Weierstra{\ss} datum from
Table \ref{table:weierstrassfamilies}.
Here we shall give the proof for $\calE$ with modular monodromy $\barGamma_1(3)$ 
which is the most subtle case. The other groups can be treated in an analogous way.
We begin with the first corresponding row, so $\deg j_\calE=6$.  

Composing the expressions for $\jGamma$ and $j_\calE$ from
Table \ref{table:j-functions} and Table \ref{table:jE-functions} respectively,
we get
\[
j(\calE) \quad = \quad \jGamma\circ j_\calE \quad = \quad
\frac{\alpha_2^3(\alpha_2^3+8\beta_6)^3}{64(\alpha_2^3-\beta_6)^3\beta_6}  
\quad = \quad
1 +  \frac{(\alpha_2^6-20\alpha_2^3\beta_6 - 8\beta_6^2)^2}{64(\alpha_2^3-\beta_6)^3\beta_6}.
\]
We look at the general expression of the $j$-function in terms
of Weierstra{\ss} data
\[
j \quad =\quad \frac{g_2^3}{g_2^3-27g_3^2} \quad = \quad
\frac{(g_2/3)^3}{(g_2/3)^3-g_3^2} \quad = \quad
1 + \frac{g_3^2}{(g_2/3)^3-g_3^2}.
\]
If we plug in the same $\alpha_2$ and $\beta_6$ we get the identical expression
for the $j$-invariant as above. 
Moreover the analysis of common factors of $g_2,g_3$ and their multiplicities
according to the Tate algorithm, see Table \ref{table:tate}, 
gives no singular fibres except of type $I_\nu$
since $\alpha_2$ and $\beta_6$ are coprime by assumption $\deg j_\calE=6$. 
Therefore $\calE$ and the 
elliptic surface given by the Weierstra{\ss} datum share the functional and the
homological invariant and hence are isomorphic.
\medskip

Still in case $\barGamma_1(3)$ but with $\deg j_\calE=4$ we obtain from 
Table \ref{table:j-functions} and Table \ref{table:jE-functions} that
\[
j(\calE) \quad = \quad \jGamma\circ j_\calE \quad = \quad
\frac{\alpha_1^3\gamma_1(\alpha_1^3\gamma_1+8\beta_4)^3}{64(\alpha_1^3\gamma_1-\beta_4)^3\beta_4}  
\quad = \quad
1 +  \frac{(\alpha_1^6\gamma_1^2-20\alpha_1^3\gamma_1\beta_4 - 8\beta_4^2)^2}{64(\alpha_1^3\gamma_1-\beta_4)^3\beta_4}
\]
This expression can not be obtained as easily. Indeed, we have to recall 
that we are allowed to plug in polynomials into the families which are \emph{not}
necessarily coprime.
Doing this  with $\beta_6=\gamma_1^2\beta_4$ and $\alpha_2=\alpha_1\gamma_1$ 
in family $iii)$ we get
\[
j \quad = \quad 
\frac{\alpha_1^3\gamma_1^3(\alpha_1^3\gamma_1^3+8\beta_4\gamma_1^2)^3}{64(\alpha_1^3\gamma_1^3-\beta_4\gamma_1^2)^3\beta_4\gamma_1^2}  
\quad = \quad
1 +  \frac{(\alpha_1^6\gamma_1^6-20\alpha_1^3\gamma_1^5\beta_4 - 8\gamma_1^4\beta_4^2)^2}{64(\alpha_1^3\gamma_1^3-\beta_4\gamma_1^2)^3\beta_4\gamma_1^2}.
\]
which is exactly the expression for $j(\calE)$ expanded by $\gamma_1^8$.
The Weierstra{\ss} datum is thus
\[
g_2=3\alpha_1\gamma_1^3(\alpha_1^3\gamma_1+8\beta_4),\quad
g_3=\gamma_1^4(\alpha_1^6\gamma_1^2-20\alpha_1^3\gamma_1\beta_4 - 8\beta_4^2).
\]
The analysis with the Tate algorithm shows the existence of one $IV^*$ fibre and otherwise
only singular fibres of type $I_\nu$ since $\alpha_1\gamma_1$ and $\beta_4$ are coprime
and we may conclude again that $\calE$ is isomorphic to a surface given by 
Weierstra{\ss} datum from family $iii)$.

The case with $\barGamma_1(3)$ and $\deg j_\calE=2$ is very similar but with
$\beta_6=\gamma_2^2\beta_2$ and $\alpha_2=\gamma_2$ sharing even a factor
of degree two. The Weierstra{\ss} datum 
\[
g_2=3\gamma_2^3(\gamma_2+8\beta_2),\quad
g_3=\gamma_2^4(\gamma_2^2-20\gamma_2\beta_2 - 8\beta_2^2)
\]
defines then an elliptic surface sharing the functional and homological invariant with
$\calE$ again.
\medskip

This leaves us with $\barGamma_1(3)$ and $\deg j_\calE=3$. From 
Table \ref{table:j-functions} and Table \ref{table:jE-functions} we get
\[
j(\calE) \quad = \quad \jGamma\circ j_\calE \quad = \quad
\frac{\alpha_1^3(\alpha_1^3+8\beta_3)^3}{64(\alpha_1^3-\beta_3)^3\beta_3}  
\quad = \quad
1 +  \frac{(\alpha_1^6-20\alpha_1^3\beta_3 - 8\beta_3^2)^2}{64(\alpha_1^3-\beta_3)^3\beta_3}.
\]
This time we choose to plug the coprime $\alpha_1$ and $\beta_3$ from an
expression for $j(\calE)$ together with a $\gamma_2$ still to be determined
into the family $iv)$ and obtain the $j$-function of this Weierstra{\ss} datum to be
\[
j \quad = \quad
\frac{\alpha_1^3\gamma_2^6(\alpha_1^3+8\beta_3)^3}{64(\alpha_1^3-\beta_3)^3\gamma_2^6\beta_3}  
\quad = \quad
1 +  \frac{\gamma_2^6(\alpha_1^6-20\alpha_1^3\beta_3 - 8\beta_3^2)^2}{64(\alpha_1^3-\beta_3)^3\gamma_2^6\beta_3}
\]
which is exactly the expression for $j(\calE)$ expanded by $\gamma_2^6$.
Thus we get the $j$-invariant $j(\calE)$ with the Weierstra{\ss} datum
\[
g_2=3\alpha_1\gamma_2^2(\alpha_1^3+8\beta_3),\quad
g_3=\gamma_2^3(\alpha_1^6-20\alpha_1^3\beta_3 - 8\beta_3^2).
\]
Finally we choose $\gamma_2$ to vanish at the two $I_0^*$ fibres of $\calE$.
Then $\gamma_2$ is coprime to $\beta_3(\alpha_1^3-\beta_3)$ since the
$j$-invariant of an $I_0^*$ fibre is finite.

The analysis of this datum with the Tate algorithm shows the existence of 
$I_0^*$ fibres precisely at the zeros of $\gamma_2$. 
Again we may conclude, since functional and homological invariant are shown
to coincide.
\end{proof}

We next determine for the
generic members of each family whether
$-\id\in\Gamma$ or not. The following lemma
will be used as a tool to determine the Mordell-Weil torsion
from the monodromy group.

In  Section \ref{sec:Euler} we already introduced the principal congruence subgroup $\Gamma(n)$. 
As we said there, the modular curve $X^0(n)=\HH_1/\Gamma(n)$  is the classifying space
of elliptic curves with a level-$n$ structure. 
This carries a universal family if $n \geq 3$. If $n=2$, we no longer have a universal family of elliptic curves, but a universal Kummer family still exists. 
We denote by $X(n)$ the compactification of $X^0(n)$ which is obtained by adding the cusps, i.e. $X(n)=\overline\HH_1/\Gamma(n)$. The universal family over $X^0(n)$  can be extended to $X(n)$, the extension is known as Shioda modular surface.
This has $n^2$ sections which restrict to the $n$-torsion points on the smooth fibres of the universal family.
 
We had also introduced the group $\Gamma_1(n)$ and the curve   $X_1^0(n)=  \HH_1/\Gamma_1(n)$. This
is the classifying space of elliptic curves with a fixed $n$-torsion point. As above we can compactify the curve $X^0_1(n)$ by adding the cusps and obtain a curve $X_1(n)$.

If $m$ divides $n$  we consider the group
\[
\Gamma_m(n):= \Gamma(m) \cap \Gamma_1(n).
\]
Obviously, $X^0_m(n):= \HH_1 / \Gamma_m(n)$ parameterises elliptic curves with a level-$m$ structure and additionally an $n$-torsion point.  Again by adding the cusps we obtain 
the compatification  ${{X}}_m(n)$. The universal family over $X^0_m(n)$ extends to $X_m(n)$ and in addition to the sections giving the $m$-torsion points we have a distinguished section of order $n$ 
which restricts to the distinguished $n$-torsion point on the smooth fibres.  

\begin{lem}
\label{coxparry}
Let $\calE$ be a Jacobian fibration with monodromy group $\Gamma$ and assume that  $m$ and $n$ are 
positive integers with $m\mid n$. 
Then the following are equivalent:
\begin{enumerate}
\item
$\Gamma$ is, up to conjugation, contained in $\Gamma_m(n)$.
\item
The Mordell-Weil group $\MW(\calE)$ contains a subgroup 
$\ZZ/n\ZZ \times\ZZ/m\ZZ$.
\end{enumerate}
\end{lem}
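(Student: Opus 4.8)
The plan is to identify the Mordell--Weil group with a group of sections of the elliptic surface and to translate the existence of torsion sections into a statement about the monodromy representation. First I would recall the classical dictionary: for a non-isotrivial Jacobian fibration $f\colon\calE\to\PP^1$, the torsion subgroup $\MW(\calE)_{\tors}$ is isomorphic to the group of torsion sections, and each $n$-torsion section picks out, fibrewise over the smooth locus, an $n$-torsion point of the fibre which is invariant under the monodromy action. Conversely, by the theory of Shioda's modular surfaces (or equivalently by the universal property of $X_m(n)$ recalled just above the statement), a monodromy-invariant subgroup $\ZZ/n\ZZ\times\ZZ/m\ZZ$ of the $n$-torsion of a fibre extends to a subgroup of the same shape inside $\MW(\calE)$. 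So the content of the lemma is that condition (2) is equivalent to the monodromy acting trivially on a suitable subgroup of $(\ZZ/n\ZZ)^2$, and this is exactly what containment of $\Gamma$ in $\Gamma_m(n)$ encodes.

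The precise steps I would carry out: (a) Fix a smooth fibre $E_0$ and a symplectic basis of $E_0[n]\cong(\ZZ/n\ZZ)^2$ with respect to which the monodromy representation lands in $\SL(2,\ZZ)$ as in the definition recalled in Section~\ref{sec:BPT}. The reduction of $\Gamma$ modulo $n$ is the image of the monodromy on $E_0[n]$. (b) Show (2)$\Rightarrow$(1): a subgroup $\ZZ/n\ZZ\times\ZZ/m\ZZ\subset\MW(\calE)$ consists of torsion sections; restricting to $E_0$ gives $m$-torsion points forming a subgroup of that shape, all fixed by monodromy. Being a subgroup isomorphic to $\ZZ/n\ZZ\times\ZZ/m\ZZ$ with $m\mid n$, it contains a full $E_0[m]$ together with a chosen point of exact order $n$; the matrices in $\SL(2,\ZZ)$ fixing all of $E_0[m]$ and that chosen point of order $n$ are, after the right choice of symplectic basis, precisely those reducing to the identity mod $m$ and fixing the first basis vector mod $n$ — i.e. $\Gamma\subset\Gamma_m(n)$. (c) Show (1)$\Rightarrow$(2): if $\Gamma\subset\Gamma_m(n)$, then the monodromy fixes a subgroup $V\cong\ZZ/n\ZZ\times\ZZ/m\ZZ$ of $E_0[n]$ pointwise; hence the covering $\calE'\to\PP^1$ of smooth loci is classified by a map to $X_m^0(n)$, and pulling back the $m^2$ torsion sections plus the distinguished section of order $n$ on the extended Shioda family $X_m(n)$ yields, after resolving, torsion sections of $\calE$ generating a copy of $\ZZ/n\ZZ\times\ZZ/m\ZZ$ inside $\MW(\calE)$.

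The main obstacle I expect is bookkeeping in step (b): translating "contains a subgroup isomorphic to $\ZZ/n\ZZ\times\ZZ/m\ZZ$" into the correct normal form for the stabilizer in $\SL(2,\ZZ)$, and making sure the choice of symplectic basis is compatible both with the definition $\Gamma_m(n)=\Gamma(m)\cap\Gamma_1(n)$ and with the orientation convention that forces monodromy into $\SL(2,\ZZ)$ rather than $\GL(2,\ZZ)$. One has to be a little careful that a point of exact order $n$ in a group $\cong\ZZ/n\ZZ\times\ZZ/m\ZZ$ can be completed to a generating set whose "second generator" has order exactly $m$, which is where the hypothesis $m\mid n$ is used. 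A secondary subtlety is the passage between the open surface over the smooth locus and the relatively minimal model over all of $\PP^1$: sections of the open family extend uniquely over the singular fibres, so the torsion subgroup is unchanged, but this should be stated explicitly (it is standard, e.g. via the Shioda--Tate setup in \cite[Ch.~5--6]{ScSh}). Once the normal-form computation in $\SL(2,\ZZ)$ is pinned down, both implications are immediate.
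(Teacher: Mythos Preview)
Your proposal is correct and follows essentially the same approach as the paper: both directions hinge on the equivalence between torsion sections and monodromy-invariant torsion points in a reference fibre, and the identification of the pointwise stabilizer of a subgroup $\ZZ/n\ZZ\times\ZZ/m\ZZ$ in $(\ZZ/n\ZZ)^2$ with $\Gamma_m(n)$. The only packaging difference is in (1)$\Rightarrow$(2): the paper constructs the sections directly by analytically continuing the local expression $(a+b\tau)/\ell$ along paths and checking well-definedness via the monodromy hypothesis, whereas you invoke the classifying map to $X_m(n)$ and pull back the tautological sections; the paper's route has the mild advantage of sidestepping fine-moduli issues for small $n$, but the two arguments are equivalent in content.
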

\begin{proof}
We shall first prove the implication form (1) to (2). For this let $U$ be the subset of the base $\PP^1$ of $\calE$ which is given by removing the points $j(\calE)^{-1}\{0,1,\infty\}$ and the 
base points of the singular fibres. We want to construct sections which form a subgroup $\ZZ/n\ZZ \times\ZZ/m\ZZ$ of $\MW(\calE)$. It is enough to do this over $U$ as the sections can then be
extended to $\PP^1$ (since the base has dimension $1$).  We choose a base point $x_0 \in U$. For each point $x\in U$ we can choose a small disc $U(x)$ such that 
$\calE|_{U(x)} \cong \CC \times U(x)/(\ZZ + \ZZ\tau_x)$ where $\tau_x: U(x) \to \HH_1$ is a local lift of the $j$-function  $j(\calE)$ and $\ZZ + \ZZ\tau_x$ acts fibrewise on 
$\CC \times U(x)$ by translation on $\CC \times \{t\}$ with the lattice $\ZZ + \ZZ\tau_x(t)$.  

In particular, the fibre $\calE_{x_0}\cong \CC/(\ZZ + \tau_{x_0}(x_0))$ where we have chosen a fixed lift $\tau_{x_0}$. Let $a,b\in \ZZ$ and  $z_0=(a+b\tau_{x_0}(x_0))/\ell$
be an $\ell$-torsion point in $\calE_{x_0}$ (where $\ell$ will become either $m$ or $n$). Using the above local uniformisation of $\calE$ we can extend this to a local $\ell$-torsion section of $\calE|_{U(x_0)}$. We want to
extend such a section to $U$. Given any point $y\in U$ we can choose a path $s: [0,1] \to U$ connecting $x_0$ with $y$. We can cover this path with finitely many open sets $U(x_i), i=0, \ldots, N$ with $x_N=y$ 
and choose local lifts $\tau_{{x_i}}$ with $\tau_{x_i}|{U_i \cap U_{i+1}} = \tau_{x_{i+1}}|{U_i \cap U_{i+1}}$. Then we can move the point $z_0$ along the path $s$ to an $\ell$-torsion point $z_0(y) \in \calE_{y}$.    
Clearly, this will a priori depend on the chosen path $s$. The point  $z_0(y)$ will be independent of this choice if all elements in the monodromy group  $\Gamma(\calE)$ fix the point 
$(a+b)/\ell\in \ZZ/\ell + \ZZ/\ell$ (where we have chose a fixed representation $\Gamma(\calE) \to \SL(2,\ZZ)$ and hence consider $\Gamma(\calE)$ as a subgroup of $\SL(2,\ZZ)$). Given this observation, the 
claim now follows immediately from the definition of the group $\Gamma_m(n)$. The converse implication (2) to (1) follows by the same argument. 
\end{proof}

As a consequence of the above discussion we can now obtain some first results on the torsion
Mordell-Weil group.

\begin{lem}
\label{lem:G3MW}
Let $\calE$ be a member of the family $iii)$. Then the following holds:
\begin{itemize}
\item[\rm{(1)}] $-\id \notin \Gamma(\calE)$,
\item[\rm{(2)}] $\ZZ/3\ZZ \subset \MW(\calE)$. 
\end{itemize}
Moreover, for a generic element  $\calE$ the equality $\MW(\calE)= \ZZ/3\ZZ$ holds.
\end{lem}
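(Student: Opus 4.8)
The plan is to prove assertion (2) first, deduce (1) from it via Lemma~\ref{coxparry}, and finally establish the exact equality $\MW(\calE)=\ZZ/3\ZZ$ for a generic element; the first two statements will in fact hold for \emph{every} member of family $iii)$.

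For (2) I would exhibit an explicit $3$-torsion section of the Weierstra{\ss} model of family $iii)$. Reducing the affine equation $y^2=4x^3-g_2x-g_3$, with $g_2=3\alpha_2(\alpha_2^3+8\beta_6)$ and $g_3=\alpha_2^6-20\alpha_2^3\beta_6-8\beta_6^2$, to short Weierstra{\ss} form, a point $P\neq O$ is $3$-torsion precisely when its $x$-coordinate is a root of the $3$-division polynomial $48x^4-24g_2x^2-48g_3x-g_2^2$. A direct substitution shows that $x=\tfrac32\alpha_2^2$ satisfies this identically in $t$, and that there $4x^3-g_2x-g_3=8(\alpha_2^3-\beta_6)^2$. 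Hence $P=\bigl(\tfrac32\alpha_2^2,\,2\sqrt2(\alpha_2^3-\beta_6)\bigr)$ is a section meeting every smooth fibre in a point of order exactly $3$ (it is a finite point, so distinct from the zero section, and its $x$-coordinate kills the division polynomial), so $\ZZ/3\ZZ\subset\MW(\calE)$. Assertion (1) then follows from Lemma~\ref{coxparry} with $m=1$, $n=3$: since $\ZZ/3\ZZ\subset\MW(\calE)$, a conjugate of $\Gamma(\calE)$ lies in $\Gamma_1(3)$, and as $-\id$ has upper-left entry $-1\not\equiv1\pmod3$ it is not contained in $\Gamma_1(3)$; being central it then cannot lie in $\Gamma(\calE)$ either.

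For the \emph{moreover}, let $\calE$ be generic in family $iii)$, so $\alpha_2,\beta_6$ are coprime; by the Tate algorithm (Table~\ref{table:tate}) the singular fibres are $6I_3+6I_1$, whence $R(\calE)=6A_2$, and the modular monodromy is $\barGamma_1(3)$ by Proposition~\ref{weier}. First, $\rank\MW(\calE)=0$: Shioda--Tate gives $\rho(\calE)=2+6\cdot2+\rank\MW(\calE)=14+\rank\MW(\calE)$, while a generic member of the Shimada stratum with $R=6A_2$ has $\rho=\rho_{\tr}=14$, since that stratum has dimension $18-\rank R=6$, equal to $\dim L(C(\calE))$ of family $iii)$ (Lemma~\ref{dimLoc}), so the generic member of the latter is generic in the former. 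Second, $\MWt(\calE)\cong L(\calE)/R(\calE)$ is a subgroup of $D(6A_2)\cong(\ZZ/3\ZZ)^6$, hence elementary abelian $3$-torsion; it contains $\ZZ/3\ZZ$ by (2), and if it contained $(\ZZ/3\ZZ)^2$ then Lemma~\ref{coxparry} with $m=n=3$ would put a conjugate of $\Gamma(\calE)$ inside $\Gamma(3)$, forcing $\barGamma(\calE)\subseteq\barGamma(3)$; but $\barGamma_1(3)\not\subseteq\barGamma(3)$ (for instance $(\begin{smallmatrix}1&1\\0&1\end{smallmatrix})\in\Gamma_1(3)\setminus\Gamma(3)$), a contradiction. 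Thus $\MWt(\calE)=\ZZ/3\ZZ$, and together with the vanishing of the free rank this yields $\MW(\calE)=\ZZ/3\ZZ$.

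The only genuinely computational step is verifying that $x=\tfrac32\alpha_2^2$ annihilates the $3$-division polynomial identically (equivalently, that $4x^3-g_2x-g_3$ is a perfect square there); everything else is the lattice bookkeeping recalled in Section~\ref{sec:ellfibered} together with the torsion/monodromy dictionary of Lemma~\ref{coxparry}. I expect that computation to be the main, but entirely routine, obstacle, with a minor secondary point being the need to confirm that the section $P$ really has order $3$ on every smooth fibre rather than degenerating to the zero section --- which is clear since its $x$-coordinate is a polynomial, never $\infty$.
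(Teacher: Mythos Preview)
Your proof is correct and takes a genuinely different route for parts (1) and (2). The paper realises a generic member of family $iii)$ as the pullback along $j_\calE$ of the rational elliptic surface $\bar\calE$ with Weierstra{\ss} datum $\bar g_2=3z_1^3(z_1+8z_0)$, $\bar g_3=z_1^4(z_1^2-20z_1z_0-8z_0^2)$ and fibre configuration $IV^*+I_3+I_1$; this surface is no.\,69 in the Oguiso--Shioda list, hence has $\MWt=\ZZ/3$ and $-\id\notin\Gamma$, and both properties are inherited under pullback (torsion sections pull back, and the monodromy of the pullback is contained in that of $\bar\calE$). Your approach instead exhibits the $3$-torsion section $P=\bigl(\tfrac32\alpha_2^2,\,2\sqrt2(\alpha_2^3-\beta_6)\bigr)$ explicitly and then invokes Lemma~\ref{coxparry} to get $\Gamma(\calE)\subset\Gamma_1(3)$ and hence $-\id\notin\Gamma(\calE)$. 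Your argument is more self-contained (no external classification needed) at the cost of a polynomial verification; the paper's argument is more structural and explains \emph{why} the section exists, namely because the whole family is pulled back from a fixed rational modular surface.

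For the \emph{moreover} clause the two proofs are essentially the same --- both use that larger Mordell--Weil torsion would force $\barGamma(\calE)$ into a proper subgroup of $\barGamma_1(3)$, contradicting the canonical factorisation for generic $j_\calE$. Your version is in fact slightly more complete: you spell out the rank-zero step via Shioda--Tate and the dimension count $\dim L(6I_3+6I_1)=6=18-\rank(6A_2)$, whereas the paper's proof treats only $\MWt$ and leaves the vanishing of the free rank implicit.
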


\begin{proof}
Consider the following Weierstra{\ss} datum of a rational elliptic surface $\bar\calE$ in homogeneous
coordinates $z_1,z_0$
\[
\bar g_2=3z_1^3(z_1+8z_0),\quad
\bar g_3=z_1^4(z_1^2-20z_1z_0 - 8z_0^2).
\]
A generic member $\calE$ of family $iii)$ is given by coprime $\beta_6$ and $\alpha_2$.
To compute the datum of the pull-back of $\bar\calE$ along $j_\calE$ we plug in $\alpha_2^3$ and 
$\beta_6$ into $z_1$ and $z_0$ resp.\ resulting in 
\[
g'_2=3\alpha_2^9(\alpha_2^3+8\beta_6),\quad
g'_3=\alpha_2^{12}(\alpha_2^6-20\alpha_2^3\beta_6 - 8\beta_6^2).
\]
The proper Weierstra{\ss} datum of the normalised and possibly blown down pull-back
is then 
\[
g_2=3\alpha_2(\alpha_2^3+8\beta_6),\quad
g_3=\alpha_2^6-20\alpha_2^3\beta_6 - 8\beta_6^2
\]
which is exactly that of $\calE$. By the Tate algorithm we find that the fibre configuration  on the rational elliptic surface $\bar\calE$ is
$IV^*, I_1,I_3$.

Due to the singular fibres the trivial lattice is $E_6+A_2$, which implies that
the surface is no.69 in the list of Oguiso-Shioda \cite{os} and thus has
Mordell-Weil torsion $\ZZ/3$. In particular $-\id$ does not belong to the monodromy.

Since torsion sections are pulled back to torsion sections, the torsion
Mordell-Weil group of $\calE$ also contains $\ZZ/3$.
Moreover, the monodromy group of $\calE$ is generated by the monodromy
elements of the rational elliptic surface $\bar\calE$  associated to loops liftable along $j_\calE$.
So the monodromy of $\calE$ is contained in that of the rational elliptic surfacer $\bar\calE$ 
and hence does not contain $-\id$.

Conversely, the generic $\calE$ does not have torsion Mordell-Weil properly containing
$\ZZ/3$. Otherwise $j(\calE)$ factors through $\jGamma$ for some $\barGamma$
properly contained in $\barGamma_1(3)$. This contradicts 
$\jGamma\circ j_\calE$ being the canonical factorisation.
\medskip

In the above arguments we have used that $\calE$ is a generic element of the family. It thus remains to prove that 
$-\id$ is never contained in the monodromy group
and $\ZZ/3$ is always contained in the torsion Mordell-Weil. But this both
follows by semicontinuity, since 
the monodromy group can only get smaller under specialization and the torsion Mordell-Weil can only get bigger. 
\end{proof}

\begin{lem}
\label{mwtorsion6}
Let $\calE$ be a member of family $ii)$. Then the following holds:
\begin{itemize}
\item[\rm{(1)}] If $\alpha_4$ or $\beta_4$ is a perfect square, then $-\id \notin \Gamma(\calE)$.
\item[\rm{(2)}] In the two cases the generic element has $\MW(\calE)=\ZZ / 2\ZZ$ and $\MW(\calE)=\ZZ / 4\ZZ$ respectively.
\end{itemize}
\end{lem}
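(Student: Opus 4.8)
The plan is to follow the strategy of Lemma~\ref{lem:G3MW}: for a generic member $\calE$ of each of the two subfamilies one realises $\calE$ as the normalised (and possibly blown down) pull-back along $j_\calE$ of an explicit rational elliptic surface $\bar\calE$, transfers the torsion Mordell--Weil group and the property ``$-\id\notin\Gamma$'' from $\bar\calE$ to $\calE$, and then handles the remaining members of family $ii)$ by semicontinuity.

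First I would substitute $\alpha_4=\gamma_2^2$, respectively $\beta_4=\delta_2^2$, into the Weierstra{\ss} datum of family $ii)$ and run the Tate algorithm on $\Delta=\alpha_4(\alpha_4-4\beta_4)\beta_4^4$. Since the poles of $\jGamma$ at $z=0,4,\infty$ have the unequal orders $1,1,4$ (see \eqref{row2}), squaring $\alpha_4$ turns the four $I_1$ fibres over $z=0$ into two $I_2$ fibres, giving configuration $2I_2+4I_1+4I_4$, while squaring $\beta_4$ turns the four $I_4$ fibres over $z=\infty$ into two $I_8$ fibres, giving $8I_1+2I_8$; it is this asymmetry of the two hypotheses that produces the two different torsion groups. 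For the first case the seed surface is the $\Gamma_1(4)$-modular surface $\bar\calE_1$, with configuration $I_1^*+I_1+I_4$ (root lattice $D_5+A_3$, so $\bar\calE_1$ is extremal with $\MW(\bar\calE_1)=\ZZ/4\ZZ$) and $j$-invariant $\jGamma$; for the second case one takes the quadratic twist $\bar\calE_2$ of $\bar\calE_1$ by the coordinate function $z$ (which de-stars the $I_1^*$ over $z=0$ and stars the $I_4$ over $z=\infty$), with configuration $I_1+I_1+I_4^*$ (root lattice $D_8$, $\MW(\bar\calE_2)=\ZZ/2\ZZ$). The monodromy of $\bar\calE_1$ is $\Gamma_1(4)$ by its modular description, hence torsion free; and the monodromy of $\bar\calE_2$ is generated by $-\sigma_1,\sigma_2$ where $\sigma_1,\sigma_2$ freely generate $\Gamma_1(4)$, hence is again free; in particular $-\id\notin\Gamma(\bar\calE_i)$ in both cases. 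Finally, the pull-back of $\bar\calE_i$ along $j_\calE$ --- which over $z=0$, respectively over $z=\infty$, is ramified of order $2$, so that the $*$-fibre of $\bar\calE_i$ is de-starred into an $I_2$-, respectively $I_8$-fibre --- normalises back to $\calE$; one checks this with the Tate algorithm and by comparing $j$-invariants, exactly as in Lemma~\ref{lem:G3MW}.

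Since torsion sections pull back to torsion sections we get $\MW(\calE)\supseteq\MW(\bar\calE_i)$, and since the monodromy of $\calE$ is generated by the monodromies of those loops of the base of $\bar\calE_i$ that lift along $j_\calE$ we get $\Gamma(\calE)\subseteq\Gamma(\bar\calE_i)$, hence $-\id\notin\Gamma(\calE)$; this is part~(1), and it extends to the further (more special) members of family $ii)$ with $\alpha_4$ or $\beta_4$ a perfect square because under specialisation the monodromy group can only shrink. For a \emph{generic} member the liftable loops already generate all of $\Gamma(\bar\calE_i)$, so $\Gamma(\calE)=\Gamma(\bar\calE_i)$; since by Lemma~\ref{coxparry} the torsion Mordell--Weil group is determined by the monodromy group, $\MW(\calE)=\MW(\bar\calE_i)$, which gives the values $\ZZ/4\ZZ$ and $\ZZ/2\ZZ$ in part~(2). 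A strictly larger torsion group is impossible for a generic member, for by Lemma~\ref{coxparry} it would make $j(\calE)$ factor through a Belyi map of index $>6$, contradicting $\barGamma(\calE)=\barGamma_1(4)$.

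I expect the main obstacle to be part~(1) in the case $\MW=\ZZ/2\ZZ$. There Lemma~\ref{coxparry} only yields $\Gamma(\calE)\subseteq\Gamma_1(2)$, which \emph{does} contain $-\id$, so the statement is not formal: it really uses that the seed $\bar\calE_2$, although it carries an additive $I_4^*$ fibre and is not a modular surface in the naive sense, nevertheless has torsion-free monodromy. Pinning down the correct quadratic twist and verifying --- e.g.\ via \cite{os} and the classification of the lifts of $\barGamma_1(4)$ to $\slz$ --- that its monodromy is indeed the lift not containing $-\id$ is the delicate step; the rest is bookkeeping, in particular keeping straight which of the two hypotheses leads to which configuration and torsion group.
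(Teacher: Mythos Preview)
Your approach is essentially the same as the paper's: both realise $\calE$ as the (normalised) pull-back along $j_\calE$ of a rational elliptic surface with three singular fibres, read off the torsion Mordell--Weil group from the Oguiso--Shioda list (nos.\ $64$ and $72$, i.e.\ root lattices $D_8$ and $D_5+A_3$), argue that the seed monodromy is torsion-free because it is free on two generators, and transfer both pieces of information along the pull-back; the generic upper bound on the torsion then comes from the canonical factorisation of $j(\calE)$. Your write-up is somewhat more explicit than the paper's --- you spell out which hypothesis produces which configuration, describe the second seed as a quadratic twist of the first, and note that the delicate point is the $\ZZ/2$ case --- but the substance is the same.

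Two small remarks. First, calling $\bar\calE_1$ ``the $\Gamma_1(4)$-modular surface'' is slightly loose terminology, but your actual claim (that its monodromy group is $\Gamma_1(4)$) is correct and follows, as you indicate, from Lemma~\ref{coxparry} together with $-\id\notin\Gamma(\bar\calE_1)$. Second, your assignment $\alpha_4=\gamma_2^2\Rightarrow\MW=\ZZ/4$ and $\beta_4=\delta_2^2\Rightarrow\MW=\ZZ/2$ is the one consistent with the later Proposition~\ref{Gamma04'} and with the discriminant $\Delta=\alpha_4(\alpha_4-4\beta_4)\beta_4^4$; the ``respectively'' in the lemma statement as printed appears to list the two torsion groups in the opposite order.
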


\begin{proof}
In both cases the elliptic fibration is a pull-back along $j_\calE$ 
of degree four of a rational elliptic fibration whose fibre configuration is one of the following
\[
I_4^* + 2\ I_1,\quad 
I_1^* + I_4 + I_1    
\]
and with two simple ramification points over the $*$-fibre.
Again, it suffices to investigate the rational
elliptic fibrations.
The monodromy is freely generated by two elements, so $-\id$ does not 
belong to the monodromy, since it is torsion.

The surfaces occur as nos. 64,72 in the list of Oguiso-Shioda \cite{os} and have
Mordell-Weil torsion $\ZZ/2$
and $\ZZ/4$ respectively. So via the pull-back these groups are contained
in the Mordell-Weil torsion.
Equality holds for all $j_\calE$
such that $\jGamma\circ j_\calE$ is the canonical factorisation, hence generically.
\end{proof}

\begin{lem}
\label{minusId}
If $\calE$ is a generic member of either family $i)$ or $ii)$, then $-\id\in\Gamma(\calE)$.
\end{lem}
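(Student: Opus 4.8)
The plan is to reduce the statement, for each of the two families, to producing a \emph{single} member whose monodromy group contains the central element $-\id$, and then to obtain such a member by degenerating to a surface carrying an $I_0^*$ fibre, whose local monodromy is exactly $-\id$ by Table \ref{table:kodaira}.

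First I would set up the semi-continuity. Family $i)$ (resp.\ $ii)$) is parametrised by an irreducible variety of pairs of binary forms $(\alpha_4,\beta_4)$ of degree $4$; by the semi-continuity of the monodromy group recalled in Section \ref{sec:stratifications} (cf.\ \eqref{closure}) there is a value $\Gamma_{\mathrm{gen}}$, attained on a dense open subset, such that the monodromy group of \emph{every} member of the family is conjugate into $\Gamma_{\mathrm{gen}}$. Since $-\id$ is central in $\SL(2,\ZZ)$, any conjugate of a subgroup containing $-\id$ again contains $-\id$; hence it suffices to find one (possibly special) member of the family whose monodromy group contains $-\id$, and this forces $-\id\in\Gamma_{\mathrm{gen}}=\Gamma(\calE)$ for the generic member $\calE$.

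To produce such a member I would take $\alpha_4=x\widetilde\alpha$ and $\beta_4=x\widetilde\beta$, with $x$ a linear form and $\widetilde\alpha,\widetilde\beta$ generic cubic forms, pairwise coprime and coprime to $x$, and substitute into the Weierstra{\ss} data of the relevant row of Table \ref{table:weierstrassfamilies}. A direct computation of vanishing orders at the point $p=\{x=0\}$ gives, in both families, $(\nu_2,\nu_3,\nu_\Delta)=(2,3,6)$ --- one only has to observe that the quadratic factors occurring in $g_2$ and $g_3$ (namely $\alpha_4^2-9\beta_4^2$ in case $i)$, and $\alpha_4^2-4\alpha_4\beta_4+\beta_4^2$, $2\alpha_4^2-8\alpha_4\beta_4-\beta_4^2$ in case $ii)$) do not acquire a zero at $p$ for a generic choice of $\widetilde\alpha,\widetilde\beta$ --- while at all other points the vanishing orders, and hence the fibre types, are as for a generic member of the family (only of type $I_\nu$) and the datum is minimal and non-isotrivial, so defines a point of $\calF'$ lying in the family. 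By the Tate algorithm (Table \ref{table:tate}) the fibre over $p$ is then of type $I_0^*$, so $-\id$ is the local monodromy around $p$ and lies in the monodromy group of this member. Combined with the previous paragraph this proves the lemma.

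The argument has no real obstacle beyond the two (routine) Tate computations and the check that the degenerate datum still satisfies the openness conditions defining $V'$; the point worth emphasising is the choice of degeneration. In particular one cannot simply read off $-\id$ from the Mordell--Weil torsion: for family $i)$ the generic fibration has full $2$-torsion, yet $-\id$ acts trivially on $2$-torsion, so this does not distinguish the two lifts of $\barGamma(2)$ to $\SL(2,\ZZ)$; and Lemma \ref{mwtorsion6} shows that on the proper subloci of family $ii)$ where $\alpha_4$ or $\beta_4$ is a perfect square the monodromy is freely generated and hence omits $-\id$. Thus the assertion is genuinely about the \emph{generic} member, and the degeneration producing an $I_0^*$ fibre is exactly what makes it work.
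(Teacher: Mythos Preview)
Your proof is correct and essentially identical to the paper's own argument: both use semi-continuity to reduce to exhibiting a single special member, obtained by letting $\alpha_4$ and $\beta_4$ share a common linear factor (your $x$, the paper's $\gamma_1$), and then apply the Tate algorithm to see an $I_0^*$ fibre there. Your write-up is slightly more explicit about the vanishing-order computation and about why centrality of $-\id$ handles the conjugacy ambiguity, and your closing paragraph explaining why one cannot read this off from Mordell--Weil torsion is a nice addition, but the mathematical content is the same.
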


\begin{proof}
Here we use the semicontinuity argument for the monodromy in the other
direction. Since the group can only get smaller under specialisation, the
generic monodromy for the family contains $-\id$, if it does so for some member.

If $\alpha_4=\alpha_3\gamma_1$ and $\beta_4=\beta_3\gamma_1$ with
$\alpha_3,\beta_3,\gamma_1$ pairwise coprime, then the Tate algorithm shows
that for either family we get an $I_0^*$ fibre corresponding to $\gamma_1$.
Hence $-\id$ is in the monodromy of such a special member.
\end{proof}

\section{Classification for $\jGamma$ of low degree}
\label{sec:low}

We are now ready to classify the ambi-typical strata with $\barGamma$ of index at most $6$.
At the same time we determine the corresponding root lattices and Mordell-Weil torsion.

\begin{pro}
\label{Gamma12}
There is a unique ambi-typical stratum with modular monodromy $\barGamma_1(2)$.
Its monodromy group is $\Gamma_1(2)$.
A generic element of this stratum has the following root lattice and Mordell-Weil torsion:
\[
8 A_1, \, \ZZ/2\ZZ.
\]
\end{pro}

\begin{proof}
Since $-\id$ is the only $2$-torsion element in $\slz$, there is no splitting map to $\slz$
from any subgroup of $\pslz$ containing $2$-torsion such as $\barGamma_1(2)$.
In particular, every $\barGamma_1(2)$
monodromy stratum is actually a $\Gamma_1(2)$ monodromy stratum.

With our explicit knowledge of $\jGamma$ and $j_{\calE}$ for the family $v)$
we can use the Tate algorithm and find that the generic fibre configuration is $8I_1+8I_2$.
By Lemma \ref{dimLoc} we find that the dimension of the configuration locus is $\dim L(8I_1+8I_2)=10$.
The corresponding generic root lattice is $8A_1$.  The Shimada stratum with this root lattice and saturation given by the generic element of family $v)$
also has dimension 10.   
This stratum and family $v)$ therefore determine the same irreducible closed subset of $\calF'$.
It is ambi-typical by the argument of \eqref{dim2}, since $\Gamma_1(2)$ belongs to the second
column in Table \ref{table:maxdim} and thus $m(\Gamma_1(2))\leq 10$ by Lemma \ref{maxdim}.

Any other Shimada stratum with $\barGamma_1(2)$ monodromy corresponds by Proposition \ref{weier} 
to a stratum in family $v)$ and must be of dimension less than $10$.
But its monodromy group must still be $\Gamma_1(2)$, so it belongs to the monodromy 
stratum above, see \eqref{closure},
and cannot be a monodromy stratum of its own. 

By Lemma \ref{coxparry} the Mordell-Weil torsion always contains $\ZZ/2\ZZ$ and for a generic element this is equal to $\ZZ/2\ZZ$. The proof is analogous to that in the proof of Lemma \ref{lem:G3MW}.
\end{proof}

\begin{pro}
\label{Gamma2}
There is a unique ambi-typical stratum with modular monodromy $\barGamma(2)$ 
and monodromy group $\Gamma(2)$.
A generic element of this stratum has the following root lattice and Mordell-Weil torsion:
\[
12 A_1, \, \ZZ/2\ZZ \times\ZZ/2\ZZ.
\]
\end{pro}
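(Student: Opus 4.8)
The plan is to mimic the proof of Proposition \ref{Gamma12}, now using family $i)$ of Table \ref{table:weierstrassfamilies}, which by Proposition \ref{weier} realises every elliptic surface with modular monodromy $\barGamma(2)$. First I would run the Tate algorithm on a generic member, i.e.\ on $g_2=\alpha_4^2+3\beta_4^2$, $g_3=\beta_4(\alpha_4^2-\beta_4^2)$ with $\alpha_4,\beta_4$ generic coprime binary quartics, so $\Delta=\alpha_4^2(\alpha_4-3\beta_4)^2(\alpha_4+3\beta_4)^2$. At each of the $4+4+4$ simple zeros of $\alpha_4$, $\alpha_4-3\beta_4$, $\alpha_4+3\beta_4$ one checks $\nu_2=\nu_3=0$ and $\nu_\Delta=2$, giving a fibre of type $I_2$, while the zeros of $\beta_4$ carry $j=1$ with $\Delta\neq0$ and hence smooth fibres. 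So the generic configuration is $12\,I_2$, there are no $*$-fibres and no fibres of type $II,III,IV$, and the generic root lattice is $12A_1$. (Equivalently, by Table \ref{table:j-functions} the three poles $z=0,\pm3$ of $\jGamma$, each of order $2$, pull back under a generic degree-$4$ map $j_\calE$ to twelve double poles of $j(\calE)$.)

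Next I would compare dimensions. By Lemma \ref{dimLoc}, $\dim L(12I_2)=12+0-6=6$, and by \eqref{rem:dimcomp} the Shimada stratum with root lattice $12A_1$ also has dimension $18-12=6$. Since a configuration stratum is open and dense both in a unique monodromy stratum and in a unique Shimada stratum, the component of $L(12I_2)$ meeting family $i)$ is dense in a monodromy stratum and in the Shimada stratum with $R=12A_1$, so these share their (irreducible, $6$-dimensional) closure. By Lemma \ref{minusId} a generic member of family $i)$ has $-\id$ in its monodromy, so the generic monodromy group of this closure is the full preimage in $\slz$ of $\barGamma(2)$, namely $\Gamma(2)$ (note $-\id\in\Gamma(2)$). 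Finally $\Gamma(2)$ occupies the index-$6$ column of Table \ref{table:maxdim}, so $m(\Gamma(2))\leq 6$, and \eqref{dim2} shows this closure is an ambi-typical stratum. The Mordell--Weil torsion then follows from Lemma \ref{coxparry} with $m=n=2$: since $\Gamma(2)=\Gamma(2)\cap\Gamma_1(2)$ is the monodromy group, $\MW(\calE)\supseteq\ZZ/2\ZZ\times\ZZ/2\ZZ$, and equality holds generically by the argument used in Lemma \ref{lem:G3MW} (a strictly larger torsion subgroup would force $j(\calE)$ to factor through $\jGamma$ for a $\barGamma$ properly inside $\barGamma(2)$, contradicting the canonicity of $\jGamma\circ j_\calE$).

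It remains to prove uniqueness. Any ambi-typical stratum with modular monodromy $\barGamma(2)$ has, by Proposition \ref{weier}, its generic member in family $i)$, hence dimension at most $\dim L(12I_2)=6$; in dimension $6$ it must be the Shimada stratum with $R=12A_1$ just found, which is unique since (by the root-lattice uniqueness of Theorem \ref{teo:liststrata2}) $12A_1$ is not among the exceptional root lattices. A stratum of dimension $<6$ coming from a proper degeneration inside family $i)$ whose monodromy group is again $\Gamma(2)$ lies, by the maximality property \eqref{closure}, in the closure of the monodromy stratum above, hence cannot be its own monodromy stratum and so is not ambi-typical.

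I expect the genuine obstacle to be the remaining possibility: unlike $\barGamma_1(2)$, the group $\barGamma(2)$ is torsion free and therefore admits index-$2$ lifts in $\slz$, so one must rule out an ambi-typical stratum whose monodromy group maps onto $\barGamma(2)$ but omits $-\id$. I would dispatch this by a degeneration argument in the spirit of Proposition \ref{badfibres}: in every degeneration of family $i)$ that lowers the dimension of the stratum, either an $I_0^*$-fibre appears, forcing $-\id$ into the monodromy and reducing to the previous case, or the degree of $j_\calE$ drops (equivalently $j_\calE$ acquires a proper left factor), so that the canonical factorisation of $j(\calE)$ — and hence its modular monodromy — is no longer $\barGamma(2)$.
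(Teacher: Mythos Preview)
Your first two paragraphs reproduce the paper's argument essentially verbatim: identify the generic configuration $12I_2$ in family $i)$, use Lemma~\ref{minusId} to see $-\id\in\Gamma$ so the generic monodromy is the full preimage $\Gamma(2)$, match dimensions via Lemma~\ref{dimLoc} and \eqref{rem:dimcomp}, and conclude by \eqref{dim2} using the bound $m(\Gamma(2))\leq 6$ from Table~\ref{table:maxdim}. The uniqueness argument in your third paragraph is also the paper's, modulo one issue: invoking Theorem~\ref{teo:liststrata2} is circular, since that theorem is part of the end result being assembled from these propositions. You do not need it---the paper simply observes that any Shimada stratum with $\Gamma(2)$ monodromy lies in family $i)$ (Proposition~\ref{weier}), hence has dimension $\leq 6$, and in dimension $<6$ lies in the closure of the $6$-dimensional $\Gamma(2)$ monodromy stratum, so by \eqref{closure} cannot be a monodromy stratum in its own right.

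Your fourth paragraph, however, is addressing a non-issue. The proposition is stated for monodromy group $\Gamma(2)$, and $\Gamma(2)$ \emph{contains} $-\id$. The possibility that $\Gamma\to\barGamma(2)$ splits (i.e.\ $-\id\notin\Gamma$) concerns a \emph{different} monodromy group and is treated separately in Proposition~\ref{Gamma2'}; it is not part of what you must prove here. Moreover, the dichotomy you propose (``either an $I_0^*$-fibre appears or $\deg j_\calE$ drops'') is false: for instance two $I_2$ fibres over distinct preimages of the same pole of $\jGamma$ can coalesce into an $I_4$, which neither introduces a $*$-fibre nor changes $\deg j_\calE$. So the argument you sketch would not go through even if it were needed.
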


\begin{proof}
$\Gamma(2)$ is the  pre-image
of $\barGamma(2)$ in $\slz$ and contains $-\id$,
thus by Lemma \ref{minusId} it is the monodromy group of the generic surface in family $i)$.
We can now argue very much as in the proof of the previous proposition.
The generic fibre configuration is $12I_2$ which gives a generic root lattice $12A_1$,
so both the corresponding strata are of dimension $6$.

On the other hand $\Gamma(2)$ belongs to Column 5 in Table \ref{table:maxdim}.
Hence $m(\Gamma(2))\leq 6$ by Lemma \ref{maxdim}, and the irreducible
closed subset of $\calF'$ corresponding to family $i)$ is ambi-typical by \eqref{dim2}.

Any other Shimada stratum with $\Gamma(2)$ monodromy corresponds
to a stratum in family $i)$ by Proposition \ref{weier} and must be of dimension less than $6$.
Due to \eqref{closure} it belongs to the monodromy 
stratum above, and can not be a monodromy stratum of its own. 

The claim about the Mordell-Weil torsion again follows as in the previous proof.
\end{proof}

\begin{pro}
\label{Gamma04}
There is a unique ambi-typical stratum with modular monodromy $\barGamma_1(4)$ 
and monodromy group $\Gamma_0(4)$.
A generic element of this stratum has the following root lattice and Mordell-Weil torsion:
\[
4 A_3, \, \ZZ/2\ZZ.
\]
\end{pro}

\begin{proof}
$\Gamma_0(4)$ 
is the pre-image in $\slz$ of $\barGamma_1(4)=\barGamma_0(4)$
(see Remark \ref{Gamma0}), thus contains $-\id$ and
-- by Lemma \ref{minusId} -- is the monodromy group of the generic surface in family $ii)$.
The argument then is as above, only applied to the family $ii)$.
\end{proof}

To complete the classification of monodromy strata for the two torsion free subgroups
of $\pslz$, we have to exploit the fact, that for fibrations with monodromy $\Gamma$ 
not containing $-\id$ the quotient map $\slz \to \pslz$ defines an isomorphism $\Gamma\cong\barGamma$.

\begin{pro}
\label{Gamma2'}
There is a unique ambi-typical stratum with modular monodromy $\barGamma(2)$ 
and monodromy group $\Gamma$ such that $\Gamma \to \barGamma(2)$ is an isomorphism.
The generic root lattice and Mordell-Weil torsion are
\[
2 A_3 + 8A_1, \, \ZZ/2\ZZ \times\ZZ/2\ZZ.
\]
\end{pro}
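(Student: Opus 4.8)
The strategy parallels Propositions~\ref{Gamma2} and~\ref{Gamma04}, but now we must follow the copy of $\barGamma(2)$ in $\slz$ which does \emph{not} contain $-\id$. By Proposition~\ref{weier} any Jacobian fibration with modular monodromy $\barGamma(2)$ has a Weierstra{\ss} datum in family $i)$. An $I_0^*$ fibre has local monodromy $-\id$ (Table~\ref{table:kodaira}), so the requirement $-\id\notin\Gamma(\calE)$ forbids $I_0^*$ fibres; by the Tate algorithm applied to family $i)$ this forces $\alpha_4,\beta_4$ to be coprime and $\deg j_\calE=4$. When $\alpha_4,\beta_4$ are coprime all singular fibres are of type $I_k$, and since the three poles of $\jGamma$ have order $2$ (Table~\ref{table:j-functions}) every such $k$ is even; there are no $IV^*$ fibres because $\barGamma(2)$ has $e_3=0$ (Lemma~\ref{rami}). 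Thus a generic element has configuration $\sum I_{2m_i}$ with $\sum m_i=12$, and if it has $N$ singular fibres the root lattice has rank $24-N$, so by \eqref{rem:dimcomp} and Lemma~\ref{dimLoc} both the Shimada stratum and the configuration stratum have dimension $N-6$.

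The generic coprime member of family $i)$ has configuration $12I_2$ and monodromy $\Gamma(2)$ (Proposition~\ref{Gamma2}), so $\calE$ lies in a proper closed subfamily. Since $\jGamma$ is Galois over $X(1)$ with group $\pslz/\barGamma(2)\cong S_3$ permuting its three double poles, we may assume up to this deck symmetry that $\alpha_4=\tilde\alpha_2^2$ is the square of a squarefree quadratic and that $\beta_4,\alpha_4\mp 3\beta_4$ are squarefree and coprime to $\alpha_4$; the Tate algorithm then yields configuration $2I_4+8I_2$, root lattice $R=2A_3+8A_1$ of rank $14$, and dimension $18-14=4$. The key technical point is that on this subfamily $-\id$ leaves the monodromy, which I would prove exactly as in Lemmas~\ref{mwtorsion6} and~\ref{lem:G3MW}: $\calE$ is a pull-back along $j_\calE$ of a rational elliptic surface whose additive fibre is resolved by the ramification of $j_\calE$ over it, so the only source of $-\id$ in the monodromy of the rational surface disappears; hence $\Gamma(\calE)$ surjects onto $\barGamma(2)$ with trivial kernel, i.e.\ $\Gamma(\calE)\to\barGamma(2)$ is an isomorphism. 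For the Mordell--Weil torsion, $\Gamma(\calE)\subseteq\Gamma(2)=\Gamma_2(2)$ gives $\ZZ/2\ZZ\times\ZZ/2\ZZ\subseteq\MW(\calE)$ by Lemma~\ref{coxparry}, while any strictly larger torsion subgroup would place $\Gamma(\calE)$ into a $\Gamma_m(n)$ whose image in $\pslz$ is properly smaller than $\barGamma(2)$, contradicting that $\jGamma\circ j_\calE$ is the canonical factorisation; so generically $\MW(\calE)=\ZZ/2\ZZ\times\ZZ/2\ZZ$, and this together with $-\id\notin\Gamma(\calE)$ persists on the whole subfamily by semicontinuity.

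It remains to establish ambi-typicality and uniqueness. Let $\Gamma$ denote the resulting isomorphic lift. The $2I_4+8I_2$ subfamily is an irreducible, $4$-dimensional configuration stratum contained in a monodromy stratum $\calF'_{\Gamma,i}$, and $m(\Gamma)\le 6$ by Lemma~\ref{maxdim} (the column $\deg\jGamma=6$ of Table~\ref{table:maxdim} is valid for every lift of $\barGamma(2)$). To pin down $\dim\calF'_{\Gamma,i}=4$ I would rule out dimensions $6$ and $5$ using the numerics above: dimension $6$ forces $N=12$, i.e.\ configuration $12I_2$, which has monodromy $\Gamma(2)\ne\Gamma$; and dimension $5$ forces $N=11$, i.e.\ configuration $I_4+10I_2$, where the single $I_4$ prevents the relevant factor of $\Delta$ from being a perfect square, so the analogous pull-back analysis retains $-\id$ and the monodromy is again $\Gamma(2)\ne\Gamma$. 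Hence $\overline{\calF'_{\Gamma,i}}=\overline{\calFF'_{R,G,j}}$ with $R=2A_3+8A_1$ and the stratum is ambi-typical; any other Shimada stratum with isomorphic-lift monodromy again lies in family $i)$ with all fibres of type $I_{2m}$, so has dimension $<4$, and by \eqref{closure} it lies in the boundary of $\calF'_{\Gamma,i}$ and is not a monodromy stratum of its own. (The other Shimada stratum with root lattice $2A_3+8A_1$ arises from a $2I_4+8I_2$ degeneration that \emph{retains} $-\id$, which is the content of Theorem~\ref{teo:liststrata2}(2).) The main obstacle is precisely this case-by-case determination of whether $-\id$ survives in the monodromy of the various degenerations of family $i)$: the bound $m(\Gamma)\le 6$ alone does not separate the $4$-dimensional isomorphic-lift stratum from the $6$-dimensional $\Gamma(2)$-stratum, so an explicit monodromy (equivalently, Mordell--Weil) computation is unavoidable.
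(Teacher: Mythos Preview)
Your approach has the right ingredients but is missing the decisive forward implication: assuming the monodromy $\Gamma$ is an isomorphic lift of $\barGamma(2)$, one must show directly that (up to the $S_3$ deck action) $\alpha_4$ is a perfect square. The paper does this as follows. The splitting $\barGamma(2)\cong\Gamma\hookrightarrow\slz$ assigns to each of the three cusps of $X(\barGamma(2))$ a local monodromy class, either that of $(\begin{smallmatrix}1&2\\0&1\end{smallmatrix})$ or of $(\begin{smallmatrix}-1&-2\\0&-1\end{smallmatrix})$; in $\slz_{ab}\cong\ZZ/12\ZZ$ these map to $2$ and $8$, and since the three loops sum to zero in homology, exactly one cusp carries the ``$-$'' class. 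At a preimage of that cusp under $j_\calE$ of multiplicity $k$ the local monodromy is the $k$-th power, which corresponds to fibre type $I_{2k}^*$ when $k$ is odd; but the Tate algorithm (with coprime $\alpha_4,\beta_4$) gives $I_{2k}$, so $k$ must be even at every such preimage, forcing $\alpha_4=\gamma_2^2$. This single Tate-versus-monodromy comparison shows that the entire $\Gamma$-locus inside family $i)$ coincides with the irreducible $4$-dimensional closed set $\alpha_4=\gamma_2^2$, yielding ambi-typicality and uniqueness simultaneously.

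Your substitute arguments do not fill this gap. For the dimension-$5$ case your reasoning (``the analogous pull-back analysis retains $-\id$'') runs the implication backwards: the absence of the ramified-pull-back structure does not by itself show $-\id\in\Gamma(\calE)$. More seriously, your uniqueness claim (``any other Shimada stratum with isomorphic-lift monodromy has dimension $<4$'') is unjustified: there are other dimension-$4$ configurations with ten singular $I_{2m}$-fibres, namely $I_6+9I_2$ and the $2I_4+8I_2$ configuration with the two $I_4$'s lying over \emph{different} poles of $\jGamma$, and you give no argument excluding monodromy $\Gamma$ for these. Each such case would require precisely the Tate-versus-monodromy comparison above, so the ``unavoidable'' case analysis you anticipate collapses once that comparison is in hand.
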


\begin{rem}
The group $\Gamma$ in the proposition can be described as
\[
\Gamma= \left\{
\begin{pmatrix} a & b \\ c & d \end{pmatrix} \Big| \quad 
b,c \equiv 0 \mod 2,\quad a,d \equiv 1 \mod 4 \right\}.
\]
\end{rem}

\begin{proof}
By Proposition\ \ref{weier} the surfaces of any $\Gamma$ monodromy stratum are
contained in family $i)$ where $\jGamma$ by \eqref{row1} of table \ref{table:j-functions}
has three poles at $0$ and $\pm3$.
The splitting isomorphism gives a map from 
$\pi_1^{orb} (\HH_1/\bar{\Gamma})\cong \barGamma$
to $\Gamma\subset\slz$ and $\HH_1/\bar{\Gamma}$ is identified with the complement in $\PP^1$ 
of these poles.
The positive loop at each pole is mapped by $(\jGamma)_*$ to the conjugacy class of 
$\pm(\begin{smallmatrix} 1 & 2 \\ 0 & 1 \end{smallmatrix})$. 
Since $1$ is chosen to represent the class of $\pm(\begin{smallmatrix} 1 & 1 \\ 0 & 1 \end{smallmatrix})$  this is
further mapped to $2\in \ZZ/6\ZZ\cong\pslz_{ab}$.
Hence the splitting isomorphism associates to each
loop the conjugacy class of $(\begin{smallmatrix} 1 & 2 \\ 0 & 1 \end{smallmatrix})$ or
$(\begin{smallmatrix} -1 & -2 \\ 0 & -1 \end{smallmatrix})$ corresponding to $2$ and\ $8$
in $\ZZ/{12\ZZ}=\slz_{ab}$ respectively.
Now the sum of the three elements in $\slz_{ab}$ must be zero, since the sum in
homology of the three loops is zero.
Under the assumption of a splitting isomorphism we therefore 
can assume that the conjugacy
class associated to the loop around $0$ contains 
$(\begin{smallmatrix} -1 & -2 \\ 0 & -1 \end{smallmatrix})$
-- possibly after applying a deck transformation in $\barGamma(1)/\barGamma(2)$
of $j_{\barGamma(2)}$ acting by the full permutation group on $\{ 0, \pm 3 \}$.
 
We now consider the map $j_\calE= (\alpha_4:\beta_4)$ and try to understand the
possible fibres over pre-images of $0$.
Such a pre-image corresponds to a linear factor $\gamma_1$ of $\alpha_4$ which may
occur with multiplicity $1\leq k\leq 4$. It may occur in $\beta_4$ with multiplicity $l=0$
or $l=1$, as long as $k>l$. Indeed, in case $k\leq l$ the image would not be $0$, in case $k>l>1$
the Weierstra{\ss} coefficients $g_2,g_3$ would share a common factor of $\gamma_1^{12}$
which is not allowed.

In all possible combinations we can compute the local fibre type at the zero of $\gamma_1$
from the local monodromy.
Recall that  we made an assumption on the matrix associated to the loop around $0$ in the codomain of $j_\calE$.
Since the multiplicity of $j_\calE$ is $k-l$ at the zero of $\gamma_1$ in the domain of $j_\calE$, the matrix  associated to the loop around this
zero must be conjugate to $(\begin{smallmatrix} -1 & -2 \\ 0 & -1 \end{smallmatrix})^{k-l}$.
These monodromy matrices determine the local fibre type by Table  \ref{table:kodaira} in the way recorded in the
last row of Table \ref{table:types}.

The other row gives the local fibre types determined by the Tate algorithm:
The pole order of $j(\calE)$ is the product of the multiplicities of $j_\calE$ at the zero of $\gamma_1$
and of $\jGamma$ at $0$, hence $2(k-l)$.
The vanishing orders of $g_2,g_3$ at the zero of $\gamma_1$ are determined by row $i)$ in Table
\ref{table:weierstrassfamilies}, so for $l=0$ they do not vanish simultaneously, and $\nu_2=2,\nu_3=3$ for $l=1$.

\begin{table}[htb]
\label{table:types}
\[
\begin{array}{c|c|c|c|c|c|c|c|}
(k,l) & 1,0 & 2,0 & 3,0 & 4,0 & 2,1 & 3,1 & 4,1 \\
\hline
\text{Tate} & I_2 & I_4 & I_6 & I_8 & I_2^* & I_4^* & I_6^* \\
\text{cover} & I_2^* & I_4 & I_6^* & I_8 & I_2^* & I_4 & I_6^* 
\end{array}
\]
\caption{Comparisons of fibre type calculations}
\end{table}

Since the fibre types in both rows of Table \ref{table:types} must coincide, our assumption on the splitting implies that $k$ is even.
Hence we may conclude,
that $\alpha_4=\gamma_2^2$. 
This condition generically corresponds to a map $j_\calE$
branched outside the other poles of $\jGamma$.
Therefore $j_\calE$ induces a surjection on orbifold
fundamental groups and thus the monodromy group is $\Gamma$.

Generically there are $I_4$ fibres at the zeroes of $j_\calE$ and an $I_2$ fibre
at each of the four unramified pre-images of each of the other two poles $\pm3$ of $\jGamma$.
This yields generic fibre type $2I_4+8I_2$ and generic root lattice $2A_3+8A_1$
with corresponding strata of dimension $4$.
So we found the unique ambi-typical $\Gamma$ stratum of the claim, since every stratum
of root lattice type with constant $\Gamma$ monodromy was shown to belong
to the closed set in family $i)$ defined by $\alpha_4=\gamma_2^2$
and we can use conclusion \eqref{closure} again. 

The assertion about the Mordell-Weil torsion follows from Lemma \ref{coxparry}
since the modular monodromy is $\barGamma(2)$ and only contained in
$\barGamma_m(n)$ if $m,n$ divide $2$.
\end{proof}

\begin{rem}\label{rem:explanationShimadastrata}
Note that the generic root lattice and the Mordell-Weil torsion
do not necessarily determine a unique Shimada stratum.
In fact, Entry 95 in  Table 3 of Shimada \cite{shimada2}
shows that there are two families with root lattice $2 A_3 + 8A_1$ and Mordell-Weil torsion $ \ZZ/2\ZZ \times\ZZ/2\ZZ$, 
which can be distinguished \emph{algebraically}. That -- in the terminology of Shimada --
means, that the saturations of the root lattices are non-isomorphic for the two families.

Shimada also gives a recipe how to analyse such a situation in Section 6.1 and his Example
6.4 corresponding to number 91 in his table is similar to our case in many aspects.
By lattice theory the two possible saturations in case 95 correspond to two isotropy subgroups of the discriminant group of
$2A_3+8A_1$ up to isometry.
Shimada computes these and provides his result on his home page \cite{shimada3} . 
The subgroup belonging to our family consists of the trivial element and the isotropic elements in
$\operatorname{discr}(2 A_3 + 8A_1) =(\ZZ/4\ZZ)^2\times (\ZZ/2\ZZ)^8$ given by
\[
(0,0,1,1,1,1,1,1,1,1),\quad (2,2,0,0,0,0,1,1,1,1),\quad (2,2,1,1,1,1,0,0,0,0),
\]
where a basis for the discriminant group is chosen in the obvious way.
To prove that we are in that case requires a detailed analysis of the torsion sections and
their intersections with fibre components. We shall not give the details here.
\end{rem}

\begin{pro}
\label{Gamma04'}
There are two ambi-typical strata with modular monodromy $\barGamma_1(4)$ 
and monodromy group $\Gamma$ such that $\Gamma \to \barGamma_1(4)$ is an isomorphism.
Their generic root lattice and Mordell-Weil torsion are respectively
\begin{itemize}
\item[\rm{(1)}] $4A_3 + 2A_1, \, \ZZ/4\ZZ$, 
\item[\rm{(2)}] $2 A_7, \, \ZZ/2\ZZ$.
\end{itemize}
The two  corresponding subgroups in $\slz$ are not conjugate.
\end{pro}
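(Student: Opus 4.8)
The plan is to run the argument of Proposition~\ref{Gamma2'} with $\barGamma(2)$ replaced by $\barGamma_1(4)$. By Proposition~\ref{weier}, every monodromy stratum on which the monodromy group $\Gamma$ is constant and $\Gamma\to\barGamma_1(4)$ is an isomorphism consists of surfaces from family $ii)$ of Table~\ref{table:weierstrassfamilies}, so $j_\calE=(\alpha_4:\beta_4)$. Since $-\id\notin\Gamma$, I would compare, cusp by cusp of $\jGamma$, the fibre type given by the Tate algorithm on family $ii)$ with the one forced by the lifted local monodromy, exactly as in Table~\ref{table:types}; this should show that for the generic member either $\alpha_4$ or $\beta_4$ is a perfect square, and --- equivalently (cf.\ the proof of Lemma~\ref{mwtorsion6}) --- that $\calE$ is the pull-back along the degree-$4$ map $j_\calE$ of a \emph{rational} elliptic surface $\bar\calE$ with $j(\bar\calE)=\jGamma$, where $j_\calE$ has two simple ramification points over the (unique) cusp of $\jGamma$ carrying the $*$-fibre of $\bar\calE$.

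Next I would pin down $\bar\calE$. The Euler number $12$ of a rational elliptic surface, the pole orders $1,1,4$ of $\jGamma$ at its cusps $z=0,4,\infty$ (see \eqref{row2}), and $e_2=e_3=0$ (which makes all other fibres of $\bar\calE$ smooth), force \emph{exactly one} cusp to carry a $*$-fibre. Since $z\mapsto 4-z$ is a deck transformation of $\jGamma$ exchanging the two narrow cusps and fixing $z=\infty$, up to equivalence there remain only the two rational surfaces of Lemma~\ref{mwtorsion6}: the one with configuration $I_1^*+I_4+I_1$ (no.~$72$ of \cite{os}, Mordell--Weil torsion $\ZZ/4\ZZ$), whose $*$-fibre sits over a narrow cusp, and the one with $I_4^*+2I_1$ (no.~$64$ of \cite{os}, torsion $\ZZ/2\ZZ$), whose $*$-fibre sits over $z=\infty$. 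In the first case the even ramification of $j_\calE$ over that narrow cusp means $\alpha_4=\gamma_2^2$; the pull-back turns $I_1^*$ into $I_2$, so the generic configuration is $4I_4+2I_2+4I_1$, the generic root lattice is $4A_3+2A_1$, and by \eqref{rem:dimcomp} and Lemma~\ref{dimLoc} the corresponding stratum has dimension $4$. In the second case $\beta_4=\delta_2^2$, $I_4^*$ becomes $I_8$, the generic configuration is $2I_8+8I_1$, the root lattice is $2A_7$, and again the dimension is $4$. For a generic $j_\calE$ the branch points lie away from the other cusps and from $\jGamma^{-1}\{0,1\}$, so $j_\calE$ induces a surjection of orbifold fundamental groups and the monodromy group equals $\Gamma(\bar\calE)$; the latter is freely generated by two elements, hence torsion free, hence an isomorphic lift of $\barGamma_1(4)$. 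As in the proof of Proposition~\ref{Gamma2'}, each of these two closed irreducible loci is simultaneously the closure of a monodromy stratum (by \eqref{closure}, since constancy of $\Gamma$ singles it out inside family $ii)$) and of the Shimada stratum with the indicated root lattice, the two having equal dimension $4$; hence both loci are ambi-typical, and by \eqref{closure} no further ambi-typical stratum with such $\Gamma$ can occur. The Mordell--Weil torsion is read off from $\bar\calE$ via Lemma~\ref{coxparry}, arguing as in Lemma~\ref{lem:G3MW}, giving $\ZZ/4\ZZ$ and $\ZZ/2\ZZ$ respectively. Finally the two monodromy groups are not conjugate in $\slz$: by Lemma~\ref{coxparry}, $\ZZ/4\ZZ\subset\MW(\calE)$ is equivalent to $\Gamma(\calE)$ being conjugate into $\Gamma_1(4)$, which holds in the first case but fails in the second.

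The step I expect to be the real obstacle is the first, local one: making precise that $-\id\notin\Gamma$ forces $j_\calE$ to ramify with even multiplicity over one entire cusp of $\jGamma$ and imposes nothing over the other two --- that is, producing the $\barGamma_1(4)$-analogue of Table~\ref{table:types} --- and then matching the two admissible homological invariants (equivalently, the rational surfaces nos.~$64$ and $72$) with the correct squareness condition, so that the root lattices and torsions come out as stated and not interchanged. Everything afterwards (Tate-algorithm fibre configurations, dimension counts via \eqref{rem:dimcomp} and Lemma~\ref{dimLoc}, Mordell--Weil torsion, non-conjugacy) is routine and parallels the low-index cases already settled.
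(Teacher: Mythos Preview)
Your proposal is correct and follows essentially the same route as the paper's proof. Both arguments restrict to family $ii)$ via Proposition~\ref{weier}, split into two cases according to which cusp of $\jGamma$ carries the ``minus'' local monodromy, deduce from a Tate/monodromy comparison (the analogue of Table~\ref{table:types}) that $\alpha_4$, respectively $\beta_4$, must be a square, read off the generic fibre configurations $2I_2+4I_1+4I_4$ and $2I_8+8I_1$, and appeal to Lemma~\ref{mwtorsion6} for the Mordell--Weil torsion and non-conjugacy.

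The only difference is in how the case split is organised. The paper uses the abelianisation $\slz_{ab}\cong\ZZ/12$ and the sum-zero constraint on the images of the three cusp loops to see that, up to the deck transformation exchanging the narrow cusps, exactly two sign patterns survive; you instead phrase this as classifying the rational elliptic surfaces $\bar\calE$ with $j(\bar\calE)=\jGamma$ by their Euler number. These are equivalent: your ``exactly one $*$-fibre'' observation is the Euler-number shadow of the abelianisation constraint, and your two surfaces (Oguiso--Shioda nos.~64 and~72) are precisely the two homological invariants the paper singles out. One small point worth making explicit in your write-up is that the ``all three cusps carry a minus'' option, while compatible with the sum-zero constraint, forces $\alpha_4$, $\beta_4$, and $\alpha_4-4\beta_4$ all to be squares, hence lands in a zero-dimensional locus and does not contribute a stratum; in your framing this is automatic since the corresponding $\bar\calE$ would have Euler number $24$ and is not rational.
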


\begin{rem}
The group is $\Gamma_1(4)$ in case $(1)$ and in case $(2)$ can be given by
\[
\Gamma= \Big\{ \quad M \quad \Big| \quad  M \equiv
\begin{pmatrix} -1 & 1 \\ 0 & -1 \end{pmatrix}^k \mod 4, \quad k=0,1,2,3  \Big\}.
\]
\end{rem}

\begin{proof}
We argue as in the last proof, but with  family $ii)$ and the associated map $\jGamma$
which by \eqref{row2} has poles at $0$, $4$ and $\infty$.
The analysis of the splitting isomorphism again provides information about the monodromy
at these poles.
However, this time only the poles at $0$ and $4$ are equivalent under deck transformation,
so we have to consider two cases instead of one
\begin{enumerate}
\item
the conjugacy class associated to the pole at $0$ contains 
$(\begin{smallmatrix} -1 & -1 \\ 0 & -1 \end{smallmatrix})$,
\item
the conjugacy class associated to the pole at $\infty$ contains 
$(\begin{smallmatrix} -1 & -4 \\ 0 & -1 \end{smallmatrix})$.
\end{enumerate}
In the first case, we apply the Tate algorithm to $j_\calE= (\alpha_4:\beta_4)$ and compare local monodromies
at its zeroes. We infer $\alpha_4=\alpha_2^2$, then compute the generic fibre configuration
$2I_2+ 4 I_1 + 4 I_4$, the generic root lattice $4A_3+2A_1$ and the dimension $4$ of
the corresponding strata.
In the second case we look at the poles instead and get
$ 2I_8+ 8I_1$, $2 A_7$ and again dimension 4.

We can argue as before that these yield the ambi-typical strata of the claim and that there
are no more.

Lemma \ref{mwtorsion6} provides the Mordell-Weil torsion groups, showing in particular  that  the
two subgroups are not conjugate.
\end{proof}

\begin{pro}
\label{Gamma13}
There is a unique ambi-typical stratum with modular monodromy $\barGamma_1(3)$.
Its monodromy group is $\Gamma_1(3)$.
Its generic root lattice and Mordell-Weil torsion are
\[
6 A_2, \, \ZZ/3.
\]
\end{pro}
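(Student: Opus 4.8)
The plan is to run the same argument as in Propositions~\ref{Gamma12}--\ref{Gamma04'}, now with the Weierstra\ss\ families attached to $\barGamma_1(3)$. By Proposition~\ref{weier} every monodromy stratum with modular monodromy $\barGamma_1(3)$ is realised by Weierstra\ss\ data either in family $iii)$ (when $\deg j_\calE\in\{2,4,6\}$) or in family $iv)$ (when $\deg j_\calE=3$), so only these two families have to be examined; I would treat the top-dimensional case first.

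First I would take $\deg j_\calE=6$, so $\alpha_2,\beta_6$ are coprime, and apply the Tate algorithm to $g_2=3\alpha_2(\alpha_2^3+8\beta_6)$, $g_3=\alpha_2^6-20\alpha_2^3\beta_6-8\beta_6^2$, $\Delta=(\alpha_2^3-\beta_6)^3\beta_6$. Coprimality forces the singular fibres to sit over the zeros of $\beta_6$ (type $I_1$) and of $\alpha_2^3-\beta_6$ (type $I_3$), the zeros of $\alpha_2$ giving smooth fibres with $j=0$; hence the generic configuration is $6I_1+6I_3$, the generic root lattice is $6A_2$, and $\dim L(6I_1+6I_3)=12-6=6$ by Lemma~\ref{dimLoc}. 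Lemma~\ref{lem:G3MW} gives $-\id\notin\Gamma(\calE)$ and $\MW(\calE)=\ZZ/3\ZZ$ generically; since $\ZZ/3\ZZ\subseteq\MW(\calE)$ is equivalent to $\Gamma(\calE)\subseteq\Gamma_1(3)$ by Lemma~\ref{coxparry} and $[\slz:\Gamma(\calE)]=8=[\slz:\Gamma_1(3)]$, we get $\Gamma(\calE)=\Gamma_1(3)$. The Shimada stratum with root lattice $6A_2$ also has dimension $18-12=6$, so it has the same generic point; as $\barGamma_1(3)$ sits in the $\deg\jGamma=4$ column of Table~\ref{table:maxdim} we have $m(\Gamma_1(3))\le 6$, and \eqref{dim2} identifies the common closure as an ambi-typical stratum with the stated invariants.

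For uniqueness I would dispose of the remaining possibilities. If $\calE$ lies in family $iii)$ with $\deg j_\calE\in\{2,4\}$ then $j_\calE$ is a degeneration of the degree-$6$ map obtained by letting $\alpha_2,\beta_6$ acquire a common factor; the Tate algorithm gives generic configurations $2IV^*+2I_3+2I_1$ (dimension $2$) and $IV^*+4I_3+4I_1$ (dimension $4$) respectively, the monodromy stays $\Gamma_1(3)$, and the corresponding Weierstra\ss\ loci lie in the closure of the stratum just found with strictly smaller dimension, so by \eqref{closure} they are not monodromy strata in their own right. The case of family $iv)$, i.e.\ $\deg j_\calE=3$, is different: here the Euler relation \eqref{Eulerformel} forces exactly two $*$-fibres, necessarily of type $I_0^*$, so $-\id\in\Gamma(\calE)$ and the relevant lift of $\barGamma_1(3)$ is $\Gamma_0(3)$ (cf.\ Remark~\ref{Gamma0}), not $\Gamma_1(3)$. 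One then has to show separately, using the Tate algorithm on $g_2=3\alpha_1\gamma_2^2(\alpha_1^3+8\beta_3)$, $g_3=\gamma_2^3(\alpha_1^6-20\alpha_1^3\beta_3-8\beta_3^2)$ together with the maximality property \eqref{closure}, that this family does not give rise to an ambi-typical stratum, which completes the proof.

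I expect the last point --- ruling out family $iv)$ --- to be the only delicate step: unlike the degenerations of family $iii)$ it is not a priori subsumed by a dimension count, since family $iv)$ carries its own monodromy group $\Gamma_0(3)$ and a genuine generic configuration $2I_0^*+3I_3+3I_1$, so one must compare it directly against the Shimada stratification rather than merely invoke Lemma~\ref{maxdim}; everything else is the routine Tate-algorithm-plus-\eqref{dim2} machinery already used for the other low-index groups.
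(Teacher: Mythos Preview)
Your treatment of family $iii)$ --- the $6A_2$ stratum and its $\deg j_\calE\in\{2,4\}$ degenerations --- is correct and matches the paper. The gap is in family $iv)$, exactly at the point you flag as delicate, and your proposed strategy does not close it. The generic configuration of family $iv)$ is $2I_0^*+3I_3+3I_1$, so $\dim L=8+2-6=4$; its root lattice is $2D_4+3A_2$ of rank $14$, hence the Shimada dimension is also $18-14=4$. The two dimensions \emph{agree}, so ``comparing directly against the Shimada stratification'' rules out nothing, and \eqref{dim2} is a sufficient criterion for ambi-typicality, not a necessary one. What is actually needed --- and what you are missing --- is to show that family $iv)$ is not a monodromy stratum at all. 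The paper does this by exhibiting an auxiliary Weierstra\ss\ family with $j_\calE=\alpha_1^3\alpha_2/\beta_5$ of degree $5$ and generic configuration $2II+5I_3+5I_1$, which has the same monodromy $\Gamma_0(3)$ and contains family $iv)$ as the proper closed specialization $\alpha_2=\gamma_2$, $\beta_5=\gamma_2\beta_3$. Then \eqref{closure} prevents family $iv)$ from being a monodromy stratum in its own right, and the larger family is disqualified by its $II$ fibres via Proposition~\ref{prop:restricitionfibres}.

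There is also a second case you skip: the step ``necessarily of type $I_0^*$, so $-\id\in\Gamma$'' is not a consequence of the Euler relation. If $-\id\notin\Gamma$, Proposition~\ref{badfibres} still permits $I_{>0}^*$ fibres, so one must separately exclude a splitting lift $\Gamma\cong\barGamma_1(3)$ when $\deg j_\calE=3$. The paper does this with an $\slz_{ab}=\ZZ/12$ computation: the images of the loops at the $3$-torsion point and the two cusps are forced to be $8,7,9$, so $\calE$ is a pullback of a surface with fibres $IV^*+I_1^*+I_3^*$; requiring exactly two $I^*$ fibres on $\calE$ then pins the ramification of $j_\calE$ to $(3)_A,(2,1),(2,1)$, which by Riemann--Hurwitz leaves no further branching and forces $\jGamma\circ j_\calE$ itself to satisfy \eqref{belyj1}, contradicting \eqref{maximality}.
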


\begin{proof}
Any ambi-typical stratum with $\barGamma_1(3)$ modular monodromy belongs
to family $iii)$ or $iv)$.

If it belongs to family $iii)$ then by Lemma \ref{lem:G3MW}, the Mordell-Weil torsion contains $\ZZ/3$ and $-\id\not\in\Gamma$.
By the former property and Lemma \ref{coxparry} family $iii)$ has monodromy contained
in $\Gamma_1(3)$.
Since $-\id\not\in\Gamma_1(3)$, no proper subgroup 
surjects onto $\barGamma_1(3)$, thus $\Gamma_1(3)$ is the monodromy group for the generic surface
in family $iii)$.

We follow the proofs of Propositions \ref{Gamma12} and \ref{Gamma2} and get generic fibre configuration $6I_3+6I_1$,
generic root lattice $6A_2$, corresponding strata dimension 6 and $m(\Gamma_1(3))\leq 6$
by Lemma \ref{maxdim}. Using \eqref{dim2} and \eqref{closure} we conclude that 
family $iii)$ constitutes a unique ambi-typical stratum, with the invariants given in the claim.

To address possible strata in family $iv)$
we look at the additional family with Weierstra{\ss} datum
\[
g_2= 3 \alpha_1\alpha_2(\alpha_1^3\alpha_2+8\beta_5),\quad
g_3=\alpha_2(\alpha_1^6\alpha_2^2-20\alpha_1^3\alpha_2\beta_5-8\beta_5^2),
\]
that has associated $j$-invariant
\[
j \quad =\quad 
\frac{(g_2/3)^3}{(g_2/3)^3-g_3^2} \quad = \quad
\frac{\alpha_1^3\alpha_2(\alpha_1^3\alpha_2+8\beta_5)^3}{64(\alpha_1^3\alpha_2-\beta_5)^3\beta_5^2}
\]
which is the composition of $\jGamma$ for $\barGamma_1(3)$ with $j_\calE=\alpha_1^3\alpha_2/\beta_5$.
It contains the family $iv)$ as a proper closed subset determined by the specialization to
$\alpha_2=\gamma_2, \beta_5=\gamma_2\beta_3$. A zero of $\gamma_2$ generically corresponds to
a fibre of type $I_0^*$ so both have generic monodromy $\Gamma_0(3)$,
the pre-image in $\slz$ of $\barGamma_1(3)=\barGamma_0(3)$.

Repeating the argument above using \eqref{closure} in particular, we deduce that no stratum
of monodromy $\Gamma_0(3)$ is ambi-typical, except possibly that corresponding to the generic fibre
configuration $2II+5I_3+5I_1$ in the additional family. However, by Proposition \ref{prop:restricitionfibres} the fibre $II$ may not occur in generic surfaces of an
ambi-typical stratum.

We are left to look for an ambi-typical stratum in family $iv)$ 
with $\Gamma\to \barGamma_1(3)$ an isomorphism.
The splitting isomorphism gives a map from 
$\pi_1^{orb}(\HH/\Gamma) \cong \barGamma$
to $\Gamma\subset\slz$.
The positive loops around the $3$-torsion point and the poles of $\jGamma$ of order $1$ and $3$ are mapped to 
$2$, $1$ and $3$ in $\ZZ/6=\pslz_{ab}$, so the splitting isomorphism associates to these loops the elements
$2$ or $8$, $1$ or $7$, and $3$ or $9$ respectively in $\ZZ/{12}=\slz_{ab}$.

In fact, we can be more precise. On the one hand $2$ can not be associated to a $3$-torsion point, since then the corresponding monodromy
is conjugate to $(\begin{smallmatrix} 1 & 1 \\ -1 & 0 \end{smallmatrix})$ contradicting $-\id\not\in\Gamma$.
On the other hand $1$ and $3$ can not be associated to the two poles. Otherwise the corresponding elements
of $\Gamma$ are conjugate to powers of  $(\begin{smallmatrix} 1 & 1 \\ 0 & 1 \end{smallmatrix})$ and so are
all monodromies at poles of $j(\calE)$. Then the $I^*$ fibres -- present according to Table \ref{table:columnslow}
-- are $I_0^*$ fibres contradicting again $-\id\not\in\Gamma$.

We use again the fact  that the sum of the three elements in $\slz_{ab}$ must be zero, since the sum in
homology of the three loops is zero. 
Thus the elements must be $8,7$ and $9$, and $\calE$ is a pullback (up to normalization and blow down) 
of an elliptic surface with singular fibre configuration $I_1^*+I_3^*+IV^*$ along $j_\calE$.

By Table \ref{table:jE-functions} the map $j_\calE$ has degree $3$ and ramification datum $(3)$ at the $3$-torsion point.
By Table \ref{table:columnslow} the surface $\calE$ has two $I^*$ fibres, so the ramification datum of $j_\calE$ at the
two poles must be $(2,1)$.
But then $j_\calE$ is unramified outside these points and 
therefore violates the maximality condition \eqref{maximality}.
\end{proof}

\begin{pro}
There is no ambi-typical stratum with $\barGamma=\barGamma(1)^2$.
\end{pro}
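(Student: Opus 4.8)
\emph{Proof proposal.}
The plan is to reduce to a single numerical candidate and then contradict Proposition~\ref{badfibres}. First I would read off from Table~\ref{table:columnslow} the columns with $\deg\jGamma=2$, i.e.\ $\barGamma=\barGamma^2$ (so $e_2=0$, $e_3=2$): every one of them except the column $\deg j_\calE=4,\ \#I^*=0,\ \#IV^*=2$ is discarded. Hence, by Proposition~\ref{weier}, the generic member $\calE$ of any putative ambi-typical stratum with $\barGamma=\barGamma^2$ lies in family $vi)$ of Table~\ref{table:weierstrassfamilies}; applying the Tate algorithm to $g_2=-12\alpha_1\beta_1\gamma_1^3\delta_1^3$, $g_3=4\gamma_1^4\delta_1^4(\alpha_1^3\gamma_1-\beta_1^3\delta_1)$ shows that such an $\calE$ has generic fibre configuration $2IV^*+4I_2$, the two $IV^*$ fibres lying over the two $3$-torsion points of $\jGamma$ and the four $I_2$ fibres over the unique pole of $\jGamma$ (so its generic root lattice is $2E_6+4A_1$, and the candidate stratum has dimension $18-16=2$).

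Next I would show that $-\id\in\Gamma(\calE)$. Since $e_2=0$ and $e_3=2$, the group $\barGamma^2$ is the free product of its two order-$3$ vertex subgroups, so its abelianization is $(\ZZ/3)^2$, which has no quotient of order $2$; consequently there is, up to conjugacy, a unique subgroup $\Gamma_0\subset\slz$ mapping isomorphically onto $\barGamma^2$, and $-\id\notin\Gamma_0$, while the only other lift of $\barGamma^2$, its full preimage in $\slz$, contains $-\id$. Now $\Gamma(\calE)$ is generated by the local monodromies of its six singular fibres; those of the four $I_2$ fibres are conjugates of $T^2=(\begin{smallmatrix}1&2\\0&1\end{smallmatrix})$, which maps to $2$ in $\slz^{\mathrm{ab}}\cong\ZZ/12$ (with the class of $(\begin{smallmatrix}1&1\\0&1\end{smallmatrix})$ a generator). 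On the other hand $\Gamma_0$ is generated by two elements of order $3$, whose images in $\ZZ/12$ lie in $\{4,8\}$, so the image of $\Gamma_0$ in $\ZZ/12$ is contained in the subgroup $\{0,4,8\}$, which does not contain $2$. Hence $T^2\notin\Gamma_0$, so $\Gamma(\calE)\neq\Gamma_0$; being a lift of $\barGamma^2$, the group $\Gamma(\calE)$ therefore contains $-\id$.

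Finally, Proposition~\ref{badfibres} asserts that a generic element of an ambi-typical stratum whose monodromy group contains $-\id$ has no fibres of type $IV^*$. But the generic $\calE$ above has two such fibres, a contradiction. Since family $vi)$ was the only numerical possibility, there is no ambi-typical stratum with $\barGamma=\barGamma^2$. I expect the only delicate point to be the computation in $\slz^{\mathrm{ab}}\cong\ZZ/12$ of the images of $T^2$ and of the order-$3$ elements — equivalently, the observation that the $I_2$ fibres force local monodromies lying outside the distinguished $(-\id)$-free lift $\Gamma_0$; everything else is bookkeeping with the tables already established.
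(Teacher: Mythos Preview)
Your core idea — use the abelianization $\slz^{\mathrm{ab}}\cong\ZZ/12$ to constrain $\Gamma$ and then invoke Proposition~\ref{badfibres} — is sound and leads to a shorter argument than the paper's. But the step where you read off the configuration $2IV^*+4I_2$ is not justified. Applying the Tate algorithm to the \emph{generic} Weierstra{\ss} datum of family $vi)$ yields the configuration of a generic member of family $vi)$; you have not shown that the putative ambi-typical stratum coincides with family $vi)$. In fact it cannot: the paper exhibits a strictly larger family (with $g_2=-12\alpha_1\alpha_3\beta_1\beta_3$, $g_3=4\alpha_3\beta_3(\alpha_1^3\alpha_3-\beta_1^3\beta_3)$) having the same generic monodromy, so family $vi)$ is itself \emph{not} a monodromy stratum. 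Thus the generic member of the monodromy stratum with $\Gamma$ equal to the full preimage of $\barGamma^2$ has configuration $6II+6I_2$, not $2IV^*+4I_2$, and your $I_2$-based deduction of $-\id\in\Gamma$ does not apply to it.

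The fix is short and uses exactly your $\ZZ/12$ computation, just reorganised. From the surviving column you already have $\#IV^*=2$; Proposition~\ref{badfibres} then forces $-\id\notin\Gamma$, so $\Gamma$ is the $(-\id)$-free lift $\Gamma_0$. Now your observation that the image of $\Gamma_0$ in $\ZZ/12$ lies in $\{0,4,8\}$ applies to every $I_k$ local monodromy: $T^k\mapsto k$ must lie in $\{0,4,8\}$, so $4\mid k$. With $\sum k_i=8$ this leaves only $2IV^*+2I_4$ (dimension $0$) or $2IV^*+I_8$ (impossible by Riemann--Hurwitz), contradicting positive-dimensionality. Equivalently, one can keep your order of argument by noting that positive dimension together with $\sum k_i=8$ and all $k_i$ even forces at least three $I_k$-fibres and hence at least one $I_2$; then your deduction $-\id\in\Gamma$ goes through and contradicts $\#IV^*=2$ via Proposition~\ref{badfibres}.

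By comparison, the paper splits into two cases. For $-\id\notin\Gamma$ it reaches the same endpoint $2IV^*+2I_4$ but finishes by observing that $j_\calE$ then becomes a Belyi map, violating the maximality condition~\eqref{maximality}; your dimension count is an equally valid and arguably cleaner conclusion. For $-\id\in\Gamma$ the paper needs the larger-family construction to locate the actual monodromy stratum; your (corrected) argument never enters this case, since $\#IV^*=2$ already excludes it via Proposition~\ref{badfibres}.
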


\begin{proof}
We first assume that there is such a stratum with $\Gamma\to\barGamma(1)^2$ an isomorphism.
By Lemma \ref{weier} we know that there is a corresponding generic surface $\calE$ in the family $vi)$.

The splitting isomorphism gives a map from 
$\pi_1^{orb}(\HH/\Gamma) \cong \barGamma$
to $\Gamma\subset\slz$.
The positive loops around both $3$-torsion points and the pole of $\jGamma$ are mapped to 
$2\in \ZZ/6=\pslz_{ab}$, so the splitting isomorphism associates to each
loop either $2$ or $8$ in $\ZZ/{12}=\slz_{ab}$.
However, $2$ can not be associated to a $3$-torsion point, since then the corresponding monodromy
is conjugate to $(\begin{smallmatrix} 1 & 1 \\ -1 & 0 \end{smallmatrix})$ contradicting $-\id\not\in\Gamma$.

Again we use the fact  that the sum of the three elements in $\slz_{ab}$ must be zero.
Thus all elements must be $8$ and $\calE$ is a pullback (up to normalization and blow down) 
of an elliptic surface with singular fibre configuration $I_2^*+2IV^*$ along $j_\calE$.

By Table \ref{table:jE-functions} the map $j_\calE$ has degree $4$ and ramification datum $(3,1)$ at the two
$3$-torsion points.
By Table \ref{table:columnslow} the surface $\calE$ has no $I^*$ fibre, so the ramification datum of $j_\calE$ at the
pole must be $(2,2)$.
But then $j_\calE$ is unramified outside these points and 
therefore violates the maximality condition \eqref{maximality}.

To show that there is neither a stratum with $-\id \in\Gamma$ we look at the new
family with Weierstra{\ss} datum given by 
\[
g_2= -12 \alpha_1\alpha_3\beta_1\beta_3,\quad
g_3=4\alpha_3\beta_3(\alpha_1^3\alpha_3-\beta_1^3\beta_3)
\]
that has associated $j$-invariant
\[
j \quad =\quad 
\frac{(g_2/3)^3}{(g_2/3)^3-g_3^2} \quad = \quad
\frac{4\alpha_3\beta_3(\alpha_1\beta_1)^3}{(\alpha_1^3\alpha_3+\beta_1^3\beta_3)^2}
\]
which is the composition of $\jGamma$ for $\barGamma(1)^2$ with $j_\calE=(\alpha_1^3\alpha_3:\beta_1^3\beta_3)$.
It contains the family $vi)$ as a proper closed subset determined by the specialization to
$\alpha_3=\gamma_1^2\delta_1, \beta_3=\gamma_1\delta_1^2$, so both have the same generic monodromy $\Gamma$.

Repeating the argument above, using in particular \eqref{closure}, we deduce that no stratum
of that monodromy is ambi-typical, except possibly that corresponding to the generic fibre
configuration $6II+6I_2$ of the new family. However, again by Proposition \ref{prop:restricitionfibres}, 
the fibre $II$ may not occur in generic surfaces of an
ambi-typical stratum.
\end{proof}

\section{Classification in the high index cases}
\label{sec:high}

In this section we will complete the classification of ambi-typical strata of high index.  
More precisely, we will classify the subgroups $\barGamma$ of $\pslz$ of index at least $9$
for which an ambi-typical stratum exists and give a description of the possible monodromy groups $\Gamma$
in $\slz$ which cover $\barGamma$.

\subsection{Uniqueness of strata}\label{subsec:highindexuniqueness}

We shall proceed as follows.
First, given a group $\barGamma$, we shall determine the possible groups $\Gamma$  and prove  the uniqueness of the $\Gamma$ ambi-typical strata. 
Second, we will determine the root lattices, $j_\calE$ branch data and the number of $*$-fibres  of their 
generic members in terms of the ramification data of $\jGamma$. 
In fact, we use the notation as in  \cite{BPT} for the three cases that $j_\calE$ can belong to:
\begin{itemize}
\item[$(2)_B,2$]: \hspace{.6cm}
$j_\calE$ is of degree $2$ with branching in a $2$-torsion point and a non-torsion point, 
\item [$2^2$]:\hspace{.8cm}
$j_\calE$ is of degree $2$ with branching in two non-torsion points, 
\item[$1$]:\hspace{.8cm}
$j_\calE$ is of degree $1$ with no branching.
\end{itemize}

The actual list of ramification data of $\jGamma$, and thus the list of possible groups $\barGamma$
is postponed to the next subsection, as is the computation of the corresponding
Mordell-Weil torsion.

\begin{pro}
\label{index9root}
Suppose $\barGamma$ has index $9$ in $\pslz$. Any such subgroup defines at most one ambi-typical stratum and in this case 
the monodromy group $\Gamma$ is the pre-image of $\barGamma$ in $\slz$. 

The generic invariants, root lattice, 
$j_\calE$ ramification and
number of $*$-fibres are
\[
D_4 + 2 A_{i_1} + \dots + 2 A_{i_k}, \qquad (2)_B,2,\qquad 1,
\]
with $k$ the number of poles of $\jGamma$ of order at least two,
$i_1+1,\dots, i_k+1$ their orders
and root lattice rank $4+2i_1+ \dots + 2i_k = 16$.
\end{pro}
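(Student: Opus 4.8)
The plan is to mimic the low‑index propositions (such as Proposition~\ref{Gamma12}), replacing the explicit Weierstra{\ss} tables by a degree‑two pullback construction. First I would assume an ambi‑typical stratum with this $\barGamma$ exists (otherwise nothing is to prove) and fix a generic member $\calE$; then $X(\barGamma)\cong\PP^1$, since otherwise no non‑constant $j_\calE\colon\PP^1\to X(\barGamma)$ exists. By Proposition~\ref{modular_monodromy}(3) and the index‑$9$ column of Table~\ref{table:columnshigh} one has $\deg\jGamma=9$, $\deg j_\calE=2$, $\#I^*=1$, $\#IV^*=0$, $e_2=1$, $e_3=0$; by Table~\ref{table:maxdim} (or Riemann--Hurwitz) the map $\jGamma$ has exactly three poles, of orders $p_1,p_2,p_3$ with $p_1+p_2+p_3=9$, together with a single simple pre‑image $q$ of $1$. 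Since $\deg j_\calE=2$, Lemma~\ref{rami}(1) forces the partition of $j_\calE$ at $q$ to be $(2)$, so $j_\calE$ is simply ramified over $q$, and Riemann--Hurwitz then leaves exactly one further simple branch point $x$; because $e_2=1$ and $e_3=0$ the point $x$ is neither a $2$‑torsion nor a $3$‑torsion point. This already yields the ramification datum $(2)_B,2$ claimed for $j_\calE$.

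Next I would read off the singular fibres via a rational base surface. As $\deg j_\calE=2$ and $e(\calE)=24=2\cdot12$, the surface $\calE$ is --- up to normalisation and blow‑down --- the pullback along $j_\calE$ of the rational elliptic surface $\bar\calE$ whose functional invariant is $\jGamma$, exactly as in the pullback arguments used in the proofs of Proposition~\ref{weier} and Lemma~\ref{lem:G3MW}. Because $e(\bar\calE)=12$ while $\deg\jGamma=9$, $e_3=0$, $e_2=1$, the fibre configuration of $\bar\calE$ must be $III+I_{p_1}+I_{p_2}+I_{p_3}$: over $0$ every pre‑image has multiplicity $3$, hence a smooth fibre; over $1$ the simple pre‑image $q$ has odd local $j$‑order, hence a $III$ fibre (a $III^*$, or any $*$‑fibre over $0$ or $1$, would overshoot $e=12$); the poles contribute the $I_{p_j}$. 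Pulling back along $j_\calE$, over $q$ the local monodromy $(\begin{smallmatrix}0&1\\-1&0\end{smallmatrix})$ of the $III$ fibre is squared to $-\id$ and the local expansion becomes $j=1+s^2$, so $\calE$ acquires an $I_0^*$ fibre over the pre‑image of $q$. In particular $-\id\in\Gamma(\calE)$, so by Proposition~\ref{badfibres} this is the only $*$‑fibre and $\calE$ has no $IV^*$ fibre, in agreement with $\#I^*=1$, $\#IV^*=0$. When $x$ lies over a smooth fibre of $\bar\calE$ --- the generic case --- the remaining singular fibres are two copies of $I_{p_j}$ over each pole, so $\calE$ has configuration $I_0^*+2I_{p_1}+2I_{p_2}+2I_{p_3}$ and root lattice $D_4+2A_{p_1-1}+\dots+2A_{p_3-1}$, which is $D_4+2A_{i_1}+\dots+2A_{i_k}$ with $k$ the number of $p_j\ge 2$ and the $i_j+1$ those orders; its rank equals $4+2\sum_{j}(p_j-1)=4+2\cdot6=16$, and by \eqref{rem:dimcomp} the corresponding Shimada (hence configuration) stratum has dimension $2$.

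For the monodromy group and uniqueness: since $j_\calE$ has prime degree $2$ it has no proper left factor, so by Lemma~\ref{unique} the factorisation $\jGamma\circ j_\calE$ is the canonical one and $\barGamma(\calE)=\barGamma$; together with $-\id\in\Gamma(\calE)$ this forces $\Gamma(\calE)$ to be the full pre‑image $\Gamma$ of $\barGamma$ in $\slz$. By Lemma~\ref{maxdim} we have $m(\Gamma)\le2$, so by \eqref{dim2} the closure of the two‑dimensional Shimada stratum whose generic member is such a pullback is ambi‑typical, with monodromy group $\Gamma$. For uniqueness, any ambi‑typical stratum with this $\barGamma$ has a generic member of precisely this pullback form, and the only remaining freedom is the location of $x$. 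If $x$ lies over a pole $p_\ell$ then $\calE$ has configuration $I_0^*+I_{2p_\ell}+2I_{p_m}+2I_{p_n}$, of root lattice rank $17$ and stratum dimension $1$, but still with monodromy group $\Gamma$; such surfaces are specialisations of the generic ones with unchanged monodromy, hence lie in the closure of the two‑dimensional stratum, so by the maximality property \eqref{closure} they form no monodromy stratum of their own and give no second ambi‑typical stratum. (If instead $x$ lies over a ramified pre‑image of $0$ or $1$, no new singular fibre appears and we are back in the generic configuration.) The main obstacle I anticipate is the clean identification of the unique $*$‑fibre as the $I_0^*$ sitting over the ramified pre‑image of $q$ --- that is, making the rational‑surface pullback picture and the passage $III\mapsto I_0^*$ precise; once that is in place the statement about $\Gamma$ is immediate and everything else is Euler‑number and dimension bookkeeping.
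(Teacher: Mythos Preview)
Your conclusions are correct, but there is a genuine gap in the argument, and you miss the paper's much cleaner route to the key step.

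The claim that the generic $\calE$ ``is --- up to normalisation and blow-down --- the pullback along $j_\calE$ of the rational elliptic surface $\bar\calE$'' is false. The functional invariant does not determine the surface; the pullback of $\bar\calE$ is only one particular quadratic twist among the $K3$'s with $j$-invariant $\jGamma\circ j_\calE$, namely the one whose $I_0^*$ fibre sits over the ramified pre-image of $q$. The generic member of the stratum, however, has its $I_0^*$ fibre at an \emph{arbitrary} point of the base (with a smooth fibre over the ramified pre-image of $q$), and this position is exactly the second moduli parameter. Your pullback family is therefore only a $1$-dimensional slice of the $2$-dimensional configuration stratum, and your uniqueness discussion --- which treats only the location of $x$ as free --- misses an entire direction of degenerations. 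The analogy with Lemma~\ref{lem:G3MW} breaks down precisely because that family has no $*$-fibres and hence no twist freedom.

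The paper avoids all of this with a two-line algebraic observation: since $e_2=1$, the group $\barGamma$ contains an element of order~$2$, and since $-\id$ is the unique $2$-torsion element of $\slz$, there is no section $\barGamma\to\slz$ of the projection. Hence \emph{every} surface with modular monodromy $\barGamma$ satisfies $-\id\in\Gamma(\calE)$, so $\Gamma(\calE)$ is the full pre-image. With this in hand, Proposition~\ref{badfibres} forces the single $I^*$-fibre to be $I_0^*$; its position is then an independent free parameter, the branching of $j_\calE$ (forced to $(2)_B$ by Lemma~\ref{rami}(1), plus one free branch point) gives the other, and one obtains directly a $2$-dimensional irreducible family with constant monodromy $\Gamma$. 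Ambi-typicality then follows from \eqref{dim2} and $m(\Gamma)\le 2$, and uniqueness from \eqref{closure} applied to this family rather than to your $1$-dimensional pullback slice.
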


\begin{proof}
By Column $4$ of Table \ref{table:columnshigh} the map $j_\calE$ is a double cover and we have $e_2=1$ and $e_3=0$.
So $\barGamma$ contains $2$-torsion and has no splitting map to $\slz$,
since $-\id$ is the only $2$-torsion element in $\slz$.
In particular, every $\barGamma$
monodromy stratum is actually a $\Gamma$ monodromy stratum with $\Gamma$ the pre-image.

The map $j_\calE$ is branched over the unique $2$-torsion point
according to  Lemma \ref{rami}(1). 
The family of such coverings is $1$-dimensional and irreducible.

Moreover, it follows from  Proposition \ref{badfibres} that the $I^*$-fibre in the generic member of the stratum must be $I_0^*$.
Therefore we get a $2$-dimensional irreducible family of elliptic surfaces
with monodromy $\Gamma$, where the second parameter is the position of
the $I_0^*$-fibre.
If we have an ambi-typical stratum the rank of the root lattice must be $16$.

The generic fibre type consists of the $I_0^*$-fibre and the $I_\nu$-fibres
corresponding to pole orders of the generic $j$-map $\jGamma\circ j_\calE$,
which coincides with the lengths of the ramification partition at infinity.
Since $\jGamma\circ j_\calE$ is the canonical factorisation,
the map $j_\calE$ is a double cover not branched over $\jGamma^{-1}(\infty)$, 
and the ramification partition is twice that of $\jGamma$.

We can now use the argument in \eqref{dim2} to conclude that $\Gamma$ defines a monodromy stratum. This is the case since our family
has varying moduli and is of maximal dimension by Column 4 of Table \ref{table:maxdim}.
\end{proof}

\begin{pro}
\label{index12root}
Suppose $\barGamma$ is of index $12$ in $\pslz$. Let $k$ be the number of poles of $\jGamma$ of order at least two
and $i_1+1,\dots, i_k+1$ their orders. If $\barGamma$ gives rise to an ambi-typical stratum then it is torsion free and there are two possibilities:
\begin{enumerate}
\item
there is a unique ambi-typical $\Gamma$ stratum
with $\Gamma \to \barGamma$ an isomorphism.
The generic invariants, root lattice of rank $16$, $j_\calE$ ramification and
number of $*$-fibres are
\[
2 A_{i_1} + \dots + 2 A_{i_k}, \qquad 2^2,\qquad 0,
\]
\item
there is a unique ambi-typical $\Gamma$ stratum
with $\Gamma\subset \slz$ the pre-image of $\barGamma$.
The generic invariants, root lattice of rank $16$, $j_\calE$ ramification and
number of $*$-fibres are
\[
2D_4 +  A_{i_1} + \dots +  A_{i_k}, \qquad 1,\qquad 2.
\]
\end{enumerate}
\end{pro}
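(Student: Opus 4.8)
The plan is to run the scheme of Proposition \ref{index9root}. By the Euler-number analysis of Section \ref{sec:Euler}, an ambi-typical stratum with $[\pslz:\barGamma]=12$ realises one of the two columns of Table \ref{table:columnshigh} with $\deg\jGamma=12$: either $\deg j_\calE=2$, $\#I^*=0$, or $\deg j_\calE=1$, $\#I^*=2$ (with $\#IV^*=0$ in both). Both columns have $e_2=e_3=0$, so $\barGamma$ is torsion free, which gives the first assertion; moreover $\jGamma$ then has exactly four poles by Riemann--Hurwitz and $m(\Gamma)\le2$ for either lift $\Gamma$ of $\barGamma$ to $\slz$ (Table \ref{table:maxdim}). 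Writing the four pole orders as a partition of $12$ and letting $i_1+1,\dots,i_k+1$ be those of size $\ge2$, one has the elementary identity $\sum_{j=1}^k i_j=8$; hence every root lattice produced below has rank $16$ and the Shimada stratum through a generic member has dimension $18-16=2$. By \eqref{dim2} it therefore suffices, in each of the two cases, to exhibit a $2$-dimensional irreducible family with constant monodromy and to pin down the monodromy group.

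Case $\deg j_\calE=1$, yielding (2): here $j(\calE)=\jGamma$ is rigid, so the moduli come only from the positions of the two $*$-fibres. By Proposition \ref{badfibres} these must be $I_0^*$, and an $I_0^*$ fibre has local monodromy $-\id$; since $\barGamma(\calE)=\barGamma$ is fixed by $j(\calE)$, the monodromy group is forced to be the full preimage $\Gamma$ of $\barGamma$. Conversely, for any torsion-free $\barGamma$ of index $12$ one produces such surfaces by replacing a minimal Weierstraß model with $j$-invariant $\jGamma$ by $(u^2g_2,u^3g_3)$ for a linear form $u$, creating an $I_0^*$ fibre at each of two variable points; the Tate table gives generic configuration $2I_0^*+\sum_j I_{o_j}$, hence generic root lattice $2D_4+A_{i_1}+\dots+A_{i_k}$ of rank $16$, and the two $*$-positions give an irreducible $2$-dimensional family. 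It is a monodromy stratum by \eqref{dim2}, and by the maximality \eqref{closure} it is the unique ambi-typical stratum whose monodromy is the preimage of $\barGamma$. Note $\deg j_\calE=2$ cannot give this monodromy, since that case has no $*$-fibres and a monodromy group containing $-\id$ would have index $12$ in $\slz$, forcing $\calE$ to be a nontrivial quadratic twist of the pullback below, which would create $*$-fibres.

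Case $\deg j_\calE=2$, yielding (1): a degree-$2$ map $j_\calE\colon\PP^1\to X(\barGamma)$ is branched over exactly two points of $X(\barGamma)$, and since $e_2=e_3=0$ there are no $2$- or $3$-torsion points, so by Lemma \ref{rami} these branch points are non-torsion and the ramification datum is $2^2$; branching over a pole of $\jGamma$ would raise the root-lattice rank and lower the dimension, so for a generic member $j_\calE$ is unramified over $\jGamma^{-1}(\infty)$ and the Tate table gives generic configuration $\sum_j 2I_{o_j}$, generic root lattice $2A_{i_1}+\dots+2A_{i_k}$ of rank $16$, and no $*$-fibre. Such $\calE$ is the normalised pullback along $j_\calE$ of the rational elliptic surface $\calE_0$ with $j$-map $\jGamma$ and monodromy the splitting lift $\Gamma_{\mathrm{split}}\cong\barGamma$ (any other choice is a quadratic twist, introducing $I_0^*$ fibres and leaving the ambi-typical range). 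Its monodromy is generated by the $j_\calE$-liftable conjugates of the local monodromies of $\calE_0$, all lying in $\Gamma_{\mathrm{split}}$, while already the loops around the cusps of $\barGamma$ lift to generators of $\Gamma_{\mathrm{split}}$; hence the monodromy equals $\Gamma_{\mathrm{split}}$ and $-\id\notin\Gamma$. Moving the two branch points of $j_\calE$ in $X(\barGamma)$ gives an irreducible $2$-dimensional family, a monodromy stratum by \eqref{dim2}; by \eqref{closure} it is the unique ambi-typical stratum with $\Gamma\to\barGamma$ an isomorphism, and by Table \ref{table:columnshigh} the two strata found here are all.

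The step I expect to be the main obstacle is the monodromy computation in case (1): showing that the natural degree-$2$ pullback has monodromy \emph{exactly} the split lift $\Gamma_{\mathrm{split}}$ — not the preimage nor an intermediate group — and that no quadratic twist of it can occur inside an ambi-typical (hence $*$-fibre-free, rank-$16$) stratum. This is the torsion-free index-$12$ analogue of the splitting-isomorphism analysis carried out by hand in Propositions \ref{Gamma2'}, \ref{Gamma04'} and \ref{Gamma13}, with the extra twist that it must be run uniformly over the infinitely many possible groups $\barGamma$ rather than for one fixed group.
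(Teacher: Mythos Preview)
Your argument is correct and follows essentially the same route as the paper: split into the two columns $\deg j_\calE=1,2$ of Table~\ref{table:columnshigh}, build the irreducible $2$-parameter family in each case, read off the generic root lattice from the pole orders of $\jGamma$, and conclude ambi-typicality via $m(\Gamma)\le 2$ and~\eqref{dim2}. The paper's own proof is in fact terser than yours on the point you flag as the obstacle: it does \emph{not} verify inside this proposition that the $\deg j_\calE=2$ family has $-\id\notin\Gamma$, but simply records the two families and defers the precise identification of $\Gamma$ to Proposition~\ref{beauville}, where the monodromy groups of the six Beauville surfaces are quoted from~\cite{bea} and then pulled back.

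Your worry is therefore overstated, and one remark in it is wrong: there are not ``infinitely many possible groups $\barGamma$'' of index~$12$ --- there are exactly six torsion-free ones, by Beauville's classification (Proposition~\ref{index12groups}). Your own argument for the split monodromy is in any case sound and can be completed without Beauville: the rational surface $\calE_0$ with $j$-map $\jGamma$ and only $I_\nu$ fibres has monodromy $\Gamma(\calE_0)$ generated by four trace-$+2$ parabolics with product $\id$, hence a quotient of the free group $F_3$ surjecting onto $\barGamma\cong F_3$; since free groups are Hopfian this forces $\Gamma(\calE_0)\cong F_3$, so the centre is trivial and $-\id\notin\Gamma(\calE_0)$. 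After that, your pullback argument (generic $j_\calE$ induces a surjection on $\pi_1$ of the complements) gives $\Gamma(\calE)=\Gamma(\calE_0)$ exactly as in the paper's proof of Proposition~\ref{beauville}.
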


\begin{proof}
By the hypothesis on $\barGamma$ we are either in either the cases of Column $6$ or 
Column $7$ of Table \ref{table:columnshigh} and we already know that $\barGamma$ is torsion free by Proposition \ref{modular_monodromy}(2).

In case of Column $6$ of Table \ref{table:columnshigh} the map $j_\calE$ is an isomorphism.
Moreover the two $I^*$-fibres in the general member of the stratum must be of type $I_0^*$
according to Proposition \ref{badfibres}, since otherwise the corresponding surface were rigid.
Therefore we get a $2$-dimensional irreducible family of elliptic surfaces
with monodromy $\Gamma$ containing $-\id$, where the parameters are the positions of
the $I^*$-fibres, given by a reduced divisor of degree two on $\PP^1$.

The generic fibre type consists of the $I_0^*$-fibres and the $I_\nu$-fibres
corresponding to the poles of the generic $j$-map $\jGamma\circ j_\calE$.
Since $j_\calE$ is a isomorphism, the pole orders are those of $\jGamma$.

In case of Column $7$ 
of Table \ref{table:columnshigh} the map $j_\calE$ is a
double cover.
It is branched generically at two points, since Lemma \ref{rami} imposes no
additional conditions.
The family of such coverings is $2$-dimensional and irreducible.
The generic fibre type consists of the $I_\nu$-fibres
corresponding to the pole orders of the generic $j$-map $\jGamma\circ j_\calE$,
which coincide with the lengths of the ramification partition at infinity.
Since $\jGamma\circ j_\calE$ is the canonical factorisation,
the map $j_\calE$ is a double cover not branched over $\jGamma^{-1}(\infty)$, 
and the ramification partition is twice that of $\jGamma$.

By Column 6 of Table \ref{table:maxdim} our family is not contained in a monodromy stratum of
larger dimension.

Since the dimension of both families is two, the rank of the root lattice must be $16$.
\end{proof}

\begin{pro}
\label{index18root}
Suppose $\barGamma$ is of index $18$ in $\pslz$. If $\barGamma$ is associated  to an  ambi-typical stratum, then it is torsion free and the monodromy group  $\Gamma$ is
the pre-image of $\barGamma$ in $\slz$.

The generic invariants, root lattice of rank $17$, $j_\calE$ ramification and
number of $*$-fibres are
\[
D_4 + A_{i_1} + \dots + A_{i_k}, \qquad 1, \qquad 1,
\]
with $k$ the number of poles of $\jGamma$ of order at least two, and
$i_1+1,\dots, i_k+1$ their orders.
\end{pro}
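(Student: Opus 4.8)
The plan is to follow the pattern of Propositions \ref{index9root} and \ref{index12root}: read the numerics off Table \ref{table:columnshigh}, pin down the single $*$-fibre, and then extract the generic invariants. The index-$18$ column of Table \ref{table:columnshigh} supplies $\deg j_\calE = 1$, $\# I^* = 1$, $\# IV^* = 0$, hence $e_2 = e_3 = 0$; by Proposition \ref{modular_monodromy}(1) this already gives that $\barGamma$ is torsion free of index $18$, and by Table \ref{table:maxdim} the map $\jGamma$ has exactly five poles, say of orders $\nu_1,\dots,\nu_5$ with $\sum_j \nu_j = 18$. Since $j_\calE$ has degree $1$ it is an isomorphism of the source $\PP^1$'s, so $j(\calE) = \jGamma$ up to reparametrisation; the functional invariant of a generic member $\calE$ of an ambi-typical stratum with this $\barGamma$ is therefore rigid, and the only modulus available is the position of its unique $*$-fibre.

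The \emph{key step} is to show that this $*$-fibre has type $I_0^*$. If it were of type $I_b^*$ with $b \geq 1$, then by Tables \ref{table:kodaira} and \ref{table:tate} the $j$-invariant would have a pole of order $b$ at its base point, forcing that base point to coincide with one of the five fixed poles of $\jGamma$; since the functional invariant is rigid, the entire Weierstra{\ss} datum would then be rigid, contradicting that an ambi-typical stratum is positive-dimensional. (Note that Proposition \ref{badfibres} excludes $I_{>0}^*$ only under the additional hypothesis $-\id\in\Gamma$, so this rigidity argument is genuinely needed here.) Hence the generic member carries an $I_0^*$-fibre, placed at a generic point of $X(\barGamma)\cong\PP^1$; by Table \ref{table:kodaira} its local monodromy is $-\id$, so $-\id\in\Gamma(\calE)=\Gamma$ and therefore $\Gamma$ is the full pre-image of $\barGamma$ in $\slz$. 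Conversely, the family consisting of this rigid $j$-map together with a movable $I_0^*$-fibre has varying moduli and, by Column $7$ of Table \ref{table:maxdim}, attains the maximal dimension $1$; hence it is the unique monodromy stratum attached to $\Gamma$, ambi-typical by \eqref{dim2} and \eqref{closure}.

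It remains to read off the invariants. The generic fibre configuration is $I_{\nu_1}+\dots+I_{\nu_5}+I_0^*$, so $R(\calE)=D_4+\bigoplus_j A_{\nu_j-1}$; the poles of order $1$ contribute $A_0=0$, and writing $i_1+1,\dots,i_k+1$ for the pole orders that are at least $2$ we get $R(\calE)=D_4+A_{i_1}+\dots+A_{i_k}$, whose rank is $4+\sum_{j=1}^{k} i_j=17$ because $\sum_{j=1}^{5}\nu_j=18$. The map $j_\calE$ has degree $1$ and no ramification, which is recorded as "$1$", and there is exactly one $*$-fibre. The main obstacle, as indicated, is precisely the step isolating the type of the $*$-fibre: one must combine the fact that $j_\calE$ has degree one (rigidity of $j(\calE)$) with the dimension bound of Lemma \ref{maxdim} to force the $*$-fibre to be $I_0^*$ rather than an $I_b^*$ sitting over a cusp of $\barGamma$.
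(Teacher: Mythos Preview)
Your proof is correct and follows essentially the same approach as the paper's: read off $\deg j_\calE=1$, $\#I^*=1$, $\#IV^*=0$ from the index-$18$ column of Table~\ref{table:columnshigh}, argue via rigidity of the functional invariant that the unique $*$-fibre must be $I_0^*$ (so that positive-dimensionality forces a movable $I_0^*$ and hence $-\id\in\Gamma$), and then extract the root lattice from the pole orders of $\jGamma$. Your treatment of the key step is in fact more explicit than the paper's, which compresses the argument into the single clause ``positive dimensionality of the stratum must be due to moduli of a movable $I_0^*$-fibre''; your observation that Proposition~\ref{badfibres} alone does not suffice here (since $-\id\in\Gamma$ is not yet known) is a useful clarification.
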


\begin{proof}
By Proposition \ref{modular_monodromy}(1) we know that $\barGamma$ is torsion free. 
We also note that the $I^*$-fibre in the general member of the stratum must 
be of type $I_0^*$, since $j_\calE$ is of degree $1$ by Column $9$ of Table 
\ref{table:columnshigh}, thus positive dimensionality of the stratum must be due
to moduli of a movable $I_0^*$-fibre. Accordingly $-\id\in\Gamma$ and
$\Gamma$ is the pre-image in $\slz$ of $\barGamma$.

Therefore we get a $1$-dimensional irreducible family of elliptic surfaces
with monodromy $\Gamma$, where the parameter is the position of
the $I^*_0$-fibre, consequently the rank of the root lattice must be $17$.

The generic fibre type consists of the $I_0^*$-fibre and the $I_\nu$-fibres
corresponding to the poles of the generic $j$-map $\jGamma\circ j_\calE$.
Since $j_\calE$ is an isomorphism, the pole orders
are those of $\jGamma$.

The property to be a monodromy stratum follows as before by \eqref{dim2}
and Table \ref{table:maxdim}.
\end{proof}

\subsection{Classification of relevant subgroups in the high index cases}

In the previous subsection we have related the monodromy strata uniquely, resp.\
in a two to one way, with subgroups of $\pslz$ in the high index case. So
we have to classify these subgroups next.

To this end we note that subgroups $\barGamma$ are in bijective 
correspondence with maps $\jGamma$ and such maps are in turn determined by
triples of permutations $\mu_0,\mu_1,\mu_\infty$ whose product is the identity and which
generate a group acting transitively on the set of $9,12$ and $18$ elements respectively. 
Since Condition \eqref{belyj1} and the value of $e_2,e_3$ impose restrictions
on $\mu_0$ and $\mu_1$, we deduce:
\begin{enumerate}
\item
In the index $9$ case, $\mu_0$ has only $3$-cycles and $\mu_1$ only $2$-cycles
except for one fixed point.
\item
In the torsion free cases, where the index is $12$ or $18$, 
$\mu_0$ has only $3$-cycles and $\mu_1$ only $2$-cycles.
\end{enumerate}
Moreover, concerning the fibres of type $I_\nu,\nu>0$: 
\begin{enumerate}
\item[(3)]
The number of parts of $\mu_\infty$ is the number of poles and the pole orders
are the lengths of the parts and determine the corresponding fibre.
\end{enumerate}

\begin{pro}
\label{index9groups}
In the index $9$ subcase there are $4$ relevant subgroups $\barGamma$ of $\pslz$. 
These are in bijection with the following $4$ partitions of $\mu_\infty$ of the corresponding map $\jGamma$:
\[
(7,1,1), (6,2,1), (5,3,1), (4,3,2).
\]
This translates into the following root lattices:
\[
D_4+2A_6, D_4+2A_5+2A_1, D_4+2A_4+2A_2, D_4+2A_3+2A_2+2A_1.
\]
with corresponding Mordell-Weil torsion
\[
trivial,\quad \ZZ/2,\quad trivial, \quad \ZZ/2.
\]
\end{pro}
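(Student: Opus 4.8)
The plan is to turn the existence of a relevant $\barGamma$ of index $9$ into a finite classification of degree‑$9$ covers of $X(1)$ branched only over $0,1,\infty$, to carry out that classification, and then to read off the root lattices and the Mordell--Weil torsion.

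First I would collect the numerical constraints. By Remark \ref{coversubgroup} a subgroup $\barGamma\subset\pslz$ of index $9$ is the same datum as a transitive triple $(\mu_0,\mu_1,\mu_\infty)\in S_9^3$ with $\mu_0\mu_1\mu_\infty=\id$, and by Proposition \ref{modular_monodromy}(3) the only relevant $\barGamma$ have $e_2=1$ and $e_3=0$; hence $\mu_0$ has cycle type $(3,3,3)$ and $\mu_1$ has cycle type $(2,2,2,2,1)$. Since $j_\calE$ is a non‑constant map from $\PP^1$ to $X(\barGamma)$, the curve $X(\barGamma)$ has genus $0$, and the genus formula $g=1+\tfrac{9}{12}-\tfrac{e_2}{4}-\tfrac{e_3}{3}-\tfrac{e_\infty}{2}$ (equivalently Riemann--Hurwitz for $\jGamma$) then forces $e_\infty=3$, so $\mu_\infty$ is a partition of $9$ into exactly three parts. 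There are seven of these: $(7,1,1),(6,2,1),(5,3,1),(5,2,2),(4,4,1),(4,3,2),(3,3,3)$; each gives a root lattice of rank $16$, so by Proposition \ref{index9root} each realizable one produces a (two-dimensional) ambi-typical stratum with $\Gamma$ the pre-image of $\barGamma$ in $\slz$. The content of the proposition is therefore exactly that only four of the seven partitions are realized by a connected cover, each uniquely.

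Next I would run that realizability check. It is convenient to pass to the dessin of $\jGamma$: a bipartite map on $S^2$ with three trivalent black vertices over $0$, five white vertices of valencies $2,2,2,2,1$ over $1$, nine edges, and three faces whose numbers of black corners are the pole orders. Suppressing the four bivalent white vertices reduces this to a planar ribbon graph with three trivalent black vertices, one univalent leg, and four internal edges; the possible underlying connected multigraphs (on three vertices, with degrees $3,3,2$ before the leg is attached) form a short explicit list, and for each of them there are only finitely many cyclic orderings of the half-edges to test. Carrying this out, one obtains connected genus-$0$ solutions precisely for $(7,1,1),(6,2,1),(5,3,1)$ and $(4,3,2)$, each unique up to simultaneous conjugation in $S_9$, while $(5,2,2),(4,4,1)$ and $(3,3,3)$ admit no connected configuration (equivalently, the corresponding Frobenius character count over $S_9$ vanishes). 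This non-existence for the three ``extra-symmetric'' partitions, together with the uniqueness in the other four cases, is the technical core of the argument and the step I expect to be the most delicate; it can be done by hand through the ribbon-graph bookkeeping or mechanically in a computer algebra system.

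Finally I would extract the invariants. By Proposition \ref{index9root} the generic member of the stratum is the pullback of $\jGamma$ along the double cover $j_\calE$, which is unramified over $\jGamma^{-1}(\infty)$ because $\jGamma\circ j_\calE$ is the canonical factorisation, together with one movable $I_0^*$-fibre. Hence a pole of $\jGamma$ of order $m\geq 2$ contributes two $I_m$-fibres, i.e.\ a summand $2A_{m-1}$, a pole of order $1$ contributes only $I_1$-fibres, and the $I_0^*$ contributes $D_4$; this gives $D_4+2A_6$, $D_4+2A_5+2A_1$, $D_4+2A_4+2A_2$ and $D_4+2A_3+2A_2+2A_1$, each of rank $16$. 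For the Mordell--Weil torsion I would use Lemma \ref{coxparry}: since none of the indices $[\pslz:\barGamma_1(3)]=4$, $[\pslz:\barGamma_1(4)]=[\pslz:\barGamma(2)]=6$, $[\pslz:\barGamma_1(5)]=12$ divides $9$, the only possible non-trivial torsion is $\ZZ/2$, which by Lemma \ref{coxparry} occurs exactly when $\barGamma$ is sub-conjugate to $\barGamma_1(2)$ (of index $3$). The two cusps of $\barGamma_1(2)$ have widths $2$ and $1$, so an index-$3$ subgroup of it of genus $0$ with $e_2=1$ has cusp widths $(4,3,2)$ or $(6,2,1)$; by the uniqueness just established these are exactly the $\barGamma$ for those two partitions, so they carry $\ZZ/2$ torsion, whereas $(7,1,1)$ and $(5,3,1)$ carry trivial torsion. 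The latter two can also be confirmed directly: the $2$-part of the discriminant form of $D_4+2A_6$ and of $D_4+2A_4+2A_2$ equals that of $D_4$, which has no non-zero isotropic element, so no $2$-torsion section can exist. This yields the torsion list trivial, $\ZZ/2$, trivial, $\ZZ/2$.
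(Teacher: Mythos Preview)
Your approach is correct and is precisely the alternative route the paper alludes to in the Remark immediately following its proof. The paper itself argues differently: it bounds the number of relevant $\barGamma$ from above by quoting Shimada's list of root lattices of the shape $D_4+2A_{i_1}+\dots+2A_{i_k}$ with rank $16$ (entries $2171,2179,2190,2198$), reads the Mordell--Weil torsion directly off that list, checks via Shimada's Table~II that none of these lattices yields multiple components, and then produces the four permutation triples \eqref{cycle711}--\eqref{cycle432} to show all four actually occur. Your proof, by contrast, is self-contained: you classify the degree-$9$ dessins directly and derive the torsion from Lemma~\ref{coxparry}. This buys independence from Shimada's extensive computations at the cost of the ribbon-graph case analysis, which the paper explicitly calls ``straightforward though cumbersome'' and chooses to omit.

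There is one small gap in your torsion argument. You correctly show that if $\barGamma\subset\barGamma_1(2)$ then the cusp widths must be $(6,2,1)$ or $(4,3,2)$, and that $(7,1,1)$ and $(5,3,1)$ carry no $2$-torsion (your discriminant-form check is a nice confirmation). But you do not quite establish the converse: that the unique $\barGamma$ with cusp widths $(6,2,1)$, respectively $(4,3,2)$, \emph{is} contained in a conjugate of $\barGamma_1(2)$. This is easy to fix: either verify that a connected degree-$3$ cover of $X(\barGamma_1(2))\cong\PP^1$ with the required ramification over its two cusps and its elliptic point exists (a one-line check in $S_3$, e.g.\ $(123)(12)(13)=\id$), so that the resulting index-$9$ subgroup must coincide with your $\barGamma$ by uniqueness; or check directly on your explicit dessin that the nine edges admit a system of three blocks of three preserved by $\mu_0$ and $\mu_1$, exhibiting the intermediate cover to $X(\barGamma_1(2))$.
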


\begin{proof}
Since each relevant subgroup gives an irreducible stratum their number is
bounded above by the number of Shimada components which have
root lattice as described in Proposition \ref{index9root} above. The list of Shimada 
\cite[http://arxiv.org/pdf/math/0505140.pdf]{shimada} published in the arXiv version
shows that there are four such root lattices, 
entries $2171, 2179, 2190$ and $2198$, which uniquely determine the Mordell-Weil torsion
as claimed.
They do not appear in the list of root
lattices corresponding to multiple components, see
Shimada \cite[Table II, p.38]{shimada2}, therefore each of these contributes one component.

To see that these Shimada strata are indeed ambi-typical
it suffices to exhibit the four groups, which we do  in terms of four tuples
of permutations
\begin{eqnarray}
\label{cycle711}
 (123)(456)(789), & (14)(27)(56)(89), &(1643297)(5)(8)\\
\label{cycle621}
 (123)(456)(789), & (14)(26)(57)(89), &(16)(259743)(8)\\
\label{cycle531}
 (123)(456)(789), & (14)(25)(67)(89), &(16975)(243)(8)\\
\label{cycle432}
 (123)(456)(789), & (14)(27)(59)(68), &(167)(2943)(58)
\end{eqnarray}
where we use the correspondence from $(3)$ to $(1)$ in Remark \ref{coversubgroup}.
\end{proof}

\begin{rem}
One can give an alternative proof which avoids the use of Shimada's list by performing an exhaustive search for all homomorphisms
as in $(3)$ of Remark \ref{coversubgroup}. This is straightforward though cumbersome and we omit the details.
\end{rem}

In the index $12$ subcase, we get, by Proposition \ref{index12root} two monodromy strata for each relevant subgroup
of $\pslz$.
As it turns out Beauville \cite{bea} has classified exactly these torsion free subgroups
in his study of semi-stable rational elliptic fibrations. Note that Beauville uses the notation
$\Gamma_0^0(n)$ for our groups $\Gamma_1(n)$.

\begin{pro}
\label{index12groups}
In the index $12$ subcase there are $6$ relevant subgroups 
of $\pslz$ with $\jGamma$ in bijection with the partition
corresponding to $\mu_\infty$ being one of 
\[
(9,1,1,1), (8,2,1,1), (6,3,2,1), (5,5,1,1), (4,4,2,2), (3,3,3,3).
\]
\end{pro}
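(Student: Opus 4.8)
The plan is to enumerate the relevant index‑$12$ subgroups through the permutation dictionary of Remark~\ref{coversubgroup} and then to match the outcome with Beauville's classification of semi‑stable rational elliptic surfaces over $\PP^1$ with four singular fibres. The key reduction is that once we know $\barGamma$ is torsion free, the only remaining freedom is the cycle type of $\mu_\infty$, which a Riemann--Hurwitz count pins down to a partition of $12$ into four parts; after that the question becomes a finite classification.

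First I would record what the combinatorial data must look like. Being relevant, $\barGamma$ occurs in Proposition~\ref{modular_monodromy}, so $X(\barGamma)\cong\PP^1$ and, by part~(2) there, $\barGamma$ is torsion free of index $12$. Hence in the associated triple $(\mu_0,\mu_1,\mu_\infty)$ with $\mu_0\mu_1\mu_\infty=\id$ and transitive image, $\mu_0$ has cycle type $3^4$ and $\mu_1$ has cycle type $2^6$ (conditions (1) and (2) stated just before the proposition, with $e_2=e_3=0$). Applying Riemann--Hurwitz to $\jGamma$, which has $4$ points over $0$, $6$ points over $1$, and as many points over $\infty$ as $\mu_\infty$ has cycles, and using $g\bigl(X(\barGamma)\bigr)=0$, forces $\mu_\infty$ to have exactly $4$ cycles; by condition (3) before the proposition their lengths $(a_1,\dots,a_4)$, with $\sum a_j=12$, are the pole orders of $\jGamma$, i.e.\ the multiplicities of the $I_\nu$‑fibres. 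The analysis of Subsection~\ref{subsec:highindexuniqueness} (Proposition~\ref{index12root}) then identifies such a $\barGamma$ with the modular monodromy group of the rational elliptic surface over $\PP^1$ carrying exactly the four fibres $I_{a_1},\dots,I_{a_4}$; note $\sum_j(a_j-1)=8$, so the root lattice $\bigoplus_j A_{a_j-1}$ has rank $8$ and this rational surface is extremal.

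Next I would invoke Beauville~\cite{bea}: there are exactly six such surfaces, with singular‑fibre configurations $I_9+3I_1$, $I_8+I_2+2I_1$, $I_6+I_3+I_2+I_1$, $2I_5+2I_1$, $2I_4+2I_2$ and $4I_3$, pairwise non‑isomorphic; equivalently the torsion‑free index‑$12$ subgroups of $\pslz$ with genus‑$0$ quotient have cusp‑width vectors $(9,1,1,1)$, $(8,2,1,1)$, $(6,3,2,1)$, $(5,5,1,1)$, $(4,4,2,2)$, $(3,3,3,3)$, each realized once. This bounds the number of relevant subgroups by six and gives the asserted bijection with the six partitions of $\mu_\infty$. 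Conversely, each of these six is genuinely relevant: Proposition~\ref{index12root}, together with the bound $m(\Gamma)\le 2$ from the index‑$12$ column of Table~\ref{table:maxdim} and the maximality criterion~\eqref{dim2}, shows that the two associated two‑dimensional families are bona fide ambi‑typical strata.

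The substance of the argument is the classification step, which is Beauville's theorem; the main obstacle, if one wishes to argue in a self‑contained way, is to show that among the fifteen partitions of $12$ into four parts exactly the six listed admit a transitive triple $(\mu_0,\mu_1,\mu_\infty)$ with $\mu_0$ of type $3^4$, $\mu_1$ of type $2^6$ and product $\id$, unique up to simultaneous conjugacy in $S_{12}$. Discarding the other nine is quick: since the underlying rational elliptic surface is extremal, the negative‑definite rank‑$8$ root lattice $\bigoplus_j A_{a_j-1}$ must sit with finite index in the even unimodular lattice $E_8$, which forces $\prod_j a_j=\operatorname{disc}\bigl(\bigoplus_j A_{a_j-1}\bigr)$ to be a perfect square — and this fails for each of the nine spurious partitions. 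Existence and uniqueness for the six surviving configurations is then a finite check: one either exhibits explicit triples of permutations, as was done for the index‑$9$ groups, or runs the exhaustive search over homomorphisms $\pi_1(\CC\setminus\{0,1\})\to S_{12}$ described in the Remark following Proposition~\ref{index9groups}.
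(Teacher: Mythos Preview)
Your proposal is correct and its core is identical to the paper's: the paper's entire proof is the single sentence ``This claim is proved in \cite[p.658f]{bea}'', and you likewise rest the classification on Beauville.

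Where you go beyond the paper is worth noting. The paper does not spell out the Riemann--Hurwitz count forcing $\mu_\infty$ to have four cycles, nor does it offer any self-contained route; you supply both. Your observation that the rank-$8$ root lattice $\bigoplus_j A_{a_j-1}$ must sit with finite index in $E_8$, hence $\prod_j a_j$ must be a perfect square, is an elegant way to discard the nine spurious partitions of $12$ into four parts and is not in the paper. This reduces the residual work to existence and uniqueness of a transitive triple for each of the six surviving partitions, which (as you say) is a finite check. One small imprecision: your appeal to Proposition~\ref{index12root} to ``identify $\barGamma$ with the modular monodromy group of the rational elliptic surface'' slightly overstates what that proposition does; the identification comes directly from the fact that a torsion-free genus-$0$ index-$12$ subgroup yields, via $\jGamma$, the $j$-map of a semi-stable rational elliptic fibration with four singular fibres --- which is exactly the input to Beauville's theorem.
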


This claim is proved in \cite[p.658f]{bea}, which is used also in
the proof of the following proposition addressing case $(1)$ of Proposition \ref{index12root}.

\begin{pro}
\label{beauville}
If $j_\calE$ is generic of degree $2$, so $\calE$ is the pull-back along $j_\calE$ of a
rational modular elliptic surface without *-fibre,
then the monodromy group $\Gamma$ is one of the following 
\[
\Gamma_0(9)\cap\Gamma_1(3),\quad\Gamma_0(8)\cap\Gamma_1(4),\quad \Gamma_1(6),\quad \Gamma_1(5),
\quad \Gamma_1(4)\cap\Gamma(2),\quad \Gamma(3)
\]
with corresponding Mordell-Weil torsion
\[
\ZZ/3,\quad \ZZ/4,\quad \ZZ/6,\quad \ZZ/5,
\quad\ZZ/4\times\ZZ/2,\quad \ZZ/3\times\ZZ/3.
\]
\end{pro}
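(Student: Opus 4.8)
The plan is to deduce the statement from Beauville's classification of semi-stable rational elliptic surfaces with four singular fibres, transported along the degree-$2$ base change $j_\calE$. We are in situation $(1)$ of Proposition \ref{index12root}, so by that proposition the quotient homomorphism $\slz\to\pslz$ restricts to an isomorphism $\Gamma\cong\barGamma$; thus $\Gamma$ is a lift of $\barGamma$ to $\slz$ with $-\id\notin\Gamma$. By Proposition \ref{modular_monodromy}$(2)$ the group $\barGamma$ is torsion free, and by Proposition \ref{index12groups} it is one of six explicit subgroups of $\pslz$, distinguished by the partition of $\deg\jGamma=12$ formed by the cusp widths, namely one of $(9,1,1,1),(8,2,1,1),(6,3,2,1),(5,5,1,1),(4,4,2,2),(3,3,3,3)$. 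For each such $\barGamma$, the generic $\calE$ is, up to normalisation and blow-down, the pull-back along the degree-$2$ map $j_\calE$ of the rational elliptic surface $\bar\calE$ over $X(\barGamma)\cong\PP^1$ with $j$-function $\jGamma$. Since $\jGamma$ is a Belyi map with all multiplicities over $0$, resp.\ over $1$, equal to $3$, resp.\ $2$, and since $\barGamma$ is torsion free, the Tate algorithm (Table \ref{table:tate}) shows $\bar\calE$ has only multiplicative fibres, one fibre $I_\nu$ over each cusp of width $\nu$; hence the six surfaces $\bar\calE$ carry the configurations $I_9+3I_1$, $I_8+I_2+2I_1$, $I_6+I_3+I_2+I_1$, $2I_5+2I_1$, $2I_4+2I_2$, $4I_3$, which are precisely Beauville's six semi-stable rational elliptic surfaces with four singular fibres.

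Next I would transfer the monodromy from $\bar\calE$ to $\calE$. Exactly as in the low-index arguments (cf.\ the proofs of Propositions \ref{Gamma2'} and \ref{Gamma13}), the monodromy group $\Gamma(\calE)$ is generated by those monodromy elements of $\bar\calE$ attached to loops that lift along $j_\calE$, so $\Gamma(\calE)\subseteq\Gamma(\bar\calE)$; and for generic $j_\calE$ its branch points avoid the cusps and elliptic points of $X(\barGamma)$, so $j_\calE$ induces a surjection on orbifold fundamental groups and $\Gamma(\calE)=\Gamma(\bar\calE)$. On p.~658f of \cite{bea} Beauville determines these monodromy groups; in his notation $\Gamma_0^0(n)$, which is our $\Gamma_1(n)$, they are -- in the order of the partitions above -- exactly the six groups in the statement. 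Each of these has index $24$ in $\slz$ and does not contain $-\id$, hence maps isomorphically onto its image in $\pslz$, which has index $12$; and the cusp widths of the associated modular curves recover the partitions, which fixes which group corresponds to which $\barGamma$. This yields the asserted list of monodromy groups $\Gamma=\Gamma(\calE)$.

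Finally I would read off the Mordell--Weil torsion. Each $\bar\calE$ is a rational elliptic surface whose fibres are all multiplicative and satisfy $\sum_v(m_v-1)=12-4=8$, with $m_v$ the number of components over $v$, so $\MW(\bar\calE)$ has rank $0$; matching the fibre configurations against the Oguiso--Shioda list \cite{os} produces the torsion groups $\ZZ/3$, $\ZZ/4$, $\ZZ/6$, $\ZZ/5$, $\ZZ/4\times\ZZ/2$, $\ZZ/3\times\ZZ/3$. Since torsion sections pull back to torsion sections, $\MW(\calE)$ contains the corresponding group; conversely, were $\MW(\calE)$ strictly larger for a generic member, Lemma \ref{coxparry} would force $j(\calE)$ to factor through $j_{\barGamma'}$ for some $\barGamma'\subsetneq\barGamma$, contradicting that $\jGamma\circ j_\calE$ is the canonical factorisation; hence equality holds generically. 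The single substantive ingredient is Beauville's explicit identification of the six groups, which I would cite; the remaining work is the bookkeeping that aligns his list with Propositions \ref{index12root} and \ref{index12groups} and that propagates monodromy and torsion along $j_\calE$. The propagation of the monodromy -- i.e.\ the surjectivity of $j_\calE$ on orbifold fundamental groups for the generic member -- is the only point that genuinely needs an argument, and I expect it to be the main (if mild) obstacle.
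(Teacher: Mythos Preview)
Your proposal is correct and follows essentially the same route as the paper: cite Beauville \cite{bea} for the six monodromy groups of the rational modular surfaces, argue that a generic degree-$2$ base change $j_\calE$ induces a surjection on fundamental groups of the punctured bases so $\Gamma(\calE)=\Gamma(\bar\calE)$, and then read off the Mordell--Weil torsion via Lemma \ref{coxparry}. The only minor difference is that for the torsion you first appeal to the Oguiso--Shioda list and a pull-back argument to get the lower bound before invoking the canonical factorisation for the upper bound, whereas the paper simply applies Lemma \ref{coxparry} directly (since $\Gamma$ is known exactly) and cross-checks against Shimada's list; your extra step is harmless but unnecessary.
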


\begin{proof}
The list for the monodromy groups of the rational modular
elliptic surfaces is taken from \cite{bea}.
Indeed, they do not change under generic pull-back, since
${(j_{\calE}})_*$ induces a surjection on fundamental groups of the complements of
singular values.
The claim of the Mordell-Weil torsion is then 
obtained using Lemma \ref{coxparry}.
It can also be verified by a check of the corresponding
lines 2242, 2262, 2322, 2345, 2368, 2373 in \cite{shimada}.
\end{proof}

The other families of surfaces with torsion free modular monodromy of index $12$ are obtained
from the Beauville elliptic surfaces, which are rational and rigid
for deformations preserving the monodromy group, by replacing  two smooth fibres by
fibres of type $I_0^*$. This \emph{generic quadratic twisting} corresponds to 
the transformation of the Weierstra{\ss} datum $(g_2',g_3')$ of a rational Beauville elliptic surface
into Weierstra{\ss} data $(\gamma_2^2 g_2',\gamma_2^3 g_3')$, where
the zeroes of $\gamma_2$ avoid the singular fibres.
Geometrically this means the following. We first take the double cover branched at the smooth fibres over the zeroes
of $\gamma_2$. This double cover is acted on by the 
deck-transformation and the involution which is $-\id$ on each fibre.
The quotient by the product of these two involutions gives the twisted surface after resolution of its singularities.
The choice of $\gamma_2$, or equivalently the position of the $I_0^*$ fibres,  gives the two moduli of these families.

\begin{pro}
\label{twistedbeauville}
If $\calE$ is a rational modular elliptic surface without *-fibre \emph{twisted} at two smooth fibres,
then the monodromy group $\Gamma$ is one of the following 
\[
\Gamma_0(9),\quad\Gamma_0(8),\quad \Gamma_0(6),\quad \Gamma_1(5)\{\pm\id\},
\quad \Gamma_0(4)\cap\Gamma(2),\quad \Gamma(3)\{\pm\id\}
\]
with corresponding Mordell-Weil torsion
\[
trivial,\quad \ZZ/2,\quad \ZZ/2,\quad trivial,
\quad\ZZ/2\times\ZZ/2,\quad trivial.
\]
\end{pro}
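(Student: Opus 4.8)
*If $\calE$ is a rational modular elliptic surface without $*$-fibre twisted at two smooth fibres, then the monodromy group $\Gamma$ is one of $\Gamma_0(9), \Gamma_0(8), \Gamma_0(6), \Gamma_1(5)\{\pm\id\}, \Gamma_0(4)\cap\Gamma(2), \Gamma(3)\{\pm\id\}$ with corresponding Mordell–Weil torsion $\text{trivial}, \ZZ/2, \ZZ/2, \text{trivial}, \ZZ/2\times\ZZ/2, \text{trivial}$.*The plan is to exploit the relationship between a rational modular elliptic surface and its generic quadratic twist, running in parallel to the list of six Beauville surfaces already recorded in Proposition \ref{beauville}. The starting observation is that twisting by $\gamma_2^2$ (respectively $\gamma_2^3$) does not change the $j$-invariant, hence does not change the \emph{modular} monodromy group $\barGamma$; in particular the six groups $\barGamma$ of index $12$ from Proposition \ref{index12groups} are exactly those occurring here, paired off with the six Beauville cases via the same partition of $\mu_\infty$. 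So the only thing to pin down is which lift $\Gamma\subset\slz$ of each $\barGamma$ is realised. Since the twisted surface has two $I_0^*$-fibres, whose local monodromy is $-\id$, we immediately get $-\id\in\Gamma$, so $\Gamma$ is forced to be the \emph{full preimage} of $\barGamma$ in $\slz$. This is the crux: once we know $-\id\in\Gamma$ and we know $\barGamma$, the group $\Gamma$ is determined.

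Next I would identify each preimage explicitly. For four of the six Beauville groups the relevant preimage already has a standard name: the preimage of $\barGamma_1(6)$ is $\Gamma_0(6)$ (using $\barGamma_1(6)=\barGamma_0(6)$, as $-\id\in\Gamma_0(6)$ but not $\Gamma_1(6)$), and likewise the preimages of $\barGamma_0(9)$, $\barGamma_0(8)$ are $\Gamma_0(9)$, $\Gamma_0(8)$; here one should note that for $n=8,9$ the Beauville group $\Gamma_1(n)\cap\Gamma_0(\ldots)$ maps isomorphically onto its image, so its preimage is obtained by adjoining $-\id$, which for $n=9$ gives $\Gamma_0(9)$ and for $n=8$ gives $\Gamma_0(8)$ — here I would double-check the index bookkeeping $[\slz:\Gamma_0(8)]=[\slz:\Gamma_0(9)]=12$ against $[\pslz:\barGamma]=12$ and the presence of $-\id$. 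For $\barGamma_1(5)$ the group $\Gamma_1(5)$ does not contain $-\id$ and has index $24$ in $\slz$, so the preimage of $\barGamma_1(5)$ is $\Gamma_1(5)\cup(-\id)\Gamma_1(5) = \Gamma_1(5)\{\pm\id\}$, of index $12$; similarly the preimage of $\barGamma(3)$ is $\Gamma(3)\{\pm\id\}$ (since $-\id\notin\Gamma(3)$). For the remaining $\barGamma(2)$-type case the Beauville monodromy is $\Gamma_1(4)\cap\Gamma(2)$, whose image is torsion free of index $12$ and which does not contain $-\id$; its preimage is $(\Gamma_1(4)\cap\Gamma(2))\{\pm\id\} = \Gamma_0(4)\cap\Gamma(2)$, and I would verify this last equality at the level of matrices modulo $4$ (a finite check).

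Finally the Mordell–Weil torsion: by Lemma \ref{coxparry}, $\MWt(\calE)\supset \ZZ/n\ZZ\times\ZZ/m\ZZ$ iff $\Gamma\subset\Gamma_m(n)$, and since each $\Gamma$ here contains $-\id$ while $\Gamma_m(n)$ never does unless $m=n\le 2$, the torsion can only come from $\Gamma$ being contained in some $\Gamma_1(2)$- or $\Gamma(2)$-type conjugate. Concretely: $\Gamma_0(9)$, $\Gamma_1(5)\{\pm\id\}$, $\Gamma(3)\{\pm\id\}$ contain $-\id$ and are not contained in any $\Gamma_1(n)$ with $n\ge2$ except via $n\le2$ which fails, so torsion is trivial; $\Gamma_0(8)$ and $\Gamma_0(6)$ are contained in a conjugate of $\Gamma_1(2)$ (equivalently $\Gamma_0(2)$) but not in $\Gamma(2)$, giving $\ZZ/2$; and $\Gamma_0(4)\cap\Gamma(2)$ is contained in $\Gamma(2)=\Gamma_2(2)$, giving $\ZZ/2\times\ZZ/2$. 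One can alternatively read these off directly from Shimada's table, cross-referencing the fibre configurations of the twisted surfaces (obtained from the Beauville configurations by replacing two $I_0$'s with two $I_0^*$'s, hence adding $2D_4$ to the root lattice), exactly as done in Proposition \ref{beauville}. The main obstacle is the careful matrix-level identification of the preimage groups and confirming that no further index reduction occurs — i.e.\ that the preimage really does have index $12$ and not smaller — but this is a finite congruence computation for each of the six cases.
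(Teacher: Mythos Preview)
Your proposal is correct and follows essentially the same approach as the paper: the paper's proof simply states that the monodromy groups are $\Gamma_{\text{Beauville}}\{\pm\id\}$ (i.e.\ the Beauville groups with $-\id$ adjoined, due to the $I_0^*$ fibres) and then invokes Lemma~\ref{coxparry} and Shimada's list for the Mordell--Weil torsion. You have spelled out in more detail the identifications $\Gamma_{\text{Beauville}}\{\pm\id\}=\Gamma_0(9),\Gamma_0(8),\ldots$ and the case-by-case torsion check, but the underlying argument is the same.
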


\begin{proof}
The monodromy groups are the groups $\Gamma\{\pm\id\}$ generated by 
the groups from \cite{bea} and $-\id$ due to the presence of $I_0^*$ fibres.
The claim of the Mordell-Weil torsion is then 
obtained with Lemma \ref{coxparry}.
It is also immediate by a check of the corresponding
lines $2148 - 2153$ in \cite{shimada}
\end{proof}

\begin{pro}
\label{index18groups}
In the index $18$ subcase there are $26$ relevant subgroups of $\pslz$
corresponding to root lattices and Mordell-Weil torsion
in the list of Shimada given in lines
\[
2762 -  2786.
\]
The lattice appearing in line $2776$ gives rise to two different ambi-typical strata.
\end{pro}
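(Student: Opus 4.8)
The plan is to proceed exactly as in the proof of Proposition~\ref{index9groups}, only on a larger scale, combining the structural result of Proposition~\ref{index18root} with a counting argument against Shimada's list and an explicit realisation by permutation triples.

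First I would recall from Proposition~\ref{index18root} that any relevant subgroup $\barGamma$ of index $18$ in $\pslz$ determines a \emph{unique} ambi-typical stratum, whose generic root lattice is $D_4 + A_{i_1} + \dots + A_{i_k}$ of rank $4 + i_1 + \dots + i_k = 17$, where $k$ is the number of poles of $\jGamma$ of order at least $2$ and $i_1+1,\dots,i_k+1$ are their orders. Since a relevant subgroup $\barGamma$ is recovered from the monodromy stratum it produces (as $\barGamma(\calE)$ for a generic member), and that monodromy stratum is recovered from the associated ambi-typical stratum, distinct relevant subgroups yield distinct ambi-typical strata, hence distinct Shimada strata. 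Therefore the number of relevant index-$18$ subgroups is at most the number of Shimada strata whose root lattice has the form ``$D_4$ plus an orthogonal sum of $A$-lattices'' and rank $17$.

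Next I would read this bound off Shimada's published list: the root lattices of that shape and rank appear precisely as the entries in lines $2762$--$2786$ of \cite{shimada}, i.e.\ $25$ root lattices, and comparison with \cite[Table II, p.38]{shimada2} shows that each of them corresponds to a single Shimada component except the lattice $D_4 + 2A_6 + A_1$ in line $2776$, which corresponds to two complex conjugate components. Hence there are at most $26$ relevant subgroups, and the Mordell--Weil torsion of a generic member of each corresponding stratum is the one recorded in the relevant line of \cite{shimada}. To see that the bound $26$ is attained I would exhibit, for each of these $25$ root lattices, a triple $(\mu_0,\mu_1,\mu_\infty)$ of permutations in $S_{18}$ with $\mu_0\mu_1\mu_\infty=\id$, generating a transitive subgroup, with $\mu_0$ a product of six $3$-cycles, $\mu_1$ a product of nine transpositions (so that $\barGamma$ is torsion free, as demanded by Proposition~\ref{modular_monodromy}(1)), and $\mu_\infty$ of the cycle type dictated by the pole orders; via the correspondence $(3)\to(1)$ of Remark~\ref{coversubgroup} this gives the desired subgroup, and by Proposition~\ref{index18root} it produces an ambi-typical stratum. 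For the partition $(7,7,2,1,1)$ belonging to line $2776$ I would write down \emph{two} such triples and verify that the resulting subgroups lie in distinct conjugacy classes (equivalently, that the two monodromy factorisations are inequivalent), thus realising both components.

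The main obstacle I expect is the bookkeeping rather than any conceptual difficulty: one must produce $26$ valid permutation triples in $S_{18}$ and check transitivity and the cycle-type constraints in each case, which is routine but lengthy and best carried out with a computer. The genuinely delicate point is line $2776$, where one has to certify that the two triples of cycle type $(7,7,2,1,1)$ are not equivalent under simultaneous conjugation; here I would either appeal to Shimada's two-component statement (together with Proposition~\ref{index18root}, which forces each component to be a separate ambi-typical stratum) or, lattice-theoretically, distinguish the two saturations as in Remark~\ref{rem:explanationShimadastrata}. As in Proposition~\ref{index9groups}, one could bypass Shimada's list altogether by an exhaustive enumeration of all transitive triples in $S_{18}$ with $\mu_0$ all $3$-cycles and $\mu_1$ all $2$-cycles, but this is heavier and I would only indicate it.
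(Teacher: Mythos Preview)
Your proposal is essentially the paper's own argument: bound the number of relevant index-$18$ subgroups above by the count of Shimada strata with root lattice $D_4+\sum A_{i_j}$ of rank $17$ (lines $2762$--$2786$, with $2776$ double), then exhibit $26$ permutation triples in $S_{18}$ with $\mu_0$ all $3$-cycles and $\mu_1$ all $2$-cycles to realise them. The paper carries this out verbatim, tabulating the $26$ triples and then proving the two triples for the partition $(7,7,2,1,1)$ are non-conjugate by an explicit element-chasing argument (tracking where the fixed points of $\mu_\infty$ and the unique $2$-cycle must go under a putative conjugating permutation until a contradiction is reached).

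One correction on your alternatives for line $2776$. Your option~(b), distinguishing the two saturations \`a la Remark~\ref{rem:explanationShimadastrata}, does not apply here: the Mordell--Weil torsion is trivial, so $G=0$ and $L=R$ for both components; they share the same saturation and differ only by complex conjugation, not algebraically. Your option~(a), invoking Shimada's two-component statement, is not quite enough as stated: Proposition~\ref{index18root} produces an ambi-typical stratum from a relevant $\barGamma$, not the other way round, so knowing there are two Shimada strata does not by itself force two distinct $\barGamma$'s---one of the two strata might simply fail to be ambi-typical. You can rescue this by arguing that complex conjugation carries ambi-typical strata to ambi-typical strata (so once you have one, its conjugate is another), together with Shimada's statement that the two components are indeed distinct; but this is a different argument from the one you sketched. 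The paper avoids this by doing the direct non-conjugacy check on the triples.
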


\begin{rem}
We remark without proof that the two components corresponding to line $2776$ are complex conjugate, as are the maps $\jGamma$
for the corresponding modular monodromy groups. More precisely, this means that the corresponding maps 
$\mu:\pi_1(\CC\setminus\{0,1\}) \to S_{18}$ modulo conjugation in $S_{18}$ differ only by the automorphism of
$\pi_1(\CC\setminus\{0,1\})$ induced by complex conjugation.
\end{rem}

\begin{proof}
Since each relevant subgroup gives an irreducible stratum, their number is
bounded above by the number of Shimada strata
which have
root lattice of rank $17$ of the form $D_4+A_{i_1}+\dots+A_{i_k}$ 
as in Proposition \ref{index18root} above.
The list of Shimada\cite{shimada}
shows that there are $25$ such root lattices and the component count
of Shimada\cite{shimada2} shows that each of these contributes one component except for that
of line 2776 where two components exist.

To see that all these components are related to torsion free subgroups we
provide the corresponding list of $26$ triples of monodromy permutations together with
a proof, that the two triples corresponding to line $2776$ are not equal under
conjugation.
We give representatives for all orbits, with 
$\mu_1=(12)(34)(56)(78)(9\,10)(11\,12)(13\,14)(15\,16)(17\,18)$ and
$\mu_0,\mu_\infty$ in the following Table \ref{tab:jmonodromy}.

\begin{table}[htbp]
\[
{\scriptsize
\begin{array}{rlll}
1 & (123)(567)(9\:\!1\!\!\:0\!\:\:\!1\!\!\:1\!\:)(\:\!1\!\!\:3\!\:\:\!1\!\!\:4\!\:\:\!1\!\!\:5\!\:)(48\:\!1\!\!\:7\!\:)(\:\!1\!\!\:2\!\:\:\!1\!\!\:6\!\:\:\!1\!\!\:8\!\:),
(1)(5)(9)(\:\!1\!\!\:3\!\:)(23\:\!1\!\!\:7\!\:\:\!1\!\!\:6\!\:\:\!1\!\!\:4\!\:\:\!1\!\!\:5\!\:\:\!1\!\!\:2\!\:\:\!1\!\!\:0\!\:\:\!1\!\!\:1\!\:\:\!1\!\!\:8\!\: 8674) &1\!\!\:4\!\: 1 1 1 1
\\ 2 & (123)(567)(9\:\!1\!\!\:0\!\:\:\!1\!\!\:1\!\:)(48\:\!1\!\!\:7\!\:)(\:\!1\!\!\:2\!\:\:\!1\!\!\:5\!\:\:\!1\!\!\:7\!\:)(\:\!1\!\!\:4\!\:\:\!1\!\!\:8\!\:\:\!1\!\!\:6\!\:),
(1)(5)(9)(\:\!1\!\!\:5\!\:\:\!1\!\!\:8\!\:)(23\:\!1\!\!\:3\!\:\:\!1\!\!\:6\!\:\:\!1\!\!\:2\!\:\:\!1\!\!\:0\!\:\:\!1\!\!\:1\!\:\:\!1\!\!\:7\!\:\:\!1\!\!\:4\!\: 8674) & 1\!\!\:3\!\: 2 1 1 1
\\ 3 & (123)(567)(9\:\!1\!\!\:0\!\:\:\!1\!\!\:1\!\:)(4\:\!1\!\!\:3\!\:\:\!1\!\!\:8\!\:)(8\:\!1\!\!\:5\!\:\:\!1\!\!\:4\!\:)(\:\!1\!\!\:2\!\:\:\!1\!\!\:7\!\:\:\!1\!\!\:6\!\:),
(1)(5)(9)(\:\!1\!\!\:3\!\:\:\!1\!\!\:5\!\:\:\!1\!\!\:7\!\:)(23\:\!1\!\!\:8\!\:\:\!1\!\!\:2\!\:\:\!1\!\!\:0\!\:\:\!1\!\!\:1\!\:\:\!1\!\!\:6\!\: 867\:\!1\!\!\:4\!\: 4) & 1\!\!\:2\!\: 3 1 1 1
\\ 4 & (123)(458)(697)(\:\!1\!\!\:0\!\:\:\!1\!\!\:1\!\:\:\!1\!\!\:4\!\:)(\:\!1\!\!\:2\!\:\:\!1\!\!\:5\!\:\:\!1\!\!\:3\!\:)(\:\!1\!\!\:6\!\:\:\!1\!\!\:7\!\:\:\!1\!\!\:8\!\:),
(1)(\:\!1\!\!\:7\!\:)(57)(\:\!1\!\!\:1\!\:\:\!1\!\!\:3\!\:)(2389\:\!1\!\!\:4\!\:\:\!1\!\!\:5\!\:\:\!1\!\!\:8\!\:\:\!1\!\!\:6\!\:\:\!1\!\!\:2\!\:\:\!1\!\!\:0\!\: 64) & 1\!\!\:2\!\: 2 2 1 1
\\ 5 & (123)(567)(489)(\:\!1\!\!\:0\!\:\:\!1\!\!\:1\!\:\:\!1\!\!\:3\!\:)(\:\!1\!\!\:2\!\:\:\!1\!\!\:7\!\:\:\!1\!\!\:5\!\:)(\:\!1\!\!\:4\!\:\:\!1\!\!\:6\!\:\:\!1\!\!\:8\!\:),
(1)(5)(\:\!1\!\!\:6\!\:\:\!1\!\!\:7\!\:)(\:\!1\!\!\:1\!\:\:\!1\!\!\:5\!\:\:\!1\!\!\:4\!\:)(239\:\!1\!\!\:3\!\:\:\!1\!\!\:8\!\:\:\!1\!\!\:2\!\:\:\!1\!\!\:0\!\: 8674) & 1\!\!\:1\!\: 3 2 1 1
\\ 6 & (123)(567)(9\:\!1\!\!\:0\!\:\:\!1\!\!\:1\!\:)(48\:\!1\!\!\:4\!\:)(\:\!1\!\!\:3\!\:\:\!1\!\!\:5\!\:\:\!1\!\!\:7\!\:)(\:\!1\!\!\:2\!\:\:\!1\!\!\:6\!\:\:\!1\!\!\:8\!\:),
(1)(5)(9)(10\:\!1\!\!\:1\!\:\:\!1\!\!\:8\!\:\:\!1\!\!\:5\!\:\:\!1\!\!\:2\!\:)(23\:\!1\!\!\:4\!\:\:\!1\!\!\:7\!\:\:\!1\!\!\:6\!\:\:\!1\!\!\:3\!\: 8674) & 1\!\!\:0\!\: 5 1 1 1
\\ 7 & (123)(567)(49\:\!1\!\!\:1\!\:)(8\:\!1\!\!\:3\!\:\:\!1\!\!\:0\!\:)(\:\!1\!\!\:2\!\:\:\!1\!\!\:5\!\:\:\!1\!\!\:7\!\:)(\:\!1\!\!\:4\!\:\:\!1\!\!\:8\!\:\:\!1\!\!\:6\!\:),
(1)(5)(\:\!1\!\!\:5\!\:\:\!1\!\!\:8\!\:)(9\:\!1\!\!\:3\!\:\:\!1\!\!\:6\!\:\:\!1\!\!\:2\!\:)(23\:\!1\!\!\:1\!\:\:\!1\!\!\:7\!\:\:\!1\!\!\:4\!\: 867\:\!1\!\!\:0\!\: 9) & 1\!\!\:0\!\: 4 2 1 1
\\ 8 & (123)(567)(49\:\!1\!\!\:1\!\:)(8\:\!1\!\!\:3\!\:\:\!1\!\!\:5\!\:)(\:\!1\!\!\:0\!\:\:\!1\!\!\:6\!\:\:\!1\!\!\:7\!\:)(\:\!1\!\!\:2\!\:\:\!1\!\!\:8\!\:\:\!1\!\!\:4\!\:),
(1)(5)(9\:\!1\!\!\:7\!\:\:\!1\!\!\:2\!\:)(\:\!1\!\!\:3\!\:\:\!1\!\!\:8\!\:\:\!1\!\!\:6\!\:)(23\:\!1\!\!\:1\!\:\:\!1\!\!\:4\!\: 867\:\!1\!\!\:5\!\:\:\!1\!\!\:0\!\: 4) & 1\!\!\:0\!\: 3 3 1 1
\\ 9 & (123)(457)(698)(\:\!1\!\!\:0\!\:\:\!1\!\!\:1\!\:\:\!1\!\!\:3\!\:)(\:\!1\!\!\:2\!\:\:\!1\!\!\:7\!\:\:\!1\!\!\:5\!\:)(\:\!1\!\!\:4\!\:\:\!1\!\!\:6\!\:\:\!1\!\!\:8\!\:),
(1)(58)(\:\!1\!\!\:6\!\:\:\!1\!\!\:7\!\:)(11\:\!1\!\!\:5\!\:\:\!1\!\!\:4\!\:)(2379\:\!1\!\!\:3\!\:\:\!1\!\!\:8\!\:\:\!1\!\!\:2\!\:\:\!1\!\!\:0\!\: 64) & 1\!\!\:0\!\: 3 2 2 1
\\1\!\!\:0 & (123)(567)(9\:\!1\!\!\:0\!\:\:\!1\!\!\:1\!\:)(4\:\!1\!\!\:3\!\:\:\!1\!\!\:8\!\:)(8\:\!1\!\!\:4\!\:\:\!1\!\!\:5\!\:)(\:\!1\!\!\:2\!\:\:\!1\!\!\:7\!\:\:\!1\!\!\:6\!\:),
(1)(5)(9)(67\:\!1\!\!\:5\!\:\:\!1\!\!\:7\!\:\:\!1\!\!\:3\!\: 8)(23\:\!1\!\!\:8\!\:\:\!1\!\!\:2\!\:\:\!1\!\!\:0\!\:\:\!1\!\!\:1\!\:\:\!1\!\!\:6\!\:\:\!1\!\!\:4\!\: 4) & 9 6 1 1 1
\\1\!\!\:1 & (123)(567)(49\:\!1\!\!\:1\!\:)(\:\!1\!\!\:0\!\:\:\!1\!\!\:3\!\:\:\!1\!\!\:2\!\:)(\:\!1\!\!\:4\!\:\:\!1\!\!\:5\!\:\:\!1\!\!\:7\!\:)(8\:\!1\!\!\:6\!\:\:\!1\!\!\:8\!\:),
(1)(5)(9\:\!1\!\!\:2\!\:)(67\:\!1\!\!\:8\!\:\:\!1\!\!\:5\!\: 8)(23\:\!1\!\!\:1\!\:\:\!1\!\!\:3\!\:\:\!1\!\!\:7\!\:\:\!1\!\!\:6\!\:\:\!1\!\!\:4\!\:\:\!1\!\!\:0\!\: 4) & 9 5 2 1 1
\\1\!\!\:2  & (123)(457)(69\:\!1\!\!\:1\!\:)(8\:\!1\!\!\:3\!\:\:\!1\!\!\:5\!\:)(\:\!1\!\!\:0\!\:\:\!1\!\!\:7\!\:\:\!1\!\!\:2\!\:)(\:\!1\!\!\:4\!\:\:\!1\!\!\:8\!\:\:\!1\!\!\:6\!\:),
(1)(9\:\!1\!\!\:2\!\:)(\:\!1\!\!\:3\!\:\:\!1\!\!\:6\!\:)(5\:\!1\!\!\:1\!\:\:\!1\!\!\:7\!\:\:\!1\!\!\:4\!\: 8)(237\:\!1\!\!\:5\!\:\:\!1\!\!\:8\!\:\:\!1\!\!\:0\!\: 64) & 8 5 2 2 1
\\1\!\!\:3 & (123)(457)(69\:\!1\!\!\:1\!\:)(8\:\!1\!\!\:2\!\:\:\!1\!\!\:3\!\:)(\:\!1\!\!\:0\!\:\:\!1\!\!\:5\!\:\:\!1\!\!\:7\!\:)(\:\!1\!\!\:6\!\:\:\!1\!\!\:7\!\:\:\!1\!\!\:8\!\:),
(1)(\:\!1\!\!\:5\!\:\:\!1\!\!\:8\!\:)(5\:\!1\!\!\:1\!\: 8)(9\:\!1\!\!\:7\!\:\:\!1\!\!\:4\!\:\:\!1\!\!\:2\!\:)(237\:\!1\!\!\:3\!\:\:\!1\!\!\:6\!\:\:\!1\!\!\:0\!\: 64) & 8 4 3 2 1
\\1\!\!\:4 & (135)(274)(68\:\!1\!\!\:0\!\:)(9\:\!1\!\!\:1\!\:\:\!1\!\!\:3\!\:)(\:\!1\!\!\:4\!\:\:\!1\!\!\:5\!\:\:\!1\!\!\:7\!\:)(\:\!1\!\!\:2\!\:\:\!1\!\!\:8\!\:\:\!1\!\!\:6\!\:),
(14)(\:\!1\!\!\:5\!\:\:\!1\!\!\:8\!\:)(376)(11\:\!1\!\!\:6\!\:\:\!1\!\!\:4\!\:)(25\:\!1\!\!\:0\!\:\:\!1\!\!\:3\!\:\:\!1\!\!\:7\!\:\:\!1\!\!\:2\!\: 98) & 8 3 3 2 2 
\\1\!\!\:5 & (123)(567)(49\:\!1\!\!\:1\!\:)(8\:\!1\!\!\:0\!\:\:\!1\!\!\:4\!\:)(\:\!1\!\!\:2\!\:\:\!1\!\!\:6\!\:\:\!1\!\!\:8\!\:)(\:\!1\!\!\:3\!\:\:\!1\!\!\:7\!\:\:\!1\!\!\:5\!\:),
(1)(5)(16\:\!1\!\!\:7\!\:)(23\:\!1\!\!\:1\!\:\:\!1\!\!\:8\!\:\:\!1\!\!\:3\!\:\:\!1\!\!\:0\!\: 4)(67\:\!1\!\!\:4\!\:\:\!1\!\!\:5\!\:\:\!1\!\!\:2\!\: 98) & 7 7 2 1 1
\\1\!\!\:6 & (123)(567)(4\:\!1\!\!\:1\!\: 9)(8\:\!1\!\!\:4\!\:\:\!1\!\!\:0\!\:)(\:\!1\!\!\:2\!\:\:\!1\!\!\:6\!\:\:\!1\!\!\:8\!\:)(\:\!1\!\!\:3\!\:\:\!1\!\!\:7\!\:\:\!1\!\!\:5\!\:),
(1)(5)(16\:\!1\!\!\:7\!\:)(239\:\!1\!\!\:4\!\:\:\!1\!\!\:5\!\:\:\!1\!\!\:2\!\: 4)(67\:\!1\!\!\:0\!\:\:\!1\!\!\:1\!\:\:\!1\!\!\:8\!\:\:\!1\!\!\:3\!\: 8) & 7 7 2 1 1
\\1\!\!\:7 & (123)(567)(49\:\!1\!\!\:1\!\:)(8\:\!1\!\!\:3\!\:\:\!1\!\!\:5\!\:)(\:\!1\!\!\:0\!\:\:\!1\!\!\:4\!\:\:\!1\!\!\:7\!\:)(\:\!1\!\!\:2\:\!\:\!1\!\!\:8\!\:\:\!1\!\!\:6\!\:),
(1)(5)(9\:\!1\!\!\:7\!\:\:\!1\!\!\:2\!\:)(67\:\!1\!\!\:5\!\:\:\!1\!\!\:8\!\:\:\!1\!\!\:4\!\: 8)(23\:\!1\!\!\:1\!\:\:\!1\!\!\:6\!\:\:\!1\!\!\:3\!\:\:\!1\!\!\:0\!\: 4) & 7 6 3 1 1
\\1\!\!\:8 & (123)(457)(689)(\:\!1\!\!\:0\!\:\:\!1\!\!\:1\!\:\:\!1\!\!\:3\!\:)(\:\!1\!\!\:2\!\:\:\!1\!\!\:5\!\:\:\!1\!\!\:7\!\:)(\:\!1\!\!\:4\!\:\:\!1\!\!\:8\!\:\:\!1\!\!\:6\!\:),
(1)(\:\!1\!\!\:5\!\:\:\!1\!\!\:8\!\:)(11\:\!1\!\!\:7\!\:\:\!1\!\!\:4\!\:)(23764)(59\:\!1\!\!\:3\!\:\:\!1\!\!\:6\!\:\:\!1\!\!\:2\!\:\:\!1\!\!\:0\!\: 8) & 7 5 3 2 1
\\1\!\!\:9 & (123)(457)(69\:\!1\!\!\:1\!\:)(8\:\!1\!\!\:3\!\:\:\!1\!\!\:5\!\:)(\:\!1\!\!\:2\!\:\:\!1\!\!\:7\!\:\:\!1\!\!\:4\!\:)(\:\!1\!\!\:0\!\:\:\!1\!\!\:6\!\:\:\!1\!\!\:8\!\:),
(1)(9\:\!1\!\!\:8\!\:\:\!1\!\!\:2\!\:)(\:\!1\!\!\:3\!\:\:\!1\!\!\:7\!\:\:\!1\!\!\:6\!\:)(5\:\!1\!\!\:1\!\:\:\!1\!\!\:4\!\: 8)(237\:\!1\!\!\:5\!\:\:\!1\!\!\:0\!\: 64) & 7 4 3 3 1
\\ 20 & (123)(567)(49\:\!1\!\!\:1\!\:)(8\:\!1\!\!\:3\!\:\:\!1\!\!\:5\!\:)(\:\!1\!\!\:0\!\:\:\!1\!\!\:7\!\:\:\!1\!\!\:6\!\:)(\:\!1\!\!\:2\!\:\:\!1\!\!\:4\!\:\:\!1\!\!\:8\!\:),
(1)(5)(9\:\!1\!\!\:6\!\:\:\!1\!\!\:3\!\:\:\!1\!\!\:2\!\:)(23\:\!1\!\!\:1\!\:\:\!1\!\!\:8\!\:\:\!1\!\!\:0\!\: 4)(67\:\!1\!\!\:5\!\:\:\!1\!\!\:7\!\:\:\!1\!\!\:4\!\: 8) & 6 6 4 1 1
\\ 21 & (135)(486)(79\:\!1\!\!\:1\!\:)(\:\!1\!\!\:0\!\:\:\!1\!\!\:4\!\:\:\!1\!\!\:2\!\:)(\:\!1\!\!\:3\!\:\:\!1\!\!\:5\!\:\:\!1\!\!\:7\!\:)(2\:\!1\!\!\:8\!\:\:\!1\!\!\:6\!\:),
(36)(9\:\!1\!\!\:2\!\:)(15\:\!1\!\!\:8\!\:)(1\:\!1\!\!\:6\!\:\:\!1\!\!\:3\!\:\:\!1\!\!\:0\!\: 74)(258\:\!1\!\!\:1\!\:\:\!1\!\!\:4\!\:\:\!1\!\!\:7\!\:) & 6 6 2 2 2
\\ 22 & (123)(567)(49\:\!1\!\!\:1\!\:)(8\:\!1\!\!\:3\!\:\:\!1\!\!\:5\!\:)(\:\!1\!\!\:0\!\:\:\!1\!\!\:2\!\:\:\!1\!\!\:7\!\:)(\:\!1\!\!\:4\!\:\:\!1\!\!\:6\!\:\:\!1\!\!\:8\!\:),
(1)(5)(23\:\!1\!\!\:1\!\:\:\!1\!\!\:0\!\: 4)(67\:\!1\!\!\:5\!\:\:\!1\!\!\:4\!\: 8)(9\:\!1\!\!\:7\!\:\:\!1\!\!\:6\!\:\:\!1\!\!\:3\!\:\:\!1\!\!\:8\!\:\:\!1\!\!\:2\!\:) & 6 5 5 1 1
\\ 23 & (123)(457)(69\:\!1\!\!\:1\!\:)(8\:\!1\!\!\:3\!\:\:\!1\!\!\:0\!\:)(\:\!1\!\!\:2\!\:\:\!1\!\!\:5\!\:\:\!1\!\!\:7\!\:)(\:\!1\!\!\:4\!\:\:\!1\!\!\:8\!\:\:\!1\!\!\:6\!\:),
(1)(\:\!1\!\!\:5\!\:\:\!1\!\!\:8\!\:)(9\:\!1\!\!\:3\!\:\:\!1\!\!\:2\!\:\:\!1\!\!\:6\!\:)(237\:\!1\!\!\:0\!\: 64)(5\:\!1\!\!\:8\!\:\:\!1\!\!\:7\!\:\:\!1\!\!\:4\!\: 8) & 6 5 4 2 1
\\ 24 & (135)(28\:\!1\!\!\:0\!\:)(7\:\!1\!\!\:1\!\: 9)(4\:\!1\!\!\:2\!\:\:\!1\!\!\:4\!\:)(\:\!1\!\!\:3\!\:\:\!1\!\!\:5\!\:\:\!1\!\!\:7\!\:)(6\:\!1\!\!\:8\!\:\:\!1\!\!\:6\!\:),
(89)(\:\!1\!\!\:5\!\:\:\!1\!\!\:8\!\:)(1\:\!1\!\!\:0\:\!\:\!1\!\!\:1\!\: 4)(3\:\!1\!\!\:4\!\:\:\!1\!\!\:7\!\: 6)(25\:\!1\!\!\:6\!\:\:\!1\!\!\:3\!\:\:\!1\!\!\:2\!\: 7) & 6 4 4 2 2
\\ 25 & (135)(274)(69\:\!1\!\!\:1\!\:)(8\:\!1\!\!\:3\!\:\:\!1\!\!\:5\!\:)(\:\!1\!\!\:0\!\:\:\!1\!\!\:6\!\:\:\!1\!\!\:7\!\:)(\:\!1\!\!\:2\!\:\:\!1\!\!\:8\!\:\:\!1\!\!\:4\!\:),
(14)(9\:\!1\!\!\:7\!\:\:\!1\!\!\:2\!\:)(\:\!1\!\!\:3\!\:\:\!1\!\!\:8\!\:\:\!1\!\!\:6\!\:)(25\:\!1\!\!\:1\!\:\:\!1\!\!\:4\!\: 8)(37\:\!1\!\!\:5\!\:\:\!1\!\!\:0\!\: 6) & 5 5 3 3 2
\\ 26 & (135)(279)(4\:\!1\!\!\:1\!\:\:\!1\!\!\:3\!\:)(6\:\!1\!\!\:4\!\:\:\!1\!\!\:5\!\:)(8\:\!1\!\!\:6\!\:\:\!1\!\!\:7\!\:)(\:\!1\!\!\:0\!\:\:\!1\!\!\:8\!\:\:\!1\!\!\:2\!\:),
(3\:\!1\!\!\:3\!\: 6)(7\:\!1\!\!\:7\!\:\:\!1\!\!\:0\!\:)(19\:\!1\!\!\:2\!\: 4)(25\:\!1\!\!\:5\!\: 8)(\:\!1\!\!\:1\!\:\:\!1\!\!\:8\!\:\:\!1\!\!\:6\!\:\:\!1\!\!\:4\!\:) & 4 4 4 3 3
\end{array}
}
\]
\caption{Monodromy data of $\jGamma$}\label{tab:jmonodromy}
\end{table}

Supposing the factorizations in line 15 and 16 are conjugate, 
there is a permutation $\sigma$ which commutes 
with $\mu_1$ and which
conjugates $\mu_0,\mu_\infty$ of line 15 to those of line 16, which for this
proof we denote by $\rho_0$, $\rho_\infty$. From this we deduce the conditions
\[
\sigma( \mu^k_*( n)) \, = \, \rho^k_* ( \sigma (n)) \qquad
\text{for all }n,k \text{ and for } *=0,1,\infty.
\]
In particular we get for $1=\mu_\infty(1)$
\[
\sigma(1) = \sigma(\mu_\infty(1)) \implies \sigma(1) = \rho_\infty(\sigma(1)). 
\]
Thus $\sigma(1)$ is a fixed point of $\rho_\infty$ and thus either $1$ or $5$.
Then for $2=\mu_1(1)$ we get
\[
\sigma(2) = \rho_1(\sigma(1)) \in \{\rho_1(1),\rho_1(5)\} = \{2,6\}.
\]
On the other hand the $2$-cycle $(16\,17)$ is unique in both $\mu_\infty,\rho_\infty$
so
\[
(\sigma(16), \sigma(17)) = (16, 17) \implies \sigma(17) \in \{16,17\}.
\]
Consequently we get for $18=\mu_1(17)$ that
\[
\sigma(18) = \rho_1(\sigma(17)) \in \{\rho_1(16),\rho_1(17)\} = \{15,18\}.
\]
But in this way we arrive at a contradiction since 
\[
\sigma (\mu^4_\infty(2)) = \sigma(18) \in\{15,18\}
\]
while
\[
\rho^4_\infty(\sigma(2)) \in \{\rho^4_\infty(2),\rho^4_\infty(6)\} = \{14,11\}.
\]
\end{proof}

This finally concludes the proof of Theorems \ref{teo:mainresult},  \ref{teo:liststrata2} and  \ref{teo:liststrata}.

\subsection{Conclusion}

We obtain a list of 48 root lattices with uniquely associated torsion Mordell-Weil groups.
The latter appear in Table \ref{tab:lattices} using the notation of Shimada \cite{shimada} with $[n]$ for $\ZZ/n$
and $[n,m]$ for $\ZZ/n\times \ZZ/m$.
In addition the Table \ref{tab:lattices} 
gives the dimensions of the $50$ ambi-typical strata, the index of $\barGamma$ in $\pslz$,
the cardinality of the kernel of $\Gamma\to \barGamma$
and the corresponding number in
the list of Shimada.       
Although it is not needed in this paper we also state for completeness which of the monodromy groups are congruence  subgroups. This turns out to be the case if and only if the index is not divisible by $9$.  
For index $\leq 6$ this follows from \cite[Theorem 5]{wohlfahrt}. For index $9$ this follows from \cite[Table 2]{CP}, alternatively one can give an independent argument using the amplitudes of the cusps. Finally, for indices $12$ and 
$18$ the claim can be deduced from Sebbar's classification \cite[Table 1]{Se}. 

In Table \ref{tab:jmaps} we list the branch behaviour of the maps 
$\jGamma$ and  $j_\calE$.

\newpage

\begin{table}[htbp]
{\scriptsize
\begin{tabular}{|c|c|c|c|c|c|cH|c|} \hline
\# & root lattice & MW & $\dim$ & ind & $|\ker|$ & [Shi] \# & [Shi] MW & Remarks \\
\hline
0 & & $[1]$ & 18 & 1 & 2 & & & $\Gamma=\slz$ \\ \hline
1&  $8A_1$ & $[2]$ & 10 & 3 & 2 & 99 & [1],[2] & $\Gamma=\Gamma_1(2)$  \\ \hline
2&  $6A_2$ & $[3]$ & 6 &  4 & 1 & 559 & [1],[3] & $\Gamma=\Gamma_1(3)$ \\ \hline
3&  $4\,A_{3}$ & $[2] $ & 6 & 6 & 2 & 547 & [1],[2] & $\Gamma=\Gamma_0(4)$  \\  \hline 
4&  $12\,A_{1}$ & $[2, 2]$ & 6 & 6 & 2 & 565 & [2,2] & $\Gamma=\Gamma(2)$  \\  \hline 
5& $2\,A_7$ & $[2]$ & 4 & 6 & 1 & 1134 & [1],[2] &
\raisebox{-1pt}{$\barGamma=\barGamma_0$}$(4)$,
{$-\id\not\in \Gamma$}
\\  \hline 
6& $2\,A_3\,+\,8\,A_{1}$ & $[2, 2]$ & 4 & 6 & 1 & 1223 & [2,2] &
$\raisebox{-1pt}{$\barGamma=\barGamma$}(2)$,
{$-\id\not\in \Gamma$}
\\  \hline 
7& $4\,A_3\,+\,2\,A_{1}$ & $[4]$ & 4 & 6 & 1 & 1215 & [4],[2],[1] & $\Gamma=\Gamma_1(4)$  \\  \hline 

8&  $D_4+2A_6$ & $[1]$ & 2 &$9$ & 2 & 2171 & [1] & \text{non-congruence}\\ \hline
9&  $D_4+2A_5+2A_1$ & $[2]$ & 2 &$9$ & 2 & 2179 & [2]  &  " \\ \hline
10&  $D_4+2A_4+2A_2$ & $[1]$ & 2 &$9$ & 2 & 2190 & [1]  & " \\ \hline
11&  $D_4+2A_3+2A_2+2A_1$ & $[2]$ & 2 &$9$ & 2 & 2198 & [2] &  " \\ \hline

12&  $2\,D_{4}\,+\,A_{8}$ & $[1]$ & 2 &$12$ & 2 & 2148 & $[1]$  & \text{twisted Beauville} \\  \hline 
13&  $2\,D_{4}\,+\,A_{7}\,+\,A_{1}$ & $[2]$ & 2 &$12$ & 2 & 2149 & $[2]$ &  " \\  \hline 
14&  $2\,D_{4}\,+\,A_{5}\,+\,A_{2}\,+\,A_{1}$ & $[2]$ & 2 &$12$ & 2 & 2150 & $[2]$ & " \\  \hline 
15&  $2\,D_{4}\,+\,2\,A_{4}$ & $[1]$ & 2 &$12$ & 2 & 2151 & $[1]$ & "  \\  \hline 
16&  $2\,D_{4}\,+\,2\,A_{3}\,+\,2\,A_{1}$ & $[2, 2]$ & 2 &$12$ & 2 & 2152 & $[2, 2]$  & " \\  \hline 
17&  $2\,D_{4}\,+\,4\,A_{2}$ & $[1]$ & 2 &$12$ & 2 & 2153 & $[1]$ & "\\  \hline 

18&  $2\,A_{8}$ & $[3]$ & 2 &$12$ & 1 & 2242 & $[1],[3]$  & 2:1\text{ Beauville} \\  \hline 
19&  $2\,A_{7}\,+\,2\,A_{1}$ & $[4]$ & 2  &$12$ & 1 & 2262 & $[1],[2],[4]$  & " \\  \hline 
20&  $2\,A_{5}\,+\,2\,A_{2}\,+\,2\,A_{1}$ & $[6]$ & 2  &$12$ & 1 & 2322 & $[1],[2],[3],[6]$  & "\\  \hline 
21&  $4\,A_{4}$ & $[5]$ & 2  &$12$ & 1 & 2345 & $[1],[5]$  & " \\  \hline 
22&  $4\,A_{3}\,+\,4\,A_{1}$ & $[4, 2]$ & 2  &$12$ & 1 & 2368 & $[2,2],[4,2]$  & " \\  \hline 
23&  $8\,A_{2}$ & $[3, 3]$ & 2  &$12$ & 1 & 2373 & $[3,3]$  & " \\  \hline 

24&  $D_{4}\,+\,A_{13}$ & $[1]$ & 1 &$18$ & 2 & 2762 & $[1]$  & \text{non-congruence} \\  \hline 
25&  $D_{4}\,+\,A_{12}\,+\,A_{1}$ & $[1]$ & 1 &$18$ & 2 & 2763 & $[1]$  & " \\  \hline 
26&  $D_{4}\,+\,A_{11}\,+\,A_{2}$ & $[2]$ & 1 &$18$ & 2 & 2764 & $[2]$  & " \\  \hline 
27&  $D_{4}\,+\,A_{11}\,+\,2\,A_{1}$ & $[2]$ & 1 & $18$ & 2 & 2765& $[2]$  & "\\  \hline 
28&  $D_{4}\,+\,A_{10}\,+\,A_{2}\,+\,A_{1}$ & $[1]$ & 1 &$18$ & 2 & 2766 & $[1]$  & " \\  \hline 
29&  $D_{4}\,+\,A_{9}\,+\,A_{4}$ & $[1]$ & 1 &$18$ & 2 & 2767 & $[1]$  & " \\  \hline 
30&  $D_{4}\,+\,A_{9}\,+\,A_{3}\,+\,A_{1}$ & $[2]$ & 1 &$18$ & 2 & 2768 & $[2]$  & " \\  \hline 
31&  $D_{4}\,+\,A_{9}\,+\,2\,A_{2}$ & $[1]$ & 1 &$18$ & 2 & 2769 & $[1]$  & " \\  \hline 
32&  $D_{4}\,+\,A_{9}\,+\,A_{2}\,+\,2\,A_{1}$ & $[2]$ & 1 &$18$ & 2 & 2770 & $[2]$  & " \\  \hline 
33&  $D_{4}\,+\,A_{8}\,+\,A_{5}$ & $[1]$ & 1 &$18$ & 2 & 2771 & $[1]$  & " \\  \hline 
34&  $D_{4}\,+\,A_{8}\,+\,A_{4}\,+\,A_{1}$ & $[1]$ & 1 &$18$ & 2 & 2772 & $[1]$  & " \\  \hline 
35&  $D_{4}\,+\,A_{7}\,+\,A_{4}\,+\,2\,A_{1}$ & $[2]$ & 1 &$18$ & 2 & 2773 & $[2]$  & " \\  \hline 
36&  $D_{4}\,+\,A_{7}\,+\,A_{3}\,+\,A_{2}\,+\,A_{1}$ & $[2]$ & 1 & $18$ & 2 & 2774 & $[2]$  & " \\  \hline 
37&  $D_{4}\,+\,A_{7}\,+\,2\,A_{2}\,+\,2\,A_{1}$ & $[2]$ & 1 &$18$ & 2 & 2775 & $[2]$  & " \\  \hline 
38&  $D_{4}\,+\,2\,A_{6}\,+\,A_{1}$ & $[1]$ & 1 &$18$ & 2 & 2776 & $[1]$  & "  \\  \hline 
39&  $D_{4}\,+\,2\,A_{6}\,+\,A_{1}$ & $[1]$ & 1 &$18$ & 2 & 2776 & $[1]$  & "  \\  \hline 
40&  $D_{4}\,+\,A_{6}\,+\,A_{5}\,+\,A_{2}$ & $[1]$ & 1 &$18$ & 2 & 2777 & $[1]$  & " \\  \hline 
41&  $D_{4}\,+\,A_{6}\,+\,A_{4}\,+\,A_{2}\,+\,A_{1}$ & $[1]$ & 1 &$18$ & 2 & 2778 & $[1]$  & " \\  \hline 
42&  $D_{4}\,+\,A_{6}\,+\,A_{3}\,+\,2\,A_{2}$ & $[1]$ & 1 &$18$ & 2 & 2779 & $[1]$  & " \\  \hline 
43&  $D_{4}\,+\,2\,A_{5}\,+\,A_{3}$ & $[2]$ & 1 &$18$ & 2 & 2780 & $[2]$  & " \\  \hline 
44&  $D_{4}\,+\,2\,A_{5}\,+\,3\,A_{1}$ & $[2, 2]$ & 1 &$18$ & 2 & 2781 & $[2,2]$  & " \\  \hline 
45&  $D_{4}\,+\,A_{5}\,+\,2\,A_{4}$ & $[1]$ & 1 &$18$ & 2 & 2782 & $[1]$  & " \\  \hline 
46&  $D_{4}\,+\,A_{5}\,+\,A_{4}\,+\,A_{3}\,+\,A_{1}$ & $[2]$ & 1 &$18$ & 2 & 2783 & $[2]$  & " \\  \hline 
47&  $D_{4}\,+\,A_{5}\,+\,2\,A_{3}\,+\,2\,A_{1}$ & $[2, 2]$ & 1 &$18$ & 2 & 2784 & $[2,2]$  & " \\  \hline 
48&  $D_{4}\,+\,2\,A_{4}\,+\,2\,A_{2}\,+\,A_{1}$ & $[1]$ & 1 &$18$ & 2 & 2785 & $[1]$  & " \\  \hline 
49&  $D_{4}\,+\,3\,A_{3}\,+\,2\,A_{2}$ & $[2]$ & 1 &$18$ & 2 & 2786 & $[2]$  & " \\  \hline 
\end{tabular}
}
\caption{Data of ambi-typical strata}\label{tab:lattices}\label{table:list}
\end{table}

\begin{table}[htbp]
{\scriptsize
\begin{tabular}{|c|l|c|c|c|c|c|c|}
\hline
\# & \hspace*{3cm}$\jGamma$ & \raisebox{-1pt}{$\barGamma$} & $j_\calE$ & $I_0^*$ fibres \\ \hline
0 & $1$ & $\pslz$ & $(3^8)_A$, $(2^{12})_B$, $2^{18}$ & - \\ \hline
1 & $(3), (2,1), (2,1)$ & \raisebox{-1pt}{$\barGamma_1$}$(2)$ & $(2,2,2,2)_B$, $2^{10}$ & - \\ \hline
2 & $(3,1), (2,2), (3,1)$ & \raisebox{-1pt}{$\barGamma_1$}$(3)$ & $(3,3)_A$, $2^6$ & - \\ \hline
3 & $(3,3), (2,2,2), (4,1,1)$ & \raisebox{-1pt}{$\barGamma_1$}$(4)$ & $2^6$ & - \\ \hline
4 & $(3,3), (2,2,2), (2,2,2)$ & \raisebox{-1pt}{$\barGamma$}$(2)$ & $2^6$ & - \\ \hline
5 & $(3,3), (2,2,2), (4,1,1)$ & \raisebox{-1pt}{$\barGamma_1$}$(4)$ & $(2,2)_{4_\infty}$, $2^4$ & - \\ \hline
6 & $(3,3), (2,2,2), (2,2,2)$ & \raisebox{-1pt}{$\barGamma$}$(2)$ & $(2,2)_{2_\infty}$, $2^6$ & - \\ \hline
7 & $(3,3), (2,2,2), (4,1,1)$ & \raisebox{-1pt}{$\barGamma_1$}$(4)$ & $(2,2)_{1_\infty}$, $2^6$ & - \\ \hline
8 & $(3,3,3), (2,2,2,2,1), (7,1,1)$ & & $(2)_B$, $2$ & 1 \\ \hline
9 & $(3,3,3), (2,2,2,2,1), (6,2,1)$ & & $(2)_B$, $2$ & 1 \\ \hline
10 & $(3,3,3), (2,2,2,2,1), (5,3,1)$ & & $(2)_B$, $2$ & 1 \\ \hline
11 & $(3,3,3), (2,2,2,2,1), (4,3,2)$ & & $(2)_B$, $2$ & 1 \\ \hline
12 & $(3,3,3,3), (2,2,2,2,2,2), (9,1,1,1)$ & & $1$ & 2 \\ \hline
13 & $(3,3,3,3), (2,2,2,2,2,2), (8,2,1,1)$ & & $1$ & 2 \\ \hline
14 & $(3,3,3,3), (2,2,2,2,2,2), (6,3,2,1)$ & & $1$ & 2 \\ \hline
15 & $(3,3,3,3), (2,2,2,2,2,2), (5,5,1,1)$ & & $1$ & 2 \\ \hline
16 & $(3,3,3,3), (2,2,2,2,2,2), (4,4,2,2)$ & & $1$ & 2 \\ \hline
17 & $(3,3,3,3), (2,2,2,2,2,2), (3,3,3,3)$ & & $1$ & 2 \\ \hline
18 & $(3,3,3,3), (2,2,2,2,2,2), (9,1,1,1)$ & & $2^2$ & - \\ \hline
19 & $(3,3,3,3), (2,2,2,2,2,2), (8,2,1,1)$ & & $2^2$ & - \\ \hline
20 & $(3,3,3,3), (2,2,2,2,2,2), (6,3,2,1)$ & & $2^2$ & - \\ \hline
21 & $(3,3,3,3), (2,2,2,2,2,2), (5,5,1,1)$ & & $2^2$ & - \\ \hline
22 & $(3,3,3,3), (2,2,2,2,2,2), (4,4,2,2)$ & & $2^2$ & - \\ \hline
23 & $(3,3,3,3), (2,2,2,2,2,2), (3,3,3,3)$ & & $2^2$ & - \\ \hline
24 & $(3,3,3,3,3,3), (2,2,2,2,2,2,2,2,2), (14,1,1,1,1)$ & & $1$ & 1  \\ \hline
25 & $(3,3,3,3,3,3), (2,2,2,2,2,2,2,2,2), (13,2,1,1,1)$ & & $1$ & 1  \\ \hline
26 & $(3,3,3,3,3,3), (2,2,2,2,2,2,2,2,2), (12,3,1,1,1)$ & & $1$ & 1  \\ \hline
27 & $(3,3,3,3,3,3), (2,2,2,2,2,2,2,2,2), (12,2,2,1,1)$ & & $1$ & 1  \\ \hline
28 & $(3,3,3,3,3,3), (2,2,2,2,2,2,2,2,2), (11,3,2,1,1)$ & & $1$ & 1  \\ \hline
29 & $(3,3,3,3,3,3), (2,2,2,2,2,2,2,2,2), (10,5,1,1,1)$ & & $1$ & 1  \\ \hline
30 & $(3,3,3,3,3,3), (2,2,2,2,2,2,2,2,2), (10,4,2,1,1)$ & & $1$ & 1  \\ \hline
31 & $(3,3,3,3,3,3), (2,2,2,2,2,2,2,2,2), (10,3,3,1,1)$ & & $1$ & 1  \\ \hline
32 & $(3,3,3,3,3,3), (2,2,2,2,2,2,2,2,2), (10,3,2,2,1)$ & & $1$ & 1  \\ \hline
33 & $(3,3,3,3,3,3), (2,2,2,2,2,2,2,2,2), (9,6,1,1,1)$ & & $1$ & 1  \\ \hline
34 & $(3,3,3,3,3,3), (2,2,2,2,2,2,2,2,2), (9,5,2,1,1)$ & & $1$ & 1  \\ \hline
35 & $(3,3,3,3,3,3), (2,2,2,2,2,2,2,2,2), (8,5,2,2,1)$ & & $1$ & 1  \\ \hline
36 & $(3,3,3,3,3,3), (2,2,2,2,2,2,2,2,2), (8,4,3,2,1)$ & & $1$ & 1  \\ \hline
37 & $(3,3,3,3,3,3), (2,2,2,2,2,2,2,2,2), (8,3,3,2,2)$ & & $1$ & 1  \\ \hline
38 & $(3,3,3,3,3,3), (2,2,2,2,2,2,2,2,2), (7,7,2,1,1)$ & & $1$ & 1  \\ \hline
39 & $(3,3,3,3,3,3), (2,2,2,2,2,2,2,2,2), (7,7,2,1,1)$ & & $1$ & 1  \\ \hline
40 & $(3,3,3,3,3,3), (2,2,2,2,2,2,2,2,2), (7,6,3,1,1)$ & & $1$ & 1  \\ \hline
41 & $(3,3,3,3,3,3), (2,2,2,2,2,2,2,2,2), (7,5,3,2,1)$ & & $1$ & 1  \\ \hline
42 & $(3,3,3,3,3,3), (2,2,2,2,2,2,2,2,2), (7,4,3,3,1)$ & & $1$ & 1  \\ \hline
43 & $(3,3,3,3,3,3), (2,2,2,2,2,2,2,2,2), (6,6,4,1,1)$ & & $1$ & 1  \\ \hline
44 & $(3,3,3,3,3,3), (2,2,2,2,2,2,2,2,2), (6,6,2,2,2)$ & & $1$ & 1  \\ \hline
45 & $(3,3,3,3,3,3), (2,2,2,2,2,2,2,2,2), (6,5,5,1,1)$ & & $1$ & 1  \\ \hline
46 & $(3,3,3,3,3,3), (2,2,2,2,2,2,2,2,2), (6,5,4,2,1)$ & & $1$ & 1  \\ \hline
47 & $(3,3,3,3,3,3), (2,2,2,2,2,2,2,2,2), (6,4,4,2,2)$ & & $1$ & 1  \\ \hline
48 & $(3,3,3,3,3,3), (2,2,2,2,2,2,2,2,2), (5,5,3,3,2)$ & & $1$ & 1  \\ \hline
49 & $(3,3,3,3,3,3), (2,2,2,2,2,2,2,2,2), (4,4,4,3,3)$ & & $1$ & 1  \\ \hline
\end{tabular}
}\caption{$j$-factorisation data of ambi-typical strata}\label{tab:jmaps}
\end{table}


\section{Moduli spaces of lattice polarised $K3$ surfaces and Shimada strata}\label{sec:mirrorpol}

In this section we want to start our discussion about the precise relationship between moduli spaces of lattice polarised $K3$ surfaces
and ambi-typical strata.
For this  we first recall some basic facts about lattice polarised $K3$ surfaces and their  moduli spaces. 
Our aim is to understand how many moduli spaces of lattice polarised $K3$ surfaces dominate a given ambi-typical stratum and to compute the degree of the finite map
from a component of such a moduli space to the ambi-typical stratum it dominates. 

In Section \ref{sec:ellfibered} we encountered the following situation: we started with an elliptically fibred $K3$ surface $f: \calE \to \PP^1$ and root lattice $R(\calE)$ whose saturation in the $K3$ lattice 
was denoted by $L(\calE)$. Then $M(\calE)=U+ L(\calE)$ is a hyperbolic lattice contained in the N\'eron-Severi group of $\calE$. This gives rise to a lattice polarization.

To explain this in more detail, let   $M$ be an even lattice of signature $(1,t)$ which admits a primitive embedding 
$\iota: M \to L_{K3}$ into the $K3$ lattice $L_{K3}$.  
By $T=T(\iota)=\iota(M)^{\perp}_{L_{K3}} \subset L_{K3}$ we denote the orthogonal complement of the image of $\iota$.
The lattice $T$ has signature $(2,19- t)$.
We shall for the rest of this section assume that the rank of $T$ is at least $3$ (which is the case in our situation).
Then $T$  defines a type IV homogeneous domain 
\begin{equation*} 
\Omega_{T}=\{[x] \in \PP(T \otimes \CC) \mid (x,x)=0, (x,\bar x) >0\} = \calD_{T} \cup  \calD'_{T}
\end{equation*}
of dimension $19 - t$ consisting 
of two connected components, namely $ \calD_{T}$ and  $ \calD'_{T}$. 

We also consider the real cone
\begin{equation*}
C_M=\{x \in M_{\RR} \mid (x,x)>0 \}=C_M^+ \cup C_M^-
\end{equation*}
which again consists of two connected components, of which we choose one, say $C_M^+$. Removing from $C_M^+ $ all hyperplanes orthogonal to
roots $\Delta_M=\{d \in M \mid d^2=-2\}$ subdivides $C_M^+$ into different connected components, the so called {\em Weyl chambers}, of which we choose one and call it  $C_M^{\operatorname {pol}}$.
An {\em $(M,\iota)$-polarised} $K3$ surface is a pair $(S,\tilde{\iota})$ where $\tilde{\iota}:M \to \NS(S) \subset H^2(S,\ZZ)$ is a primitive embedding which is isomorphic to $\iota$ with respect to a suitable marking $\varphi: H^2(S,\ZZ) \to L_{K3}$ and 
such that $\tilde{\iota}(C_M^{\operatorname {pol}})$ contains an ample class. 
In this case we call $\varphi: H^2(S,\ZZ) \to L_{K3}$  an {\em $M$-polarised marking}. 
We call two $M$-polarised $K3$ surfaces $(S_1,\tilde{\iota}_1)$ and $(S_2,\tilde{\iota}_2)$ isomorphic if there is an isomorphims $f: S_1 \to S_2$ with $f^* \circ \tilde{\iota}_2=\tilde{\iota}_1$.  

In order to describe the moduli space of $M$-polarised $K3$ surfaces we consider the group
\begin{equation*}
\O(L_{K3},M,\iota):=\{g \in O(L_{K3}) \mid g|_{\iota(M)}= id_{\iota(M)}\}
\end{equation*}
and recall the definition of the {\em { stable }} orthogonal group
\begin{equation*}
\tO(T):=\{g \in O(T) \mid g|_{D(T)}= id_{D(T)}\}
\end{equation*}
consisting of all orthogonal transformations of $T$ which act trivially on the discriminant$D(T)$. 
It is well known, see \cite[Corollary 1.5.2]{Nik}, that there is an isomorphism 
\begin{equation*}
\O(L_{K3},M,\iota) \cong \tO(T).
\end{equation*}
The group $O(T)$, and hence also $\tO(T)$,  acts properly discontinuously on $\Omega_{T}$ as well as on the open subset
\begin{equation*}
\Omega_{T} ^{\operatorname {pol}}=  \Omega_{T} \setminus \bigcup_{d \in T, d^2=-2} (H_d \cap \Omega_{T})
\end{equation*}
where $H_d=\langle d \rangle ^{\perp} \subset \PP(T\otimes {\CC})$ is the hyperplane orthogonal to $d$.
\begin{pro}\label{prop:latticepol}
The quotient 
\begin{equation*}
\calN^a_{M,\iota}= \Omega_{T} ^{\operatorname {pol}} / \tO(T)
\end{equation*}
is the moduli space of $(M,\iota)$-polarised $K3$ surfaces.
\end{pro}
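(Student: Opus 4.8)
The plan is to derive this proposition from the two classical pillars of $K3$ surface theory, the surjectivity of the period map and the global Torelli theorem, in the form worked out by Dolgachev for lattice polarised $K3$ surfaces. The first step is to construct the period map on $(M,\iota)$-polarised $K3$ surfaces. Given such a surface $(S,\tilde\iota)$ together with an $M$-polarised marking $\varphi\colon H^2(S,\ZZ)\to L_{K3}$, the inclusion $\tilde\iota(M)\subseteq \NS(S)$ forces $H^{2,0}(S)\subseteq\bigl(\tilde\iota(M)\bigr)^{\perp}\otimes\CC$, so that $\varphi_{\CC}(H^{2,0}(S))$ is a line in $T\otimes\CC$ satisfying the Hodge--Riemann bilinear relations, hence a point of $\Omega_{T}$. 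I would then check that this point lies in $\Omega_{T}^{\operatorname{pol}}$: a class $d\in T$ with $d^{2}=-2$ orthogonal to the period would give an algebraic $(-2)$-class $\varphi^{-1}(d)$ orthogonal to $\tilde\iota(M)$, in particular orthogonal to an ample class contained in $\tilde\iota(C_{M}^{\operatorname{pol}})$, which is impossible because $\varphi^{-1}(d)$ or its negative is effective by Riemann--Roch. Since any two $M$-polarised markings of $(S,\tilde\iota)$ differ by an element of $\O(L_{K3},M,\iota)$, which acts on $\Omega_{T}$ through its restriction to $T$ under the isomorphism $\O(L_{K3},M,\iota)\cong\tO(T)$ of \cite[Corollary 1.5.2]{Nik}, this produces a well-defined map from isomorphism classes of $(M,\iota)$-polarised $K3$ surfaces to $\calN^{a}_{M,\iota}$.

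Next I would prove injectivity of this map. If $(S_{1},\tilde\iota_{1})$ and $(S_{2},\tilde\iota_{2})$ have the same image, I would pick $M$-polarised markings and an element of $\tO(T)$ carrying one period to the other, extend it by the identity on $\iota(M)$, and transport it through the markings to obtain an isometry $\psi\colon H^{2}(S_{1},\ZZ)\to H^{2}(S_{2},\ZZ)$ which is a Hodge isometry with $\psi\circ\tilde\iota_{1}=\tilde\iota_{2}$. By the global Torelli theorem it then suffices to modify $\psi$, by reflections in $(-2)$-classes of $\NS(S_{2})$ and possibly $-\id$, into a Hodge isometry sending the Kähler cone of $S_{1}$ to that of $S_{2}$, and to verify that this modification can be chosen so as to still fix $\tilde\iota_{2}(M)$ pointwise; for the latter one uses that $\tilde\iota_{i}(C_{M}^{\operatorname{pol}})$ contains an ample class and that the period avoids the $(-2)$-hyperplanes of $T$, which forces the relevant Weyl chambers of $\NS(S_{2})$ to be compatible with $C_{M}^{\operatorname{pol}}$. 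The resulting isometry is induced by a unique isomorphism $f\colon S_{1}\to S_{2}$, and $f^{*}\circ\tilde\iota_{2}=\tilde\iota_{1}$.

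For surjectivity, starting from $[\omega]\in\Omega_{T}^{\operatorname{pol}}$ I would invoke surjectivity of the period map to obtain a marked $K3$ surface $(S,\varphi)$ with $\varphi_{\CC}(H^{2,0}(S))=\CC\omega$; then $\iota(M)\subseteq\varphi(\NS(S))$, and because $[\omega]$ lies off the $(-2)$-hyperplanes of $T$ there is no $(-2)$-class in $\varphi(\NS(S))\cap T$. Using this one shows that $\tilde\iota(C_{M}^{\operatorname{pol}})$, with $\tilde\iota=\varphi^{-1}\circ\iota$, lies inside a single Weyl chamber of $\NS(S)$, so after composing $\varphi$ with an element of the Weyl group of $\NS(S)$ — which does not move the period point — one may assume that $\tilde\iota(C_{M}^{\operatorname{pol}})$ contains an ample class; thus $(S,\tilde\iota)$ is $(M,\iota)$-polarised with period $[\omega]$. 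Finally, the usual argument — local Torelli makes $\Omega_{T}^{\operatorname{pol}}$ a base for a family of $M$-polarised markings, and the quotient by $\tO(T)$ glues these into a family of $(M,\iota)$-polarised $K3$ surfaces — upgrades this bijection on points to the statement that $\calN^{a}_{M,\iota}$ carries the structure of a coarse moduli space.

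The hard part will not be any single step but the Weyl-chamber bookkeeping that runs through both the injectivity and the surjectivity arguments: one must simultaneously keep track of the chosen chamber $C_{M}^{\operatorname{pol}}$, the ample cone of the surface, and the reflections that are allowed while preserving the $M$-polarisation. It is precisely the deletion of the $(-2)$-hyperplanes of $T$ in passing from $\Omega_{T}$ to $\Omega_{T}^{\operatorname{pol}}$ that makes these compatibility requirements hold, and verifying this carefully is the only non-formal point; everything else reduces to standard applications of the global Torelli theorem and the surjectivity of the period map for $K3$ surfaces.
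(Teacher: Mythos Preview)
Your proposal is correct and follows exactly the standard Torelli--surjectivity argument of Dolgachev; the paper's own proof is nothing more than a reference to \cite[Section 3]{Do} and \cite[p.~360]{BHPV}, so you have simply written out the content of those citations.
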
 
\begin{proof}
See \cite[Section 3]{Do} or \cite[p. 360]{BHPV}
\end{proof}

It is sometimes also useful to consider a weakening of $(M,\iota)$-polarizations. Recall that a line bundle $\calL$ is said to be a {\em quasi-polarization} if it is nef and big. 
We say that $(S,\tilde \iota)$ is an $(M,\iota)$-quasi-polarised $K3$ surface if $\tilde{\iota}(C_M^{\operatorname {pol}})$ contains a big and nef class.
By
\cite[Section 3]{Do} the quotient
\begin{equation*}
\calN_{M,\iota}= \Omega_{T} / \tO(T). 
\end{equation*}
is in $1:1$ correspondence with the set of isomorphism classes of $M$-quasi-polarised $K3$ surfaces. 
It can be viewed as the moduli space of $M$-polarized $K3$ surfaces with ADE singularities.
This contains  $\calN^a_{M,\iota}$ as an open subset. 

At this point some remarks are in order. 
In the literature it is often tacitly assumed that the lattice $M$ has a unique primitive embedding into the $K3$ lattice. This assumption then justifies to talk about {\em the} moduli space of $M$-polarised $K3$ surfaces.
For us it will be important  to also allow the possibility that $M$ possesses different primitive embeddings into the $K3$ lattice (the number of such embeddings modulo $\O(L_{K3})$ is always finite). We will then consider the union 
\begin{equation*}
\calN_{M} = \cup_{\iota} \calN_{M,\iota}
\end{equation*}
 where $\iota$ runs over all classes of different primitive embeddings of $M$ into $L_{K3}$. 
Moreover, Dolgachev has formulated arithmetic conditions which lead to the notion of {\it $m$-admissible} lattices. This means in particular that the transcendental lattice $T$ splits off a summand $U(m)$, i.e. a multiple of a hyperbolic plane.
This is relevant for mirror symmetry and the discussion of the Yukawa-coupling, but it plays no role for our purposes since
the construction of the moduli space of lattice (quasi-)polarised $K3$ surfaces does not require this condition. In fact, a number of the lattices which we consider, in particular when $M$ has rank $19$, do not fulfill this condition,
see the Appendix. In these cases the lattice $T$ does not split off a summand $U$ over $\mathbb Q$ resulting in compact moduli spaces of $M$-polarised $K3$ surfaces. 
Finally, we recall that the case $M=U$ gives us another construction for the
moduli space $\calF$ of elliptically fibred $K3$ sufaces with a section or Jacobian fibrations.  
 
Since $\O(T)$ acts properly discontinuously on $\Omega_T$ the quotient  $\calN_{M,\iota}$ has at most finite quotient singularities. 
By a well know result of Baily-Borel this is a quasi-projective variety.
We also note that for small rank of the transcendental lattice $T$ there is a relation with Siegel spaces:
if $t=18$ then  $\calD_{T}\cong \HH_1$ is the upper half plane, if $t=17$, then   $\calD_{T}\cong \HH_1 \times \HH_1 $, and if
$t=16$, then $\calD_{T}\cong \HH_2$, the Siegel upper half plane of genus $2$. We will discuss this in more detail in the case of $t=18$ in  the Appendix.

The  quotient $\Omega_T / \tO(T) $ can have one or two components. If it has two components then these are complex conjugate to each other.
An element $g \in \O(T)$ can either fix  the two components $\calD_{T}$ and $\calD'_{T}$ or interchange them, depending on its spinor norm. We recall that the {\em real spinor norm} is a homomorphism
\begin{equation*}
\operatorname{sn}_{\RR}: \O(T) \to \RR^*/(\RR^*)^2 = \{\pm 1\}.
\end{equation*}
For a precise definition we refer the reader to \cite[Section 1]{GHS}. 
Our normalization of the spinor norm  is such that in the case of signature $(2,n)$, in which we here are,  the transformation $g$ fixes the  two components of 
$\Omega_T$ if and only if $\operatorname{sn}_{\RR}(g)=1$ and it interchanges them if  and only if $\operatorname{sn}_{\RR}(g)=-1$.
We define the groups
\begin{equation*}
\O^+(T)= \{g \in \O(T) \mid \operatorname{sn}_{\RR}(g)=1 \}
\end{equation*}
and 
\begin{equation*}
\tO^+(T)= \O^+(T) \cap \tO(T).
\end{equation*}
The quotient $\Omega_T /\tO(T)$ then has two components if any only if $\tO(T)=\tO^+(T)$.

We first want to discuss the number of moduli spaces of $M$-polarised $K3$ surfaces and the number of connected components. 
Clearly, there is only one such moduli space if there is a unique primitive embedding of $\iota: M \to L_{K3}$ (up to $\O(L_{K3})$). In general, however, such an embedding need not be unique. 
Assume that there is at least one primitive embedding $\iota: M \to L_{K3}$ and let $T_{\iota}=\iota(M)^{\perp}_{L_{K3}}$. Then $T_{\iota}$ may depend on $\iota$, but its genus does not. It is determined by
\[
\sign(T_{\iota})=(2,18 - \rank(M)) \mbox{ and } (D(T_i),  q_{T_i}) \cong (D(M), - q_M).  
\]    
We call this the {\em genus orthogonal to $M$} and denote it by $\calG_T$.   
\begin{pro}
Let $T\in \calG_T$ and let $\overline{{\O}}(T)$ be the image of $\O(T)$ in $\O(D(T))$. Then the index $[\O(D(T)): \overline{\O}(T)]$ depends only on the genus $\calG_T$ of $T$ and not on $T$ itself.
\end{pro}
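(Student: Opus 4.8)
The statement is that for a lattice $T$ lying in a fixed genus $\calG_T$, the index $[\O(D(T)):\overline{\O}(T)]$ depends only on $\calG_T$. The plan is to exploit the strong approximation / spinor-genus machinery together with the fact that within a genus all lattices become isometric after passing to the completions. First I would recall that the discriminant form $(D(T),q_T)$ — being an invariant of the genus — is literally the same finite quadratic form for every $T\in\calG_T$, so the target group $\O(D(T))$ is a fixed finite group; only the subgroup $\overline{\O}(T)$ can a priori move. Hence it suffices to show that $|\overline{\O}(T)|$ is constant on the genus, equivalently that $|\ker(\O(T)\to \O(D(T)))| = |\tO(T)|$ "relative to" $|\O(T)|$ behaves consistently — but more directly, $|\overline{\O}(T)| = [\O(T):\tO(T)]$, so the claim is $[\O(T):\tO(T)]$ is a genus invariant.

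The key step I would carry out is a local–global argument. For each prime $p$ (including the treatment at $\infty$ where the relevant contribution is trivial since $\tO$ imposes no condition there), the $p$-adic lattice $T_p = T\otimes\ZZ_p$ depends only on $\calG_T$ up to isometry, hence so does the local group $\O(T_p)$ and its subgroup $\tO(T_p) = \{g\in\O(T_p)\mid g \text{ acts trivially on } D(T_p)\}$; consequently the local index $[\O(T_p):\tO(T_p)]$ — equivalently the order of the image $\overline{\O}(T_p)\subset \O(D(T_p))$ — is a genus invariant. Then I would invoke the fact (this is where one cites the standard reference, e.g. Nikulin \cite{Nik} or Cassels/Kneser, and in the context of this paper the proof of Proposition \ref{pro:independenceindex} supplied by Brandhorst) that the global image $\overline{\O}(T)\subset\O(D(T))$ is obtained from the local images by a surjectivity/strong-approximation statement: namely $\overline{\O}(T) = \prod_p \overline{\O}(T_p)$ under the canonical decomposition $D(T)=\bigoplus_p D(T)_p$, because $\O^+(T)$ (or $\tO(T)$ together with $\O(T_p)$ for all $p$) surjects onto the product of local images by strong approximation for the spinor kernel. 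This uses rank $T\ge 3$ (indefinite of signature $(2,n)$ with $n\ge 1$), which is exactly the hypothesis in force in this section.

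Assembling these, $|\overline{\O}(T)| = \prod_p |\overline{\O}(T_p)|$ is a product of genus-local invariants, hence a genus invariant, and therefore $[\O(D(T)):\overline{\O}(T)] = |\O(D(T))| / |\overline{\O}(T)|$ depends only on $\calG_T$. The main obstacle — and the one point I would want to state carefully rather than wave at — is the global-to-local surjectivity $\overline{\O}(T)=\prod_p\overline{\O}(T_p)$: a priori $\overline{\O}(T)$ is only contained in the product, and one needs the indefiniteness and rank hypothesis to run strong approximation on the spin group $\mathrm{Spin}(T)$, ensuring that the obstruction (an idele class / spinor-norm obstruction) vanishes or at least does not depend on $T$ within the genus. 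Once that is granted the rest is bookkeeping with finite quadratic forms, and I would not grind through the explicit identification of $\O(D(T))$ beyond noting it is fixed by $\calG_T$.
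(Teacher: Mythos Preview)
The paper's own proof is a one-line citation to Miranda--Morrison \cite[Chapter~VIII, Proposition~6.1(2)]{MiMo}, so you are doing more than the paper by attempting to unpack the mechanism. Your overall strategy --- reduce to local data via strong approximation and observe that local completions are genus invariants --- is exactly the spirit of the Miranda--Morrison argument.

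However, there is a genuine gap in your assembly step. You assert that $\overline{\O}(T) = \prod_p \overline{\O}(T_p)$, and then compute $|\overline{\O}(T)|$ as the product of the local orders. But the local maps $\O(T_p) \to \O(D(T)_p)$ are surjective (this is standard for $p$-adic lattices; see e.g.\ Nikulin or Miranda--Morrison), so the right-hand side of your claimed equality is all of $\O(D(T))$. Your formula would therefore force $[\O(D(T)):\overline{\O}(T)]=1$ for every indefinite $T$ of rank $\ge 3$ --- and the paper itself exhibits counterexamples: Proposition~\ref{prop:embeddings} lists five lattices with index $2$. Strong approximation applies to the \emph{spin} group, not to $\O$; the passage from $\mathrm{Spin}$ to $\O$ introduces a global spinor-norm obstruction which is precisely what makes the index nontrivial.

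You do hedge correctly at the end (``vanishes or at least does not depend on $T$ within the genus''), and the second alternative is the right one. But then your displayed computation $|\overline{\O}(T)| = \prod_p |\overline{\O}(T_p)|$ is simply wrong and cannot be used. The correct statement --- which is the content of Miranda--Morrison VIII.6.1 --- is that $\overline{\O}(T)$ is the kernel of a map from $\O(D(T))$ to a finite group built out of the objects $\Sigma(T)$, $\Sigma^{\sharp}(T)$ (in Miranda--Morrison's notation), and these groups are computed from local spinor-norm data, hence are genus invariants. That is the argument you need; the product formula is a red herring and should be dropped entirely.
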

\begin{proof}
This follows from the theory of Miranda and Morrison, in particular \cite[Chapter VIII, Proposition 6.1. (2)]{MiMo}. Here we recall that we are always in the situation that $T$ is indefinite since we assume in this section 
that it  has rank at least $3$. 
\end{proof}

It follows from the theory of Miranda-Morrison \cite[Theorem VIII.7.2]{MiMo}, see  also \cite[p. 514]{shimada2}, 
that there is an exact sequence, which is completely determined by $M$, of the form 
\begin{equation} \label{seq:MirandaMorrison}
0 \to \coker\big(\!\O(T) \to \O(D(T))\big) \to \calM_T \to \calG_T \to 0
\end{equation}
where $\calM_T$ is a finite group which is in $1:1$ correspondence with the primitive embeddings $\iota: M \to L_{K3}$ and thus with the moduli spaces of lattice polarised $K3$ surfaces with lattice polarization $M$. 

The following result is far less obvious. We will not need it for the lattices we are concerned with, as in our cases the genus always consists of one element only, but state it to complete the picture.
The proof of this was communicated to us by Simon Brandhorst, here we only give a sketch.

\begin{pro}\label{pro:independenceindex}
The index $[\tO(T): \tO^+(T)]$ depends only on the genus $\calG_T$ of $T$ and not on $T$ itself.
\end{pro}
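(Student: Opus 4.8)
The statement is that $[\tO(T):\tO^+(T)]$, which is either $1$ or $2$, depends only on the genus $\calG_T$ and not on the individual lattice $T$. The index is $2$ precisely when every $g\in\tO(T)$ has real spinor norm $+1$, i.e. when $\tO(T)=\tO^+(T)$; it is $1$ exactly when some $g\in\tO(T)$ has $\operatorname{sn}_\RR(g)=-1$. So the plan is to show that the existence of an element of $\tO(T)$ with negative real spinor norm is a genus-invariant property. The natural framework for this is Miranda--Morrison theory, which the paper has already invoked for the exact sequence \eqref{seq:MirandaMorrison}; the key point is that that theory computes the spinor-norm behaviour of $\O(T)$ and $\tO(T)$ adelically, purely in terms of the discriminant form and the signature — i.e. in terms of the genus.

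First I would package the relevant information as a map of groups. Miranda--Morrison attach to $T$ a finite abelian "spinor genus" group $\Gamma_T$ (their $\Gamma_{\mathbb A}/\Gamma_{\mathbb A,0}$ machinery), together with a homomorphism from $\O(T)$ to a group built from $\RR^\times/(\RR^\times)^2$ at the archimedean place and from $\QQ_p^\times/(\QQ_p^\times)^2$ at the finite places; crucially the target of this homomorphism, and the image of $\tO(T)$ inside it, depend only on $(\sign T, D(T), q_T)$, hence only on $\calG_T$. Composing with projection to the archimedean factor $\RR^\times/(\RR^\times)^2=\{\pm1\}$ recovers $\operatorname{sn}_\RR$. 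So $\tO^+(T)$ is the preimage in $\tO(T)$ of the kernel of a genus-determined linear form on a genus-determined finite group, and whether that form is trivial on the image is a genus-theoretic question. Concretely, I would identify the subgroup $\Sigma\subset\{\pm1\}\times\prod_p\QQ_p^\times/(\QQ_p^\times)^2$ which is the image of the adelic spinor-norm map on $\tO(T)$ — this is exactly the "$\tO$-part" of the data controlled by Miranda--Morrison's local-global setup — and observe that $[\tO(T):\tO^+(T)]=2$ iff the archimedean coordinate of every element of $\Sigma$ is $+1$.

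The main obstacle is bookkeeping rather than conceptual: one must verify that the object I am calling $\Sigma$ is genuinely a function of the genus and not of $T$. This requires two ingredients already available in the literature. One is the strong approximation / local-global principle for the spin group underlying the orthogonal group of an indefinite lattice of rank $\ge 3$ (this is why the rank-$\ge3$ hypothesis, noted in the excerpt, is needed): it lets one realise any adelically-prescribed spinor norm, compatible with the global obstructions, by a genuine element of $\tO(T)$. The other is the Miranda--Morrison computation of the relevant local units: at each finite place the local contribution is read off from the Jordan decomposition of the $p$-adic discriminant form, which is a genus invariant, and at the archimedean place it is read off from the signature, again a genus invariant. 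Assembling these, the image $\Sigma$ is literally computed from $(\sign T, D(T), q_T)$, so its archimedean projection — hence the index $[\tO(T):\tO^+(T)]$ — depends only on $\calG_T$.

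I would close by remarking that, as Brandhorst observed and as is compatible with the exact sequence \eqref{seq:MirandaMorrison}, the whole discussion is parallel to the genus-invariance of $[\O(D(T)):\overline{\O}(T)]$ proved just above via \cite[Chapter VIII]{MiMo}; the only new point is tracking the archimedean spinor norm through the same machine, which the Miranda--Morrison formalism handles on an equal footing with the finite places. In the cases relevant to this paper the genus $\calG_T$ is a singleton, so the statement is vacuous there; it is recorded only for completeness.
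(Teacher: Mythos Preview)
Your approach is essentially the same as the paper's: both invoke strong approximation and the Miranda--Morrison local-global formalism, observing that the relevant adelic objects ($\Sigma(T)$, $\Sigma^{\sharp}(T)$, $\Gamma_S$ in the paper's terminology) depend only on the genus, with the only new ingredient beyond the preceding proposition being the extra bookkeeping of the archimedean spinor norm. The paper's proof is in fact only a sketch attributed to Brandhorst, and your write-up is somewhat more explicit about how the archimedean projection of the image $\Sigma$ controls the index.

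One slip to fix: you have the cases reversed. The index $[\tO(T):\tO^+(T)]$ equals $1$ precisely when $\tO(T)=\tO^+(T)$, i.e.\ when every $g\in\tO(T)$ has $\operatorname{sn}_\RR(g)=+1$; it equals $2$ when some $g\in\tO(T)$ has negative real spinor norm. This does not affect the substance of the argument --- the point remains that the existence of such an element is a genus-invariant property --- but the statement as written is backwards.
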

\begin{proof} (Sketch) This is a consequence of the strong approximation theorem. It  can be deduced from \cite[Proposition VIII.6.1 (2)]{MiMo} 
with extra bookkeeping of the real spinor norm 
using the fact that (in the terminology of 
\cite{MiMo}) the objects $\Gamma_S$, $\Sigma(T)$ and $\Sigma^{\sharp}(T)$ all depend only on the genus $\calG_T$ and not on the lattice $T$ itself.  
\end{proof}

As a corollary of the theory of Miranda and Morrison and in particular Sequence \eqref{seq:MirandaMorrison} together with Proposition \ref{pro:independenceindex} we thus obtain
\begin{cor}\label{cor:countingcomponets} 
Let $M$ be a hyperbolic lattice which admits a primitive embedding into the $K3$-lattice $L_{K3}$ and let $\calG_T$ be the genus of 
the orthogonal complement of one, and hence any such embedding of $M$ into $L_{K3}$.
\begin{itemize}
\item[\rm{(1)}]The number of moduli spaces of $M$-polarised $K3$ surfaces is given by 
\[
| \calM_T |=  [\O(D(T)):\overline{\O}(T)] \cdot |\calG_T|.
\]
\item[\rm{(2)}]The number of connected components of these moduli spaces is given by 
\[
| \calM_T |^c=  [\O(D(T)):\overline{\O}(T)] \cdot |\calG_T|\cdot \frac{2}{[\tO(T):\tO^+(T)]} .
\]
\end{itemize}
\end{cor}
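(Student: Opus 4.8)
\textbf{Proof plan for Corollary \ref{cor:countingcomponets}.}

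The strategy is to simply combine the two structural inputs provided above: the Miranda--Morrison exact sequence \eqref{seq:MirandaMorrison} together with Proposition \ref{pro:independenceindex}, and to track components via the spinor norm. For part (1), the key point is that by definition the finite group $\calM_T$ in \eqref{seq:MirandaMorrison} is in bijection with the set of primitive embeddings $\iota \colon M \hookrightarrow L_{K3}$ modulo $\O(L_{K3})$, which by the discussion preceding the corollary is the same as the set of moduli spaces $\calN_{M,\iota}$. Hence $|\calM_T|$ is the number of such moduli spaces. So first I would read off the order of $\calM_T$ from the exactness of
\begin{equation*}
0 \to \coker(\O(T)) \to \O(D(T)) \to \calM_T \to \calG_T \to 0,
\end{equation*}
which gives $|\calM_T| = |\calG_T| \cdot [\O(D(T)) : \overline{\O}(T)]$, using that $\overline{\O}(T)$ is by definition the image of $\O(T) \to \O(D(T))$ and that $\coker(\O(T))$ denotes the cokernel of this map, so that the middle map has image $\overline{\O}(T)$ and the map $\O(D(T)) \to \calM_T$ has kernel $\overline{\O}(T)$ and image of index $|\calG_T|$. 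This is really just counting in a four-term exact sequence of finite groups, so no serious obstacle arises here; one only has to be careful that the index $[\O(D(T)) : \overline{\O}(T)]$ is well defined independently of $T \in \calG_T$, which is exactly the content of the Proposition immediately preceding the Corollary (citing Miranda--Morrison \cite[Ch.\ VIII, Prop.\ 6.1(2)]{MiMo}).

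For part (2), the plan is to pass from moduli spaces to their connected components. Each moduli space $\calN_{M,\iota} = \Omega_T / \tO(T)$ has one or two connected components: it has two if and only if $\tO(T) = \tO^+(T)$ (i.e.\ no element of $\tO(T)$ swaps $\calD_T$ and $\calD'_T$), and one otherwise, as recorded in the paragraph defining $\O^+(T)$ and $\tO^+(T)$. This dichotomy can be written uniformly as: the number of components of $\calN_{M,\iota}$ equals $2/[\tO(T):\tO^+(T)]$, since the index $[\tO(T):\tO^+(T)]$ is either $1$ or $2$ (the spinor norm $\operatorname{sn}_\RR$ taking values in $\{\pm 1\}$). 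Multiplying the count from part (1) by this factor gives
\begin{equation*}
|\calM_T|^c = |\calG_T| \cdot [\O(D(T)) : \overline{\O}(T)] \cdot \frac{2}{[\tO(T):\tO^+(T)]},
\end{equation*}
provided this per-embedding component count is the same for every embedding $\iota$ in the fixed genus. That uniformity is precisely where Proposition \ref{pro:independenceindex} enters: since $[\tO(T):\tO^+(T)]$ depends only on the genus $\calG_T$ and not on the individual lattice $T$, every one of the $|\calM_T|$ moduli spaces has the same number $2/[\tO(T):\tO^+(T)]$ of components, and the product formula follows.

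The main (and really the only) obstacle is conceptual rather than computational: one must be confident that Proposition \ref{pro:independenceindex} and the preceding proposition genuinely license replacing "$[\,\cdot\,]$ for the specific $T$ orthogonal to a chosen $\iota$" by "$[\,\cdot\,]$ for the genus", so that the same multiplicities can be pulled out of the sum over all embeddings. Since all lattices $T(\iota)$ arising from primitive embeddings of a fixed $M$ lie in the single genus $\calG_T$ (they have the prescribed signature $(2,18-\operatorname{rank} M)$ and discriminant form $(D(M),-q_M)$, as stated), both indices are genus invariants and hence constant across the family of embeddings. With that in hand the corollary is immediate from \eqref{seq:MirandaMorrison}; I would present it as a two-line deduction with appropriate references to \cite{MiMo} and to Proposition \ref{pro:independenceindex}.
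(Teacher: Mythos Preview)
Your proposal is correct and follows exactly the approach the paper takes: the paper does not give an explicit proof but simply states that the corollary follows from the Miranda--Morrison exact sequence \eqref{seq:MirandaMorrison} together with Proposition \ref{pro:independenceindex}, which is precisely the two-step deduction you outline. Your unpacking of the counting in the four-term exact sequence and the uniform component count via the genus-invariance of $[\tO(T):\tO^+(T)]$ is the intended argument.
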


We shall now discuss which values these numbers can have in our cases.
The relevant data for the ambi-typical strata are collected in Table \ref{table:list}. The lattice data consist first of a root lattice $R$ and an isotropic subgroup 
$G\subset D(R)$ in the discriminant group of $R$. This determines an overlattice $L$ of $R$ and the hyperbolic lattice $M=U+L$.

We shall first prove that in our situation the genus $\calG_T$ always consists of a single element.

\begin{pro} \label{prop:uniquegenus}
Let $M$ be a hyperbolic lattice associated to one of the families listed in Table \ref{table:list}.
Then the genus given by the signature $(2,20 - \rank(M))$ and discriminant group $(D(M),-q_M)$
consists of  one element only.  
\end{pro}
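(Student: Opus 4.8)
\textbf{Proof strategy for Proposition \ref{prop:uniquegenus}.}
The plan is to show that every lattice $T$ in the relevant genus is indefinite of rank $\geq 3$ with the specified discriminant form, and then invoke the classical fact that such a genus is determined by its rank, signature and discriminant form together with a mass-type or spinor-genus argument to conclude the genus has a single class. Concretely, for each family in Table \ref{table:list} the hyperbolic lattice $M = U + L$ has signature $(1, t)$ with $t = \operatorname{rank}(L) \leq 19$, and the orthogonal complement $T$ has signature $(2, 20 - \operatorname{rank}(M)) = (2, 19 - t)$, so $\operatorname{rank}(T) = 21 - \operatorname{rank}(M) \geq 2$; in all but the rank-$19$ cases (the $1$-dimensional strata, numbers $24$--$49$) we have $\operatorname{rank}(T) \geq 3$ and $T$ is indefinite. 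First I would dispose of the indefinite cases: by Nikulin's theorem \cite[Corollary 1.13.3]{Nik}, an indefinite lattice of rank $\geq 3$ whose discriminant form can be realised subject to the signature congruence is unique in its genus, provided the standard rank conditions relative to the $p$-components of $D(T)$ are met. Since $D(T) \cong D(M)$ with the opposite form, and $D(M)$ is a quotient of $D(R)$ by the isotropic subgroup $G$, one reads off from the explicit $R$ (sums of $A_n$ and $D_4$ summands) that $\ell(D(T)_p) \leq \operatorname{rank}(T) - 2$ holds in every case, so Nikulin's criterion applies directly.

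The genuine obstacle is the rank-$19$ family, i.e.\ the $1$-dimensional strata numbered $24$--$49$, where $\operatorname{rank}(M) = 19$, $T$ has signature $(2, 1)$ and rank $3$. Here $T$ is still indefinite, so in principle Nikulin's criterion could still apply, but the $p$-rank bound $\ell(D(T)_p) \leq 1 = \operatorname{rank}(T) - 2$ becomes delicate: for all of these the root lattice is $D_4 + A_{i_1} + \dots + A_{i_k}$ with $\sum(i_j+1) = 13$, so $D(R)$ can have $2$-rank as large as $3$ (from the $D_4$ factor plus $A_{\text{odd}}$ factors), and one must check that passing to $D(M) = D(R)/G$ and then taking the group orthogonal to $M$ indeed cuts the relevant $p$-ranks down to at most $1$. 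In each such case one verifies this by explicitly computing the isotropic subgroup $G$ from Shimada's tables \cite{shimada2,shimada3} and the resulting $D(M)$; since $\operatorname{rank}(T) = 3$ and the form has signature $(2,1)$, as soon as $\ell(D(T)_p) \leq 1$ for every $p$ Nikulin's criterion again gives uniqueness. I would organise this as a finite check, grouping the $26$ lattices by the structure of their $2$-adic and odd-$p$ discriminant components.

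As a fallback for any recalcitrant case where the crude rank bound fails, I would appeal to the Miranda--Morrison machinery already invoked in the excerpt: the genus $\calG_T$ is the cokernel term in the exact sequence \eqref{seq:MirandaMorrison}, so $|\calG_T| = 1$ is equivalent to the map $\O(D(T)) \to \calM_T$ in that sequence being surjective, equivalently to the local spinor norm groups generating everything globally. This is a computable condition at the finitely many bad primes (here only $p = 2, 3, 5, 7$ occur, since $\operatorname{disc}(M)$ divides a product of the discriminants of the $A_n$ and $D_4$ summands), and Brandhorst's computations or a direct genus enumeration in a computer algebra system confirm $|\calG_T| = 1$ throughout. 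I expect the main effort — though not the main conceptual difficulty — to be precisely this bookkeeping of $2$-adic discriminant forms for the twenty-six rank-$19$ lattices; the conceptual content is entirely contained in Nikulin's and Miranda--Morrison's theorems, which are already available to us.
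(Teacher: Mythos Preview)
Your main line of attack --- Nikulin's uniqueness criterion \cite[Corollary 1.13.3]{Nik} via the bound $\ell(D(T)_p) \leq \operatorname{rank}(T)-2$ --- fails much more often than you seem to expect, and not only in the rank-$3$ cases. Take entry 23 ($R=8A_2$, Mordell--Weil torsion $\ZZ/3\times\ZZ/3$): here $\operatorname{rank}(T)=4$, while $D(R)=(\ZZ/3)^8$ is elementary abelian, so $D(L)=G^\perp/G\cong(\ZZ/3)^4$ is too, giving $\ell(D(T)_3)=4>2$. Entry 17 ($2D_4+4A_2$, trivial torsion) is similar: $\ell(D(T)_2)=\ell(D(T)_3)=4>2$. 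In the rank-$3$ cases the bound $\ell\leq 1$ is even more unrealistic: for instance entry 44 ($D_4+2A_5+3A_1$, torsion $(\ZZ/2)^2$) has $2$-part of $D(R)$ equal to $(\ZZ/2)^7$, and passing to $G^\perp/G$ for an isotropic $(\ZZ/2)^2$ keeps it elementary abelian of $2$-rank $3$. So your ``fallback'' would in fact be carrying the bulk of the argument, and your description of it is not correct either: in the exact sequence \eqref{seq:MirandaMorrison} the map $\calM_T\to\calG_T$ is surjective by construction, so $|\calG_T|=1$ is not a surjectivity statement but the assertion that the genus set itself is a singleton, which the sequence does not by itself decide.

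The paper does something different and more efficient: apart from entries (1)--(3), where Nikulin's \cite[Theorem 1.14.2]{Nik} applies directly, it invokes the Conway--Sloane criterion \cite[Chapter 15, especially Theorem 20 and \S 9.7]{CS}. For an indefinite lattice, the genus has one class provided there are no \emph{non-tractable} primes; for an odd prime $p$ this requires $p^{\binom{n}{2}}\mid d$ (with $n=\operatorname{rank}(T)$, $d=\operatorname{disc}(T)$), and for $p=2$ it requires $8^{\binom{n}{2}}\mid 4^{\lfloor n/2\rfloor}d$. These divisibility conditions are easy to falsify by hand once one remembers that passing from $R$ to the overlattice $L$ divides the discriminant by $|G|^2$. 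The Conway--Sloane test sidesteps the $p$-rank bottleneck entirely, which is why it succeeds uniformly here; your route through Nikulin's $\ell$-bound cannot be rescued without essentially replacing it by this or an equivalent mass/spinor argument.
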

\begin{proof}
The claim for the entries (1),(2) and (3) of Table  \ref{table:list} follows immediately from the numerical conditions of Nikulin's theorem \cite[Theorem 1.14.2]{Nik}.

For the other lattices we shall use \cite[Chapter 15]{CS}. Since we have indefinite lattices, it suffices, according to \cite[Section 15.9.7]{CS}, to show that there are no non-tractable primes
(for a definition of non-tractable primes see \cite[Chapter 15.9.6]{CS}). 
Let $d$ be the discriminant of $M$ and let $n=22 - \operatorname{rank}(M)$ be the rank of the orthogonal complement. 
According to \cite[Theorem 15.20]{CS} a necessary condition for an odd prime $p$ to be non-tractable is  
$$
d \mbox{ is divisible by } p^{\binom{n}{2}}.
$$
Note that $n=8$ in the case of entry (4), i.e. Shimada's case 565, and $n=4$ or $n=3$ in all other cases.
One can now check by hand that this condition is never fulfilled for the lattices coming from Table  \ref{table:list}. In most cases this  follows already from the discriminants of the root lattices
$R$  in the
second column of this table. However, in some cases one has to be more careful. An example is entry (23) which is Shimada's family 2373. Here $n=4$ and we must not have divisors of $d$ of the form $p^6$.
Now the lattice $8A_2$ has discriminant $3^8$. However, the lattice $L$ is an overlattice of $8A_2(-1) + U$ with torsion group $[3,3]$. But  this means that the order
of the discriminant group of $M$ is $3^8/3^4=3^4$ and hence $3$ is not a non-tractable prime. The other cases can be treated in the same way.

Thus the only possible non-tractable prime is $p=2$.  By \cite[Theorem 15.20]{CS} this implies that  
$$
4^{[\frac{n}{2}]}d \mbox{ is divisible by } 8^{\binom{n}{2}}.
$$
Again, this can be checked by hand. For example in case 2784 one has 
$4^{[\frac{n}{2}]}d(D_4+A_5+2A_3+2A_1 + U)=2^{11}\cdot 3$. However, taking the torsion into account we obtain that $4^{[\frac{n}{2}]}d=2^7\cdot 3$ which is not 
divisible by $8^3=2^9$.  The other cases are similar. 
\end{proof}

\begin{cor}
Let $M$ be a hyperbolic lattice associated to one of the families listed in Table \ref{table:list}. Then the orthogonal complement of a primitive embedding 
$\iota: M \to L_{K3}$ depends only on $M$ and not on the chosen embedding $\iota$. 
\end{cor}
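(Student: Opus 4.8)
The statement to prove is the Corollary: for a hyperbolic lattice $M$ coming from the ambi-typical families in Table \ref{table:list}, the orthogonal complement $T(\iota) = \iota(M)^\perp_{L_{K3}}$ of a primitive embedding $\iota\colon M\hookrightarrow L_{K3}$ depends only on $M$ and not on $\iota$. This is an essentially immediate consequence of Proposition \ref{prop:uniquegenus}, and my plan is to make that deduction explicit.

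First I would recall that for any primitive embedding $\iota\colon M\hookrightarrow L_{K3}$, the isometry class of the orthogonal complement $T(\iota)$ is not a priori unique, but its \emph{genus} is completely determined by $M$: indeed $T(\iota)$ has signature $(2, 20-\rank(M))$ and discriminant form $(D(T(\iota)), q_{T(\iota)}) \cong (D(M), -q_M)$, as follows from Nikulin's theory \cite[Corollary 1.6.1, Proposition 1.6.1]{Nik} (this is exactly the genus denoted $\calG_T$ in the excerpt). So two orthogonal complements $T(\iota_1)$ and $T(\iota_2)$ always lie in the same genus $\calG_T$, regardless of the choice of embeddings.

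Next I would invoke Proposition \ref{prop:uniquegenus}, which asserts precisely that for the lattices $M$ appearing in Table \ref{table:list} this genus $\calG_T$ consists of a single isometry class. Combining the two observations: $T(\iota_1)$ and $T(\iota_2)$ lie in a genus with only one class, hence $T(\iota_1) \cong T(\iota_2)$ as lattices. Since $\iota_1, \iota_2$ were arbitrary primitive embeddings, the isometry class of $T(\iota)$ depends only on $M$, which is the claim. I should phrase the conclusion carefully: what is independent of $\iota$ is the isometry class of $T(\iota)$ (there is in general still a nontrivial set $\calM_T$ of embeddings up to $\O(L_{K3})$, but they all have isometric orthogonal complements).

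This proof is short and presents no real obstacle, since all the substantive work has been done in Proposition \ref{prop:uniquegenus} (the computation that there are no non-tractable primes, via \cite[Chapter 15]{CS}) and in the standard lattice-theoretic fact that the genus of the orthogonal complement is determined by $M$. The only thing to be slightly careful about is to state the right invariant — isometry class rather than, say, a marked complement — and to make sure the signature bookkeeping is consistent with the conventions of Section \ref{sec:mirrorpol}, where $M$ has signature $(1,t)$ and $T$ has signature $(2, 19-t)$ with $\rank(M) + \rank(T) = 22$; in the notation of Proposition \ref{prop:uniquegenus} this reads signature $(2, 20 - \rank(M))$ after accounting for the rank shift, matching the statement there. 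So the write-up is essentially: recall the genus is determined by $M$, cite Proposition \ref{prop:uniquegenus} that the genus is a singleton, conclude.
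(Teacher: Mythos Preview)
Your proposal is correct and takes essentially the same approach as the paper: the corollary is an immediate consequence of Proposition \ref{prop:uniquegenus}, using the standard fact that the genus of the orthogonal complement is determined by $M$. In fact the paper does not even spell out a proof, simply recording the statement as a corollary of Proposition \ref{prop:uniquegenus}.
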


This corollary allows us to speak of {\em the} orthogonal complement $T$ of the lattice $M$  in the $K3$ lattice $L_{K3}$, even if the primitive embedding $\iota: M \to L_{K3}$ is not uniquely defined (which can occur). 

\begin{pro} \label{prop:embeddings}
Let $M$ be a hyperbolic lattice associated to one of the families listed in Table \ref{table:list} and let  $T$ be the unique element in the genus orthogonal to $M$.
Then the map $\O(T) \to \O(D(T))$ is always surjective with the exception of the following five  root lattices $R$:
\begin{itemize}
\item[(17)] $2D_4 + 4A_2$,
\item[(21)] $4A_4$,
\item[(37)] $D_4 + A_7 + 2A_2 + 2A_1$,
\item[(48)] $D_4 + 2A_4 + 2A_2 + A_1$,
\item[(49)] $D_4 + 3A_3 + 2A_2$.
\end{itemize}
In these cases the index $[\O(D(T)): \overline{\O}(T)]=2$, in particular, there are exactly two non-isomorphic primitive embeddings of $M$ into  $L_{K3}$.
\end{pro}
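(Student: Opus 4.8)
The plan is to reduce the statement to a single index computation per lattice and then to carry that computation out by the local--global methods of Miranda--Morrison. The key input is Proposition \ref{prop:uniquegenus}: the genus $\calG_T$ orthogonal to $M$ consists of one element. Substituting $|\calG_T|=1$ into Corollary \ref{cor:countingcomponets}(1) (equivalently, into the exact sequence \eqref{seq:MirandaMorrison}) gives
\[
|\calM_T| \;=\; [\O(D(T)):\overline{\O}(T)],
\]
so the number of primitive embeddings $\iota\colon M\hookrightarrow L_{K3}$ up to $\O(L_{K3})$ is exactly this index, and it equals $1$ precisely when $\O(T)\to\O(D(T))$ is onto. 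Thus the whole proposition reduces to computing $[\O(D(T)):\overline{\O}(T)]$ for each of the $50$ lattices $M$ attached to Table \ref{table:list}. Here $D(T)\cong D(M)=D(L)$ with $q_{D(T)}=-q_{D(M)}$, where $L$ is the overlattice of the root lattice $R$ cut out by the isotropic subgroup $G=\MWt(\calE)\subset D(R)$ listed in Table \ref{table:list}; in particular the discriminant form $q_M=q_{G^\perp/G}$ (the remaining $U$ summand contributing nothing) is explicit in every entry.

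To compute $[\O(D(T)):\overline{\O}(T)]=|\coker(\O(T)\to\O(D(T)))|$ I would invoke the machinery of Miranda--Morrison \cite[Ch.~VIII]{MiMo}: since every $T$ occurring here is indefinite of rank $\ge 3$, strong approximation applies and the cokernel is the quotient of a product $\prod_p \Gamma_p(T)$ of local groups by the image of a global reciprocity map, each $\Gamma_p(T)$ depending only on the $p$-adic Jordan decomposition of $T$ --- equivalently on $q_M$ localised at $p$. For a large block of the entries the computation can be short-circuited: whenever the (unique) lattice in $\calG_T$ splits off an orthogonal summand isometric to $U$ or $U(2)$, the map $\O(T)\to\O(D(T))$ is onto by Nikulin's criterion \cite[Cor.~1.5.2, \S1.14]{Nik}, and such a summand can usually be read off directly. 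For the remaining cases --- in particular all the rank-$3$ lattices $T$ coming from the $1$-dimensional strata (entries $24$--$49$), which by the discussion preceding the proposition do \emph{not} split off a copy of $U$, together with a handful of rank-$4$ cases --- one must genuinely run the local computation: organise the work by the primes dividing $\det M$ (after the $G$-quotient these lie in $\{2,3,5,7\}$), write down the $p$-adic form of $q_M$, read off $\Gamma_p(T)$, and apply the reciprocity relation of \cite{MiMo} (compare the analogous computations in \cite[p.~514]{shimada2} and the local tables of \cite[Ch.~15]{CS}).

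The expected outcome is that the cokernel is trivial for all entries except the five listed ones --- $2D_4+4A_2$ (17), $4A_4$ (21), $D_4+A_7+2A_2+2A_1$ (37), $D_4+2A_4+2A_2+A_1$ (48), $D_4+3A_3+2A_2$ (49) --- where it has order $2$; for these five $|\calM_T|=2$, yielding exactly two non-isomorphic primitive embeddings, and for the other $45$ there is a unique embedding. I expect the main obstacle to be exactly this last step: the $2$-adic Miranda--Morrison bookkeeping (tracking parities of even/odd Jordan constituents together with oddity and sign invariants), and for entry $21$ the odd-prime contribution at $p=5$, are notoriously error-prone. In practice the safest route is to combine the conceptual argument above with a machine check, e.g.\ using Kirschmer's implementation of genus and spinor-norm computations, which simultaneously re-verifies Proposition \ref{prop:uniquegenus}.
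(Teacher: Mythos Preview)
Your approach is essentially the same as the paper's: reduce via Proposition~\ref{prop:uniquegenus} and the Miranda--Morrison sequence to computing $[\O(D(T)):\overline{\O}(T)]$ case by case, dispatch the easy cases by Nikulin's criterion, and handle the rest by explicit (machine-assisted) computation. The paper's proof is even terser than yours --- it simply cites Nikulin for entries (1)--(3), Shimada's computations \cite{shimada4} for the general case, and Kirschmer's Table~\ref{table:OPlus} in the Appendix for the rank~$17$ cases --- so your proposal, while more explicit about the Miranda--Morrison bookkeeping, is if anything more detailed than what the paper actually provides.
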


\begin{proof}
For  the cases (1),(2) and (3) from Table \ref{table:list} the surjectivity of $\O(T) \to \O(D(T))$ can be seen directly by Nikulin's ciriterion \cite[Theorem 1.16.10]{Nik}.
In general this follows from computations of Shimada, which are available from his website, see \cite{shimada4}. 
For the rank $17$ cases this follows independently from Kirschmer's computations, see Table \ref{table:OPlus}, Column 5 in the Appendix. 
\end{proof}

It now follows that the number of connected components of $M$-polarised $K3$ surfaces, where $M$ is a lattice corresponding to an ambi-typical stratum, is either $1,2$ or $4$. 
It is at least $2$ in the cases listed in Proposition \ref{prop:embeddings}.
To determine the exact number one has to compute the index $[\tO(T): \tO^+(T)]$.
Indeed, all three possible cases occur as the 
following example shows. 
\begin{ex}\label{ex:components}
Assume that the root lattice $R$ has rank $17$.
Then the following holds:
out of the $25$ rank $17$ lattices we have $|\calM_T|^c=1$ in $18$ cases. 
In the four cases $D_4+A_7+A_4+2A_1$, $D_4+2A_6+A_1$, $D_4+A_6+A_3+2A_2$ and $D_4 + 2A_5 +3A_1$ we have $| \calM_T |=1$ and $[\tO(T):\tO^+(T)]=1$ and thus $|\calM_T|^c=2$. Finally, in the three cases 
$D_4 +A_7 +2A_2 +2A_1$, $D_4 +2A_4 +2A_2 +A_1$ or  $D_4 +3A_3 +2A_2$ we have $| \calM_T |=2$ and  $[\tO(T):\tO^+(T)]=1$
and hence $|\calM_T|^c=4$. This follows from Kirschmer's computations, see Table \ref{table:OPlus}  in the Appendix. Note that we have $2$ components if either $[\tO(T):\tO^+(T)]=1$ or 
$[\O(D(T)):\overline{\O(T)]}=2$ and $4$ components if both of these indices are $1$ and $2$ respectively. In all other cases we have $1$ component. 
\end{ex}

\begin{rem}\label{rem:groupscomponents}
It is interesting to compare this with the Shimada's list of non-connected moduli of elliptic $K3$ surfaces, see \cite[Corollary 1.5 and Table II]{shimada2}. We first observe that the lattices which appear in Proposition \ref{prop:embeddings} do not appear 
in Shimada's list. The reason is that different moduli spaces of lattice-polarised $K3$ surfaces can lead to the same Shimada stratum. This is due to symmetries of the root lattice $R$ and we shall discuss this in the next section. 
On the other hand, there are three root lattices which appear in both Table \ref{table:list} of our paper and Table 3 in \cite[p.555-557]{shimada2}. 
These are $D_4+2A_6+A_1$, which are our cases (38/39), as well as $2A_3+8A_1$ and $4A_3+2A_1$ which are our cases (6) and
(7) respectively. In the case of $D_4+2A_6+A_1$ we have two complex conjugate components. The lattice $4A_3+2A_1$ appears in  \cite[Table 3]{shimada2}
in connection with the 
Mordell-Weil torsion $\ZZ/ 2\ZZ$. In this case one has two components.
However, in our situation we have Mordell-Weil torsion $\ZZ/4\ZZ$ and this leads to one component only. Finally, in the case of $2A_3 + 8A_1$ the two components in Shimada's list come from inequivalent isotropic 
subgroups $\ZZ/2\ZZ \times \ZZ/2\ZZ$ which correspond to different intersection behaviour of the torsion sections. Only one of these cases leads to an ambi-typical stratum, see the discussion 
in Remark \ref{rem:explanationShimadastrata}.    
\end{rem}


\section{The moduli maps}\label{sec:modulimaps}

The aim of this section is to investigate the precise relationship between moduli spaces of lattice-polarised $K3$ surfaces and ambi-typical strata.   In particular we show that there is a finite map from certain moduli 
spaces of lattice polarised $K3$ surfaces to ambi-typical strata and compute the degree of this map.

For this we consider an ambi-typical stratum
\begin{equation}\label{equ:ambitypical}
\calA=\overline{\calF'_{\Gamma,i}} = \overline{\calFF'_{R,G,{\iota}}}. 
\end{equation}
As before we denote by $L$ the lattice defined by the isotropic group $G \subset D(R)$ and set $M=U+L$.
Our first goal is to associate to such a given ambi-typical stratum certain moduli spaces of lattice-polarised $K3$ surfaces. First of all, given any surface $S$ in such a stratum, we identify   
the sublattice spanned by the section and the fibre with a fixed summand  $U$ of the $K3$ lattice $L_{K3}$. More precisely, we  first choose the standard basis $e,f$ of $U$ with $e^2=f^2=0$ and $e.f=1$ and
identify the fibre class with $f$ and the section $s$ with $e-f$. This can be done once and for all simultaneously for all surfaces in this stratum. 
Now choose a generic surface $S$ in $\overline{\calFF'_{R,G,{\iota}}}$ and a marking $\varphi: H^2(S,\ZZ) \to L_{K3}$ where the sublattice spanned by the fibre and the section are identified with a given summand $U$ as described above. 
The components of the singular fibres not meeting the section define a sublattice isomorphic to $R$ whose saturation in the $K3$ lattice is isomorphic to $L$. We note that the isomorphism with $R$ is not canonical, but depends on an
identification of these components with a set of simple roots of $R$. There are finitely many ways of choosing such an identification.   
Adding the sublattice spanned  by the fibre and the section one obtains an embedding 
\[
\iota: M \to \NS(S) \subset H^2(S,\ZZ)\cong L_{K3}.
\]
and thus an element $(S,\iota)$ in a connected component  $\calN_{M,\ell}$ of the  moduli space  $\calN_{M,\iota}$  of lattice polarised $K3$ surfaces. 
We shall refer to such a component, as well as to the moduli space $\calN_{M,\iota}$ itself, as   {\em associated} to $\calA$.
Note that we do not claim that  $\calN_{M,\ell}$  or $\calN_{M,\iota}$ are uniquely determined by the ambi-typical stratum since there is no canonical way of identifying the fibre components with a 
root system.  Nor do we claim that we obtain a morphism from (an open part of) the stratum  $\calA$ to such a moduli space $\calN_{M,\iota}$. 
Indeed, we always have (at least) the ambiguity given by the symmetries $S_R$ 
of the Dynkin diagram associated to $R$. Changing the isomorphism by such an element can have two effects. One is that it defines different points in
the same moduli space of $M$-lattice polarised $K3$ surfaces. The other is that
it leads to  different moduli spaces of lattice polarised $K3$ surfaces. Note however, that two such moduli spaces of $M$-polarised $K3$ surfaces define the same Shimada stratum as the combinatorial type of the singular fibres is the same.   
As we shall see, both cases  occur. We shall now discuss this in detail. 
Before doing this we establish the following convention. Given such a moduli space $\calN_{M,\iota}$ of lattice polarised $K3$ surfaces and a $K3$ surface $S$ which appears in this moduli space,
we always have a distinguished copy $U$. In each component $\calN_{M,\ell}$ we consider the non-empty open part of the moduli space where  this copy of $U$ contains a nef isotropic class and 
an irreducible $(-2)$-curve. 
This determines a Jacobian fibration $S \to \PP^1$ whose fibre and section are contained in $U$. Unless stated otherwise we will always work with this elliptic fibration.
By  $\calN_{M,\ell}^0$ we further denote the open subset where the configuration of the singular fibres  is generic. 
 
\begin{pro}\label{pro:constructionmap}
The following holds:
\begin{itemize}
\item[{\rm{(1)}}] Given an ambi-typical stratum $\calA$ there are only finitely many components of moduli spaces of lattice-polarised $K3$ surfaces associated to $\calA$.
\item[{\rm{(2)}}] Let  $\calN_{M,\ell}$ be a component of a moduli space  $\calN_{M,\iota}$ of lattice polarised $K3$ surfaces associated to $\calA$ and
let  $\calN_{M,\ell}^0$ be the open subset where the configuration of the singular fibres is generic.    
Then there is a natural finite to one dominant morphism $\calN^0_{M,\ell} \to \calA$.
\end{itemize}
\end{pro}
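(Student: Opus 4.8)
The plan is to build the morphism in part (2) as the composite of the period map, its covering transformations, and the GIT construction, and to control its fibres by counting the data that go into the construction. First I would make precise the construction sketched just above the proposition. Fix once and for all the copy $U\subset L_{K3}$, the identification of a set of simple roots of $R$ with a configuration of fibre components, and hence a fixed primitive embedding $\iota\colon M=U+L\hookrightarrow L_{K3}$; this determines $T=T(\iota)=\iota(M)^\perp$ and, by Proposition \ref{prop:latticepol} and the discussion following it, the moduli space $\calN_{M,\iota}=\Omega_T/\tO(T)$, of which $\calN_{M,\ell}$ is a connected component. For $(S,\tilde\iota)\in\calN^0_{M,\ell}$ the distinguished $U\subset\NS(S)$ gives a Jacobian fibration $f\colon S\to\PP^1$, hence a point of $\calF'$; since on the open locus $\calN^0_{M,\ell}$ the generic singular-fibre configuration is attained, this point lies in the configuration stratum that is open and dense in $\calFF'_{R,G,\iota}$, i.e.\ in $\calA$ up to closure. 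This defines the set map $\calN^0_{M,\ell}\to\calA$; I would then check it is a morphism of varieties by factoring it through Miranda's GIT construction: a family over $\calN^0_{M,\ell}$ of Weierstra\ss\ data (obtained from the universal family of $M$-polarised $K3$ surfaces by choosing, locally, the $0$-section and a fibre class) gives a classifying map to $V'$, and passing to the quotient $\calF'=V'/\!\!/\SL(2,\CC)$ one lands in $\calA$; Miranda \cite{Mir} and the identification $\calF=V/\!\!/\SL(2,\CC)$ from Section \ref{sec:ellfibered} make this algebraic.

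Next I would prove dominance and finiteness by a dimension count. By Remark \eqref{rem:dimcomp} the Shimada stratum $\calFF'_{R,G,\iota}$ has dimension $18-\rank(R)$, while $\dim\calN_{M,\iota}=19-t$ where $(1,t)$ is the signature of $M$, i.e.\ $t=\rank(M)-1=1+\rank(L)=1+\rank(R)$ since $L$ is an overlattice of $R$ of the same rank; hence $\dim\calN_{M,\iota}=18-\rank(R)=\dim\calA$. Thus the morphism $\calN^0_{M,\ell}\to\calA$ is a dominant morphism between irreducible varieties of the same dimension, so it is generically finite, and after shrinking $\calN^0_{M,\ell}$ (or restricting to a suitable open subset of $\calA$) it is finite. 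For the "finite to one'' statement everywhere on $\calN^0_{M,\ell}$ one argues that the fibre over $[f\colon S\to\PP^1]\in\calA$ consists of the $M$-polarised surfaces $(S',\tilde\iota')$ with $S'$ isomorphic as an elliptic surface to $S$: since $R$ is spanned by fibre components, any such $\tilde\iota'$ is determined up to the finite symmetry group $S_R$ of the Dynkin diagram of $R$ (acting by permuting fibre components of each Kodaira type) together with the choice of the two points of $C_M^+$ that are not relevant after quotienting; hence the fibre is a subset of a fixed finite set and the map is quasi-finite, which together with properness of a suitable compactification (or Zariski's main theorem) gives finiteness.

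Part (1) is then immediate: the associated components $\calN_{M,\ell}$ arise from the finitely many choices involved — by Corollary \ref{cor:countingcomponets} there are only $|\calM_T|$ primitive embeddings $\iota$ (and in our situation, by Proposition \ref{prop:uniquegenus}, the genus $\calG_T$ is a single class so this number is $[\O(D(T)):\overline{\O}(T)]$), each $\calN_{M,\iota}$ has by Proposition \ref{pro:independenceindex} at most $2$ components, and the identification of fibre components with simple roots of $R$ introduces only the finite ambiguity $S_R$; composing these finite choices shows there are finitely many associated components. I expect the main obstacle to be the algebraicity of the map — verifying carefully that the set-theoretic assignment $(S,\tilde\iota)\mapsto[f\colon S\to\PP^1]$ is a morphism of quasi-projective varieties, which requires producing a genuine family of Weierstra\ss\ data over (an \'etale cover of) $\calN^0_{M,\ell}$; this is where one must be careful that the choice of $0$-section and fibre inside the fixed $U$ is canonical, so that only the $S_R$-ambiguity (and not a continuous ambiguity) remains, and then invoke the universal property of the GIT quotient $\calF'=V'/\!\!/\SL(2,\CC)$. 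The dimension count and the finiteness of fibres are comparatively routine given the results already established.
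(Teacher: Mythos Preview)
Your approach is essentially the paper's, but you over-engineer two points and leave one small gap.

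The gap: you try to extract dominance from the dimension count (``Thus the morphism \ldots\ is a dominant morphism between irreducible varieties of the same dimension''), but equal dimensions of source and target do not force dominance --- the image could sit in a proper closed subset. Dominance is instead immediate from the very construction of ``associated'': the moduli space $\calN_{M,\ell}$ was obtained by starting with a \emph{generic} $S\in\calA$ and marking its fibre components, so that same $S$ (with its marking) lies in $\calN^0_{M,\ell}$ and maps back to the generic point of $\calA$. The paper puts this as ``dominant by the definition of an ambi-typical stratum.'' Once dominance is in hand, your dimension count does give generic finiteness, but the paper (and you, in the next paragraph) get finite fibres more directly: over a fixed $[f\colon S\to\PP^1]$ the preimages differ only by the identification of fibre components with simple roots of $R$, a finite set governed by $S_R$.

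On algebraicity: you anticipate this as the main obstacle and propose building local families of Weierstra{\ss} data over an \'etale cover. The paper sidesteps this entirely by invoking that $\calF$ is a \emph{coarse} moduli space of Jacobian $K3$ fibrations: the assignment $(S,\tilde\iota)\mapsto(S\to\PP^1)$ is already a morphism $\calN_{M,\ell}\to\calF$ by the universal property, and on $\calN^0_{M,\ell}$ the image lands in $\calA$ because the configuration (hence the monodromy and the root data) is constant there. Your Weierstra{\ss}-family argument would also work but is unnecessary. For part~(1) your counting via $|\calM_T|$ etc.\ is fine; the paper simply cites Nikulin's finiteness of primitive embeddings.
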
 
\begin{proof}
Claim $(1)$ follows from Nikulin's theory since there are only finitely many inequivalent primitive embeddings $\iota: L \to L_{K3}$.  

To prove (2) we recall that 
we have for every  element in $\calN_{M,\iota}$ a well defined Jacobian fibration $S \to \PP^1$ which can be written in Weierstra{\ss} form. The fact that $\calF$ is a coarse moduli space of $K3$ surfaces with  a Jacobian fibration defines 
a morphism $\calN_{M,\ell} \to \calF$. On the open subset  $\calN_{M,\ell}^0 $ the monodromy is constant with monodromy group $\Gamma$  and hence we obtain a morphism $\calN_{M,\ell}^0 \to \calA$. This map has finite fibres since there are
only finitely many ways of identifying the components of the singular fibres not intersection the $0$-section with a basis of the root lattice $R$. The map is dominant by the definition of an ambi-typical stratum. 
\end{proof}
  
We now want to understand these maps better. The next two statements will be useful for this.

\begin{lem} \label{lem:roots}
Let $(R,G)$ be a pair consisting of a root lattice and an isotropic subgroup $G \subset D(R)$ with associated overlattice $L$, arising from one of the families listed in Table \ref{table:list}.
Then the roots of $R$ and $L$ coincide. 
\end{lem}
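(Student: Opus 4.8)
The plan is to show that any $x\in L$ with $x^2=-2$ must already lie in $R$; together with the trivial inclusion $R\subseteq L$ this gives the claim. I would set $G=L/R\subseteq D(R)$ and use \cite[Corollary 6.20]{ScSh} to identify $G$ with the torsion Mordell--Weil group $\MWt(\calE)$ of a generic member $\calE$ of the stratum, so that a nonzero class $\bar x\in G$ corresponds to a nonzero torsion section $P\neq s$, with $s$ the zero section. Writing $R=\bigoplus_F R_F$ as the orthogonal sum of the $ADE$ summands attached to the reducible singular fibres $F$ of $\calE$, one obtains the orthogonal decomposition $D(R)=\bigoplus_F D(R_F)$, and under this identification the $D(R_F)$-component of $\bar x$ is exactly the class in the component group $G_F\cong D(R_F)$ recording which component of $F$ the section $P$ passes through.

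The heart of the argument is a minimal-norm estimate. Since $R$ is negative definite and $L\subseteq R^{\vee}$, every $x\in L$ satisfies $-x^2\geq \mu(\bar x)$, where $\mu(\bar y):=\min\{-v^2\mid v\in R^{\vee},\ v+R=\bar y\}$ is the minimal norm of the discriminant class $\bar y$. The function $\mu$ is additive over the decomposition $D(R)=\bigoplus_F D(R_F)$, and on each summand $D(R_F)$ its value on a given class is exactly the local correction term $\operatorname{contr}_F(P)$ of Shioda's height theory (it equals $i(m-i)/m$ when $F=I_m$ and $P$ meets the $i$-th component, and $1$ resp.\ $0$ when $F=I_0^*$). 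Hence $\mu(\bar x)=\sum_F\operatorname{contr}_F(P)$. I would then apply Shioda's height formula on $\calE$: as $\chi(\calO_\calE)=2$ and $P$ is torsion, its height vanishes, so
\[
\sum_F\operatorname{contr}_F(P)\;=\;2\chi(\calO_\calE)+2(P\cdot s)\;=\;4+2(P\cdot s)\;\geq\;4,
\]
using $(P\cdot s)\geq 0$ since $P$ and $s$ are distinct irreducible curves. Putting the two displays together, any $x\in L\setminus R$ has $-x^2\geq\mu(\bar x)\geq 4$, so in particular $x^2\neq-2$, as required.

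The inputs beyond elementary lattice theory are the identification of $\mu$ on $D(R_F)$ with the Shioda correction term and the vanishing of the height of a torsion section, and it is precisely here that the hypothesis that $(R,G)$ arises from an actual elliptic $K3$ surface enters; granting these standard facts (see \cite{ScSh}), the argument is uniform and requires no case distinction (and in fact proves the statement for every Jacobian elliptic $K3$ surface). I do not foresee a real obstacle, the only delicate point being the bookkeeping that the fibrewise correction terms genuinely sum to $\mu(\bar x)$, which is built into the definitions. A more pedestrian alternative would be to extract the explicit isotropic subgroup $G\subseteq D(R)$ for each of the $50$ entries of Table~\ref{table:list} from Shimada's data and verify $\mu(\bar g)>2$ for every nonzero $\bar g\in G$ by direct computation; this is routine but considerably longer.
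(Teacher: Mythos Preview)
Your argument is correct. It is, however, a genuinely different route from the paper's proof. The paper argues directly on the geometry of a sufficiently general $K3$ surface $S$ with $\NS(S)\cong U+L$: a class $r\in L$ with $r^2=-2$ has $\pm r$ effective by Riemann--Roch, and since $r\perp U$ the effective representative is vertical and disjoint from the $0$-section, hence is a non-negative combination of non-identity fibre components and so lies in $R$. This is shorter and uses only the effectivity of $(-2)$-classes. Your approach instead passes through Shioda's height pairing: you bound the minimal norm in each nonzero coset of $R$ in $L$ by identifying it with the sum of local contributions of the corresponding torsion section and then invoking $\langle P,P\rangle=0$ to obtain $\sum_F\operatorname{contr}_F(P)=4+2(P\cdot s)\ge 4$. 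This yields the sharper quantitative statement that every $x\in L\setminus R$ satisfies $-x^2\ge 4$, at the cost of checking (or citing) that for the relevant fibre types the minimal norm of a discriminant class equals the Shioda correction term --- which is immediate for $A_n$ and $D_4$, the only local types occurring in Table~\ref{table:list}. Both proofs ultimately use that $(R,G)$ is realised by an elliptic $K3$; neither is purely lattice-theoretic.
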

\begin{proof}
A proof can be found in \cite[Proposition 3.2]{shimada2}. Indeed, this is a simple geometric argument. We can use the fact that $M=U+L$ is isomorphic to the N\'eron-Severi group $\NS(S)$ of some (sufficiently general) $K3$ surface $S$ and that $R$ is the
subgroup of $\NS(S)$ generated by all fibre components which do not meet the $0$-section.   
Then the claim is geometrically clear: assume that $r$ is a root of $L$, which is not a root of $R$. Then $\pm r$ is effective and meets neither the $0$-section nor a general fibre, since it is orthogonal to the summand $U$ which contains the classes 
of the $0$-section and a general fibre. Hence $\pm r$ defines 
a union of rational curves on the associated elliptic $K3$ surfaces consisting of  components of singular fibres, which do not intersect the $0$-section. But  these roots are already contained in $R$. 
\end{proof}
\begin{rem}
Here we use the specific geometric situation. In general such a statement is false. Indeed, let $R=4A_1$ and let $H$ be the subgroup generated by the diagonal  $(1,1,1,1)$ of $D(R)= 4 \ZZ/2\ZZ$.  
Then the overlattice is $D_4$ and one obtains a new root, namely $r= \frac12 \sum_{i=1}^4 r_i$. 
\end{rem}

Using the above lemma we can now prove
\begin{pro} \label{prop:automorphisms}
Let $(R,G)$ be a pair of a root lattice and an isotropic subgroup arising from one of the families listed in Table \ref{table:list} with associated overlattice $L$. Then 
$$
\O(L) \cong \{g \in \O(R) \mid \overline{g}(G)=G \}.
$$
\end{pro}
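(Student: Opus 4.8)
\textbf{Proof plan for Proposition \ref{prop:automorphisms}.}

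The statement to prove is that $\O(L) \cong \{g \in \O(R) \mid \overline{g}(G)=G\}$, where the overlattice $L \supset R$ corresponds to the isotropic subgroup $G \subset D(R)$. The natural approach is to produce the isomorphism in the direction that is geometrically transparent, namely restriction. First I would observe that by Lemma \ref{lem:roots} the roots of $L$ coincide with the roots of $R$; since $R$ is generated by its roots (it is an $ADE$ lattice), it follows that $R$ is the sublattice of $L$ generated by all $(-2)$-vectors, hence $R$ is a characteristic sublattice of $L$. Consequently every isometry $g \in \O(L)$ maps $R$ to itself, so restriction gives a well-defined homomorphism $\rho \colon \O(L) \to \O(R)$. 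An isometry of $L$ also induces an isometry of $L^\vee$ (the dual), hence of $D(L) = L^\vee/L$, and under the inclusion $R \subset L \subset L^\vee \subset R^\vee$ one has the standard identification of $G = L/R$ with an isotropic subgroup of $D(R) = R^\vee/R$ and of $D(L)$ with $G^\perp/G$ inside $D(R)$ (Nikulin, \cite[Proposition 1.4.1]{Nik}). Chasing these identifications shows that the image $\rho(g) \in \O(R)$ induces on $D(R)$ an automorphism $\overline{\rho(g)}$ which preserves $G$ (indeed it restricts to the automorphism of $D(L) = G^\perp/G$ coming from $g$). Thus $\rho$ lands in $\{g \in \O(R) \mid \overline{g}(G)=G\}$.

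Next I would show $\rho$ is injective: an isometry of $L$ that restricts to the identity on $R$ is the identity on $R_\QQ = L_\QQ$, hence the identity on $L$. For surjectivity, take $h \in \O(R)$ with $\overline{h}(G)=G$. Extend $h$ to an isometry $h_\QQ$ of $R_\QQ = L_\QQ$; this induces $h^\vee \in \O(R^\vee)$ and the induced map $\overline{h}$ on $D(R)$ preserves $G = L/R$ by hypothesis. Therefore $h_\QQ$ maps $L = \{x \in R^\vee \mid x \bmod R \in G\}$ isometrically onto itself, giving the required preimage in $\O(L)$. This shows $\rho$ is the desired isomorphism.

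The one genuinely delicate point — and the step I expect to be the main obstacle — is verifying carefully that the map $D(L) \to D(R)$ induced on discriminant groups is exactly the inclusion $D(L) \cong G^\perp/G \hookrightarrow$ "restricted to" $G^\perp \subset D(R)$, and that under this identification the condition "$g$ preserves $L$" is equivalent to "$\overline{g}$ preserves $G$" rather than merely "$\overline{g}$ preserves $G^\perp$". In principle both conditions should be forced to agree here, because $g \in \O(R)$ automatically preserves the orthogonal complement of anything it preserves, and $G^\perp$ is determined by $G$; but one must check that an isometry of $R^\vee$ preserving $G$ need not a priori be induced by an isometry of $R$, so the surjectivity argument must start from $h \in \O(R)$ (which it does) rather than from $\O(R^\vee)$. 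I would make this precise by working throughout with the commutative diagram of inclusions $R \subset L \subset L^\vee \subset R^\vee$ and the induced maps on quotients, invoking only the formal functoriality of discriminant forms under isometries. The rest is routine lattice bookkeeping, and no feature specific to the 50 ambi-typical strata is needed beyond Lemma \ref{lem:roots}, which is what makes $R$ characteristic in $L$.
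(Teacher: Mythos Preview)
Your proposal is correct and follows essentially the same approach as the paper's proof: both use Lemma~\ref{lem:roots} to show that $R$, being the sublattice generated by all roots of $L$, is preserved by every $g\in\O(L)$, and then invoke the standard overlattice/isotropic-subgroup correspondence to conclude that $\overline{g}(G)=G$ and conversely that any $h\in\O(R)$ with $\overline{h}(G)=G$ extends to $L$. Your write-up is more explicit about injectivity and the discriminant-group bookkeeping than the paper (which dispatches the first direction as ``clear by construction'' and the second in two sentences), but the logical skeleton is identical; your ``delicate point'' is not actually delicate here, since you start from $h\in\O(R)$ and the description $L=\{x\in R^\vee\mid x\bmod R\in G\}$ makes the equivalence immediate.
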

\begin{proof}
Clearly, the elements of $\O(R)$, which leave $G$ invariant (as a subgroup), define isometries of $L$, by construction of this lattice. Conversely, let
$g \in \O(L)$. Then $g$ maps roots of $L$ to roots of $L$. By the above Lemma \ref{lem:roots} these are exactly the roots of $R$ and hence $R$ is mapped to itself. 
Hence $g$ is an isometry of the lattice inclusion $R \subset L$ and thus $\overline{g}$ maps the isotropic subgroup $G$ to itself.   
\end{proof}
  
Let $R$ be  a root lattice. We recall that the Weyl group $W_R \subset \O(R)$ is the group generated  
by the reflections with respect to the roots $r \in R$. 
We denote by $S_R$ the subgroup of $\O(R)$ which is induced by symmetries of the Dynkin diagram.
We also
recall from \cite[Theorem 12.2]{Hum} that the isometry group of $R$  is the semi-direct 
product of the Weyl group $W_R$ and the group $S_R$:
$$
\O(R)= W_R \rtimes S_R.
$$

Elements in the Weyl group $W_R$ act trivially on the discrimant $D(R)$ and  hence, in our situation, lift to isometries of $L$. In particular, we can consider $W_R \subset \O(L)$.
We further denote the subgroup of $S_R$ which leaves $G$ invariant (as a subgroup) by $S_R^G$.
It now follows from Proposition \ref{prop:automorphisms} that $\O(L)$ is generated by the Weyl group $W_R$ together with the 
group $S_R^G$ of diagram isometries which fix the subgroup $G$, i.e.
\begin{equation}\label{equ:groupgenerated} 
\O(L)=W_R \rtimes S_R^G.
\end{equation}  
We can extend all elements in $\O(L)$ to isometries of $M=U + L$ by  taking the identity on the first factor $U$. 
By doing this we can consider $S_R^G$ as a subgroup of $\O(M)$ and to simplify the notation we shall denote the image of  $S_R^G$ in $O(M)$ by $S_M$. 
This notation is unambiguous in our situation as 
there is no lattice $R$ in Table  \ref{table:list} which comes with more than one subgroup $G \subset D(R)$.

The lattice $M$ has signature $(1,\rank(M)-1)=(1,\rank(R)+1)$. As before we fix a connected component $C^+(M)$ of the positive cone in $M_{\RR}$. We have already mentioned that the  hyperplanes orthogonal to the roots $r \in M$ subdivide 
$C^+(M)$ into connected components, the Weyl chambers of $M$. It is well known, see \cite[Proposition 8.2.6]{Hu} that the Weyl group $W_M$ of $M$ acts simply transitively on the Weyl chambers in $C^+(M)$.
By Lemma \ref{lem:roots} we have  
\begin{equation} \label{equ:Weylgroups}
W_R=W_L \subset  W_M \subset \tO^+(M).
\end{equation} 
In particular, $W_R$ also acts faithfully on the set of Weyl chambers of $M$. 

For our applications it will be essential that the group $S_M$ maps Weyl chambers to themselves:

\begin{pro}\label{pro:Dynkinauto}
Let $(R,G)$ be a pair of a root lattice and an isotropic subgroup arising from one of the families listed in Table \ref{table:list}.
The elements of  $S_M$, i.e. the symmetries of the Dynkin diagram of $R$ fixing $G$ as a group, map all Weyl chambers of $C^+(M)$ to themselves. 
\end{pro}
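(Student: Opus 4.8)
The plan is to use the geometric realisation of $M=U+L$ as the N\'eron-Severi lattice of a sufficiently general member $S$ of the Shimada stratum attached to $(R,G)$ -- the input already invoked in the proof of Lemma \ref{lem:roots} -- where $U$ is spanned by the class $f$ of a general fibre and the zero section $s=e-f$, and to deduce the proposition from the fact that every $g\in S_M$ preserves the nef cone of such an $S$. The nef cone of a projective $K3$ surface is the closure of a Weyl chamber of $C^+(\NS(S))$ for the reflection group generated by the $(-2)$-classes, which for $\NS(S)\cong M$ is the group $W_M$ appearing in \eqref{equ:Weylgroups}; for a generic $(M,\iota)$-polarised surface this chamber is exactly the polarisation chamber $C_M^{\operatorname{pol}}$. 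Hence the substantive point to prove is $g(C_M^{\operatorname{pol}})=C_M^{\operatorname{pol}}$, every other Weyl chamber being related to this one in a way controlled only by $W_M$.

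First I would describe the irreducible $(-2)$-curves on such a generic $S$. Since every ambi-typical stratum has finite Mordell-Weil group (Table \ref{table:list}), the only sections are the torsion sections, and for a generic member no further irreducible $(-2)$-curves occur (a rational multisection of degree $\ge 2$ being excluded on the generic surface). Thus the irreducible $(-2)$-curves are precisely the zero section $s$, the components of the reducible fibres not meeting $s$ -- a system of simple roots of $R$, which by Lemma \ref{lem:roots} exhaust the roots of $L$ -- and the finitely many torsion sections, indexed by $\MWt(S)\cong L/R\cong G$.

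Next I would check that $g$ permutes this set. By construction $g$ restricts to the identity on $U$, hence fixes $f$ and $s$; it acts on $L$ by a Dynkin automorphism of $R$, hence permutes the fibre components; and because $g$ fixes $G\subset D(R)$ as a subgroup, and each torsion section is recovered from the fibre components it meets under the correspondence $\MWt(S)\cong G$, it permutes the torsion sections compatibly. Therefore $g$ carries the set of effective irreducible $(-2)$-classes of $S$ to itself; as $g\in\O^+(M)$, it then preserves the Mori cone of $S$, hence its nef cone, so $g(C_M^{\operatorname{pol}})=C_M^{\operatorname{pol}}$. Finally, $\O(R)=W_R\rtimes S_R$ together with \eqref{equ:groupgenerated} and \eqref{equ:Weylgroups} identifies this fixed chamber with a Weyl chamber of $C^+(M)$ for $W_M$, as required.

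The step I expect to be the main obstacle is the precise determination, for a \emph{generic} member, that the three families above exhaust the irreducible $(-2)$-curves -- in particular ruling out rational multisections -- together with the bookkeeping showing that the $S_R^G$-action on $D(R)$ matches the permutation of the torsion sections induced by the fibre-component incidences. This is where the detailed structure of the individual entries of Table \ref{table:list} enters, including the delicate cases with root lattice $2A_3+8A_1$ and $D_4+2A_6+A_1$, for which one argues as in Remark \ref{rem:explanationShimadastrata}. The remaining assertions -- that an element of $\O^+(M)$ preserving the Mori cone preserves the nef chamber, and that this chamber coincides with $C_M^{\operatorname{pol}}$ -- are formal.
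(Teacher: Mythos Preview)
Your approach is in the right spirit and would work, but the paper's proof is substantially simpler and bypasses precisely the obstacle you flag as the hard step. Instead of enumerating the irreducible $(-2)$-curves on a generic $S$ and verifying that $g$ permutes them (which forces you to deal with torsion sections, the identity components of the singular fibres---which you omit from your list---and to rule out rational multisections), the paper checks directly that $g$ maps positive roots to positive roots. By \cite[Section 8.2.3]{Hu} this is equivalent to preserving the chamber. The argument is then almost trivial: for a positive root $s$ with $(f,s)\neq 0$ one has $(f,s)>0$ since $f$ is nef and $s$ effective; because $g(f)=f$, also $(f,g(s))=(f,s)>0$, and since $g(s)$ is a root, one of $\pm g(s)$ is effective, and the inequality forces $g(s)$ effective. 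The same works with $s_0$ in place of $f$. The remaining positive roots are orthogonal to $U$, hence lie in $R$ by Lemma \ref{lem:roots}, and a Dynkin automorphism of $R$ permutes the simple roots, so preserves the positive roots of $R$.

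What this buys is that the paper never needs to know the complete list of irreducible $(-2)$-curves, never needs to match the $S_R^G$-action on $D(R)$ to the permutation of torsion sections, and never needs any case analysis for the special entries of Table \ref{table:list}. Your route through the Mori cone is correct once the list of irreducible $(-2)$-curves is established, but establishing that list for generic $S$ is genuine extra work (and your list as written is missing the fibre components meeting the zero section, though these are easily added). The paper's proof replaces all of this with the observation that pairing against the nef classes $f$ and $s_0$ detects effectivity of roots not lying in $R$.
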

\begin{proof}
Here we make again use of the special situation, namely the fact that there exists an $M$-polarised $K3$ surface $S$ with $\NS(S)\cong M$ and such that the components of the singular fibres which do not intersect the $0$-section 
generate the root lattice $R$.  In fact these define a set of simple roots which gives rise to the Dynkin diagram associated to $R$ and $S_R$ acts on these. 
More precisely, we can choose a primitive embedding $\iota: M \to L_{K3}$ and  a marking $\varphi: H^2(S,\ZZ) \to L_{K3}$  such that $\varphi(\NS(S))=\iota(M)$.
Let $\Delta^+$ be the set of positive roots (i.e. effective $(-2)$-classes in $\NS(S)$). By \cite[Section 8.2.3]{Hu} it is enough to show that every element of $S_M$ maps positive roots to positive roots. Let $g\in S_M$ and let $f$ be the class of a fibre.
Then $g(f)=f$. If $s$ is a positive root with $(f,s) \neq 0$, then $(f,s)>0$. 
Since $(f,g(s))=(g(f),g(s))=(f,s)>0$ it follows that $g(s)$ is again positive. By definition of the group $S_M$ we have  $g(s_0)=s_0$ where $s_0$ is the class of the $0$-section.
Then the same argument also shows that $g(s)$ is positive for every positive root $s$ with $(s,s_0)\neq 0$. It remains to consider the positive roots which are orthogonal to $f$ and $s_0$, but these are exactly the positive roots of 
$R$, which are given by non-negative combinations of components of singular fibres which do not intersect the $0$-section. 
Since $g$ permutes these, the claim follows. 
\end{proof}

We shall now discuss the action of the group of symmetries of the Dynkin diagram on the moduli spaces of $M$-lattice polarised $K3$ surfaces in more detail.
Let $(S,\tilde{\iota})$ be a general element in $\calN_{M,\iota}$, more precisely an element in  $\calN_{M,\ell}^0$, where $\calN_{M,\ell}$ is a connected component of $\calN_{M,\iota}$ 
and $\calN_{M,\ell}^0$ denotes 
the open set where the fibre configuration is constant. Then  $\tilde{\iota} : M \to \NS(S) \subset H^2(S,\ZZ)$
is a primitive embedding and 
 there exists a marking $\varphi: H^2(S,\ZZ) \to L_{K3}$ such
 that $\tilde{\iota}= \varphi^{-1} \circ \iota$ and 
 such that $\tilde{\iota}(C_M^{\operatorname {pol}})$ contains an ample class on $S$ where $C_M^{\operatorname {pol}}$ is the fixed Weyl chamber in $C^+(M)$ which we have chosen once and for all. 
 Every element $g_M \in S_M \subset \O(M)$ has, by Proposition \ref{pro:Dynkinauto}, the property that it fixes the Weyl chambers in $C^+(M)$. 
  Hence $\tilde{\iota} \circ g_M: M \to \NS(S)$ defines 
 again  an $M$-polarization on $S$. Now two cases can occur. The first is that  $\tilde{\iota}$ and $\tilde{\iota} \circ g_M$ define isomorphic embeddings of $M$ into the $K3$ lattice $L_{K3}$. In this case  $(S,\tilde{\iota})$ and   $(S,\tilde{\iota}\circ g_M)$
 define elements in the same moduli space $\calN_{M,\iota}$ and $g_M$ induces a map from  $\calN_{M,\iota}$ to itself identifying $(S,\tilde{\iota})$ and   $(S,\tilde{\iota}\circ g_M)$. 
 Note that the moduli space $\calN_{M,\iota}$ can have one or two components. If it has two components, then $g_M: \calN_{M,\iota} \to \calN_{M,\iota}$ can either fix the components or interchange them.
 By the discussion in Remark \ref{ex:components} all of these possibilities discussed actually occur in our situation.
 In the second case $\iota$ and $\iota \circ g_M$ define different embeddings and $g$ induces an isomorphism of moduli spaces
$\calN_{M,\iota} \to\calN_{M,\iota\circ g_M} $. 
The lattices where more than one embedding exists are listed in Proposition \ref{prop:embeddings}. In these cases we have two different embeddings, but there exist symmetries of the Dynkin diagram which defines 
isomorphisms $\calN_{M,\iota} \cong \calN_{M,\iota\circ g_M}$ (by Remark \ref{rem:groupscomponents}).
 Again, note that $\calN_{M,\iota}$ can have one or two components, but the latter does not occur among our cases. 

We can also formulate the above discussion in more group theoretic terms. 
The choice of a primitive embedding $\iota: M \to L_{K3}$ with orthogonal complement  $T$ (up to isomorphism of embeddings) is 
equivalent to the choice of  an isomorphism $\alpha_{\iota}: (D(M),q_M) \cong (D(T),-q_T)$ (modulo $\bO(T)$).
Recall the definitions of the groups $S^G_R$ and $S_M$ from (\ref{equ:groupgenerated}) and the subsequent paragraph.  
An element $g_M \in S_M$ defines an isometry $\overline{g}_M \in   \O(D(M))$ and, via $\alpha_{\iota}$, an isometry $\alpha_{\iota}(\overline{g}_M) \in   \O(D(T))$. 
The morphism which maps $(S,\tilde{\iota})$ to  $(S,\tilde{\iota}\circ g_M)$ maps $\calN_{M,\iota}$ to itself if and only if $\alpha_{\iota}(\overline{g}_M) \in \bO(T)$.

If $g_M$ induces a morphism from $\calN_{M,\iota}$ to itself, then we can describe this map explicitly.  In this case  $\alpha_{\iota}(\overline{g}_M)\in \bO(T)$ and we can lift this to an element $g_T \in  \O(T)$ such that the pair $(g_M,g_T)$ 
extends to an isometry of $L_{K3}$. The lift $g_T$ is uniquely determined up to $\tO(T)$. The action of $g_T$ on $\calN_{M,\iota}= \Omega_T/ \tO(T)$ then induces the map on $\calN_{M,\iota}$ in question.
Now  $\calN_{N,\iota}$ has two components if and only if $\tO^+(T)=\tO(T)$ and in this case $g_T$ interchanges the two components if and only if $g_T$ has real spinor norm $-1$, i.e. if and only if
$g_T \notin \tO^+(T)$ (which in this case is independent of the chosen lift). 

Let  $\pi_M: \O(M) \to \O(D(M))$ and $\pi_T: \O(T) \to \O(D(T))$ be the canonical projections. We define
\begin{equation}
\overline{S}_M=\pi_M(S_M)
\end{equation} 
which we will, via $\alpha_{\iota} : D(M) \to D(T)$, also consider as a subgroup $\overline{S}_M \subset \O(D(T))$. As a subgroup this depends on the embedding $\iota$ as it is defined via the 
isomorphism $\alpha_{\iota}$. This becomes important when we define
\begin{equation}\label{equ:SMiota}
 \overline{S}_{M,\iota} =  \overline{S}_M \cap \bO(T), \,    \overline{S}^+_{M,\iota} = \overline{S}_M \cap \bO^+(T)
\end{equation} 
and their pre-images
\begin{equation}\label{eq:lifts}
\Gamma_{M,\iota}=\pi_T^{-1} (\overline{S}_{M,\iota}) \subset \O(T), \, \Gamma^+_{M,\iota}=\pi_T^{-1} (\overline{S}^+_{M,\iota}) \subset \O^+(T).
\end{equation}
The elements in $\Gamma_{M,\iota}$ are those isometries of $T$ which can be extended to the overlattice $L_{K3}$ of $T \oplus \iota(M)$ (where we do not ask that these isometries act trivially on $\iota(M)$). 
The group $\Gamma_{M,\iota}$ acts on the period domain $\Omega_T$ and induces an action on the moduli space $\calN_{N,\iota}$. 
An element in $\Gamma_{M,\iota}$ fixes the components of $\calN_{N,\iota}$ if and only if it is in  $\Gamma^+_{M,\iota}$. 

By the above discussion the group $\overline{S}_M$ operates on $\O(D(T))/\bO(T)$ via $h \mapsto h \circ \alpha_{\iota}(\overline{g})$.
It will be important for us to know whether this action is transitive. 
The following is essentially a reformulation of Remark \ref{rem:groupscomponents}.

\begin{pro}\label{prop:trans}
If $M$ is a hyperbolic lattice arising from one of the families listed in Table \ref{table:list}, then $\overline{S}_M$ acts transitively on $\O(D(T))/\bO(T)$ unless we are in the case where $R=2A_3 + 8A_1$.
\end{pro}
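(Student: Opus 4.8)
The plan is to translate the statement into a question about the Miranda--Morrison exact sequence \eqref{seq:MirandaMorrison} and the group $\overline{S}_M$ of induced Dynkin symmetries. Recall that the cardinality of $\O(D(T))/\bO(T)$ equals $[\O(D(T)):\overline{\O}(T)]$, which by Proposition \ref{prop:embeddings} equals $1$ for all but the five listed root lattices and equals $2$ for those five. So transitivity is automatic and vacuous except in those five cases (and we must also double-check that $2A_3+8A_1$ is not one of them --- it is not, since by Proposition \ref{prop:embeddings} the exceptional lattices are $2D_4+4A_2$, $4A_4$, $D_4+A_7+2A_2+2A_1$, $D_4+2A_4+2A_2+A_1$, $D_4+3A_3+2A_2$). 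First I would reduce the proposition to the claim: for each of these five lattices $R$, the group $\overline{S}_M \subset \O(D(T))$ surjects onto the two-element quotient $\O(D(T))/\bO(T)$; equivalently, $\overline{S}_M \not\subset \bO(T)$, i.e.\ there exists a Dynkin symmetry $g \in S_R^G$ whose induced action $\overline{g} \in \O(D(R)) \cong \O(D(M))$ is \emph{not} in $\overline{\O}(T)$ under the isomorphism $\alpha_\iota$.

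The key computation is therefore, for each of the five lattices, to exhibit an explicit order-two diagram symmetry and check that it acts non-trivially on the relevant quotient of the discriminant form. In each of the five cases the root lattice $R$ has a pair of isomorphic $A_n$-summands (e.g.\ $2A_4$ in $4A_4$, $2A_2$ in $D_4+A_7+2A_2+2A_1$, etc.) or a $D_4$-summand, and swapping two such summands (or applying the order-$3$ triality/order-$2$ graph automorphism of $D_4$ where appropriate) gives an element $g$ of $S_R$; one then verifies that $g$ preserves the isotropic subgroup $G$, so $g \in S_R^G$. The content is then to compute $\overline{g}$ on $D(R)$, restrict/quotient to $D(M) = D(L)$, and compare with the image $\overline{\O}(T)$ in $\O(D(T)) \cong \O(D(M))$; one checks $\overline{g} \notin \overline{\O}(T)$. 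Since $[\O(D(T)):\overline{\O}(T)]=2$, it suffices to show $\overline{g}$ lies in the non-trivial coset, which can be detected by its effect on a single well-chosen element of $D(M)$, or by invoking the Miranda--Morrison local invariants (the spinor-type obstruction at the relevant prime) that control which coset an isometry of the discriminant form lands in. These are the computations Kirschmer's appendix and Shimada's tables \cite{shimada4} already record, so I would cite those to avoid grinding through all five cases by hand, simply noting that in each case the tables display a Dynkin symmetry realizing the non-trivial coset.

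The main obstacle is organizing the case analysis cleanly: one must be careful that the symmetry chosen genuinely fixes $G$ (not merely $R$), since otherwise it does not descend to $\O(L)$ and hence not to $S_M$; and one must correctly track the isomorphism $\alpha_\iota: (D(M),q_M) \xrightarrow{\sim} (D(T),-q_T)$ so that "non-trivial coset of $\bO(T)$" and "non-trivial coset of $\overline{\O}(T)$" are matched up properly --- this is where a sign or a choice of generator can easily be mishandled. A secondary subtlety is the genuinely different behaviour of $R = 2A_3+8A_1$: there $D(R) = (\ZZ/4)^2 \oplus (\ZZ/2)^8$ carries \emph{two} non-isometric isotropic subgroups $(\ZZ/2)^2$ (Remark \ref{rem:explanationShimadastrata}), and for the one relevant to the ambi-typical stratum the group $S_R^G$ turns out to be too small to hit the non-trivial coset of $\bO(T)$, which is exactly why this case is the stated exception; I would make this precise by exhibiting the relevant $G$, computing $S_R^G$, and checking $\overline{S}_M \subset \bO(T)$ --- again referring to Shimada's explicit isotropy subgroups on his homepage \cite{shimada3} and to Kirschmer's computations rather than reproducing the lattice arithmetic in full.
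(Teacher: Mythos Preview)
Your approach differs from the paper's. The paper argues indirectly via Shimada's classification: a failure of $\overline{S}_M$ to act transitively on $\O(D(T))/\bO(T)$ would force the pair $(R,G)$ to appear in Shimada's tables \cite[Corollary 1.5, Table 3]{shimada2} of root data admitting multiple connected families. Comparing those tables with Table~\ref{table:list} (essentially Remark~\ref{rem:groupscomponents}) leaves only $4A_3+2A_1$ (irrelevant, wrong torsion), $D_4+2A_6+A_1$ (two components from complex conjugation, same embedding), and $2A_3+8A_1$. Your direct strategy---for each of the five lattices of Proposition~\ref{prop:embeddings} exhibit an explicit Dynkin symmetry realizing the non-trivial coset---is more self-contained but considerably longer; the paper's citation of Shimada trades this for dependence on a large external computation. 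Note also that Kirschmer's appendix treats only the rank~$17$ cases, so it does not directly cover your cases (17) and (21).

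There is, however, a genuine gap in your treatment of the exception. You claim that for $R = 2A_3 + 8A_1$ the group $S_R^G$ ``turns out to be too small to hit the non-trivial coset of $\bO(T)$.'' But by Proposition~\ref{prop:embeddings} this $R$ is \emph{not} among the five exceptions, so $\O(T) \to \O(D(T))$ is surjective and $\O(D(T))/\bO(T)$ is trivial---there is no non-trivial coset, and transitivity for this fixed $M$ is vacuous. The actual content of the exception, as the paper's proof indicates, is that for $R = 2A_3 + 8A_1$ with Mordell--Weil torsion $(\ZZ/2)^2$ there are two \emph{non-isometric} isotropic subgroups $G \subset D(R)$ (Remark~\ref{rem:explanationShimadastrata}), giving two non-isomorphic overlattices $L$ and hence two distinct hyperbolic lattices $M$, only one of which occurs in Table~\ref{table:list}. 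This is a failure of $S_R$ to act transitively on isotropic subgroups of $D(R)$, not a failure of $\overline{S}_M$ to surject onto $\O(D(T))/\bO(T)$ for a fixed $M$; your proposed check ``$\overline{S}_M \subset \bO(T)$'' would not detect it.
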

\begin{proof}
As we have said before (see Remark \ref{rem:groupscomponents}) comparing the lists in \cite[Corollary 1.5]{shimada2}
and \cite[Table 3]{shimada2} with our Table \ref{table:list} we find three lattices, namely  $4A_3 + 2A_1$, $2A_3 + 8A_1$ and $D_4+2A_6 +A_1$. 
The first lattice is irrelevant for us as it appears with Mordell-Weil torsion $\ZZ/2\ZZ$ in Shimada's lists, whereas we have torsion $\ZZ/4\ZZ$. The reason that $D_4+2A_6 +A_1$ appears in Shimada's lists is that 
there are two connected component, but they belong to the same primitive lattice embedding. Finally, for  $2A_3 + 8A_1$ there exist two combinatorially different components of the moduli space coming from 
different lattice embeddings (but only one of them appears in our classification). 
\end{proof}  

Our previous discussion can now be summarised as follows. Let $\calA$ be an ambi-typical stratum, resp. let $\calA \cup \overline{\calA}$ be the union of the two complex-conjugated components 38/39. Then there are finitely many components of
moduli spaces $\calN_{N,\iota}$  of lattice-polarised $K3$ surfaces which are associated to $\calA$ or $\overline{\calA}$. The group $\overline{S}_M$ acts transitively on the set of all  moduli spaces $\calN_{M,\iota}$ whose components are
associated to $\calA$ or $\calA \cup \overline{\calA}$ respectively. 
The groups $\Gamma_{M,\iota}$ act on the moduli spaces $\calN_{M,\iota}$ (and their elements may interchange connected components of these moduli spaces).

\begin{rem}\label{rem:groupSMiota}
By inspection one sees that for all our lattices $-1 \in \overline{S}_{M,\iota}$ and hence also $-1 \in \Gamma_{M,\iota}$ and hence  $\Gamma_{M,\iota}/(\pm \tO(T)) \cong \overline{S}_{M,\iota}/(\pm 1)$. 
\end{rem}

\begin{teo}\label{pro:descriptionmap}
Let $\calA$ be an ambi-typical stratum and let $\calN_{M,\iota}$ be a moduli space of lattice polarised $K3$ surfaces associated to $\calA$. Then the following holds:
\begin{itemize}
\item[(1)] If $\calA$ is one of the ambi-typical strata different from 38/39, then the dominant map
$\calN_{M,\iota}^0 \to \calA$ is given by the action of the finite group $\Gamma_{M,\iota}/(\pm\tO(T))$ which acts faithfully.
\item[(2)] Let  $\calA \cup \overline{\calA} $ be the union of the two complex conjugated strata 38/39. Then $\calN_{M,\iota}$ has two connected components $\calN_{M,\ell}$ and $\overline{\calN}_{M,\ell}$ and the dominant map
$\calN_{M,\ell}^0 \cup \overline{\calN}_{M,\ell}^0 \to \calA \cup \overline{\calA}$ is given by 
the group   $\Gamma_{M,\iota}/(\pm\tO(T))$ which acts faithfully.
\end{itemize}
\end{teo}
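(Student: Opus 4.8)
The plan is to unwind the definitions established in the preceding discussion and to turn the set-theoretic description of the fibres of $\calN_{M,\iota}^0 \to \calA$ into a group action of $\Gamma_{M,\iota}/(\pm\tO(T))$. First I would recall that by Proposition \ref{pro:constructionmap} the map $\calN_{M,\iota}^0 \to \calA$ is finite, dominant and canonical once the Jacobian fibration attached to the distinguished copy of $U$ is fixed. Two markings of a general $K3$ surface $S$ in $\calN_{M,\iota}^0$ give the same point of $\calA$ precisely when they differ by a relabelling of the components of the reducible fibres not meeting the zero-section, i.e. by an element of $\O(R)$ fixing the fibre class $f$ and the section class $s_0$; by Proposition \ref{prop:automorphisms} and \eqref{equ:groupgenerated} these are exactly the elements of $W_R\rtimes S_R^G=\O(L)$, extended by the identity on $U$. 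The Weyl part $W_R=W_M\cap \O(L)\subset\tO^+(M)$ moves the Weyl chamber $C_M^{\operatorname{pol}}$ and hence does \emph{not} preserve the $(M,\iota)$-polarisation, so it does not act on $\calN_{M,\iota}$; by contrast Proposition \ref{pro:Dynkinauto} shows that $S_M$ fixes every Weyl chamber, so precisely the subgroup $S_M$ contributes genuine automorphisms/identifications of the moduli space. Thus the fibre of $\calN_{M,\iota}^0 \to \calA$ over a general point is the $S_M$-orbit of the chosen marking, and the part of $S_M$ that sends $\calN_{M,\iota}$ to itself (rather than to $\calN_{M,\iota\circ g_M}$) is, by the spinor-norm analysis recalled before \eqref{equ:SMiota}, exactly $\overline S_{M,\iota}$, whose full preimage in $\O(T)$ is $\Gamma_{M,\iota}$.

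Next I would make precise how $\Gamma_{M,\iota}$ acts on $\calN_{M,\iota}=\Omega_T/\tO(T)$: an element $g_M\in S_M$ with $\alpha_\iota(\overline g_M)\in\bO(T)$ lifts (uniquely modulo $\tO(T)$) to $g_T\in\O(T)$ such that $(g_M,g_T)$ extends to an isometry of $L_{K3}$, and the induced map on $\Omega_T/\tO(T)$ realises the identification $(S,\tilde\iota)\sim(S,\tilde\iota\circ g_M)$. Hence the fibres of $\calN_{M,\iota}^0 \to \calA$ over general points are precisely the orbits of the group $\Gamma_{M,\iota}/\tO(T)$, and using Remark \ref{rem:groupSMiota} (where $-1\in\overline S_{M,\iota}$, so $-1$ acts trivially on $\Omega_T$ and on $\calN_{M,\iota}$) this group is $\Gamma_{M,\iota}/(\pm\tO(T))\cong\overline S_{M,\iota}/(\pm1)$. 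I would then verify faithfulness: if $g_T\in\Gamma_{M,\iota}$ acts trivially on $\calN_{M,\iota}$, then it fixes a very general period point, so it acts trivially on $T$ up to an element of $\tO(T)$ — using that a very general point of $\Omega_T$ has trivial stabiliser in $\O(T)$ beyond $\pm\id$, which holds because $\Omega_T$ is not contained in any proper reflection hyperplane and the transcendental lattice of a very general member equals $T$. This forces $g_T\in\pm\tO(T)$, proving that $\Gamma_{M,\iota}/(\pm\tO(T))$ acts faithfully. Part (1) then follows, since $\calA$ is one of the strata where, by Proposition \ref{prop:trans}, $\overline S_M$ acts transitively on $\O(D(T))/\bO(T)$, so every component of every associated moduli space is covered and the degree count is uniform.

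For part (2) I would argue as follows. The root lattice $D_4+2A_6+A_1$ has a Dynkin-diagram symmetry, namely the involution swapping the two $A_6$ summands, lying in $S_R=S_R^G$; combining Theorem \ref{teo:liststrata2}(1), Remark \ref{rem:groupscomponents} and Example \ref{ex:components} I would note that here $|\calM_T|=1$ (one primitive embedding) but $[\tO(T):\tO^+(T)]=1$, so $\calN_{M,\iota}$ has exactly two connected components $\calN_{M,\ell}$ and $\overline{\calN}_{M,\ell}$, which are complex conjugate to one another and which map onto the two complex-conjugate strata $\calA$ and $\overline\calA$ (numbers 38 and 39). The relevant diagram symmetry $g_M$ has $\alpha_\iota(\overline g_M)\in\bO(T)$ but $g_T\notin\tO^+(T)$, i.e. $g_T$ has real spinor norm $-1$, so it interchanges $\calN_{M,\ell}$ and $\overline{\calN}_{M,\ell}$ and simultaneously interchanges $\calA$ and $\overline\calA$. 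Running the same orbit argument as in part (1) on the union, the fibres of $\calN_{M,\ell}^0\cup\overline{\calN}_{M,\ell}^0\to\calA\cup\overline\calA$ are the orbits of $\Gamma_{M,\iota}/(\pm\tO(T))$, and faithfulness is proved exactly as before. I expect the main obstacle to be the bookkeeping needed to separate the two roles a diagram symmetry $g_M$ can play — identifying points within one moduli space versus mapping one moduli space isomorphically to another, and within the first case fixing versus swapping components — and to match this cleanly, via the isomorphism $\alpha_\iota$ and the spinor norm, with membership in $\overline S_{M,\iota}$ and $\overline S^+_{M,\iota}$; the geometric inputs (Proposition \ref{pro:Dynkinauto} and Lemma \ref{lem:roots}) are what make this bookkeeping go through, and the only genuinely delicate analytic point is the triviality of the generic stabiliser in $\O(T)$ used for faithfulness.
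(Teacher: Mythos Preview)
Your proposal is correct and follows essentially the same approach as the paper: identify the fibre of $\calN_{M,\iota}^0\to\calA$ with the set of relabellings coming from $\O(L)=W_R\rtimes S_R^G$, eliminate the Weyl part using that the induced isometry must preserve $C_M^{\operatorname{pol}}$, and transport the remaining $S_M$-action to the $T$-side as $\Gamma_{M,\iota}/(\pm\tO(T))$, with faithfulness coming from the fact that only $\pm\id_T$ act trivially on $\Omega_T$. The paper's write-up is terser and frames the Weyl-elimination step slightly differently (it argues directly that the isometry induced by an isomorphism $f\colon S_2\to S_1$ preserves $C_M^{\operatorname{pol}}$, hence lies in $S_M$, rather than your phrasing that $W_R$ ``does not act''), but the logical content is the same; one small inaccuracy in your write-up is the claimed equality $W_R=W_M\cap\O(L)$, which is neither needed nor asserted in the paper (only $W_R=W_L\subset W_M$ is used).
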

\begin{proof}
We start with a surface $S\in \calA$ (or $S \in  \overline{\calA} $). 
As we have explained before, identifying the fibre components not meeting the $0$-section of the singular fibres with simple roots of $R$ and the lattice spanned by the fibre and section with a copy 
of $U$ we obtain a lattice polarization $\tilde \iota: M \to \NS(S) \subset H^2(S,\ZZ)$. Let $\varphi: H^2(S,\ZZ) \to L_{K3}$ be a marking and set $\iota=\varphi \circ \tilde \iota$.
We have to determine when two lattice-polarised 
$K3$ surfaces  in  $\calN_{M,\iota}$ are mapped to the same point in $\calA$ or $\overline{\calA} $ respectively.   

Assume that two surfaces $(S_1,\tilde{\iota}_1)$ and $(S_2,\tilde{\iota}_2)$ in  $\calN_{N,\iota}$ 
define the same point in $\calA$  or  $\calA \cup \overline{\calA}$ respectively. Then there is an isomorphism $f:S_2 \to S_1$ which respects the elliptic fibration. 
This induces a map $f^*: \NS(S_1) \to \NS(S_2)$. Let $\varphi_i : H^2(S_i;\ZZ) \to L_{K3}$ be markings with $\iota= \tilde{\iota}_i \circ \varphi_i$ for $i=1,2$. 
Then $\varphi_2 \circ f^* \circ \varphi_1$ defines an isometry of $M=U + L$. This is the identity on $U$ as the 
$0$-section and the general fibre are mapped to the $0$-section and the general fibre respectively. By restriction this then
defines  an isometry of $L$. Recall  from  (\ref{equ:groupgenerated}) that  
$\O(L)=W_R \rtimes S_R^G$ and from (\ref{equ:Weylgroups}) that $W_R=W_L \subset W_M   \subset \tO^+(L)$.  
Since the isometry $\varphi_2 \circ f^* \circ \varphi_1|_M$ maps $C_M^{\operatorname {pol}}$ to itself and since the Weyl group acts faithfully on the Weyl chambers it follows that   
$\varphi_2 \circ f^* \circ \varphi_1|_M$ defines an element in $\Gamma_{M,\iota}$.
Conversely, if two lattice polarised $K3$ surfaces in $\calN_{N,\iota}^0$ are conjugate under $\Gamma_{M,\iota}$ the underlying elliptic $K3$ surfaces are isomorphic and hence define the same point in $\calA$
or  $\calA \cup \overline{\calA}$ respectively.  
Finally, since $\pm \id_T$ are the only elements in $\O(T)$ which act trivially on $\Omega_M$ it follows that the group $\Gamma_{M,\iota}/(\pm\tO(T))$ acts faithfully on $\calN_{M,\iota}$. 
\end{proof}

\begin{cor}\label{cor:countingdegree}
The following holds:
\begin{itemize}
\item[(1)] If $\calA$ is an ambi-typical stratum different from 38/39, then the degree of the dominant 
map $\calN_{M,\iota}^0 \to \calA$ is given by 
$$
|\overline{S}_M/(\pm1)|/|\calM_T|^c=|\overline{S}^+_{M,\iota}/(\pm 1)|.
$$
\item[(2)] If $\calA$ (or $\overline{\calA}$ respectively)  is one of the strata 38/39, then the degree of the dominant  
map $\calN_{M,\ell}^0 \cup \overline{\calN}_{M,\ell}^0  \to \calA \cup \overline{\calA}$ is given by 
$$
2|\overline{S}_M/(\pm1)|/|\calM_T|^c = |\overline{S}^+_{M,\iota}/(\pm 1)|=24.
$$
\end{itemize}
\end{cor}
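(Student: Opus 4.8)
\textbf{Proof proposal for Corollary \ref{cor:countingdegree}.}

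The plan is to deduce the degree formula directly from Theorem \ref{pro:descriptionmap}, which identifies the covering $\calN_{M,\iota}^0 \to \calA$ (respectively $\calN_{M,\ell}^0 \cup \overline{\calN}_{M,\ell}^0 \to \calA \cup \overline{\calA}$) with the quotient by the faithful action of the finite group $\Gamma_{M,\iota}/(\pm\tO(T))$. By Remark \ref{rem:groupSMiota} this group equals $\overline{S}_{M,\iota}/(\pm 1)$, so the degree is $|\overline{S}_{M,\iota}/(\pm 1)|$; hence everything reduces to an index computation inside $\overline{S}_M/(\pm1)$. First I would invoke Proposition \ref{prop:trans}: for every $M$ in Table \ref{table:list} except $R = 2A_3 + 8A_1$, the group $\overline{S}_M$ acts transitively on $\O(D(T))/\bO(T)$, and the stabiliser of the trivial coset is exactly $\overline{S}_{M,\iota} = \overline{S}_M \cap \bO(T)$ by \eqref{equ:SMiota}. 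Thus $[\overline{S}_M : \overline{S}_{M,\iota}] = |\O(D(T))/\bO(T)| = [\O(D(T)) : \overline{\O}(T)]$, which equals $|\calM_T|$ by Proposition \ref{prop:uniquegenus} (the genus is a single class) together with Corollary \ref{cor:countingcomponets}(1). Passing to the quotients by $\pm 1$ (using $-1 \in \overline{S}_{M,\iota}$ from Remark \ref{rem:groupSMiota}, so that $\pm 1$ lies in both groups) gives $|\overline{S}_{M,\iota}/(\pm 1)| = |\overline{S}_M/(\pm1)| / |\calM_T|$.

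It then remains to reconcile this with the two forms claimed. On the one hand, $\overline{S}_{M,\iota}^+ = \overline{S}_{M,\iota}$ precisely when $\overline{S}_{M,\iota} \subset \bO^+(T)$; when it is not, the second description $|\overline{S}^+_{M,\iota}/(\pm 1)|$ and the number of components $|\calM_T^c|$ each pick up the same factor of $2$ coming from the index $[\tO(T) : \tO^+(T)]$ and the spinor-norm dichotomy discussed before Proposition \ref{pro:descriptionmap}. Concretely, by Corollary \ref{cor:countingcomponets}(2), $|\calM_T^c| = |\calM_T| \cdot 2 / [\tO(T):\tO^+(T)]$, and $[\overline{S}_{M,\iota} : \overline{S}_{M,\iota}^+]$ equals $1$ or $2$ in lock-step with whether $\tO(T) = \tO^+(T)$; a short bookkeeping of the real spinor norm (identical to the argument sketched for Proposition \ref{pro:independenceindex}) shows $|\overline{S}_M/(\pm1)| / |\calM_T^c| = |\overline{S}^+_{M,\iota}/(\pm 1)|$, which is the asserted degree in case (1). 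For the exceptional lattice $2A_3 + 8A_1$ (stratum 6), where transitivity in Proposition \ref{prop:trans} fails, I would argue separately: only one of the two combinatorial components of the moduli space occurs as an ambi-typical stratum (Remark \ref{rem:explanationShimadastrata}), so the orbit of $\overline{S}_M$ on the relevant cosets is a single orbit and the same stabiliser computation applies verbatim, with $|\calM_T|$ replaced by the size of that orbit.

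For case (2), the strata 38/39 force $R = D_4 + 2A_6 + A_1$, which by Proposition \ref{prop:embeddings} has a unique primitive embedding, so $\calM_T$ is trivial but $\calN_{M,\iota}$ has two complex-conjugate components (Example \ref{ex:components}, $|\calM_T^c| = 2$); Theorem \ref{pro:descriptionmap}(2) then identifies $\calN_{M,\ell}^0 \cup \overline{\calN}_{M,\ell}^0 \to \calA \cup \overline{\calA}$ with the quotient by $\Gamma_{M,\iota}/(\pm\tO(T)) = \overline{S}_{M,\iota}/(\pm 1)$, giving degree $2|\overline{S}_M/(\pm1)|/|\calM_T^c|$ after accounting for the doubled target; the middle equality is the same index identity as in case (1). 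The final numerical value $24$ comes from an explicit determination of $\overline{S}_M$ for $D_4 + 2A_6 + A_1$: the Dynkin-diagram symmetry group $S_R$ is $(\ZZ/2)^4$ (one involution each for $D_4$, for $A_1$, and for each $A_6$, together with the swap of the two $A_6$ factors), one checks via Proposition \ref{prop:automorphisms} that all of these preserve $G$, computes the image $\overline{S}_M$ in $\O(D(T))$, and verifies $\overline{S}_{M,\iota} = \overline{S}_{M,\iota}^+ = \overline{S}_M$ has order $48$, whence $|\overline{S}^+_{M,\iota}/(\pm 1)| = 24$. The main obstacle I anticipate is precisely this last explicit lattice-theoretic bookkeeping — tracking how the diagram symmetries act on $D(M) \cong D(T)$ and pinning down the real spinor norms — which is delegated in the paper's spirit to Shimada's and Kirschmer's computations; the rest is a clean index chase combining Theorem \ref{pro:descriptionmap}, Proposition \ref{prop:trans}, and Corollary \ref{cor:countingcomponets}.
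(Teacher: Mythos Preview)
Your overall strategy matches the paper's: deduce the degree from Theorem \ref{pro:descriptionmap} and an orbit--stabiliser count. But there is a genuine gap in how you execute the index computation.

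You begin by asserting that the degree of $\calN_{M,\iota}^0 \to \calA$ equals $|\overline{S}_{M,\iota}/(\pm 1)|$, and then use Proposition \ref{prop:trans} (transitivity on \emph{embeddings}, i.e.\ on $\O(D(T))/\bO(T)$) to rewrite this as $|\overline{S}_M/(\pm1)|/|\calM_T|$. However, the formula in the Corollary is $|\overline{S}_M/(\pm1)|/|\calM^c_T|=|\overline{S}^+_{M,\iota}/(\pm 1)|$, and $|\calM_T|\ne|\calM^c_T|$ precisely when $\calN_{M,\iota}$ has two connected components (for instance cases 35, 42, 44 in Table \ref{table:OPlus}, where $|\calM_T|=1$ but $|\calM^c_T|=2$). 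In case 35 your computation gives $8$, the Corollary gives $4$. Your second paragraph proves the \emph{identity} $|\overline{S}_M/(\pm1)|/|\calM^c_T|=|\overline{S}^+_{M,\iota}/(\pm 1)|$ between the two displayed expressions, but it never reconnects this to the degree you actually computed; the slide from $|\calM_T|$ to $|\calM^c_T|$ is unjustified. The claim that $[\overline{S}_{M,\iota}:\overline{S}^+_{M,\iota}]$ moves ``in lock-step'' with $[\tO(T):\tO^+(T)]$ is asserted but not argued, and in any case does not close the gap.

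The paper's fix is to run orbit--stabiliser on \emph{connected components} rather than on embeddings: $\overline{S}_M$ acts transitively on the $|\calM^c_T|$ components (this is what the discussion before Theorem \ref{pro:descriptionmap} establishes, not merely Proposition \ref{prop:trans}), and the stabiliser of a single component $\calN_{M,\ell}$ is exactly $\overline{S}^+_{M,\iota}$ by construction of $\Gamma^+_{M,\iota}$. So the degree from one component is $|\overline{S}^+_{M,\iota}/(\pm 1)|=|\overline{S}_M/(\pm 1)|/|\calM^c_T|$ directly, with no reconciliation step needed.

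A smaller point: your description of $S_R$ for $D_4+2A_6+A_1$ is incorrect. The Dynkin symmetry of $D_4$ is $S_3$ (triality), not $\ZZ/2$, and $A_1$ has trivial Dynkin symmetry; the symmetry group of $2A_6$ is the wreath product $(\ZZ/2)\wr\ZZ/2$ of order $8$. Thus $|S_R|=6\cdot 8=48$, not $2^4$. You recover $|\overline{S}_M|=48$ in the end, but the route is wrong.
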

\begin{proof}
The first equality follows from the fact that $\overline{S}_M$ acts transitively on the connected components of $M$-polarised $K3$ surfaces with the exception of the cases 38/39 where we have two orbits. The second
equality follows from the definition of the group $\overline{S}^+_{M,\iota}$ and the constructin of the covering map. 
\end{proof}  

In the case of $1$-dimensional strata one can compute all the data of the covering map from the moduli space of lattice polarised $K3$ surfaces to the ambi-typical stratum. In Table \ref{table:genera} we give 
these data for all strata of dimension $1$ with more than one connected component. We list the case number, the root lattice, the number of connected components, the order of the group $\overline{S}_M$ coming from the symmetries of the 
Dynkin diagram, the degree of the covering map, the genus of the modular curve parameterising the lattice polarised $K3$ surfaces and finally the genus $g_{\operatorname{BPT}}$ of the ambi-typical 
stratum. The later is always $0$ in accordance  with  \cite[Theorem]{BPT} which says 
that all monodromy strata are rational. We find it remarkable that in contrast the genus of the components of the associated moduli spaces of lattice-polarised $K3$ surfaces can be as high as $13$.    
Note that the group $\overline{S}_M$ is a subgroup of the symmetry group of the Dynkin diagram of the root lattice. If the isotropic group $G$ is trivial, then the two groups coincide. 
Otherwise the condition that $G$ must be fixed can impose nontrivial extra conditions.
This is the case for numbers 
$37, 48, 49$ where $\overline{S}_{M,\iota}$ is a proper subgroup of index $2$ of $\overline{S}_M$.  Furthermore, in these cases $\overline{S}^+_{M,\iota}$ is also an index $2$ subgroup 
of $\overline{S}_{M,\iota}$. Altogether $\overline{S}^+_{M,\iota}$  can have index $1$, $2$ or $4$ in $\overline{S}_{M}$  
The relevant computations, in particular of the genera $g$, were performed by Markus Kirschmer and are presented in the appendix. 

\begin{table}\label{table:genera}
\[
\begin{array}
{c|c|c|c|c|c|c}
\text{Number}& \text{root lattice} &|\calM_T|^c & |\overline{S}_{M}/(\pm 1)| & d= |\overline{S}^+_{M,\iota}/(\pm 1)|  & g & g_{\operatorname{BPT}}\\
\hline
35&D_4+A_7+A_4+2A_1&  2 & 8 & 4 & 0 &0\\
37&D_4+A_7+2A_2+2A_1&  4 & 32 & 8 & 0 &0\\
38/39&D_4 +2A_6+ A_1&  2 & 24 & 24 & 1 &0\\
42&D_4+A_6+A_3+2A_2&  2 & 96 & 48 & 13 &0\\
44&D_4+2A_5+3A_1&  2 & 24 & 12 & 0 &0\\
48&D_4+2A_4+2A_2+A_1& 4 & 192 & 48   & 1 &0\\
49&D_4+3A_3+2A_2& 4 & 384 & 96  & 5 &0\\
\end{array}
\]
\caption{Covering data for all $1$-dimensional ambi-typical strata with more than one component}
\label{table:covering}
\end{table}


\section*{Appendix. Numerical calculations \\ Markus Kirschmer}
\label{sec:appendix}
\appendix
\stepcounter{section}

In this appendix 
we summarise some explicit computations concerning the 25 rank 17 
lattices in Table \ref{table:list}.
All computations were done in \textsc{Magma} \cite{Magma} and the complete code is available from
\url{www.math.rwth-aachen.de/~Markus.Kirschmer/magma/K3.html}.

Let $L$ be one of the rank 17 lattices in Table \ref{table:list} and set $M := U \oplus L$.
Further let $\iota \colon M \hookrightarrow L_{K3}$ be a primitive embedding.

\subsection*{Constructing $T$}
We first need to construct an integral lattice $T$ isometric to $\iota(M)^\perp$.
This can be done without constructing an embedding $\iota$ as follows:

Let $(V, q)$ be the ambient quadratic space of $T$. 
The fact that $M \oplus T$ and $L_{K3}$ lie in isometric quadratic spaces shows that $(V, q)$ has signature $(2,1)$ and determinant $\det(M)$.
It also uniquely determines the Hasse-Witt-invariants of $(V, q)$.
Using \cite[Alg. 3.4.3]{Kir} we can construct a rational quadratic space isometric to $(V, q)$.
Let $X$ be a maximal even lattice in $(V, q)$, ie. $q(X) \subseteq 2\ZZ$ and no lattice properly containing $X$ has that property, cf. \cite[Alg. 3.5.5]{Kir}.
By \cite[Theorem 91:2]{OMeara}, the genus of $X$ is unique, thus we may assume that $T \subseteq X$.

For any prime $p$ dividing $\# D(L)$, let $S_p$ be the $p$-Sylow subgroup of $D(L)$. Then
\[ \{ Y \subseteq X \mid D(Y) \cong S_p \mbox{ and } \# q_Y^{-1}(\{a\}) = \# q_L^{-1}(\{-a\}) \mbox{ for all } a \in \QQ/2\ZZ \} \]
consists of a single genus. Let $Y^{(p)}$ be any representative. By Proposition \ref{prop:uniquegenus}, the lattice
\[ T:= \cap_{p} Y^{(p)} \]
is isometric to $\iota(M)^\perp$.

\subsection*{Computing $\O(T)$ and its subgroups}

A finite generating set of $\O(T)$ can be constructed using a variation of Voronoi's algorithm by M.H.~Mertens \cite{Mertens}.
A slight modification of Mertens' algorithms also yields a finite presentation of $\O(T)$ using Bass-Serre theory \cite{Voronoi}.
This modification was provided to us by S. Sch\"onnenbeck.

The group $\tO(T)$ is the kernel of the homomorphism $\pi_T \colon \O(T) \to \O(D(T), q_T)$. 
Since the group $\O(D(T), q_T)$ is finite, we can construct a finite generating set of $\tO(T)$ 
using the standard orbit stabiliser algorithm.
Similarly, the spinor norm map $\textnormal{sn}_\RR \colon \O(T) \to \{\pm 1\}$ yields finite generating sets for $\O^+(T)$ and $\tO^+(T)$.
Note that spinor norms can be computed using Zassenhaus' trick \cite{Zassenhaus}.
But one has to keep in mind that our normalization of the spinor norm on $(T, q)$ corresponds to Zassenhaus' spinor norm on the lattice $(T, -q)$.

Next we want to compute generators for the group $\Gamma_{M, \iota}$ cf. equation (\ref{eq:lifts}).
The group $D(M)$ is finite and so is its automorphism group $\Aut(D(M))$.
Thus the subgroup
\[  \O(D(M), q_M) = \{ f \in \Aut(D(M)) \mid q_M(f(x)) = q_M(x) \mbox{ for all } x \in D(M) \} \]
can be enumerated by brute force. 
Similarily, we enumerate the subgroup $\overline{S}_M \subseteq \O(D(M), q_M)$ induced by the automorphism group of the Dynkin diagram of the root lattice $R$.
If $\overline{S}_M = \O(D(M), q_M)$, then $\Gamma_{M,\iota} = \O(T)$.
Suppose now $\overline{S}_M \subsetneq \O(D(M), q_M)$.
In these cases, it just happens that $\pi_T \colon \O(T) \to \O(D(T), q_T)$ is onto.
We start by computing any isometry $\alpha \colon (D(M), q_M) \to (D(T), -q_T)$ using a backtrack approach.
This gives us an isomorphism $\O(D(T)) \cong \O(D(M))$.
The fact that $\O(T) \to \O(D(T), q_T)$ is onto shows that there exists some $f \in \O(T)$ such that $\pi_T(f) \circ \alpha = \alpha_\iota$.
In particular, the pre-image of $\overline{S}_{M,\iota}$ under $\O(T) \to \O(D(T)) \cong \O(D(M))$ must be conjugate to $\Gamma_{M, \iota}$ 
and we find generators for this group using the orbit stabiliser algorithm.

\subsection*{Fuchsian groups}
Let $\SO^+(T) = \{ \varphi \in \O^+(T) \mid \det(\varphi) = 1\}$.
We denote by $\SO^+(2,1)$ the connected component of the special orthogonal group of a real quadratic space of signature $(2,1)$.
The sporadic isomorphism between $\SO^+(2,1)$ and $\PSL(2,\RR)$ implies that the type IV homogeneous domain associated to $T$ is isomorphic to 
the upper half plane $\HH_1$. Moreover, it induces  
an injection $\SO^+(T) \hookrightarrow \PSL(2,\RR)$ and thus an action of $\SO^+(T)$ on the upper half plane $\HH_1$.
This action is properly discontinuous.
Hence any finite index subgroup $G$ of $\SO^+(T)$ is a Fuchsian group, cf. \cite[Theorem 2.2.6]{Katok}.

Suppose $G$ is a finite index subgroup of $\SO^+(T)$ and let $g:= g(\HH_1 / G)$ be the genus of the (compactified) curve $\HH_1 /G$.
By \cite[Section~4.3]{Katok}, the group $G$ admits a presentation
\begin{equation}\label{eq:pres}
 G \cong \left\langle a_1,b_1,\ldots,a_g,b_g,x_1,\ldots,x_d,y_1,\dots,y_t \middle\vert
 \begin{array}{@{}l@{}}
 x_1^{m_1} = \ldots = x_d^{m_d} = 1 \mbox{ and } \\
 x_1\cdots x_d y_1 \cdots y_t [a_1,b_1] \cdots [a_g,b_g] = 1 
 \end{array}
 \right\rangle
\end{equation}
with $t=0$ if and only if $\HH_1/G$ is compact.

Since the index of $G$ in $\O(T)$ is finite, we can obtain a finite presentation of $G$ from the presentation of $\O(T)$ using the Reidemeister-Schreier method \cite[Section~2.3]{CGT}.
Even though this presentation might not be in the form of eq.~\eqref{eq:pres}, it is good enough to determine the genus of $\HH_1/G$:

\begin{lem}\label{lem:appendixnoncongruence}
Let $L$ be one of the rank $17$ lattices of Table \ref{table:list} and let $\iota \colon L \oplus U \to L_{K3}$ be a primitive embedding.
Further, let $T:= \iota(L \oplus U)^\perp$ and let $G$ be a finite index subgroup of $\SO^+(T)$.
\begin{enumerate}
\item The space $\HH_1/G$ is compact. 
\item The torsion free part of $G/G'$ has rank $2 g(\HH_1/G)$.
\end{enumerate}
\end{lem}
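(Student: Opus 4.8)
\textbf{Proof plan for Lemma~\ref{lem:appendixnoncongruence}.}
The plan is to separate the two statements: (2) is an immediate group-homology consequence of the presentation in \eqref{eq:pres} once (1) is known, so the real content is the compactness claim (1), i.e. that the parameter $t$ in \eqref{eq:pres} is zero for the groups in question. For (1) I would argue that $\HH_1/G$ is compact if and only if $\SO^+(T)$ acts cocompactly on $\HH_1$, which by the standard theory of orthogonal groups of rational quadratic forms of signature $(2,1)$ happens exactly when the form $(V,q)$ associated to $T$ is anisotropic over $\QQ$, equivalently when $T$ does \emph{not} represent $0$ nontrivially, equivalently when $T$ does not split off a hyperbolic plane $U$ over $\QQ$. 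This is precisely the observation already flagged in Section~\ref{sec:mirrorpol}: for the rank $17$ lattices $M=U\oplus L$ of Table~\ref{table:list} the transcendental lattice $T$ has rank $3$ and, because $M$ is not $m$-admissible in Dolgachev's sense, $T$ is anisotropic over $\QQ$; I would either cite that discussion directly or verify anisotropy lattice-by-lattice using the Hasse--Minkowski criterion (a ternary form over $\QQ$ is isotropic iff it is isotropic over every $\QQ_p$ and over $\RR$, and the latter fails here since $(V,q)$ has signature $(2,1)$, so one only needs the finitely many bad primes dividing $\det(M)$). Since the signature is indefinite, $T$ cannot be (totally) definite, so anisotropy is the only obstruction to compactness and it holds.

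For the deduction of (2) from (1): once we know $\HH_1/G$ is compact, the presentation \eqref{eq:pres} has $t=0$, hence
\[
G \cong \left\langle a_1,b_1,\ldots,a_g,b_g,x_1,\ldots,x_d \,\middle\vert\, x_1^{m_1}=\cdots=x_d^{m_d}=1,\ x_1\cdots x_d[a_1,b_1]\cdots[a_g,b_g]=1\right\rangle,
\]
where $g=g(\HH_1/G)$. Abelianizing, $G/G'$ is generated by the images of $a_1,\dots,a_g,b_1,\dots,b_g,x_1,\dots,x_d$ subject to $m_i x_i=0$ for each $i$ and the single relation $x_1+\cdots+x_d=0$ (the commutator terms die in the abelianization). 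Thus $G/G'\cong \ZZ^{2g}\oplus A$, where $A$ is the finite abelian group $\left(\bigoplus_{i=1}^d \ZZ/m_i\ZZ\right)\big/\langle(1,\dots,1)\rangle$; in particular the free part of $G/G'$ has rank exactly $2g$, which is the claim.

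The main obstacle is really just bookkeeping in step (1): one must be sure that \emph{every} finite-index subgroup $G$ of $\SO^+(T)$ — not merely $\SO^+(T)$ itself — gives a compact quotient, but this is automatic because passing to a finite-index subgroup replaces $\HH_1/G$ by a finite cover of $\HH_1/\SO^+(T)$, and a finite cover of a compact surface is compact. So the only genuine point to nail down is the anisotropy of the ternary form $(V,q)$ for each of the $25$ lattices, and this follows either from the already-cited fact in Section~\ref{sec:mirrorpol} that these $T$ do not split a rational hyperbolic plane, or from a short direct Hasse-invariant computation at the primes dividing $\det(M)$, which the \textsc{Magma} code referenced in the appendix performs. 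I would phrase the written proof so as to invoke the former, keeping the latter as the computational backup.
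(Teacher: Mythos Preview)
Your argument for (2) is the same as the paper's: once $t=0$, abelianize the presentation \eqref{eq:pres} and read off the free rank. Nothing to add there.

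For (1), your route is genuinely different from the paper's and, in spirit, more conceptual. The paper argues computationally: it computes (via the Reidemeister--Schreier presentation already obtained) that $\SO^+(T)/\SO^+(T)'$ is an elementary abelian $2$-group; if $t>0$ this forces $g=0$ and $\SO^+(T)\cong \ZZ/2\ZZ * \cdots * \ZZ/2\ZZ$, and then a direct count of low-index subgroups rules this out case by case. Your approach instead invokes the Godement/Borel--Harish-Chandra compactness criterion: the arithmetic quotient is compact iff the ternary rational form $(V,q)$ is anisotropic, which for rank $3$ is the same as $T$ not splitting a rational hyperbolic plane. This is cleaner and bypasses the presentation entirely for this step, at the cost of still needing a case-by-case anisotropy check at the bad primes (which the paper's \textsc{Magma} framework could certainly do, and which is implicitly confirmed by the remark in Section~\ref{sec:mirrorpol} you cite, though that remark says only ``a number of'' the lattices rather than ``all'').

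One slip to fix: your parenthetical ``the latter fails here since $(V,q)$ has signature $(2,1)$'' is backwards. A form of signature $(2,1)$ is indefinite and hence \emph{isotropic} over $\RR$, so the real place gives you nothing; the anisotropy of $(V,q)$ over $\QQ$ must come from some finite prime $p\mid\det(M)$. Rewrite that sentence to say that Hasse--Minkowski reduces the question to checking anisotropy at the finitely many bad primes (since a non-degenerate ternary form is automatically isotropic at all other places, including $\RR$), and then defer to the computation.
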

\begin{proof}
It suffices to prove the first statement for $G = \SO^+(T)$.
An explicit computation shows that the abelian group $\SO^+(T) / \SO^+(T)' \cong (\ZZ/2\ZZ)^{r}$ is an elementary abelian $2$-group.
Suppose $\HH_1/\SO^+(T) $ is not compact, i.e. the parameter $t$ in eq.~\eqref{eq:pres} is non-zero.
The isomorphism type of  $\SO^+(T) / \SO^+(T)'$ implies that $g(\HH_1/ \SO^+(T) ) = 0$ and 
\[ \SO^+(T) \cong \ZZ/2\ZZ * \ldots * \ZZ/2\ZZ  \]
is a free product of $r$ copies of $\ZZ/2\ZZ$.
An explicit computation shows that for all lattices $T$, the groups $\SO^+(T)$ and $\ZZ/2\ZZ * \ldots * \ZZ/2\ZZ$ have different numbers of subgroups of small index.
This proves the first assertion.\\
The second assertion follows immediately from the fact that the parameter $t$ in eq.~\eqref{eq:pres} is zero as $\HH_1/G$ is compact.
\end{proof}

Suppose now $G$ is a subgroup of $\O^+(T)$.
We denote by $\HH_1/G$ the space $(\HH_1/ \pm G \cap \SO^+(T))$ where $\pm G$ is the subgroup of $\O(T)$ generated by $G$ and $-I_3$.

\subsection*{Results}

For each rank 17 lattice in Table \ref{table:list}, we constructed a lattice $T$ as well as generating sets for $\O(T)$, $\O^+(T)$, $\tO(T)$ and $\tO^+(T)$.
It turns out that in all cases $[\O(T) : \O^+(T)] = 2$ and $g(\HH_1/\O^+(T)) = 0$.
Table~\ref{table:OPlus} lists the indices  $I_1:= [\O^+(T) : \tO^+(T)]$, $I_2:=[\tO(T) : \tO^+(T)] $ and $ I_3:= [\O(D(T)) : \overline{\O(T)}]$,
the order of $\overline{S}_{M,\iota}/(\pm 1)$ and the genus of the moduli space of lattice polarised $K3$ surfaces. We note that in all but three cases  $\overline{S}_{M}/(\pm 1)=\overline{S}_{M,\iota}/(\pm 1)$.
The only cases where this is not the case are  37, 48 and 49 where  $\overline{S}_{M,\iota}/(\pm 1)$ has index $2$ in $\overline{S}_{M}/(\pm 1$). Furthermore, in these cases, as well as in 
35, 42 and 44 the group  $\overline{S}^+_{M,\iota}/(\pm 1)$ has  
index $2$ in $\overline{S}_{M,\iota}/(\pm 1)$.

\begin{table}[h]
\[
\begin{array}{c|c|c|c|c|c|c}
\# & \text{root lattice} & I_1 & I_2 & I_3 & |\overline{S}_{M}/(\pm 1)| & g(\HH_1/\tO^+(T) ) \\ \hline
24 & D_{4}+A_{13} & 12 & 2 & 1 & 6 & 1\\
25 & D_{4}+A_{12}+A_{1} & 12 & 2 & 1 & 6 & 1\\
26 & D_{4}+A_{11}+A_{2} & 16 & 2 & 1 & 4 & 0\\
27 & D_{4}+A_{11}+2A_{1} & 8 & 2 & 1 & 4 & 0\\
28 & D_{4}+A_{10}+A_{2}+A_{1} & 24 & 2 & 1 & 12 & 1\\
29 & D_{4}+A_{9}+A_{4} & 72 & 2 & 1 & 12 & 4\\
30 & D_{4}+A_{9}+A_{3}+A_{1} & 8 & 2 & 1 & 4 & 1\\
31 & D_{4}+A_{9}+2A_{2} & 96 & 2 & 1 & 48 & 7\\
32 & D_{4}+A_{9}+A_{2}+2A_{1} & 8 & 2 & 1 & 4 & 0\\
33 & D_{4}+A_{8}+A_{5} & 72 & 2 & 1 & 12 & 4\\
34 & D_{4}+A_{8}+A_{4}+A_{1} & 24 & 2 & 1 & 12 & 1\\
35 & D_{4}+A_{7}+A_{4}+2A_{1} & 8 & 1 & 1 & 8 & 0\\
36 & D_{4}+A_{7}+A_{3}+A_{2}+A_{1} & 16 & 2 & 1 & 8 & 1\\
37 & D_{4}+A_{7}+2A_{2}+2A_{1} & 32 & 1 & 2 & 32 & 0\\
38/39 & D_{4}+2A_{6}+A_{1} & 48 & 1 & 1 & 24 & 1\\
40 & D_{4}+A_{6}+A_{5}+A_{2} & 48 & 2 & 1 & 24 & 7\\
41 & D_{4}+A_{6}+A_{4}+A_{2}+A_{1} & 48 & 2 & 1 & 24 & 1\\
42 & D_{4}+A_{6}+A_{3}+2A_{2} & 96 & 1 & 1 & 96 & 13\\
43 & D_{4}+2A_{5}+A_{3} & 32 & 2 & 1 & 16 & 3\\
44 & D_{4}+2A_{5}+3A_{1} & 24 & 1 & 1 & 24 & 0\\
45 & D_{4}+A_{5}+2A_{4} & 96 & 2 & 1 & 48 & 7\\
46 & D_{4}+A_{5}+A_{4}+A_{3}+A_{1} & 16 & 2 & 1 & 8 & 1\\
47 & D_{4}+A_{5}+2A_{3}+2A_{1} & 16 & 2 & 1 & 8 & 0\\
48 & D_{4}+2A_{4}+2A_{2}+A_{1} & 192 & 1 & 2 & 192 & 1\\
49 & D_{4}+3A_{3}+2A_{2} & 384 & 1 & 2 & 384 & 5\\

\end{array}
\]
\caption{Indices $I_1= [\O^+(T) : \tO^+(T)]$, $I_2=[\tO(T) : \tO^+(T)] $, $ I_3= [\O(D(T)) : \overline{\O(T)}]$,
order of $\overline{S}_{M,\iota}/(\pm 1)$ and genus of the moduli space of lattice polarised $K3$ surfaces.}
\label{table:OPlus}
\end{table}

There are only four lattices $T$ for which $\Gamma_{M, \iota} \ne \O(T)$.
In these cases, the genus of $\hat{C}_T:= \HH_1/ \Gamma^+_{M,\iota} $, which is birational to the ambitypical stratum, is again $0$, once more in concordance with \cite[Theorem]{BPT}.
Table~\ref{table:Gamma} lists the indices $[\O^+(T) : \Gamma^+_{M,\iota} ]$ and $[\Gamma_{M,\iota} : \Gamma^+_{M,\iota}]$ in these cases.

\begin{table}[h]
\[
\begin{array}{c|c|c|cc}
\# & \text{root lattice} & [\O^+(T) : \Gamma^+_{M,\iota} ] & [\Gamma_{M,\iota} : \Gamma^+_{M,\iota}] \\ \hline
26 & D_{4}+A_{11}+A_{2} & 2 & 2 \\
29 & D_{4}+A_{9}+A_{4} & 3 & 2 \\
33 & D_{4}+A_{8}+A_{5} & 3 & 2 \\
38/39 & D_{4}+2A_{6}+A_{1} & 1 & 1 
\end{array}
\]
\caption{Indices of $\Gamma_{M,\iota}^+$ in $\O^+(T)$ and $\Gamma_{M,\iota}$.}
\label{table:Gamma}
\end{table}

\newpage
%
%
%
%
%
%


\end{document}